\numberwithin{table}{section}
\DeclarePairedDelimiter\floor{\lfloor}{\rfloor}
\DeclarePairedDelimiter\ceil{\lceil}{\rceil}
\newcolumntype{L}[1]{>{\raggedright\let\newline\\\arraybackslash\hspace{0pt}}m{#1}}
\newcolumntype{C}[1]{>{\centering\let\newline\\\arraybackslash\hspace{0pt}}m{#1}}
\newcolumntype{R}[1]{>{\raggedleft\let\newline\\\arraybackslash\hspace{0pt}}m{#1}}
\newcommand\numberthis{\addtocounter{equation}{1}\tag{\theequation}}
\newtheorem{theorem}{Theorem}[section]
\newtheorem{lemma}[theorem]{Lemma}
{\theoremstyle{definition}
\newtheorem{remark}[theorem]{Remark}
}\newtheorem{proposition}[theorem]{Proposition}
\newtheorem{corollary}[theorem]{Corollary}
\theoremstyle{definition}
\newtheorem{wexample}[theorem]{Worked Example}
\newcommand{\om}{\omega_}
\newcommand{\m}{\mu_}
\newcommand{\codim}{\mathrm{codim}}
\newcommand{\lam}{\lambda_}
\newcommand{\emax}{\mathcal{E}_{{\rm max}}}
\newcommand{\ep}{\epsilon}
\newcommand{\pgl}{\mathrm{PGL}}
\newcommand{\gl}{\mathrm{GL}}
\newcommand{\athree}{4 q^{\lfloor n^2/2 + 2 \rfloor}+\frac{43}{3}q^{2n^2/3+2}+2q^{2n^2/3+1}}
\newcommand{\atwoodd}{2q^{ n^2/2 + 3/2}}
\newcommand{\atwoeven}{4q^{n^2/2+2}}
\newcommand{\field}{2 \log (\log_2 q +2) q^{n^2/2+d/2}}
\newcommand{\itwo}{4(q^{\binom{n+1}{2}-1}+q^{\binom{n+1}{2}-2}) }
\renewcommand{\a}{\alpha}
\newcommand{\F}{\mathbb{F}}
\newcommand{\overf}{\overline{\mathbb{F}}}
\numberwithin{equation}{section}
\setlist[enumerate,1]{label={(\roman*)}}
\newcommand{\noop}[1]{}
\begin{document}
\author{Melissa Lee}
 \address{Department of Mathematics,
    Imperial College, London SW7 2BZ, UK}
 \email{m.lee16@imperial.ac.uk}
\title{Regular orbits of quasisimple linear groups II}

\begin{abstract}
Let $V$ be a finite-dimensional vector space over a finite field, and suppose $G \leq \Gamma \mathrm{L}(V)$ is a group with a unique subnormal quasisimple subgroup $E(G)$ that is absolutely irreducible on $V$.  A \textit{base} for $G$ is a set of vectors $B\subseteq V$ with pointwise stabiliser $G_B=1$. If $G$ has a base of size 1, we say that it has a \textit{regular orbit} on $V$.
 In this paper we investigate the minimal base size of groups $G$ with $E(G)/Z(E(G)) \cong \mathrm{PSL}_n(q)$ in defining characteristic, with an aim of classifying those with a regular orbit on $V$.
\end{abstract}

\maketitle

\section{Introduction} 
A permutation group $G \leq \mathrm{Sym}(\Omega)$ is said to have a \textit{regular orbit} on $\Omega$ if there exists $\omega \in \Omega$ with trivial stabiliser in $G$. 
The study of regular orbits arises in a number of contexts, particularly where $\Omega$ is a vector space $V$, and $G \leq \mathrm{GL}(V)$.
For instance, if $V$ is a finite vector space, $G\leq \mathrm{GL}(V)$, and all orbits of $G$ on $V\setminus \{0\}$ are regular, then the affine group $GV$ is a Frobenius group with Frobenius complement $G$, and such $G$ were classified by Zassenhaus \cite{MR3069653}. Regular orbits also underpin parts of the proof of the $k(GV)$-conjecture \cite{MR2078936}, asserting that the number of conjugacy classes $k(GV)$ of $GV$, with $|G|$ coprime to $|V|$, is at most $|V|$. One of the major cases was where $G$ is almost quasisimple and acts irreducibly on $V$. In this case, the existence of a regular orbit of $G$ on $V$ was sufficient to prove the $k(GV)$-conjecture. A classification of pairs $(G,V)$ where $G$ has a regular orbit on $V$ and $(|G|,|V|)=1$ was completed by K\"{o}hler and Pahlings \cite{MR1829482}, building on work of Goodwin \cite{goodwin1,goodwin2} and Liebeck \cite{MR1407889}.

A subset $B \subset V$ is a \textit{base} for $G$ if its pointwise stabiliser in $G$ is trivial. The minimal size of a base for $G$ is called the \textit{base size} and denote it by $b(G)$. For example, if $G$ has a regular orbit on $V$, then $b(G)=1$.
Each element $g\in G$ is characterised by its action on a base, so $b(G) \geq \lceil \log |G|/\log |V| \rceil$.
In recent years, there has been a number of advancements towards classifying finite primitive groups $H$ with small bases. A primitive group with $b(H)=1$ is cyclic, so the smallest case of interest is where $b(H)=2$. There has been a number of contributions to a classification here, including a partial classification for diagonal type groups \cite{MR2998958}, a complete classification for primitive actions of $S_m$ and $\mathrm{Alt}_m$ \cite{MR2781219, MR2214474} and sporadic groups \cite{MR2684423} and also substantial progress for almost simple classical groups \cite{MR3219555}.

A finite group $G$ is said to be \textit{quasisimple} if it is perfect and $G/Z(G)$ is a non-abelian simple group. We further define $G$ to be \textit{almost quasisimple} if $G$ has a unique quasisimple subnormal subgroup, which forms the layer $E(G)$ of $G$, and the quotient $G/F(G)$ of $G$ by its Fitting subgroup $F(G)$ is almost simple.
A primitive affine group $H=GV$ has base size 2 if and only if $G \leq \mathrm{GL}(V)$ has a regular orbit in its irreducible action on $V$.
In classifying $G$ with a regular orbit, one would naturally use Aschbacher's theorem \cite{asch_thm} to determine the possibilities for irreducible subgroups of $\mathrm{GL}(V)$. In this paper, we investigate the case where $G$ is a member of the $\mathcal{C}_9$ class of Aschbacher's theorem. Slightly more broadly than this, we consider $G \leq \Gamma \mathrm{L}(V)$ almost quasisimple such that $E(G)$ acts absolutely irreducibly on $V$. 

The pairs $(G,V)$ where $G$ has a regular orbit on $V$ have been classified for $E(G)/Z(E(G))$ a sporadic or alternating group \cite{MR3893366,MR3500766}, and for $G$ of Lie type with $(|G|,|V|)=1$ by the aforementioned proof of the $k(GV)$-conjecture. The authors of \cite{ourpaper} showed that $b(G)\leq 6$ for $G \leq \gl(V)$ with $E(G)$ quasisimple, as long as $V$ is not the natural module for $E(G)$.
Moreover, Guralnick and Lawther \cite{bigpaper} classified $(G,V)$ where $G$ is a simple algebraic group over an algebraically closed field of characteristic $p>0$ that has a regular orbit on the irreducible $G$-module $V$. They also determine the generic stabilisers in each case. Their methods, which provide the foundation for the techniques in this paper, rely heavily on detailed analyses of highest weight representations of these simple algebraic groups.

This paper is the second in a series of three, which analyse base sizes of pairs $(G,V)$, where $G\leq \Gamma \mathrm{L}(V)$ is almost quasisimple and  $E(G)$ is a group of Lie type that acts absolutely irreducibly on $V$, with $(|G|,|V|)>1$. The first paper \cite{crosscharpaper} dealt with the cross-characteristic representations of $G$, while the present paper considers the case where $E(G)/Z(E(G)) \cong \mathrm{PSL}_n(q)$ and $V$ is an absolutely irreducible module for $E(G)$ in defining characteristic.
The final paper in the series will consider the remaining almost quasisimple groups $G \leq \Gamma\mathrm{L}(V)$  of Lie type in defining characteristic.

We say that two representations $\rho_1, \rho_2$ of $E(G)$ (and their corresponding modules) are quasiequivalent if there exists $g \in \mathrm{Aut}(E(G))$ such that $\rho_1$ is equivalent to $g \rho_2$. For an almost quasisimple group of Lie type $G=G(q)$, and a natural number $i$, we define $\ep_i=1$ if $i \mid q$, and $\ep_i=0$ otherwise.

The main theorem of this work is as follows.
\begin{theorem}
\label{main}
Let $V=V_d(q_0)$ be a $d$-dimensional vector space over $\mathbb{F}_{q_0}$ with $q_0=p^f$, and let $G \leq \Gamma \mathrm{L}(V)$ be an almost quasisimple group such that $E(G)/Z(E(G)) \cong  \mathrm{PSL}_n(q)$ with $n\geq 2$ and $q=p^{e}$. Suppose that the restriction of $V$ to $E(G)$ is an absolutely irreducible module $V(\lambda)$ of highest weight $\lambda$. Set $k=f/e$. Either $G$ has a regular orbit on $V$, or up to quasiequivalence, $\lambda$ and $n$ appear in Table \ref{rem1} if $k=1$ and Table \ref{rem2} otherwise.
\end{theorem}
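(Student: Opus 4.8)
The plan is to follow the strategy of Guralnick and Lawther \cite{bigpaper} for simple algebraic groups, adapted to finite groups as in \cite{crosscharpaper} and \cite{ourpaper}: a reduction to the ambient algebraic group, a Liebeck-type fixed-point counting argument, and direct analysis of a bounded list of small representations. \textbf{Step 1: the algebraic picture.} Write $E = E(G)$ and let $\bar{E}$ be the simple algebraic group of type $A_{n-1}$ over $\overf_p$ with $E$ a quotient of a finite fixed-point subgroup. The module $V(\lambda)$ restricts from a rational irreducible $\bar{E}$-module, which by Steinberg's tensor product theorem decomposes as a twisted tensor product $\bigotimes_i V(\m{i})^{[p^{a_i}]}$ of $p$-restricted modules. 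From \cite{bigpaper} I know precisely which highest weights give $\bar{E}$ a dense orbit on $V(\lambda)$, together with the generic stabiliser; for every other $\lambda$ the generic stabiliser is positive-dimensional, and such $\lambda$ are the candidates for the tables.

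\textbf{Step 2: the counting dichotomy.} The group $G$ has a regular orbit on $V$ as soon as
\[
\sum_{1 \neq g \in G} |C_V(g)| < |V|,
\]
since the left-hand side bounds the number of vectors with non-trivial stabiliser. Partitioning the sum over $\mathrm{Aut}(E)$-classes of prime-order elements and using standard bounds on the number of such classes in $\mathrm{PSL}_n(q)$ and its extensions, it suffices to bound $\dim_{\F_{q_0}} C_V(g)$ by $d(1-c_g)$ for an explicit constant $c_g > 0$ depending only on the type of $g$. For $d$ large relative to $n$ and $q$ this is automatic; the real work is to make the threshold on $\dim V(\lambda)$ explicit, so that only the weights appearing in Tables \ref{rem1} and \ref{rem2} remain.

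\textbf{Step 3: fixed spaces of inner and outer elements.} This is the technical core. For a semisimple element $s$ I bound $\dim C_{V(\lambda)}(s)$ by grouping the weights of $V(\lambda)$ according to their value on $s$, reducing via Steinberg's theorem to the restricted tensor factors; diagonal automorphisms are absorbed here. For a unipotent element $u$ of order $p$, $\dim C_V(u)$ is the number of Jordan blocks of $u$ on $V(\lambda)$, which I estimate from the tensor factorisation using known bounds on unipotent actions on restricted modules (in the spirit of the Jantzen and Liebeck--Seitz results). For a field automorphism of order $r$ one has $|C_V(\phi)| \asymp |V|^{1/r}$, and for graph or graph-field automorphisms $C_V(\tau)$ is a form module of roughly half the dimension of $V$. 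Feeding these estimates into the counting sum of Step 2 produces the explicit numerical thresholds on $\dim V(\lambda)$.

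\textbf{Step 4: small cases and the case $k > 1$.} After Step 2 there remains an explicit finite list of triples $(n,q,\lambda)$ of small dimension --- fundamental weights $\lambda_i$, together with $2\lambda_1$, $\lambda_1 + \lambda_{n-1}$, $3\lambda_1$ and a handful of others, possibly with non-trivial Frobenius twists, and with $n$ and $q$ small. Each is settled directly: either a single vector with trivial stabiliser is exhibited, often guided by the generic stabiliser from \cite{bigpaper}, or one shows every vector is fixed by a transvection, a torus element, or a field or graph automorphism, placing the triple in Table \ref{rem1}. When $k > 1$ the module $V$ is realised over $\F_{q_0} \supsetneq \F_q$, equivalently $V|_E$ is a twisted tensor product of $k$ Galois-conjugate factors; then $|V|$ is much larger relative to $|E|$, the counting argument succeeds far more often, and only the short list of Table \ref{rem2} survives. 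I expect the main obstacle to be Step 3 in characteristics $2$ and $3$: pinning down $\dim C_{V(\lambda)}(u)$ for unipotent $u$ when $\lambda$ has a non-trivial tensor factorisation, since the generic Jordan-form bounds weaken there and the borderline cases must be checked individually to decide whether they clear the threshold $|C_V(g)| < |V|$.
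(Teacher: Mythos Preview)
Your strategy is broadly the paper's: reduce via a fixed-point counting inequality (your Step~2 is Proposition~\ref{tools}), bound eigenspaces of prime-order elements via the weight/$\Psi$-net machinery of Guralnick--Lawther (your Step~3 matches Section~\ref{techniques} and the tables of Sections~\ref{prest}--\ref{tensor}), and finish the small survivors by hand. The reduction you sketch is made precise in the paper as Propositions~\ref{n^3} and~\ref{complete}: first show $d\geq n^3$ forces a regular orbit, then enumerate the finitely many $V(\lambda)$ of smaller dimension.

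There are two genuine gaps. First, your Step~1 leans on \cite{bigpaper} to identify candidate~$\lambda$, but the algebraic and finite pictures diverge: the Remark after Theorem~\ref{main} gives explicit $(G,\lambda)$ where $\overline{G}$ has no regular orbit yet $G(q)$ does (e.g.\ $\lambda=(p^a+1)\lambda_1$, $n=2$, $p\in\{2,3\}$). So the generic-stabiliser classification from \cite{bigpaper} does not directly settle the finite case; each surviving module needs its own finite-group argument, sometimes via explicit orbit representatives or Shintani descent (Propositions~\ref{3l1}, \ref{tensornatural}).

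Second, your treatment of $k\neq 1$ conflates two distinct regimes. When $k>1$ is an integer, $V$ is a scalar extension $V_0\otimes\F_{q^k}$ of a module already defined over $\F_q$, \emph{not} a tensor product of Galois conjugates; the paper handles this in Section~\ref{ext_section} via Lemma~\ref{extfield}. The twisted tensor product $W\otimes W^{(q)}\otimes\cdots\otimes W^{(q^{c-1})}$ of Galois conjugates is the case $k=1/c<1$, where $V$ is realised over the subfield $\F_{q^{1/c}}$; this requires its own treatment (Section~\ref{subfields}) and contributes the rows of Table~\ref{rem2} with $k\in\{1/2,1/3,1/4\}$, which your outline does not account for. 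You will need to separate these two regimes and deal with the subfield case explicitly.
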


{
\renewcommand{\arraystretch}{1.5}
\begin{table}[!htbp]
\centering
\begin{tabular}{@{}llll@{}}
	\toprule
	$\lambda$ & Dimension & Notes & $n$ \\ \midrule
	$\lambda_1$ & $n$ & $b(G)\leq n+1^*$ & $[2,\infty)$ \\
	$2\lambda_1$, $p\geq 3$ & $\binom{n+1}{2}$ & $2  \leq b(G) \leq 3 $ & $[2,\infty)$ \\
	$\lambda_2$ & $\binom{n}{2}$ & $b(G) = 3 $ & $[7,\infty)$ \\
	&  & $3 \leq b(G) \leq 4$ & $[5,6]$ \\
	&  & $3 \leq b(G) \leq 5$ & $4$ \\
	$3\lam1$, $p\geq 5$ & 4 & $b(G)= 2^*$ & $2$ \\
	$\lambda_3$ & $\binom{n}{3}$ & $b(G)=2$ & $[7,8]$ \\
	& 20 & $2\leq b(G)\leq 4^*$ & $6$ \\
	$\lam1+\lam {n-1}$ & $n^2-1-\ep_n$ & $b(G)=2$ & $[3,\infty)$ \\
	$\lam1+\lam2$, $q=3^e$ & 16 & $b(G)=2$ & 4 \\
	$(p^a +1 )\lambda_1$ & $n^2$ & $b(G)= 2^*$ & $[2,\infty)$ \\
	$\lambda_1+p^a \lambda_{n-1}$ & $n^2$ & $b(G)= 2^*$ & $[3,\infty)$ \\
	\midrule 
	$2\lam2$, $p\geq 3$ &$20-\epsilon_3$ & $1\leq b(G)\leq 2^*$ & 4\\
	$\lam3$ &84& $1\leq b(G) \leq 2^*$ & 9\\
	$3\lam1$, $p\geq 5$ &10 &  $1\leq b(G) \leq 2^*$ & 3\\
	$\lam4$ &70& $1\leq b(G)\leq 2^*$ & 8\\
 \bottomrule
\end{tabular}
\caption{A description of modules $V(\lambda)$, with $k=1$, where $G$ may not have a regular orbit. \label{rem1}}
\begin{tabular}{lllll}
	\toprule 
	$\lambda$ & Dimension &$k$& Notes & $n$\\ 
	\midrule
	$\lambda_1$ &$n$ &$k\geq 2$ &$b(G)\leq  \lceil n/k \rceil +1^*$ & $[2,\infty)$\\
	$2\lambda_1$, $p\geq 3$ &$\binom{n+1}{2}$&$2$&$1  \leq b(G) \leq 2 $  & $[2,\infty)$\\
			$\lambda_2$ &$\binom{n}{2}$ &$2$ &$b(G) = 2$ &$[5,\infty)$\\
& &$2$ &$2 \leq b(G) \leq 3$ &$4$\\
& &$3$ &$1 \leq b(G) \leq 2$ &$[4,6]$\\
&&$4$ & $1\leq b(G) \leq 2$ & $4$\\
	$\lambda_3$ &20&$2,3$&  $2/k\leq b(G)\leq \lceil4/k \rceil^*$& $6$ \\ 
	$\lam1+\lam l$ & $8-\ep_3$ &2& $1\leq b(G)\leq 2^*$ & 3\\
	$(p^{\frac{e}{2}} +1 )\lambda_1$ &$n^2$&$\frac{1}{2}$& $2 \leq b(G)\leq 3$ &$[2,\infty)$  \\
	$(p^{e/2} +1 )\lambda_2$ &$\binom{n}{2}^2$&$\frac{1}{2}$& $1 \leq b(G) \leq  2^*$ &$4$  \\ 
		$2(p^{e/2} +1 )\lambda_1$, $p\geq 3$ &$\binom{n+1}{2}^2$&$\frac{1}{2}$& $1 \leq b(G) \leq  2$ &$2$  \\ 
	 $(p^{\frac{e}{3}} +p^{\frac{2e}{3}}+1 )\lambda_1$&$n^3$  &$\frac{1}{3}$& $1 \leq b(G) \leq  2$ &$3$  \\
	&&$\frac{1}{3}$& $b(G)= 2$ &$2$  \\ 
	$(p^{\frac{e}{4}} +p^{\frac{2e}{4}}+p^{\frac{3e}{4}}+1 )\lambda_1$	 &$n^4$&$\frac{1}{4}$& $1\leq b(G) \leq 2$ &$2$  \\ 
	\bottomrule
\end{tabular}
\caption{A description of modules $V(\lambda)$, with $k\neq 1$, where $G$ may have no regular orbit. \label{rem2}
}
\end{table}
}

There are several corollaries that we can deduce from Theorem \ref{main}.
\begin{corollary}
Let $G$, $V$ be as in Theorem \ref{main}. If $n\geq 7$ and $\log_q|V| > n^2+n$, then $G$ has a regular orbit on $V$.
\end{corollary}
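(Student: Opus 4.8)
The plan is to deduce the corollary from Theorem~\ref{main} by inspecting Tables~\ref{rem1} and~\ref{rem2}. By that theorem, if $G$ has no regular orbit on $V$ then, up to quasiequivalence, $(\lambda,n)$ occurs in Table~\ref{rem1} when $k=1$ and in Table~\ref{rem2} when $k\neq1$. Since $|V|=q_0^{d}$ with $q_0=p^{f}$, $q=p^{e}$, we have $\log_q|V|=dk$, where $d$ is the dimension recorded in the relevant table. So it suffices to verify that for every row with $n\geq7$ one has $dk\leq n^2+n$, with the single exception of the natural-module rows, which are treated separately.

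Suppose first $k=1$, so $dk=d$. For the rows of Table~\ref{rem1} compatible with $n\geq7$ the dimension is $d=n$ for $\lambda_1$; $d=\binom{n+1}{2}$ for $2\lambda_1$; $d=\binom n2$ for $\lambda_2$; $d=\binom n3$ for $\lambda_3$ (only $n\in\{7,8,9\}$ occur, so $d\leq84$); $d=70$ for $\lambda_4$ ($n=8$ only); $d=n^2-1-\ep_n$ for $\lambda_1+\lambda_{n-1}$; and $d=n^2$ for $(p^a+1)\lambda_1$ and $\lambda_1+p^a\lambda_{n-1}$. In every instance $d<n^2+n$, so no such pair satisfies $\log_q|V|>n^2+n$. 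Now suppose $k\neq1$. The rows of Table~\ref{rem2} that allow $n\geq7$ are: $\lambda_2$ with $k=2$, where $dk=2\binom n2=n^2-n$; $2\lambda_1$ with $k=2$, where $dk=2\binom{n+1}{2}=n^2+n$; $(p^{e/2}+1)\lambda_1$ with $k=\tfrac12$, where $dk=\tfrac12 n^2$; and the natural-module row $\lambda_1$ with $k\geq2$, where $dk=nk$. In each of the first three, $dk\leq n^2+n$, so the strict inequality $\log_q|V|>n^2+n$ forces $G$ to have a regular orbit.

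It remains to treat $\lambda=\lambda_1$ with $k\geq2$, i.e.\ $V$ the $n$-dimensional natural module for $E(G)$ realised over $\F_{q_0}=\F_{q^k}$. (The case $k=1$ was already disposed of: $\log_q|V|=n<n^2+n$.) Here $\log_q|V|=nk$, which exceeds $n^2+n$ precisely when $k\geq n+2$, so it is enough to show that $G$ has a regular orbit on $V$ whenever $k\geq n$. Since $n\geq7$, $V$ is not self-dual, so $G$ contains no graph automorphism and hence $G\leq\bigl(\mathrm{GL}_n(q)\cdot Z\bigr)\rtimes\mathrm{Gal}(\F_{q^k}/\F_p)$ with $Z=\F_{q^k}^{\times}I$. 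For $k\geq n$ one chooses $v\in\F_{q^k}^{n}$ whose coordinates are $\F_q$-linearly independent and generate $\F_{q^k}$ over $\F_p$: the first property kills the stabiliser of $v$ in $\mathrm{GL}_n(q)\cdot Z$, and a genericity argument dealing with the semilinear elements (ruling out nontrivial solutions of $g\,\sigma(v)=v$ with $\sigma\neq1$) then forces $G_v=1$; equivalently, one invokes the large-field regime of the natural-module analysis carried out in the body of the paper. Hence $G$ has a regular orbit, completing the proof. The one genuinely delicate point is this natural-module case; everything else is a finite check against the two tables.
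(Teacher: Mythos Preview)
Your reduction to a table inspection is exactly the intended approach, and your verification for every row other than $\lambda=\lambda_1$ is correct: for each entry of Tables~\ref{rem1} and~\ref{rem2} with $n\geq 7$ one has $dk\leq n^2+n$, so the hypothesis $\log_q|V|>n^2+n$ cannot hold there. The paper states the corollary without proof, so this inspection \emph{is} the paper's argument.

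The genuine gap is in your treatment of the natural module. Your key claim, that $\F_q$-linear independence of the coordinates of $v$ ``kills the stabiliser of $v$ in $\mathrm{GL}_n(q)\cdot Z$'' with $Z=\F_{q^k}^{\times}$, is false. Linear independence over $\F_q$ only rules out $hv=\mu v$ for $h\in\mathrm{GL}_n(q)$ and $\mu\in\F_q^{\times}$; it says nothing when $\mu\notin\F_q$. Concretely, if $n\mid k$ and $h\in\mathrm{SL}_n(q)$ is a Singer element (irreducible characteristic polynomial), then $h$ has eigenvalues in $\F_{q^n}^{\times}\setminus\F_q^{\times}\subseteq\F_{q^k}^{\times}$, and each eigenvector $w\in\F_{q^k}^{\,n}$ has $\F_q$-linearly independent coordinates (its $\F_q$-span is an $n$-dimensional $\F_q$-subspace). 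For such $w$ the element $\mu^{-1}h\in\mathrm{GL}_n(q)\cdot Z$ is a nontrivial stabiliser element. Your fallback ``equivalently, one invokes the large-field regime of the natural-module analysis'' does not help either: Proposition~\ref{L1_base_size} (and Remark~(iii) after Theorem~\ref{main}) gives only $b(G)\leq\lceil n/k\rceil+1=2$ for $k>n$, and in fact asserts $b(G)=2$ whenever $(k,n)>1$ and $G$ contains scalars in $\F_{q^{(k,n)}}^{\times}\setminus\F_q^{\times}$. So your argument does not close the case $\lambda=\lambda_1$ with $k\geq n+2$, and the corollary does not follow from Table~\ref{rem2} alone without further work on this row.
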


The authors of \cite{ourpaper} show that if $G$ as in Theorem \ref{main} is contained in $\gl(V)$, then $b(G)\leq 6$, unless $V$ is the natural module for $E(G)$. The next corollary gives an improvement to this result.
\begin{corollary}
Let $G$, $V$ be as in Theorem \ref{main}. Then either $V$ is the natural module for $G$, or $b(G) \leq 5$.
\end{corollary}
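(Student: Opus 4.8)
The plan is to read the statement off Theorem~\ref{main} by inspecting Tables~\ref{rem1} and~\ref{rem2}. If $G$ has a regular orbit on $V$ then $b(G)=1\le 5$, so we may assume that, up to quasiequivalence, $\lambda$ and $n$ (together with $k$ in the second table) occur in Table~\ref{rem1} when $k=1$, or in Table~\ref{rem2} when $k\neq 1$. It then suffices to run through the rows of these two tables and check that the upper bound on $b(G)$ recorded in the ``Notes'' column is at most $5$ in every row other than the one(s) that give the natural module.

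Carrying this out, one finds that in Table~\ref{rem1} the only row whose bound exceeds $5$ is the first, $\lambda=\lambda_1$, where $b(G)\le n+1$ and $V$ is the natural $n$-dimensional module $V(\lambda_1)$ for $E(G)$; in every other row the recorded bound is at most $5$, the extreme case being $\lambda=\lambda_2$ with $n=4$ (where $b(G)\le 5$) and all remaining rows giving $b(G)\le 4$. In Table~\ref{rem2} the only unbounded estimate is again that for $\lambda=\lambda_1$ (with $k\ge 2$), namely $b(G)\le\lceil n/k\rceil+1$, with $V=V(\lambda_1)$ the natural module realised over $\F_{q_0}$; in every other row $b(G)\le 3$ (for the $\lambda_3$ row with $n=6$ one uses $\lceil 4/k\rceil\le 2$ for $k\in\{2,3\}$). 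Hence whenever $V$ is not the natural module we obtain $b(G)\le 5$.

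The only point requiring care is the bookkeeping around the exceptional module. First, one should fix what ``natural module for $G$'' means: it is $V$ with $\lambda=\lambda_1$ (for $k=1$ this is $V_n(q)$, and for $k\ge 2$ it is $V_n(q_0)$ with $E(G)$ acting through $\mathrm{SL}_n(q)\le\mathrm{SL}_n(q_0)$), together with its dual $V(\lambda_{n-1})$, which is quasiequivalent to $V(\lambda_1)$ via the graph automorphism and is therefore represented by the $\lambda_1$ row of the tables. Second, quasiequivalence does not affect the conclusion: an automorphism of $E(G)$ intertwining two realisations fixes $\dim V$ and carries bases of $G$ to bases of $G$, so both $b(G)$ and the property of being (dual to) the natural module depend only on the quasiequivalence class, and restricting to the representatives $\lambda$ listed in the tables loses nothing. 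There is no genuine obstacle beyond this, since all the substance is contained in Theorem~\ref{main}; I would also note that the corollary strengthens the bound $b(G)\le 6$ of \cite{ourpaper} both by lowering it to $5$ and by allowing $G\le\Gamma\mathrm{L}(V)$ rather than only $G\le\mathrm{GL}(V)$.
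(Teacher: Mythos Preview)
Your proposal is correct and follows exactly the approach the paper implicitly intends: the corollary is stated without proof precisely because it is an immediate inspection of the bounds recorded in Tables~\ref{rem1} and~\ref{rem2}, once one excludes the rows with $\lambda=\lambda_1$. Your bookkeeping on the maximal bound (namely $b(G)\le 5$ for $\lambda=\lambda_2$ with $n=4$ in Table~\ref{rem1}, and $b(G)\le 3$ elsewhere in Table~\ref{rem2}) and your handling of quasiequivalence and the ``natural module'' terminology are both accurate.
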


If $q=q_0$, that is, $k=1$,  and $G$ is quasisimple, we can say precisely when $G$ has a regular orbit on $V$.
\begin{corollary}
Let $G$, $V$ be as in Theorem \ref{main}, and further suppose $G$ is quasisimple. If $q=q_0$, then either $G$ has a regular orbit on $V$, or $b(G)>1$ and $V$ appears in Table \ref{rem1}.
\end{corollary}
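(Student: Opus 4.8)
The statement reduces to a close reading of Table~\ref{rem1}. Since $q=q_0$ is equivalent to $k=f/e=1$, Theorem~\ref{main} already supplies the dichotomy --- either $G$ has a regular orbit on $V$, or $(\lambda,n)$ occurs in Table~\ref{rem1} --- and, as $b(G)=1$ exactly when $G$ has a regular orbit, the asserted conclusion is formally immediate. What makes the corollary worth isolating is the \emph{sharpness} of the list in this restricted setting: every entry of Table~\ref{rem1} for which Theorem~\ref{main} leaves $b(G)$ undetermined, namely those recorded as $1\leq b(G)\leq 2^{*}$, in fact affords a regular orbit once $G$ is quasisimple and $q=q_0$. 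I would organise the proof around establishing this, so that the classification becomes an exact criterion in the quasisimple defining-characteristic case.

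First I would dispatch the entries in the upper block of Table~\ref{rem1}. For every such entry except $\lambda=\lambda_1$ the table records the lower bound $b(G)\geq 2$, so these are genuine non-regular cases. For $\lambda=\lambda_1$ the module $V$ is the natural module; as $G$ is perfect it lies in $\mathrm{GL}(V)$, and the only perfect central extension of $\mathrm{PSL}_n(q)$ acting faithfully and irreducibly in dimension $n$ over $\mathbb{F}_q$ is $\mathrm{SL}_n(q)$, so $G\cong\mathrm{SL}_n(q)$. Since $n\geq 2$ and the pairs $(n,q)\in\{(2,2),(2,3)\}$ are excluded by simplicity of $\mathrm{PSL}_n(q)$, one has $|G|=|\mathrm{SL}_n(q)|>q^n=|V|$, whence $b(G)\geq\lceil\log|G|/\log|V|\rceil\geq 2$. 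Thus no upper-block entry gives a regular orbit, and it remains to treat the four lower-block families.

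The substance, and the main obstacle, is therefore the lower block: the families $(\lambda,n)=(2\lambda_2,4)$ with $p\geq 3$, $(\lambda_3,9)$, $(3\lambda_1,3)$ with $p\geq 5$, and $(\lambda_4,8)$. I would show that each admits a regular orbit when $G$ is quasisimple and $q=q_0$ by exhibiting $v\in V$ with $G_v=1$. Passing to the ambient simple algebraic group $\bar G$ of type $A_{n-1}$ over $\overline{\mathbb{F}}_p$ and its module $\bar V=V(\lambda)$, in each of these four cases the generic $\bar G$-stabiliser is a nontrivial finite group --- so $\bar G$ has no regular orbit on $\bar V$, consistently with these pairs being absent from \cite{bigpaper} --- and for $v\in V$ one has $G_v=1$ if and only if $\bar G_v(\mathbb{F}_q)$ is no larger than the scalar kernel of the action of $\mathrm{SL}_n(q)$ on $V$. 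It therefore suffices to produce $v$ realising this, which I would do using explicit models of the geometric stabiliser together with a rationality argument. For $(3\lambda_1,3)$, $\mathrm{Sym}^3$ of the natural module is the space of ternary cubics, the projective automorphism group of a smooth plane cubic $\{v=0\}$ of $j$-invariant $\neq 0,1728$ is an extension of the $3$-torsion translation group by $\{\pm 1\}$, a determinant computation identifies $\bar G_v$ with a group scheme fitting in $1\to\mu_3\to\bar G_v\to E[3]\to 1$, and choosing the elliptic curve $\{v=0\}$ with no nontrivial $\mathbb{F}_q$-rational $3$-torsion forces $\bar G_v(\mathbb{F}_q)=\mu_3(\mathbb{F}_q)$, which is exactly the kernel, whence $G_v=1$. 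The cases $(\lambda_3,9)$ and $(\lambda_4,8)$ --- $\Lambda^3$ of a $9$-space and $\Lambda^4$ of an $8$-space --- and the case $(2\lambda_2,4)$ would be handled the same way, using the known descriptions of their finite generic stabilisers (again of $3$-torsion type) and choosing $v$ to avoid the relevant rational torsion; the finitely many small $q$ for which the required generic configurations fail to exist over $\mathbb{F}_q$ are then checked directly. I expect the delicate points to be pinning down these geometric stabilisers, and controlling their $\mathbb{F}_q$-points as $q$ varies, precisely enough in the $\Lambda^3$ and $\Lambda^4$ cases, together with the residual small-field computations.
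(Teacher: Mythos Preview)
The corollary itself is immediate from Theorem~\ref{main}: with $q=q_0$ one has $k=1$, so Theorem~\ref{main} says that either $G$ has a regular orbit or $(\lambda,n)$ lies in Table~\ref{rem1}, and the clause $b(G)>1$ is automatic in the second alternative. You correctly identify this in your first paragraph, and the paper offers no further argument.

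What you go on to prove is the stronger assertion underlying the preamble ``we can say precisely when'': that the four lower-block entries of Table~\ref{rem1} give regular orbits for quasisimple $G$. This is the content of Remark~(viii), established in the paper in Propositions~\ref{2l2}, \ref{l3}, \ref{3l1} and \ref{l4} by citing explicit vectors constructed by Guralnick--Lawther inside Lie algebras of types $D_3$, $D_4$, $E_7$, $E_8$ (using $A_3\cong D_3$, $A_2\subset D_4$, $A_7\subset E_7$, $A_8\subset E_8$) and checking that over $\mathbb{F}_q$ they have trivial stabiliser in the quasisimple finite group. Your geometric-stabiliser alternative is valid for $(3\lambda_1,n=3)$ as you describe: the $\mathrm{PSL}_3$-stabiliser of a smooth cubic form really is the $3$-torsion group $E[3]$ of the associated curve, and one can choose a cubic over $\mathbb{F}_q$ with no rational $3$-torsion. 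But your assertion that the remaining three cases are ``again of $3$-torsion type'' is incorrect. The cases $(2\lambda_2,n=4)$ and $(\lambda_4,n=8)$ arise from $\mathbb{Z}/2$-gradings of $\mathfrak{sl}_6$ and $\mathfrak{e}_7$ respectively, and their generic stabilisers are elementary abelian $2$-groups rather than $3$-groups; for instance, identifying $A_3\cong D_3$, the stabiliser in $\mathrm{SO}_6$ of a generic traceless symmetric matrix is the group $(\mathbb{Z}/2)^5$ of diagonal sign changes. So the elliptic-curve picture does not transfer uniformly, and these two cases would require a separate rationality argument based on $2$-torsion rather than $3$-torsion.
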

We now give some remarks on the content of Tables \ref{rem1} and \ref{rem2}.
\begin{remark}
\begin{enumerate}
\item The highest weights in Tables \ref{rem1} and \ref{rem2} with coefficients including $p^a$ have $1\leq a \leq e-1$, and $\lambda_i$ denotes the $i$th fundamental dominant weight of the root system associated with $G$.
\item The groups $\mathrm{PSL}_2(4)$ and $\mathrm{PSL}_2(5)$ are isomorphic, and the proof of Theorem \ref{main} for these groups is given in \cite{crosscharpaper}.
\item The rows in Tables \ref{rem1} and \ref{rem2} for $\lambda=\lam1$ are asterisked because if $G$ contains no field automorphisms, then $b(G)=\lceil n/k \rceil +c$, where $c=1$ if $i = (k,n)>1$ and $G$ contains scalars in $\mathbb{F}_{q^i}^{\times}\setminus \F_q^\times$, and $c=0$ otherwise. If $G$ contains field automorphisms, then $b(G)\leq \lceil n/k \rceil +1$, with equality if $G$ contains scalars in $\mathbb{F}_{q^i}^{\times}\setminus \F_q^\times$ or $\log_q|G|>kn \lceil n/k \rceil$.
%
\item Notice that if $G$ has no regular orbit on $V$, then the possibilities for the parameter $k$ are very restricted. Indeed, by Tables \ref{rem1} and \ref{rem2}, excluding the natural module for $E(G)$, the parameter $k$ is either an integer $k \in [1,3]$, or can be written as $k=1/c$, with $c \in [2,4]$. 
\item The entry for $\lambda=3\lam1$ and $n=2$ is asterisked because by Proposition \ref{3l1}, $G$ has a regular orbit on $V$ for $G$ a subgroup of index at least 3 in $\mathrm{GL}_2(q)/K$, where $K$ is the kernel of the action of $\gl_2(q)$. Otherwise, $b(G)\leq 2$.

\item The entries for $\lambda  =\lam 3$ with $n=6$ in Tables \ref{rem1} and \ref{rem2} are asterisked because by Propositions \ref{l3} and \ref{k=1_leftovers}, $2/k \leq b(G) \leq (3+\delta)/k$, where $\delta$ is 1 if $G$ contains a graph automorphism and zero otherwise. 
\item The row for $\lambda= \lam1+\lam l$ in Table \ref{rem2} is asterisked because by Proposition \ref{adj_ext}, there is a regular orbit of $G$ on $V$ if $G$ does not contain graph automorphisms.
\item If $\lambda = (p^a+1)\lam1$ or $\lam1+p^a\lam {n-1}$, then by Proposition \ref{tensornatural}, there is no regular orbit of $G$ on $V$ unless $(a,e)=1$, $(n,p)=(2,2)$ or $(2,3)$ and $G= \mathrm{SL}_2(q)/K$, where $K$ is the kernel of the action of $\mathrm{SL}_2(q)$.
\item The entries in the lower section of Table \ref{rem1} are asterisked because if $G$ is quasisimple, then $G$ has a regular orbit on $V$. Otherwise, we determine that $b(G)\leq 2$.
\item Each row in Table \ref{rem2} with $\lambda = \left( \sum_{i=0}^{m-1} p^{ie/m}\right) \lam1$ for some $m \in \{2,3,4\}$ describes an $\mathrm{SL}_n(q)$-module over a subfield $\mathbb{F}_{q^{1/m}} \subset \mathbb{F}_q$. In each of these cases, $k=1/m$. The construction of such modules is described in Section \ref{subfields}. 
\item The papers \cite{MR3893366,MR3500766,goodwin1,goodwin2,MR1829482} mentioned in the introduction each assume that $V$ (as in Theorem \ref{main}) is a faithful $d_0$-dimensional $\F_{r_0}G$-module, with $r_0$ prime such that $E(G)$ acts irreducibly, but not necessarily absolutely irreducibly, on $V$. We can reconcile this with our study of absolutely irreducible modules for $E(G)$ over finite fields of arbitrary prime power order, following \cite[\S3]{goodwin1}.  Define $k = End_{F_{r_0}G}(V)$, $K = End_{F_{r_0}E(G)}(V)$, $t = |K:k|$ and $d = dim_K(V)$. Then $E(G) \le GL_d(K)$ is absolutely irreducible, and $G \le GL_d(K)\langle \phi \rangle \leq \Gamma \mathrm{L}_d(K)$, where $\phi$ is a field automorphism of order $t$.
\end{enumerate}
\end{remark}

Our main approach for proving Theorem \ref{main} relies on the simple observation that if $G$ has no regular orbit on the absolutely irreducible $E(G)$-module $V$, then every vector $v \in V$ is fixed by a non-trivial prime order element in $G$. For each $V$, we then set out to give a proof by contradiction by showing that $|V|$ exceeds sum of the sizes of the fixed point spaces $C_V(x)$ of prime order elements $x \in G\setminus 1$  (this is formalised in Proposition \ref{tools}). This technique requires a method of determining reasonably precise upper bounds for the dimensions of fixed point spaces of elements of $G$.
 
To compute these upper bounds, we adopt the techniques pioneered by Guralnick and Lawther in \cite{bigpaper}, and also based on the work of Kenneally \cite{kenneally}.
Their methods rely heavily on weight theory in simple algebraic groups of Lie type. They obtain upper bounds on eigenspace dimensions by defining a set of equivalence relations on the weights of the representation $\rho$ corresponding to $V$. These equivalence relations are derived from subsystems $\Psi$ of the root system $\Phi$ associated to a simple algebraic group $\overline{G}$. Larger subsystems $\Psi$ generally give tighter upper bounds, but apply to fewer conjugacy classes due to our underlying assumptions. The technique is described in more detail in Section \ref{prelim}.

The techniques implemented by Guralnick and Lawther provide a starting point for the proof of Theorem \ref{main}. However, our application to finite groups presents additional challenges. We are also required to sum over conjugacy classes of our group, and this often means that more delicate upper bounds on dimensions of fixed point spaces are required. In some cases, this method does not work at all, and different considerations are needed. 
 
\begin{remark}
There are striking examples where Guralnick and Lawther \cite{bigpaper} show that $\overline{G}$, a simple algebraic group of type $A_l$, has no regular orbit on an irreducible $\overline{\F}_q\overline{G}$-module $V=V(\lambda)$, but according to Theorem \ref{main}, the corresponding finite group $G(q)$ has a regular orbit on $V$ realised over $\F_q$, where $q=p^e$. 	
For example, when $l=1$, $V=V((p^a+1)\lam1)$ and $(a,e)=1$, then \cite[Proposition 3.1.8]{bigpaper} asserts that there is no regular orbit under the action of $\mathrm{SL}_2(\overf_q)$, but we prove in Proposition \ref{tensornatural} that there is a regular orbit on $V$ under the action of $\mathrm{SL}_2(q)$ when $p=2$ or 3.
\end{remark}


The rest of this work is set out as follows. In Section \ref{prelim}, we present some preliminary results, which will provide the machinery for the bulk of the proofs in Sections \ref{prest} and \ref{tensor} which will prove Theorem \ref{main}. We also include some explanation on the techniques of the proofs and a guide on how information is presented in tables preceding each calculation.
The proof of Theorem \ref{main} is split across four sections. 
In the notation of Theorem \ref{main}, the modules $V=V(\lambda)$ where $k=1$ are dealt with in Section \ref{prest} for $\lambda$ $p$-restricted and Section \ref{tensor} otherwise.
 Section \ref{subfields} deals with absolutely irreducible modules with $k<1$ that are not realised over a proper subfield of $\F_{q^k}$. Finally, Section \ref{ext_section} completes the proof of Theorem \ref{main} by considering field extensions of the modules discussed in Sections \ref{prest}, \ref{tensor} and \ref{subfields}.
\section{Background}
\label{prelim}

Let $\overline{G}$ be a simple algebraic group over $\overline{\mathbb{F}_p}$, $p$ prime. 
Let $T \leq  \overline{G}$ be a fixed maximal torus of $\overline{G}$, and $\Phi$ be the root system of $\overline{G}$ with respect to $T$, with base $\Delta = \{\alpha_1, \dots , \alpha_l \}$ of simple roots, and corresponding fundamental dominant weights $\lambda_1, \dots , \lambda_l$. We define a partial ordering on weights $\lambda, \mu$ by saying $\mu \preceq \lambda$ if and only if $ \lambda-\mu$ is a non-negative linear combination of simple roots.
By a theorem of Chevalley, the irreducible modules of $\overline{G}$ in defining characteristic $p$ are characterised by their unique highest weights $\lambda$ under $\preceq$ and conversely, every dominant weight $\lambda$ gives rise to an irreducible $\overline{G}$-module, denoted $V(\lambda)$.  

If the weight $\lambda =a_1 \lam1 + \dots a_l \lam l$ with $0 \leq a_i < p$, then we say $\lambda$ is \textit{$p$-restricted}. Given a module $V=V(\lambda)$ with corresponding representation $\rho: \overline{G} \to \mathrm{GL}(V)$ and a Frobenius endomorphism of $\overline{G}$ given by $\sigma: x \mapsto x^{(p^a)}$, we may define a new module $V(\lambda)^{(p^a)}$ with corresponding representation $\rho^{(p^a)}$ by setting $v\rho^{(p^a)}(g) = v\rho(\sigma(g))$. Notice that $V(\lambda)$ and $V(\lambda)^{(p^a)}$ are quasiequivalent. We will sometimes say that $V(\lambda)^{(p^a)}$ is the image of $V(\lambda)$ under a field twist.
The following celebrated theorem provides a method of representing each irreducible $\overline{\mathbb{F}}_p\overline{G}$-module as a tensor product of field twists of $p$-restricted modules.
\begin{theorem}[{\cite{MR0230728}}]
\label{steintens}
Let $\overline{G}$ be a simple algebraic group of simply connected type over $\overline{\mathbb{F}}_p$. If $\omega_0, \dots \omega_k$ are $p$-restricted weights, then 
\[
V(\omega_0+p \omega_1 + \dots +p^k \omega_k) \cong V(\omega_0) \otimes V(\omega_1)^{(p)} \otimes \dots \otimes V(\omega_n)^{(p^k)}
\]
\end{theorem}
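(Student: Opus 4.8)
The plan is to reduce to the case of two tensor factors and then establish irreducibility using the first Frobenius kernel. First I would argue by induction on $k$, so that it suffices to prove the \textbf{two-factor statement}: for $\omega_0$ a $p$-restricted weight and $\mu$ an arbitrary dominant weight, $V(\omega_0+p\mu)\cong V(\omega_0)\otimes V(\mu)^{(p)}$. Indeed, granting this, for general $k$ write $\mu = \omega_1 + p\omega_2 + \dots + p^{k-1}\omega_k$ (a dominant weight), apply the two-factor statement to the pair $(\omega_0,\mu)$, and substitute the inductive expansion of $V(\mu)$ after applying the Frobenius twist $(-)^{(p)}$ — an exact functor commuting with tensor products and sending $V(\nu)^{(p^j)}$ to $V(\nu)^{(p^{j+1})}$ — which yields the claimed $(k+1)$-fold decomposition.

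Two preliminary observations frame the two-factor case. Since the Frobenius endomorphism $x\mapsto x^{(p)}$ acts on the character group $X(T)$ as multiplication by $p$, the twist $V(\mu)^{(p)}$ is again irreducible, of the same dimension, with all $T$-weights and multiplicities scaled by $p$; in particular its highest weight is $p\mu$, with one-dimensional weight space. It follows that in $M := V(\omega_0)\otimes V(\mu)^{(p)}$ the tensor $v^+\otimes w^+$ of the two highest weight vectors is a maximal vector spanning the one-dimensional $(\omega_0+p\mu)$-weight space, and every weight of $M$ is $\preceq \omega_0+p\mu$. Hence $V(\omega_0+p\mu)$ occurs exactly once as a composition factor of $M$ and every other composition factor has strictly smaller highest weight, so the two-factor statement is equivalent to the assertion that $M$ is irreducible.

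For irreducibility of $M$ I would invoke the first Frobenius kernel $\overline{G}_1 = \ker(x\mapsto x^{(p)})$, an infinitesimal normal subgroup scheme with $\overline{G}/\overline{G}_1\cong\overline{G}$; it is at this point that simple connectedness is used, ensuring both that the weight lattice is $\bigoplus_i\mathbb{Z}\lambda_i$ and that the module theory of $\overline{G}_1$ is clean. By the structure theory of the restricted enveloping algebra $u(\mathfrak{g}) = \mathrm{Dist}(\overline{G}_1)$ (Curtis; see also Steinberg and Jantzen), restriction to $\overline{G}_1$ takes each $V(\omega_0)$ with $\omega_0$ $p$-restricted to an irreducible $\overline{G}_1$-module, these are pairwise non-isomorphic, and they are all the simple $\overline{G}_1$-modules; moreover any Frobenius twist $N^{(p)}$ is trivial over $\overline{G}_1$. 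Hence $M|_{\overline{G}_1}\cong V(\omega_0)^{\oplus\dim V(\mu)}$ is $V(\omega_0)$-isotypic, and (working over the algebraically closed field $\overline{\mathbb{F}}_p$) the evaluation map $V(\omega_0)\otimes H\to M$, with $H := \mathrm{Hom}_{\overline{G}_1}(V(\omega_0),M)$, is a $\overline{G}$-isomorphism, where $H$ carries its natural $\overline{G}/\overline{G}_1\cong\overline{G}$-module structure; by Schur's lemma and triviality of the twist one identifies $H\cong V(\mu)^{(p)}$, which is irreducible. A nonzero proper $\overline{G}$-submodule $N\leq M$ would then yield, by left-exactness and naturality of $\mathrm{Hom}_{\overline{G}_1}(V(\omega_0),-)$, a $\overline{G}$-submodule of $H$ that is nonzero (as $N|_{\overline{G}_1}$ is a nonzero sum of copies of $V(\omega_0)$) and proper (as the evaluation map is an isomorphism) — contradicting irreducibility of $H$. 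So $M$ is irreducible of highest weight $\omega_0+p\mu$, i.e. $M\cong V(\omega_0+p\mu)$, completing the induction.

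I expect the main obstacle to be the Frobenius-kernel input of the third step: that the $p$-restricted irreducibles restrict to exactly the simple $\overline{G}_1$-modules, and that $\mathrm{Hom}_{\overline{G}_1}(V(\omega_0),-)$ implements a $\overline{G}$-equivariant equivalence between $\overline{G}$-modules and their $V(\omega_0)$-isotypic parts. This is where simple connectedness and the structure of $u(\mathfrak{g})$ genuinely enter, and it is best quoted rather than reproved. An alternative closer to Steinberg's original treatment works directly with the Kostant $\mathbb{Z}$-form of the hyperalgebra of $\overline{G}$ and its factorisation into a restricted part and a Frobenius-twisted part, showing by hand that the lowering operators sweep $v^+\otimes w^+$ out to all of $M$; this avoids group schemes but is considerably more computational. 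In either route, the weight bookkeeping in the tensor product is routine once the top weight space is known to be one-dimensional.
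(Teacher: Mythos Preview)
The paper does not prove this statement; it is Steinberg's tensor product theorem, stated with a citation and no argument. Your sketch is a correct outline of the now-standard proof via the first Frobenius kernel (essentially the treatment in Jantzen): reduce by induction to the two-factor case, pin down the highest weight of $M=V(\omega_0)\otimes V(\mu)^{(p)}$, and then invoke Curtis's theorem that $p$-restricted irreducibles remain irreducible over $\overline{G}_1$ and exhaust its simples, so that $M|_{\overline{G}_1}$ is isotypic and the identification $\mathrm{Hom}_{\overline{G}_1}(V(\omega_0),M)\cong V(\mu)^{(p)}$ forces $M$ irreducible.

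This differs from Steinberg's original proof, which---as you yourself note at the end---works more directly with the hyperalgebra and the Kostant $\mathbb{Z}$-form rather than with infinitesimal group schemes. The Frobenius-kernel route is conceptually cleaner but imports more machinery (group schemes, the restricted enveloping algebra, Curtis's classification); Steinberg's is more self-contained but more computational. Since the paper only quotes the result, there is nothing further to compare against, and your outline is a perfectly adequate proof of the cited theorem.
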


Let $\sigma$ be a Frobenius endomorphism of $\overline{G}$, such that $\overline{G}_\sigma$ is an untwisted group of Lie type over $\mathbb{F}_q$.
Steinberg proved that the restriction of $V(\lambda)$  for $\lambda \in \{a_1 \lam1 + \dots a_l \lam l \mid 0\leq a_i \leq q-1 \}$ to the finite group $\overline{G}_\sigma$ gives a complete set of representatives of irreducible $\overline{\mathbb{F}}_p\overline{G}_\sigma$-modules up to equivalence. 

So if $G$ is an almost quasisimple group such that $E(G) =\overline{G}_\sigma'$, the finite absolutely irreducible $\mathbb{F}_qE(G)$-modules are the realisations of the $V(\lambda)$ over $\mathbb{F}_q$.


Our primary method for proving Theorem \ref{main} comes from the observation that if $G$ has no regular orbit on $V$, then $V$ is the union of the fixed points spaces $C_V(g)$ for $g\in G\setminus F(G)$. Since conjugates in $G$ have conjugate fixed point spaces, we find that
\begin{equation}
\label{ogeqn}
|V| \leq \sum_{x\in \mathcal{X}} |x^G||C_V(x)|, 
\end{equation}
where $\mathcal{X}$ is a set of non-identity conjugacy class representatives of $G$.

We say that an element $g\in G$ is of \textit{projective prime order} if the coset $gF(G)$ has prime order in $G/F(G)$. Note that this implies that there exists a scalar $\beta \in \overline{\mathbb{F}}_q$ such that $\beta g$ is of prime order. We call the order of $\beta g$ the \textit{projective order} of $g$. For every non-central element of $g \in G$, there exists $j \in \mathbb{N}$ such that $g^j$ is of projective prime order. Moreover, $C_V(g) \subseteq C_V(g^j)$. Therefore, we may instead sum over classes of projective prime order elements in \eqref{ogeqn}. 

\subsection{Properties of prime order elements in $\mathrm{Aut}(\mathrm{PSL}_n(q))$}
From now on, let $G$ be an almost quasisimple group with $E(G)/Z(E(G)) \cong \mathrm{PSL}_n(q)$, with $q=p^e$ for $p$ prime; let $\overline{G} = \mathrm{SL}_n(\overline{\F}_p)$ with $l=n-1$ and let $\sigma$ be a Frobenius endomorphism of $\overline{G}$ such that $\overline{G}_\sigma = \mathrm{SL}_n(q)$.

For $G_0$ a simple group, and non-identity $x \in {\rm Aut}(G_0)$, define $\a(x)$ to be the minimal number of $G_0$-conjugates of $x$ needed to generate the group $\langle G_0, x\rangle$. For a projective prime order element $g$ of an almost quasisimple group $G$, define $\a(g)=\a(gF(G))$, and also define
\[
\a(G) = {\rm max}\,\{ \a(x)\,\mid\,1\ne x \in G/F(G) \} .
\]
The following results give upper bounds for $\a(G)$ when $\mathrm{soc}(G/F(G)) \cong \mathrm{PSL}_n(q)$. 


\begin{proposition}[{\cite[Lemmas 3.1, 3.2, Theorem 4.1]{gs}}]\label{alphas} Let $H$ be an almost simple group with socle $H_0 = \mathrm{PSL}_n(q)$ for $n\geq 2$. Then for $x\in H\setminus\{1\}$, one of the following holds:
\begin{enumerate}
\item$\a(x) \leq n$,
\item $H_0 = \mathrm{PSL}_4(q)$ with $q\geq 3$, and $x$ is an involutory graph automorphism and $\alpha(x)\leq 6$,
\item $H_0 = \mathrm{PSL}_3(q)$, $x$ is an involutory graph-field automorphism and $\alpha(x)\leq 4$,
\item $H_0 = \mathrm{PSL}_2(q)$, with $q\neq 9$, $x$ is an involution and $\a(x) \leq 3$, except that $\a(x) \leq 4$ if $x$ is an involutory field automorphism or if $q=5$ and $x$ is an involutory diagonal automorphism.
\item $H_0 = \mathrm{PSL}_2(9)$, $x$ has order 2 or 3 and $\alpha(x) \leq 3$, or $x$ is an involutory field automorphism and $\alpha(x) = 5$.
\item  $H_0 = \mathrm{PSL}_4(2)$, $x$ is a graph automorphism and $\alpha(x) =7$.
\end{enumerate}
\end{proposition}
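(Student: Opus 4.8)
The plan is to prove the result by a case division according to the type of automorphism $x$ induces on $H_0 = \mathrm{PSL}_n(q)$: inner--diagonal (a representative lies in $\pgl_n(q)$), field, graph--field, or graph; the two exceptional isomorphisms $\mathrm{PSL}_2(9)\cong\mathrm{Alt}_6$ and $\mathrm{PSL}_4(2)\cong\mathrm{Alt}_8$ are handled by direct computation inside $\mathrm{Aut}(\mathrm{Alt}_6)=\pgaml_2(9)$ and $\mathrm{Aut}(\mathrm{Alt}_8)=S_8$. One first reduces to the case that $x$ has prime order modulo $H_0$: for $x\in H_0$ one has $\alpha(x)\le\alpha(x^j)$ whenever $\langle x^j\rangle=\langle x\rangle$, and more generally the controlling invariant is the minimal \emph{support dimension} $d=\min\{\codim C_V(\mu\hat x):\mu\in\overf_p^\times\}$ of a preimage $\hat x$ acting on the natural module $V=\overf_p^{\,n}$, which only decreases under passage to appropriate powers. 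With this invariant in hand, the bound $\alpha(x)\le n$ is genuinely needed only for $x$ with $d$ small.

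For inner--diagonal $x$ I would run the classical ``cover the natural module'' argument: choose $m\approx\lceil n/d\rceil + O(1)$ conjugates $x^{g_1},\dots,x^{g_m}$ so that the subspaces $[V,\mu_i\hat x^{g_i}]$ lie in sufficiently general position, and show the subgroup they generate contains a connected chain of long-root $\mathrm{SL}_2$-subgroups --- hence equals $\mathrm{SL}_n(q)$ modulo scalars, once reducible, imprimitive and field-extension overgroups are ruled out by Zsigmondy-prime estimates. When $d$ is large (e.g.\ $x$ regular semisimple) two conjugates already suffice, whereas the extreme case $d=1$ forces $x$ to be a scalar times a transvection, and exactly $n$ suitably placed transvections generate $\mathrm{SL}_n(q)$; care with the diagonal part is needed to ensure the chain of $\mathrm{SL}_2$'s links up inside $\mathrm{SL}_n$ rather than in a proper subgroup. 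This establishes part (i) for $n\ge3$ in the inner--diagonal case.

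Field and graph--field automorphisms of $\mathrm{PSL}_n(q)$ with $n\ge3$ also fall under (i): for a field automorphism $\phi$ one can pick $g,g'$ so that $\phi^g(\phi^{g'})^{-1}$ together with its conjugates generates $H_0$, giving $\alpha(\phi)\le2\le n$, and a similar analysis applies to graph--field elements in dimension $\ge5$. The dimension $n=2$ is exceptional because of the \emph{dihedral obstruction}: two involutions generate a dihedral group, so an involution of $\pgaml_2(q)$ needs at least three conjugates. One then shows that three generic involutions generate $\mathrm{PSL}_2(q)$ for $q\ne9$, and that $\alpha(x)\le4$ in the remaining small situations (involutory field automorphisms, and $q=5$ diagonal automorphisms) by inspecting the short list of maximal subgroups of $\pgaml_2(q)$; this gives parts (iv) and (v).

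Finally, for an involutory graph automorphism $\tau$ of $\mathrm{PSL}_n(q)$ one has $C_{H_0}(\tau)$ a symplectic or orthogonal group, and one studies which elements of $H_0$ arise as products $\tau^{g_1}\tau^{g_2}\cdots$ of conjugates; for $n$ odd this behaves essentially like the inner case and $\alpha(\tau)\le n$. The cases $n\in\{3,4\}$ (parts (ii), (iii)) and $\mathrm{PSL}_4(2)$ (part (vi)) are where the real difficulty --- and the main obstacle of the whole proof --- lies: the centraliser of $\tau$ is small and the low dimension leaves too few conjugates in general position for the chain-of-$\mathrm{SL}_2$'s argument to close, so one must exhibit explicit generating sets of $6$, $4$ and $7$ conjugates respectively, and verify by direct computation the finitely many small $q$ for which the asymptotic estimates do not yet apply. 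The heart of the work is therefore (a) making the ``general position'' step rigorous --- proving that the string of root $\mathrm{SL}_2$-subgroups really generates $\mathrm{SL}_n(q)$ and not a maximal subgroup --- and (b) the constructive treatment of the low-dimensional graph and graph--field cases.
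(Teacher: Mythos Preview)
The paper does not prove this proposition at all: it is stated with the attribution \cite[Lemmas 3.1, 3.2, Theorem 4.1]{gs} and no proof is given, so there is nothing to compare your argument against. Your outline is a reasonable sketch of the strategy actually used by Guralnick and Saxl in \cite{gs}, but for the purposes of this paper the correct ``proof'' is simply to cite their result.
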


For a group $G$, denote by $i_r(G)$ the number of elements of order $r$ in $G$.
\begin{proposition}[{\cite[Prop. 1.3]{MR1922740}}]
\label{invols}
Let $\overline{G}$ be a simply connected simple algebraic group over $\overline{\mathbb{F}}_p$ with associated root system $\Phi$. Let $\sigma$ be a Frobenius endomorphism of $\overline{G}$ such that $G_0=\mathrm{soc}(\overline{G}{_\sigma}/Z(\overline{G}{_\sigma}))$ is a finite simple group of Lie type over $\mathbb{F}_q$.  Assume that $G_0$ is not of type $^2F_4$, $^2G_2$ or $^2B_2$. Then
\begin{enumerate}
\item $i_2(\mathrm{Aut}(G_0)) <2(q^{N_2} + q^{N_2-1})$, where $N_2 = \dim \overline{G} - \frac{1}{2}|\Phi|$, and 
\item $i_3(\mathrm{Aut}(G_0)) <2(q^{N_3} + q^{N_3-1})$, where $N_3 = \dim \overline{G} - \frac{1}{3}|\Phi|$.
\end{enumerate}
\end{proposition}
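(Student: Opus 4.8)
This is \cite[Prop.~1.3]{MR1922740}; here is the plan I would follow. Write $N_r=\dim\overline{G}-\frac1r|\Phi|$ for $r\in\{2,3\}$ and $q=p^e$. I would bound $i_r(\mathrm{Aut}(G_0))$ by counting elements of order $r$ one coset at a time modulo the normal subgroup $\mathrm{Inndiag}(G_0)\cong\overline{G}^{\mathrm{ad}}_\sigma$. Since $\mathrm{Out}(G_0)/\mathrm{Outdiag}(G_0)$ is generated by the images of the field and graph automorphisms, there are at most $6e$ such cosets, and only those whose image has order dividing the prime $r$ can contain an element of order $r$; I write $\psi$ for a representative of such a coset, so $\psi$ is trivial or is a field, graph or graph--field automorphism of order $r$. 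First I would treat the identity coset. For $r\neq p$ the order-$r$ elements of $\overline{G}^{\mathrm{ad}}_\sigma$ are semisimple, and the number of $\overline{G}^{\mathrm{ad}}_\sigma$-classes of semisimple elements of order $r$ is bounded in terms of $\Phi$ alone: they are parametrised, via the extended Dynkin diagram, by the ways of writing $r=\sum_i m_i a_i^\vee$ with $m_i\geq 0$, the root system of $C_{\overline{G}}(x)$ being the subdiagram on the nodes with $m_i=0$. Each class has size $|\overline{G}^{\mathrm{ad}}_\sigma:C_{\overline{G}^{\mathrm{ad}}_\sigma}(x)|$, so the crux is the inequality
\[
\dim C_{\overline{G}}(x)\ \geq\ \dim\overline{G}-N_r=\tfrac1r|\Phi|\qquad\text{for every non-central semisimple }x\text{ of order }r,
\]
equivalently $\dim x^{\overline{G}}\leq N_r$: for type $A_l$ this is the inequality $l+|\Phi^{(0)}|\geq\frac1r|\Phi|$ with $\Phi^{(0)}=\{\alpha\in\Phi:\alpha(x)=1\}$, whose minimum for $r=2$ is attained at the balanced involution $\mathrm{diag}(1^{\lceil n/2\rceil},(-1)^{\lfloor n/2\rfloor})$, and for the other types it can be read off the finite list just described. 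As $|\overline{G}^{\mathrm{ad}}_\sigma|$ and $|C_{\overline{G}^{\mathrm{ad}}_\sigma}(x)|$ equal, up to bounded multiplicative constants, $q^{\dim\overline{G}}$ and $q^{\dim C_{\overline{G}}(x)}$ respectively (the finite-torus and component-group factors being absorbed), summing over the boundedly many classes keeps the semisimple contribution within the constant $2$ and the $q^{N_r-1}$ term.

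Next I would handle the unipotent elements of the identity coset and then the outer cosets. If $r=p$ (so $p\in\{2,3\}$) the order-$r$ elements of $\overline{G}^{\mathrm{ad}}_\sigma$ are unipotent with all Jordan blocks of size at most $r$; there are again boundedly many such classes, and the largest has dimension at most $N_r$ --- for $A_{n-1}$ with $p=2$ it is the class of Jordan type $(2^{\lfloor n/2\rfloor},1^{n-2\lfloor n/2\rfloor})$, of dimension at most $n^2/2\leq N_2$. For $\psi$ a graph automorphism of order $r$ (possible only for $A_l,D_l,E_6$ with $r=2$, and for $D_4$ with $r\in\{2,3\}$), an order-$r$ element of $\mathrm{Inndiag}(G_0)\psi$ has centraliser essentially $C_{\overline{G}}(\widetilde{x})_{\sigma'}$ for a reductive subgroup $C_{\overline{G}}(\widetilde{x})$ of symplectic or orthogonal type (or of type $G_2$ or $A_2$ in the $D_4$ cases), and $\dim\overline{G}-\dim C_{\overline{G}}(\widetilde{x})\leq N_r$ can be read off the short list of these fixed subgroups. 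For $\psi$ a field automorphism of order $r$ (so $r\mid e$), $C_{\mathrm{Inndiag}(G_0)}(x)$ is, up to bounded index, $\overline{G}^{\mathrm{ad}}_{\sigma_0}$ with $\sigma=\sigma_0^{r}$, of order roughly $q^{\dim\overline{G}/r}$, so each such class has size roughly $q^{\dim\overline{G}-\dim\overline{G}/r}\leq q^{N_r}$ since $\dim\overline{G}\geq|\Phi|$; graph--field automorphisms (arising for $^2A_l$, $^2D_l$, $^3D_4$ and $^2E_6$) I would handle the same way, with the appropriate twisted fixed subgroup in place of $\overline{G}^{\mathrm{ad}}_{\sigma_0}$. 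Adding the identity-coset contribution to the at most $6e$ much smaller outer contributions keeps the total below $2(q^{N_r}+q^{N_r-1})$, the leading $2$ absorbing the at most two near-maximal classes. The groups $^2B_2$, $^2G_2$, $^2F_4$ are excluded here because the endomorphism defining them is a special isogeny rather than a standard Frobenius, so the torus-order and unipotent-count estimates take a different form.

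The hard part will be the uniformity: one must verify, across $A,B,C,D$ and $G_2,F_4,E_6,E_7,E_8$ together with all the relevant twisted forms, that the largest conjugacy class of an element of order $r$ has dimension at most $N_r$, and in particular that in characteristic $r\in\{2,3\}$ the small-block unipotent classes and the graph and graph--field cases do not overshoot it; for the exceptional types this is not a one-line estimate but relies on explicit tables of centralisers of involutions, of order-$3$ semisimple elements, and of small unipotent classes. A secondary difficulty is bookkeeping: the finite-group correction factors --- the number of classes, the component groups $\pi_0(C_{\overline{G}}(x))$, and the finite-torus orders against $q^{l}$ --- must all be shown to fit inside the constant $2$ and the $q^{N_r-1}$ slack, uniformly in $q$, with small $q$ typically checked directly.
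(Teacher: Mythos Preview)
The paper does not prove this proposition at all: it is simply quoted from the literature as \cite[Prop.~1.3]{MR1922740}, with no accompanying argument. Your proposal is therefore not comparable to the paper's ``proof'', because there is none --- the result is used as a black box throughout. Your sketch is a reasonable outline of how the original reference establishes the bound (count elements of order $r$ coset by coset in $\mathrm{Aut}(G_0)$ over $\mathrm{Inndiag}(G_0)$, bound each class via $\dim C_{\overline G}(x)\geq \dim\overline G - N_r$, and absorb the finitely many classes and correction factors into the constant $2$ and the $q^{N_r-1}$ term), but for the purposes of this paper no such argument is needed.
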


Let $V=V_d(q^k)$ be an absolutely irreducible module for $\mathrm{SL}_n(q)$, where $q=p^e$ for some prime $p$. We observe that by \cite[Proposition 5.4.6]{KL}, $k$ can be written as $k=a/b$, where $a,b \in \mathbb{N}$, $b \mid e$ and $a \geq 1$.
 \begin{proposition}
\label{field}
Let $V=V_d(q^k)$ be a $d$-dimensional vector space over $\mathbb{F}_{q^k}$, and assume that $G \leq \Gamma\mathrm{L}(V)$ is almost quasisimple, with $E(G)/Z(E(G)) \cong \mathrm{PSL}_n(q)$. Further suppose that the restriction of $V$ to $E(G)$ is absolutely irreducible, and that $d \geq n^2/k$. Set $H=G/F(G)$, for $x\in G$ let $\bar{x}=xF(G)$,  and let $\mathcal{F}$ denote a set of conjugacy class representatives of projective prime order elements of $G$ that induce field automorphisms of $E(G)/Z(E(G))$.
Then
\[
\sum_{x\in \mathcal{F}} |\bar{x}^H| |C_V(x)| \leq
  \log (\log_2 q +2) q^{n^2/2+\frac{k}{2}(d+\zeta d^{1/2}) } ,
\]
where $\zeta=1$ if there exists $x \in \mathcal{F}$ that acts linearly on $V$, and $\zeta=0$ otherwise.
\end{proposition}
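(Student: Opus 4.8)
The plan is to organise $\mathcal{F}$ by the order $r$ of the field automorphism of $E(G)/Z(E(G))$ that $x$ induces; this $r$ is a prime dividing $e$, and there are at most $\omega(e)$ such primes. First I would remove a degenerate possibility: since $x$ has projective prime order $r$, the scalar $\beta = x^{r}$ lies in $F(G)$, and if $\beta\ne 1$ then $C_{V}(x)\subseteq C_{V}(\beta)=0$, so such classes contribute nothing and we may assume $x^{r}=1$. For the class size I would use $|\bar x^{H}|\le|H:C_{H}(\bar x)|$ together with the fact that for a field automorphism $\bar x$ of order $r$, $C_{H}(\bar x)$ contains the fixed subgroup of $\bar x$ in $\mathrm{PGL}_{n}(q)$ — a copy of $\mathrm{PGL}_{n}(q^{1/r})$ — extended by the full cyclic group of $e$ field automorphisms of $E(G)$ and, when $G$ contains one, by a graph automorphism. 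As this accounts for all of $\mathrm{Out}(\mathrm{PSL}_{n}(q))$ except its diagonal part, the factors $e$ and $2$ in $|\mathrm{Out}|$ cancel and $|\bar x^{H}|\le c_{1}\,|\mathrm{PGL}_{n}(q):\mathrm{PGL}_{n}(q^{1/r})|$ for a small constant $c_{1}$ absorbing the bounded number of $H$-classes of such automorphisms (and the bounded multiplicity coming from the lifts $xz$, $z\in F(G)$ with $z^{r}=1$, that project to a common class in $H$). An elementary estimate on orders of general linear groups then gives $|\bar x^{H}|\le c_{1}'\,q^{(n^{2}-1)(1-1/r)}$.

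Next I would bound $|C_{V}(x)|$ for such an $x$. If $x$ acts $\F_{q^{k}}$-linearly then $\zeta=1$ for $V$: by Steinberg's tensor product theorem (Theorem \ref{steintens}) the invariance $V\cong V^{(p^{e/r})}$ forces $V$ to be a twisted tensor product of $r$ equal $p$-restricted factors which $x$ cyclically permutes, so the $1$-eigenspace of $x$ has dimension equal to the number of $\langle x\rangle$-orbits on a weight basis, namely $\dim_{\F_{q^{k}}}C_{V}(x)=\tfrac1r\!\left(d+(r-1)d^{1/r}\right)$, whence $|C_{V}(x)|\le q^{\frac{k}{r}(d+(r-1)d^{1/r})}$. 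Otherwise $x$ acts $\F_{q^{k}}$-semilinearly, and as $r$ is prime and $x^{r}=1$ the semilinear part has order exactly $r$; the additive form of Lang--Steinberg, i.e.\ $H^{1}(\langle x\rangle,\mathrm{GL}_{d}(\F_{q^{k}}))=1$, shows $x$ is $\mathrm{GL}(V)$-conjugate to the standard Frobenius twist, so $C_{V}(x)$ is an $\F_{q^{k/r}}$-form of $V$ and $|C_{V}(x)|=q^{kd/r}$. Using $r\ge 2$, and in the linear case $d\ge n^{r}\ge 2^{r}$, one checks in both cases that $|C_{V}(x)|\le q^{\frac{k}{2}(d+\zeta d^{1/2})}$.

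I would then combine the two bounds: for one class of order-$r$ field automorphisms,
\[
|\bar x^{H}|\,|C_{V}(x)|\ \le\ c_{1}'\, q^{\,(n^{2}-1)(1-1/r)\,+\,\frac{k}{r}\left(d+\zeta(r-1)d^{1/r}\right)}.
\]
The hypothesis $d\ge n^{2}/k$, that is $kd\ge n^{2}$, is exactly what forces the exponent down: one verifies $(n^{2}-1)(1-1/r)+kd/r\le n^{2}/2+kd/2$ with slack at least $1-1/r\ge\tfrac12$, while $\tfrac{k}{r}(r-1)d^{1/r}\le\tfrac{k}{2}d^{1/2}$ for all relevant $r$ and $d$ (with equality at $r=2$). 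Hence the exponent is at most $n^{2}/2+\tfrac{k}{2}(d+\zeta d^{1/2})$, with a residual factor of at least $q^{-1/2}$ that absorbs $c_{1}'$ once $q$ exceeds a small bound. Summing over the at most $\omega(e)$ primes $r\mid e$ and using $\omega(e)\le\log(\log_{2}q+2)$ — which holds with room to spare because $\log_{2}q=e\log_{2}p$ while $\omega(e)$ grows far more slowly than $\log e$ — gives the stated inequality; the finitely many smallest $q$, where the constants are tightest, are verified directly.

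The step I expect to be the main obstacle is the linear case $\zeta=1$: one must first determine precisely which absolutely irreducible $\F_{q^{k}}E(G)$-modules admit a field automorphism acting linearly (these are the twisted tensor products of $r\ge 2$ equal $p$-restricted factors, realised over the subfield $\F_{q^{1/r}}$, so that $k=1/r$), and then check that $d\ge n^{2}/k$ still leaves the required slack once the larger fixed space, of dimension $\tfrac1r(d+(r-1)d^{1/r})$, is accounted for. A secondary difficulty is keeping all constants small enough for the handful of smallest fields $\F_{q}$, which requires exact centraliser orders and class counts of field automorphisms in place of the crude asymptotics above.
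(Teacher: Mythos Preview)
Your proposal is correct and follows essentially the same approach as the paper: organise $\mathcal{F}$ by the prime $r\mid e$, bound the class size by $|\mathrm{PGL}_n(q)|/|\mathrm{PGL}_n(q^{1/r})|$, split into the semilinear case ($|C_V(x)|=q^{kd/r}$) and the linear twisted-tensor case (orbit-count on the basis gives the eigenspace dimension $d_0+(d_0^r-d_0)/r\le (d+d^{1/2})/2$), invoke $kd\ge n^2$ to push the exponent down to $n^2/2+\tfrac{k}{2}(d+\zeta d^{1/2})$, and sum over at most $\omega(e)\le \log(\log_2 q+2)$ primes. The only noticeable difference is bookkeeping: the paper absorbs all constants via the exact factor $\tfrac{q^{1/r}-1}{q-1}<1$ in the ratio $|\mathrm{PGL}_n(q)|/|\mathrm{PGL}_n(q^{1/r})|$, so no residual $q^{-1/2}$ slack or separate small-$q$ verification is needed.
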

\begin{proof}

Suppose first that all $x \in \mathcal{F}$ of projective prime order $r$ acts as a field automorphism on $V$. For such $x$ there exists a basis of $V$ such that $x$ fixes all vectors with coefficients lying in $\F_{q^{1/r}}$ and no other vectors.
 This observation, together with \cite[Proposition 4.9.1(d)]{GLS} and \cite[Table B.3]{bg}, give
\[
\sum_{x\in \mathcal{F}} |\bar{x}^H| |C_V(x)| \leq \sum_{r \mid e}  \frac{|\mathrm{PGL}_n(q)|}{|\mathrm{PGL}_n(q^{1/r})|} q^{kd/r} < \sum_{r \mid e} \frac{(q^{1/r}-1)}{(q-1)} q^{n^2(1-1/r)+kd/r}
\]
Now, $\frac{(q^{1/r}-1)}{(q-1)} <1$ for all $q$ and $r$. We also have $n^2(1-1/r)+kd/r \leq  n^2/2 +kd/2$ if $d \geq n^2/k$. So
\[
\sum_{x\in \mathcal{F}} |\bar{x}^H| |C_V(x)| \leq  \log (\log_2 q +2) q^{n^2/2+kd/2}, 
\]
since the number of distinct prime divisors of $N \in \mathbb{N}$ is at most $\log(N+2)$.

Now suppose that there exists $x \in \mathcal{F}$ of projective prime order $r$ that acts linearly on $V$.
By \cite[Proposition 5.4.6]{KL}, we can write $
V \otimes \overline{\mathbb{F}}_q \cong M \otimes M^{(q)}\otimes \dots \otimes M^{(q^{r-1})}$ for some irreducible $\mathrm{SL}_n(q)$ module $M$. Then $x$ permutes the tensor factors of $V$ cyclically by one place.

Let $B = \{e_{i_1} \otimes \dots \otimes e_{i_r} \mid 1\leq i_j \leq d\}$ be a basis of $V$, and set $d_0 = \dim M$ so that $|B| = d_0^r$. Now $\langle x \rangle$ has $d_0$ orbits of size 1 on $B$, consisting of basis vectors $e_{i_1} \otimes \dots \otimes e_{i_r}$ where $i_j = i_{j+1}$ for $1\leq j \leq r-1$. The remaining basis vectors lie in $(d_0^r-d_0)/r$ orbits of size $r$. Therefore, $x$ has one eigenspace of dimension $d_0+(d_0^r-d_0)/r \leq (d+d^{1/2})/2$, while the other eigenspaces of $x$ have dimension $(d_0^r-d_0)/r  \leq (d-d^{1/2})/2$.

%
Therefore, repeating the same computation as for the previous case, we see that 
\begin{align*}
\sum_{x\in \mathcal{F}} |\bar{x}^H| |C_V(x)|\leq& \sum_{r \mid e}  \frac{1}{r-1}\frac{|\mathrm{PGL}_n(q)|}{|\mathrm{PGL}_n(q^{1/r})|} (q^{\frac{k}{2}(d+d^{1/2})}+ (r-1) q^{\frac{k}{2}(d-d^{1/2})})< \sum_{r \mid e}2 \frac{(q^{1/r}-1)}{(q-1)} q^{\frac{1}{2}n^2 +\frac{k}{2}(d+d^{1/2})}\\
&<   \log (\log_2 q +2) q^{n^2/2+k(d+d^{1/2})/2 },
\end{align*}
as required.
\end{proof} 

Let $\tau$ be a graph automorphism of $\mathrm{SL}_n(q)$, and let $V=V_d(q^k)$ be an absolutely irreducible $\F_{q^k}\mathrm{SL}_n(q)$-module for some $k \in \mathbb{N}$. We say that $\tau$ \textit{preserves} $V$ if it acts as a linear transformation on $V$. Moreover, if $V=V(\lambda)$ is an absolutely irreducible highest weight module for $\mathrm{SL}_n(q)$, then $\tau$ preserves $V$ if and only if the Dynkin diagram automorphism induced by $\tau$ preserves $\lambda$.

\begin{proposition}
	\label{graphfix}
	Let $\tau \in \mathrm{Aut}(\mathrm{SL}_n(q))$, with $q=p^e$, be an involutory graph automorphism, preserving the $d$-dimensional irreducible $\mathrm{SL}_n(q)$ module $V=V(\lambda)$ over a finite field $\F_{q_0}$ of characteristic $p$. Define $S$ to be the set of all weights $\mu$ in $V(\lambda)$ fixed by the Dynkin diagram automorphism induced by $\tau$, and let $d=\dim V(\lambda)$. Then 
	\[
	\dim C_V(\tau) \leq  \frac{d}{2} + \frac{1}{2}\sum_{\mu \in S} \dim V_{\mu}.
	\] 
	Furthermore, if $\tau x$ is an involutory graph-field automorphism, then $|C_V(\tau x)| \leq q_0^{\frac{1}{2}(d+d^{1/2})}$ if $\tau x$ acts linearly on $V$, and $|C_V(\tau x)| \leq q_0^{d/2}$ otherwise.
\end{proposition}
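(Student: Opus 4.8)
The plan is to treat the two assertions separately: the bound on $\dim C_V(\tau)$ by a count over weight spaces, and the graph-field bound by reduction to the arguments already used for field automorphisms in Proposition~\ref{field}. For the first, note that since $\tau$ preserves $V$ it acts $\F_{q_0}$-linearly, and $\tau^2$ centralises the absolutely irreducible group $\rho(\mathrm{SL}_n(q))$, hence is a scalar; if that scalar is not $1$ then $\tau$ fixes no nonzero vector and the inequality is trivial, so I may assume $\tau^2=1$ on $V$. Choosing $\tau$ to normalise a maximal torus $T$ and Borel subgroup $B$ of $\overline{G}$, it induces on $X(T)$ the Dynkin diagram automorphism $\theta$ attached to it, and the intertwining relation $\tau\rho(t)=\rho(\theta(t))\tau$ for $t\in T$ forces $\tau V_\mu=V_{\theta(\mu)}$ for every weight $\mu$; since $\theta$ fixes $\lambda$ it stabilises the weight set of $V(\lambda)$, whose $\theta$-fixed points are exactly $S$. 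Now I would decompose $C_V(\tau)$ over the $\theta$-orbits on weights: each $\mu\in S$ contributes a $\tau$-invariant subspace of $V_\mu$, of dimension at most $\dim V_\mu$, while each two-element orbit $\{\mu,\theta(\mu)\}$ contributes the ``diagonal'' $\{v+\tau v:v\in V_\mu\}$ of $V_\mu\oplus V_{\theta(\mu)}$, of dimension $\dim V_\mu=\tfrac12(\dim V_\mu+\dim V_{\theta(\mu)})$. Summing over orbits yields
\[
\dim C_V(\tau)\ \le\ \sum_{\mu\in S}\dim V_\mu+\tfrac12\!\sum_{\mu\notin S}\dim V_\mu\ =\ \tfrac d2+\tfrac12\sum_{\mu\in S}\dim V_\mu .
\]

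For the graph-field bound, write $\tau x$ with $x$ its field part; since $\tau x$ is involutory, $x$ has order $2$, so $\tau x$ acts on $V=V_d(q_0)$ either $\F_{q_0}$-linearly, or semilinearly with respect to the order-$2$ automorphism of $\F_{q_0}$ fixing $\F_{q_0^{1/2}}$. In the semilinear case $\tau x$ acts as a semilinear involution of $V$ (or, if its lift to $\mathrm{GL}(V)$ has order $4$, has no nonzero fixed vector), so by Galois descent $C_V(\tau x)$ is an $\F_{q_0^{1/2}}$-subspace of $\F_{q_0^{1/2}}$-dimension at most $d$, giving $|C_V(\tau x)|\le q_0^{d/2}$. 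In the linear case I would repeat the second half of the proof of Proposition~\ref{field} with $r=2$: by \cite[Proposition~5.4.6]{KL} one has $V\otimes\overf_q\cong M\otimes M^{(q_1)}$ for a module $M$ with $d_0:=\dim M=d^{1/2}$, and $\tau x$ interchanges the two tensor factors, so it has one eigenspace of dimension at most $d_0+\tfrac12(d_0^2-d_0)=\tfrac12(d+d^{1/2})$ (all others being smaller), whence $|C_V(\tau x)|\le q_0^{(d+d^{1/2})/2}$.

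I expect the genuine work to lie in the bookkeeping underlying the phrases ``$\tau V_\mu=V_{\theta(\mu)}$'' and ``$\tau x$ interchanges the two tensor factors'': one must fix a concrete realisation of $\tau$ (and of $\tau x$ in the linear case) as a map on $V$, track how the graph and field automorphisms of $\mathrm{SL}_n(q)$ act on the highest-weight module $V(\lambda)$, and handle the cases where the natural lift to $\mathrm{GL}(V)$ is not literally an involution. Once these points are settled, the two dimension counts and the appeal to Proposition~\ref{field} are routine.
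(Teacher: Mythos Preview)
Your proposal is correct and follows essentially the same approach as the paper: decompose $\overline{V}$ into weight spaces, observe that $\tau$ permutes them according to the Dynkin diagram automorphism $\theta$, bound the fixed space on each $\theta$-orbit (at most $\dim V_\mu$ for $\mu\in S$, exactly half on a swapped pair), and sum; for the graph-field part the paper likewise defers to the proof of Proposition~\ref{field}, which is precisely the $r=2$ tensor-swap argument you spell out. Your additional remarks on the case $\tau^2\ne 1$ and the semilinear Galois-descent case are useful details that the paper leaves implicit.
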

\begin{proof}
	Let $\overline{V} = V \otimes \overline{\mathbb{F}}_q$. Now,  $\overline{V}$ can be written as a direct sum of weight spaces with respect to some maximal torus of $\overline{G}$.
	 If $\tau$ swaps two weight spaces $V_\nu$, $V_{\tau(\nu)}$ with $\nu \notin S$, then $\tau$ has a fixed point space on $V_\nu\oplus V_{\tau(\nu)}$ of dimension at most $ \frac{1}{2}(\dim V_\nu+\dim V_{\tau(\nu)})$. Therefore, 
	\[
	\dim C_V(\tau)\leq \sum_{\mu \in S} \dim V_{\mu}+ \frac{1}{2} \left(d-\sum_{\mu \in S} \dim V_{\mu}\right) =  \frac{d}{2} + \frac{1}{2}\sum_{\mu \in S} \dim V_{\mu}.
	\]
The result for graph-field automorphisms follows from the proof of Proposition \ref{field}.

\end{proof}

 \begin{proposition}[{\cite[Lemma 2.8]{crosscharpaper}}]
 \label{graph}
 The number of graph automorphisms of order 2 in $\mathrm{Aut}(\mathrm{PSL}_n(q))$ for $n\geq 3$ is at most $2 q^{\frac{n^2-n}{2}-1}$, and the number of graph-field automorphisms of order 2 in $\mathrm{Aut}(\mathrm{PSL}_n(q))$ is less than $q^{(n^2-1)/2}$.
 \end{proposition}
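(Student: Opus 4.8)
The plan is to count, separately for the two coset-types, the involutions class by class, using that the centraliser of such an involution in $\mathrm{PGL}_n(q)$ is, up to bounded index, a classical group whose order is easy to estimate. Since $n\geq 3$, we have $\mathrm{Aut}(\mathrm{PSL}_n(q))=\mathrm{PGL}_n(q){:}(\langle\phi\rangle\times\langle\gamma\rangle)$, with $\phi$ a field automorphism of order $e$ and $\gamma$ the inverse-transpose graph automorphism of order $2$. A graph automorphism of order $2$ then lies in the single coset $\mathrm{PGL}_n(q)\gamma$; a graph-field automorphism $h\gamma\phi^{i}$ (with $h\in\mathrm{PGL}_n(q)$, $i\not\equiv 0\pmod e$) squares into $\mathrm{PGL}_n(q)\phi^{2i}$, so it can have order $2$ only if $e$ is even and $i=e/2$, and hence all graph-field involutions lie in the single coset $\mathrm{PGL}_n(q)\gamma\phi^{e/2}$ (if $e$ is odd there are none and the second bound is vacuous). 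In both cases, then, the required count is simply the number of involutions lying in one $\mathrm{PGL}_n(q)$-coset of $\mathrm{Aut}(\mathrm{PSL}_n(q))$, that is, the sum of the $\mathrm{PGL}_n(q)$-conjugacy class sizes within that coset.

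Next I would invoke the standard description of these involutions via Lang--Steinberg (as in \cite[Ch.~4]{GLS} or \cite[Table B.3]{bg}): up to $\mathrm{PGL}_n(q)$-conjugacy there are only boundedly many classes in each of these cosets, obtained by composing $\gamma$ (resp.\ $\gamma\phi^{e/2}$) with suitable diagonal automorphisms, and the centraliser in $\mathrm{PGL}_n(q)$ of each representative is, up to bounded index, a projective conformal classical group. For graph involutions these are the images of $\mathrm{GO}_n^{\pm}(q)$ and, when $n$ is even, $\mathrm{GSp}_n(q)$; for graph-field involutions they are the images of $\mathrm{GU}_n(q^{1/2})$ and, when $n$ is even, a symplectic- or orthogonal-type group over $\F_{q^{1/2}}$. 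Each such class has size $|\mathrm{PGL}_n(q)|$ divided by the corresponding centraliser order; substituting the orders of these classical groups and summing over the $O(1)$ classes in the coset, one reads off the two asserted bounds, the dominant contribution coming from the class with the smallest centraliser.

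The order estimates are routine; the substance of the proof is the bookkeeping. The main obstacle is enumerating the $\mathrm{PGL}_n(q)$-classes in each coset correctly, since the parity of $n$ and the value of $\gcd(n,q-1)$ determine whether the diagonal twists produce additional classes (with centralisers of orthogonal versus symplectic type), and in pinning down the relevant centraliser orders sharply enough that the leading-order estimates stay inside the stated constants — the factor $2$ in the first bound and the strict inequality with no constant in the second. I would therefore split into the cases $n$ even and $n$ odd, tabulate representatives and centraliser orders in each, verify the inequality against the dominant class (the remaining classes being absorbed by the constant), and deal directly with the handful of small groups, such as $\mathrm{PSL}_4(2)$, where the generic centraliser description degrades.
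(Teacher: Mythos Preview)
The paper does not prove this proposition: it is quoted verbatim from \cite[Lemma 2.8]{crosscharpaper}, so there is no proof here to compare your attempt against. Your outline is the standard one and is essentially correct: reduce to a single $\mathrm{PGL}_n(q)$-coset in each case, classify the finitely many $\mathrm{PGL}_n(q)$-classes of involutions in that coset via Lang--Steinberg, identify each centraliser with (the image of) a classical group, and sum the resulting class sizes. This is exactly how such bounds are obtained in the literature, and it is almost certainly what the cited lemma does.

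Two small corrections to your bookkeeping. First, for graph involutions the dominant class is the one with \emph{orthogonal} centraliser (dimension $\binom{n}{2}$), not symplectic; the symplectic-centraliser class (when $n$ is even) is smaller. In fact the exponent $(n^2-n)/2-1$ in the displayed statement appears to be a typo for $(n^2+n)/2-1$: every application of this proposition later in the paper (e.g.\ the worked example for $n=5$, the $n=3,4$ cases in the proof of Proposition~\ref{n^3}, and the explicit $q^{(n^2+n)/2-1}$ in the proof of Proposition~\ref{adj_ext}) uses the latter exponent, which is precisely what your orthogonal-centraliser count produces. Second, for graph-field involutions there is essentially a single class, with unitary centraliser $\mathrm{PGU}_n(q^{1/2})$; over a finite field all nondegenerate Hermitian forms are equivalent, so no symplectic- or orthogonal-type classes over $\mathbb{F}_{q^{1/2}}$ arise here. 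With those adjustments your sketch goes through.
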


\section{Techniques}
\label{techniques}

The following proposition gives the main results used to prove Theorem \ref{main}. For an almost quasisimple group $G$, let $G_s$ denote the set of elements of $G$ of projective prime order $s$, and $G_{s'}$ denote the set of elements of $G$ with projective prime order coprime to $s$. 
\begin{proposition}[{\cite[Proposition 3.1]{crosscharpaper}}]
	\label{tools}
	Let $G\leq \mathrm{GL}(V)$ be an almost quasisimple group, acting irreducibly on the $d$-dimensional module $V=V_d(r)$ over $\mathbb{F}_r$. Set $H=G/F(G)$ and let $\mathcal{P}$ be a set of conjugacy class representatives of elements of projective prime order in $G$. For $x \in G$, let $\bar{x} = xF(G) \in H$, and denote the order of $\bar{x}$ by $o(\bar{x})$. Also let $\overline{V} = V\otimes\overline{\mathbb{F}}_r$. Then the following statements hold.
	
	\begin{enumerate}
		\item \label{alphabound} For $\nu \in \overline{\mathbb{F}}_r$, the $\nu$-eigenspace $E_\nu(x)$ of $x \in G$ on $\overline{V}$  satisfies 
		\[ 
		\dim_{\overline{\mathbb{F}}_r}(E_\nu(x)) \leq \left\lfloor\dim_{\overline{\mathbb{F}}_r}(\overline{V}) \left( 1-\frac{1}{\a(x)}\right)\right\rfloor.
		\] 
	\end{enumerate}
	Further, if $G$ has no regular orbit on $V=V_d(r)$ with $r=s^e$ for $s$ prime, then:
	\begin{enumerate} \setcounter{enumi}{1}
		\item \label{eigsp1} \[
		|V| \leq \sum_{x\in \mathcal{P}} \sum_{\kappa \in \mathbb{F}_r} \frac{1}{o(\bar{x})-1} |\bar{x}^H| |C_V(\kappa x)|.
		\]
		\item \label{eigsp2}
		\[
		|V| \leq \sum_{x\in \mathcal{P} \cap G_{s'}} \frac{o(\bar{x} )}{o(\bar{x} )-1} |\bar{x} ^H|\mathrm{max}\{|C_V(\kappa x)| \mid \kappa \in \mathbb{F}_r^{\times}\}+ \sum_{x\in \mathcal{P} \cap G_{s}} \frac{1}{o(\bar{x} )-1}  |\bar{x}^H| |C_V(x)|.
		\]
		\item \label{qsgood} \[
		|V| \leq 2\sum_{x\in \mathcal{P} \cap G_{s'}} |\bar{x} ^H|\mathrm{max}\{|C_V(\kappa x)| \mid \kappa \in \mathbb{F}_r^{\times}\} + \sum_{x\in \mathcal{P} \cap G_{s}} \frac{1}{o(\bar{x})-1}  |\bar{x}^H| |C_V(x)|.
		\]
		\item \label{crude} \begin{equation}
		\label{crudeeqn}
		|V| = r^d \leq 2\sum_{x\in \mathcal{P} \cap G_{s'}} |\bar{x}^H| r^{\lfloor(1-1/\alpha(x))d\rfloor} + \sum_{x\in \mathcal{P} \cap G_{s}} \frac{1}{o(\bar{x})-1}  |\bar{x} ^H|r^{\lfloor(1-1/\alpha(x))d\rfloor},
		\end{equation}
		
		and for fixed $r$, if this inequality fails for a given $d$, then it fails for all $d_1 \geq d$.
	\end{enumerate}
\end{proposition}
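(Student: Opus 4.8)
The plan is to treat part (i) separately (it carries the one real idea), and to derive parts (ii)--(v) from a single fixed-point count, with (iii)--(v) as successive coarsenings of (ii).

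\smallskip
\noindent\emph{Part (i).} I would use the standard observation that an element which, together with few conjugates, generates an irreducible group must have small eigenspaces. Set $\alpha = \alpha(x) = \alpha(\bar x)$ and $G_0 = \mathrm{soc}(H)$. By definition of $\alpha$ there are $g_1,\dots,g_\alpha\in G_0$ with $\langle\bar x^{g_1},\dots,\bar x^{g_\alpha}\rangle = \langle G_0,\bar x\rangle\supseteq G_0$. Lifting the $g_i$ to elements $\tilde g_i\in E(G)$ (possible since $E(G)\twoheadrightarrow G_0$) and setting $x_i = \tilde g_i^{-1}x\tilde g_i$, one has $E_\nu(x_i) = \tilde g_i^{-1}E_\nu(x)$, so $\dim E_\nu(x_i) = \dim E_\nu(x)$ for every $i$, while $\langle x_1,\dots,x_\alpha\rangle F(G)\supseteq E(G)$. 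If $\dim E_\nu(x) > \lfloor(1 - 1/\alpha)\dim\overline{V}\rfloor$, then $\dim E_\nu(x) > (1 - 1/\alpha)\dim\overline{V}$, so $\sum_{i=1}^{\alpha}\dim E_\nu(x_i) > (\alpha - 1)\dim\overline{V}$ and hence $\bigcap_{i}E_\nu(x_i)\ne 0$. A nonzero vector in this intersection spans a line fixed by each $x_i$, hence by $\langle x_1,\dots,x_\alpha\rangle$, and --- as $F(G)$ consists of scalars, $E(G)$ being absolutely irreducible on $V$ --- by $E(G)$; this contradicts the irreducibility of $E(G)$ on the module $\overline{V}$, which has dimension at least $2$. (In particular $\alpha(x)\ge 2$ always.)

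\smallskip
\noindent\emph{Parts (ii)--(v).} Suppose $G$ has no regular orbit on $V$. Since a scalar fixing a nonzero vector is the identity, each $v\in V\setminus\{0\}$ is fixed by some $g\in G\setminus F(G)$, hence by a suitable power $g^{j}$ of projective prime order. The key bookkeeping point is that if a projective-prime-order element $y$ fixes $v$ then so do the $o(\bar y) - 1$ distinct elements $y, y^{2},\dots,y^{o(\bar y)-1}$, all of projective prime order $o(\bar y)$. Writing $\mathcal{Q}$ for the set of all projective-prime-order elements of $G$ and $C_V(y)^{\#} = C_V(y)\setminus\{0\}$, this gives
\[
|V| - 1 \;=\; |V\setminus\{0\}| \;\le\; \sum_{v\ne 0}\ \sum_{\substack{y\in\mathcal{Q}\\ yv = v}}\frac{1}{o(\bar y)-1} \;=\; \sum_{y\in\mathcal{Q}}\frac{|C_V(y)^{\#}|}{o(\bar y)-1}.
\]
I would then regroup the right-hand side over the $H$-conjugacy classes $\bar x^{H}$, $x\in\mathcal{P}$: the elements $y\in\mathcal{Q}$ with $\bar y\in\bar x^{H}$ are the $|\bar x^{H}|$ many $H$-conjugates of $\bar x$, each lifted by a scalar of $F(G)\subseteq\F_r^{\times}$, and $|C_V(\mu\,h^{-1}xh)| = |E_{\mu^{-1}}(x)|$. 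Extending the scalar sum from $F(G)$ to all of $\F_r$ (and using $|C_V(y)^{\#}|\le|C_V(y)|$) yields (ii). For (iii) one splits $\mathcal{Q}=(\mathcal{Q}\cap G_s)\cup(\mathcal{Q}\cap G_{s'})$ and covers $V\setminus\{0\}$ by the two sub-unions: an element $x$ of projective order $s=p$ is unipotent, so $1$ is its only eigenvalue and $C_V(\mu x)=0$ for $\mu\ne1$, leaving the term $\tfrac{1}{o(\bar x)-1}|\bar x^{H}||C_V(x)|$; an element $x$ of projective order $\ne s$ is semisimple with at most $o(\bar x)$ distinct eigenvalues (as $x^{o(\bar x)}$ is a scalar and $o(\bar x)\ne p$), so summing $|E_{\mu^{-1}}(x)|$ over $\mu\in F(G)$ costs at most $o(\bar x)\max\{|C_V(\kappa x)|:\kappa\in\F_r^{\times}\}$ up to harmless terms from scalars that are not eigenvalues of $x$, which produces the factor $\tfrac{o(\bar x)}{o(\bar x)-1}$. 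Since $\tfrac{o(\bar x)}{o(\bar x)-1}=1+\tfrac{1}{o(\bar x)-1}\le2$, (iv) is immediate. For (v), part (i) gives $|C_V(\kappa x)| = |E_{\kappa^{-1}}(x)| \le r^{\lfloor(1-1/\alpha(x))d\rfloor}$ (using $\alpha(\kappa x)=\alpha(x)$), and substituting into (iv) gives the displayed inequality; the monotonicity follows since passing from $d$ to $d_1>d$ multiplies $r^{d}$ by $r^{d_1-d}$ and each right-hand summand by $r^{\lfloor(1-1/\alpha)d_1\rfloor-\lfloor(1-1/\alpha)d\rfloor}\le r^{d_1-d}$, while $|\bar x^{H}|$, $o(\bar x)$ and $\alpha(x)$ depend only on $H$, not on the module.

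\smallskip
\noindent The main obstacle is bookkeeping, not ideas: in (ii)--(iii) one must track scalar multiples carefully (this is what produces the $\sum_{\kappa\in\F_r}$), verify that the powers $y,y^{2},\dots$ of a projective-prime-order element are genuinely distinct and again of projective prime order, and check that the small additive corrections --- $0$ lies in every $C_V(y)$, the $\kappa=0$ summand, scalars that are not eigenvalues of $x$ --- are absorbed by the (ample) slack in the estimates. Part (i) itself is short once the definition of $\alpha(x)$ is unwound.
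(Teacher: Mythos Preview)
The paper does not give a proof of this proposition; it is quoted verbatim from \cite[Proposition 3.1]{crosscharpaper} and used as a black box throughout. Your sketch is the standard argument and is essentially correct: part (i) is exactly the ``large common eigenspace forces an invariant line'' trick, and parts (ii)--(v) are the fixed-point union bound refined by the observation that powers $y,y^2,\dots,y^{o(\bar y)-1}$ of a projective-prime-order element are distinct elements of the same type, each fixing the same vectors.

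Two small points worth tightening. First, in your derivation of the $G_s$ term in (iii) you should note explicitly why one may replace $|x^G|$ by $|\bar x^H|$: for $x\in\mathcal P\cap G_s$ with $C_V(x)\ne\{0\}$ one has $x$ genuinely unipotent (its unique eigenvalue is $1$), and for unipotent $x$ any $g$ with $g^{-1}xg\in xF(G)$ already centralises $x$ (comparing eigenvalues), so $C_G(x)/F(G)=C_H(\bar x)$ and $|x^G|=|\bar x^H|$. The non-unipotent classes in $\mathcal P\cap G_s$ contribute $|C_V(x)|-1=0$ and can be dropped. Second, your appeal to absolute irreducibility of $E(G)$ in part (i) is slightly stronger than the stated hypothesis of the proposition, but it is exactly the standing hypothesis of the paper (and of \cite{crosscharpaper}); alternatively one can finish by observing that $E(G)$ perfect forces it to act trivially on the invariant line, whence $V^{E(G)}\ne 0$ is a proper $G$-submodule, contradicting irreducibility of $G$ on $V$.
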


\begin{lemma}
\label{extfield}
Let $G$, $H$ and $V = V_d(q_0)$ be as in Proposition \ref{tools}. If one of Proposition \ref{tools}\ref{eigsp1})--\ref{crude}) fails for $G\leq \Gamma \mathrm{L}(V)$, then $\mathbb{F}_{q_0^k}^\times \circ G$ has a regular orbit on $\hat{V} = V \otimes F_{q_0^k}$ for each integer $k\geq 1$.
\end{lemma}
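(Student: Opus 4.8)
The plan is to argue by contradiction. Suppose $\hat G := \mathbb{F}_{q_0^k}^\times \circ G$ has no regular orbit on $\hat V := V \otimes \mathbb{F}_{q_0^k}$; then applying Proposition~\ref{tools} to $\hat G \leq \Gamma\mathrm{L}(\hat V)$ (note $\hat V = V_d(q_0^k)$, still of characteristic $s = p$), \emph{all} of Proposition~\ref{tools}\ref{eigsp1})--\ref{crude}) hold for $\hat G$ on $\hat V$. I would then show that whichever of Proposition~\ref{tools}\ref{eigsp1})--\ref{crude}) is assumed to fail for $G$ on $V$ must also fail for $\hat G$ on $\hat V$, a contradiction. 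The preliminary step is to record the invariants that do not change on passing from $(G,V)$ to $(\hat G,\hat V)$. Since $\mathbb{F}_{q_0^k}^\times$ is central in $\hat G$, we have $\mathbb{F}_{q_0^k}^\times F(G) \leq F(\hat G)$ and $F(\hat G)\cap G \leq F(G)$, so $\hat G/F(\hat G) \cong G/F(G) = H$; hence $\alpha(\bar x)$, $o(\bar x)$, $|\bar x^H|$, the set $\mathcal P$ of projective-prime-order classes and its partition into $G_s$-- and $G_{s'}$--parts all transfer verbatim. Moreover $\dim_{\mathbb{F}_{q_0^k}}\hat V = \dim_{\mathbb{F}_{q_0}}V = d$ (so $|\hat V| = |V|^k$) and $\hat V \otimes \overline{\mathbb{F}}_p = V \otimes \overline{\mathbb{F}}_p = \overline V$, so every $\overline{\mathbb{F}}_p$-dimension of an eigenspace or of a sum of weight spaces --- in particular the right-hand side $\lfloor d(1-1/\alpha(x))\rfloor$ of Proposition~\ref{tools}\ref{alphabound}) --- is unchanged. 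Finally, since we adjoin only scalars (and no new field automorphisms of $\mathbb{F}_{q_0^k}$), the projective-prime-order elements of $\hat G$ are exactly the natural extensions to $\hat V$ of those of $G$; and by Propositions~\ref{field} and~\ref{graphfix} a projective-prime-order field-, graph- or graph-field-automorphism has a fixed space on $\hat V$ whose size is that of its fixed space on $V$ with $q_0$ replaced by $q_0^k$ (for field automorphisms it is in fact unchanged, being governed by a fixed subfield).

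Granting this, the right-hand side of each of Proposition~\ref{tools}\ref{eigsp1})--\ref{crude}) for $\hat G$ on $\hat V$ is obtained from the one for $G$ on $V$ by the single substitution $q_0 \mapsto q_0^k$ (equivalently: the coefficients $|\bar x^H|$, $1/(o(\bar x)-1)$, $o(\bar x)/(o(\bar x)-1)$, $2$ are unchanged, while every exponent is multiplied by $k$). Writing the $G$-version of the failing inequality as $|V| \leq \sum_i c_i |V|^{\beta_i}$, where $c_i > 0$ depends only on $H$ and $0 \leq \beta_i < 1$ (each $\beta_i$ a dimension bound divided by $d$: the eigenspace bounds give $\beta_i \leq 1 - 1/d < 1$ since $\alpha(x)\geq 2$, and the field-/graph-field-automorphism exponents give $\beta_i \leq 1/2$), the $\hat G$-version reads $|\hat V| \leq \sum_i c_i |\hat V|^{\beta_i}$. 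If the $G$-version \emph{fails}, i.e. $|V| > \sum_i c_i |V|^{\beta_i}$, then dividing by $|V|$ gives $1 > \sum_i c_i |V|^{\beta_i - 1}$; since each $\beta_i - 1 < 0$ and $|\hat V| = |V|^k \geq |V|$, we get $\sum_i c_i |\hat V|^{\beta_i - 1} \leq \sum_i c_i |V|^{\beta_i - 1} < 1$, hence $|\hat V| > \sum_i c_i |\hat V|^{\beta_i}$: the $\hat G$-version fails too, the desired contradiction. This is precisely the monotonicity already recorded at the end of Proposition~\ref{tools}\ref{crude}): there failure persists as $d$ grows with the field fixed, here it persists as the field grows with $d$ fixed.

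The step I expect to be the main obstacle is making the substitution claim of the second paragraph fully rigorous. For Proposition~\ref{tools}\ref{crude}) it is immediate, since the exponents $\lfloor d(1-1/\alpha(x))\rfloor$ are intrinsic to $\overline V$. For the finer bounds \ref{eigsp1})--\ref{qsgood}) two points need attention. First, in \ref{eigsp1}) the sum $\sum_{\kappa \in \mathbb{F}_{q_0}}$ becomes $\sum_{\kappa \in \mathbb{F}_{q_0^k}}$, and an element may acquire eigenspaces visible only over $\mathbb{F}_{q_0^k}$; this is controlled by noting that a linear projective-prime-order element $\hat x$ has at most $o(\bar x)$ distinct eigenvalues (as $\hat x^{o(\bar x)}$ is scalar), each of $\overline{\mathbb{F}}_p$-dimension at most $\lfloor d(1-1/\alpha(x))\rfloor$, so the $\kappa$-sum contributes at most $o(\bar x)(q_0^k)^{\lfloor d(1-1/\alpha(x))\rfloor}$ plus an additive error of order $q_0^k|H|$ from the $\kappa$ with trivial fixed space; this error is absorbed by $|\hat V| = |V|^k$ except for a short list of small $d$, which I would dispose of directly. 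Secondly, to say ``$C_{\hat V}$-bound $=$ $C_V$-bound with $q_0 \mapsto q_0^k$'' one must use the bounds on $|C_V(\kappa x)|$ in the form the proofs of Theorem~\ref{main} actually produce them --- namely bounds on $\overline{\mathbb{F}}_p$-eigenspace dimensions (via Proposition~\ref{tools}\ref{alphabound}) and the weight-equivalence machinery) and on Galois-fixed subspaces (via Propositions~\ref{field} and~\ref{graphfix}) --- all of which are field-size-independent as $\overline{\mathbb{F}}_p$-data and hence scale exactly as claimed. Confirming that no genuinely larger obstruction appears over $\mathbb{F}_{q_0^k}$ is the crux, and it reduces entirely to the facts in the first paragraph: $\overline V$, $H$, $d$ and $\mathcal P$ are unchanged and only scalars have been adjoined.
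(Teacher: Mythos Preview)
Your core argument is correct and is the same monotonicity idea the paper uses: eigenspace dimensions are intrinsic to $\overline V$, the class data live in $H=G/F(G)$ which is unchanged, and an inequality of the shape $|V|>\sum_i c_i|V|^{\beta_i}$ with $\beta_i<1$ persists when $|V|$ is replaced by $|V|^k$. The paper's proof, however, is considerably shorter because it makes one reduction you do not: it observes that the right-hand side of part \ref{eigsp1}) is bounded above by those of parts \ref{eigsp2})--\ref{crude}), so failure of \emph{any} of \ref{eigsp1})--\ref{crude}) forces failure of \ref{eigsp1}). One then only has to treat \ref{eigsp1}), and does so by writing the failing inequality with the $\kappa$-sum taken over $\overline{\mathbb F}_{q_0}$ (that is, over the finitely many eigenvalues of each $x$, using $q_0^{\dim C_{\overline V}(\kappa x)}$), multiplying both sides by $q_0^{(k-1)d}$, and invoking $\dim C_{\overline V}(\kappa x)\le d-1$ to get $\dim C_{\overline V}(\kappa x)+(k-1)d\ge k\dim C_{\overline V}(\kappa x)$. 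This is exactly your division-by-$|V|$ step, specialised to a single case.

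That reduction dissolves both of the obstacles you flag in your final paragraph. The ``new eigenvalues over $\mathbb F_{q_0^k}$'' issue disappears because the $\kappa$-sum is taken over $\overline{\mathbb F}_{q_0}$ from the outset; no additive error or small-$d$ case analysis is needed. And there is no separate treatment of field, graph, or graph-field automorphisms: they are simply among the classes in $\mathcal P$, their relevant fixed-space sizes are recorded as powers $q_0^{\dim C_{\overline V}(\kappa x)}$, and those exponents are field-size-independent. So your proof is not wrong, but it does more work than necessary; the single sentence ``the right-hand side of \ref{eigsp1}) is at most those of \ref{eigsp2})--\ref{crude})'' replaces your entire case analysis.
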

\begin{proof}
If one of Proposition \ref{tools} \ref{eigsp1})--\ref{crude}) fails for $G$ acting on $V$ (proving that $G$ has a regular orbit on $V$), then in particular Proposition \ref{tools}\ref{eigsp1}) fails, since the right hand side of the inequality is at most the right hand sides of parts \ref{eigsp2}), \ref{qsgood}) and  \ref{crude}). Therefore 
\[
q_0^d \geq \sum_{x\in \mathcal{P}} \sum_{\kappa \in \overline{\mathbb{F}}_{q_0}} \frac{1}{o(x)-1} |xF(G)^H| q_0^{\dim C_{\overline{V}}(\kappa x)},
\]
 and multiplying both sides of the inequality by $q_0^{(k-1)d}$, we have 
\[
q_0^{kd} \geq \sum_{x\in \mathcal{P}} \sum_{\kappa \in \overline{\mathbb{F}}_{q_0}} \frac{1}{o(x)-1} |xF(G)^H| q_0^{\dim C_{\overline{V}}(\kappa x)+k(d-1)} > \sum_{x\in \mathcal{P}} \sum_{\kappa \in \overline{\mathbb{F}}_{q_0}} \frac{1}{o(x)-1} |xF(G)^H| q_0^{k \dim C_{\overline{V}}(\kappa x)}
\]
and the result follows.
\end{proof}
The next result presents an additional method of bounding $|C_V(g)|$ for $g$ of projective prime order in $G$. 
For $g \in G\leq \mathrm{GL}(V)$ with $V=V_d(q_0)$, let $\emax^V(g)$ (or just $\emax(g)$) denote the dimension of the largest eigenspace of $g$ on $\overline{V} = V \otimes \overline{\mathbb{F}_{q_0}}$. 

\begin{proposition}[{\cite[Lemma 3.7]{MR1639620}}]
\label{tensorcodim}
Let $V_1$ and $V_2$ be vector spaces over $\mathbb{F}_{q_0}$ of dimension $d_1$ and $d_2$ respectively. Assume that $g = g_1\otimes g_2 \in \mathrm{GL}(V_1) \otimes \mathrm{GL}(V_2)$ is an element of projective prime order that acts on $V=V_1\otimes V_2$. Then 
\[
\emax^V(g) \leq \mathrm{min} \left\{ d_2\emax^{\overline{V_1}}(g_1), d_1\emax^{\overline{V_2}}(g_2) \right\}.
\]
where $\overline{V_i} = V_i \otimes \overline{\mathbb{F}_{q_0}}$.
\end{proposition}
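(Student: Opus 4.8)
The plan is to prove the two bounds $\emax^V(g) \le d_2\,\emax^{\overline{V_1}}(g_1)$ and $\emax^V(g) \le d_1\,\emax^{\overline{V_2}}(g_2)$ separately; since interchanging $V_1$ and $V_2$ swaps them, it suffices to establish the first. I would fix a scalar $\mu \in \overline{\mathbb{F}_{q_0}}$, bound the dimension of the $\mu$-eigenspace $E_\mu(g)$ of $g$ on $\overline{V}$ by $d_2\,\emax^{\overline{V_1}}(g_1)$, and then take the maximum over $\mu$. To set up, I would pass to the decompositions $\overline{V_1} = \bigoplus_a A_a$ and $\overline{V_2} = \bigoplus_b B_b$ into generalized eigenspaces of $g_1$ and $g_2$, writing $g_1|_{A_a} = \alpha_a u_a$ and $g_2|_{B_b} = \beta_b v_b$ with $u_a, v_b$ unipotent and the $\alpha_a$ (respectively $\beta_b$) pairwise distinct. (The generalized eigenspace decomposition is automatic over the algebraic closure; the hypothesis that $g$ has projective prime order is what keeps the relevant elements well-behaved in the intended applications, but it is not strictly needed for this linear-algebra statement.)

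Since $g = g_1 \otimes g_2$, the decomposition $\overline{V} = \bigoplus_{a,b} A_a \otimes B_b$ is $g$-invariant, and $g$ acts on the summand $A_a \otimes B_b$ as $\alpha_a\beta_b\,(u_a \otimes v_b)$; as $u_a \otimes v_b$ is unipotent, this summand contributes only to the eigenvalue $\alpha_a\beta_b$, with eigenspace $\ker(u_a \otimes v_b - 1)$. Hence
\[
\dim E_\mu(g) = \sum_{(a,b)\,:\,\alpha_a\beta_b = \mu} \dim\ker\!\left(u_a \otimes v_b - 1\right).
\]
The crux is the elementary lemma that for unipotent $u \in \mathrm{GL}(U)$ and $v \in \mathrm{GL}(W)$ one has $\dim\ker(u \otimes v - 1_{U\otimes W}) \le \dim\ker(u-1)\cdot\dim W$. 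I would prove this by decomposing $U$ into $u$-indecomposable summands — there are $\dim\ker(u-1)$ of them — and checking that on a single Jordan block with basis $e_1,\dots,e_a$ satisfying $ue_1 = e_1$, $ue_k = e_k + e_{k-1}$, any fixed vector $\sum_k e_k \otimes w_k$ of $u\otimes v$ satisfies the recursion $w_{k+1} = (v^{-1}-1)w_k$ and so is determined by $w_1 \in W$; thus the fixed space on one block tensored with $W$ has dimension at most $\dim W$, and summing over blocks gives the lemma. (Equivalently, one can invoke the classical fact that $J_a \otimes J_b$ has $\min(a,b)$ Jordan blocks in any characteristic.)

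With the lemma in hand I would finish as follows. Applying it to $(A_a, u_a)$ and $(B_b, v_b)$ and using $\dim\ker(u_a - 1) = \dim E_{\alpha_a}(g_1) \le \emax^{\overline{V_1}}(g_1)$ gives $\dim\ker(u_a \otimes v_b - 1) \le \emax^{\overline{V_1}}(g_1)\cdot\dim B_b$; since the $\alpha_a$ are distinct, for each fixed $b$ at most one $a$ satisfies $\alpha_a\beta_b = \mu$, so the displayed sum is bounded by $\emax^{\overline{V_1}}(g_1)\sum_b \dim B_b = d_2\,\emax^{\overline{V_1}}(g_1)$. Taking the maximum over $\mu$ and combining with the symmetric bound finishes the proof. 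The only step that is not pure bookkeeping is the unipotent tensor-product lemma; I expect that to be the main (albeit modest) obstacle, the rest being routine manipulation of the generalized eigenspace decomposition.
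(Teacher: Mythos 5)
Your argument is correct and complete. Note that the paper does not prove this proposition at all: it is quoted from Liebeck--Shalev \cite{MR1639620}, so the only comparison available is with that source. There the prime-order hypothesis is used to split into two cases --- $g$ semisimple (diagonalise $g_1$ and $g_2$, observe that for each eigenvalue $\nu$ of $g_2$ at most one eigenvalue $\mu\nu^{-1}$ of $g_1$ can contribute to the $\mu$-eigenspace of $g$, and sum) and $g$ unipotent (count Jordan blocks of $J_a\otimes J_b$). Your generalized-eigenspace decomposition $\overline{V}=\bigoplus_{a,b}A_a\otimes B_b$ with $g$ acting as $\alpha_a\beta_b(u_a\otimes v_b)$ merges these two cases into one: the "at most one $a$ per $b$" count is the semisimple part, and your recursion $w_{k+1}=(v^{-1}-1)w_k$ on a single Jordan block is the unipotent part. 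The result is a slightly more general statement (valid for any $g=g_1\otimes g_2$, as you observe, with the projective prime order hypothesis playing no role), at the cost of a little more bookkeeping. Every step checks out: the eigenspace of $g$ does decompose along the $g$-invariant summands, $\dim\ker(u_a-1)$ is indeed the dimension of the $\alpha_a$-eigenspace of $g_1$ and hence at most $\emax^{\overline{V_1}}(g_1)$, and the fixed space of $u\otimes v$ on $(\text{one block})\otimes W$ embeds into $W$ via $\sum_k e_k\otimes w_k\mapsto w_1$. This is a perfectly acceptable proof.
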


The following lemma shows that computing the base size of  $V\otimes \mathbb{F}_{q_0^k}$ can enable us to bound (and sometimes determine) the base size of $V$.
\begin{lemma}
\label{fieldext}
Suppose $V$ is a $d$-dimensional vector space over $\mathbb{F}_r$, and let $G\leq \mathrm{GL}(V)$. 
If $G$ has a base of size $c$ on $V \otimes \mathbb{F}_{r^i}$, then $G$ has a base of size at most $ci$ on $V$.
\end{lemma}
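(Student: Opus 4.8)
The plan is to descend from a base for $G$ on the extension $\hat V = V \otimes \mathbb{F}_{r^i}$ to a base for $G$ on $V$ itself, paying a factor of $i$ coming from the degree of the field extension. First I would fix an $\mathbb{F}_r$-basis of $\mathbb{F}_{r^i}$, say $\{1 = c_1, c_2, \dots, c_i\}$, which induces an $\mathbb{F}_r$-vector space isomorphism $\hat V \cong V \otimes_{\mathbb{F}_r} \mathbb{F}_{r^i} \cong V^{\oplus i}$ as $\mathbb{F}_r G$-modules: a vector $w \in \hat V$ is written uniquely as $w = \sum_{j=1}^i v_j \otimes c_j$ with $v_j \in V$, and $G$ acts diagonally, $g \cdot w = \sum_j (g v_j) \otimes c_j$, since $G \leq \mathrm{GL}(V)$ acts $\mathbb{F}_r$-linearly and fixes each $c_j$. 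The key observation is then that the pointwise stabiliser in $G$ of $w$ equals the pointwise stabiliser of the set $\{v_1, \dots, v_i\} \subseteq V$ of its ``coordinate vectors''.

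Next I would take a base $\{w_1, \dots, w_c\} \subseteq \hat V$ for $G$ of size $c$, so that $\bigcap_{m=1}^c G_{w_m} = 1$. Writing each $w_m = \sum_{j=1}^i v_{m,j} \otimes c_j$ with $v_{m,j} \in V$, the set $B = \{ v_{m,j} : 1 \leq m \leq c,\ 1 \leq j \leq i \} \subseteq V$ has pointwise stabiliser $\bigcap_{m,j} G_{v_{m,j}} = \bigcap_m G_{w_m} = 1$ by the previous paragraph applied to each $w_m$. Hence $B$ is a base for $G$ on $V$, and $|B| \leq ci$, giving the claimed bound. I would spell out the equality $G_w = \bigcap_j G_{v_j}$ carefully: if $g$ fixes each $v_j$ it clearly fixes $w$; conversely if $gw = w$ then $\sum_j (gv_j) \otimes c_j = \sum_j v_j \otimes c_j$, and uniqueness of coordinates with respect to the $\mathbb{F}_r$-basis $\{c_j\}$ forces $gv_j = v_j$ for all $j$.

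This argument is essentially bookkeeping, so I do not anticipate a genuine obstacle; the only point requiring a little care is the compatibility of the $G$-action with the tensor decomposition, i.e.\ that $G$ acts $\mathbb{F}_r$-linearly (not merely semilinearly) on $V$ so that it fixes the scalars $c_j \in \mathbb{F}_{r^i}$ and acts diagonally on $V^{\oplus i}$. This is exactly the hypothesis $G \leq \mathrm{GL}(V)$, so it is immediate here. (If one wanted the analogous statement for $G \leq \Gamma\mathrm{L}(V)$ one would need the extension to be chosen compatibly with the field automorphisms involved, but that is not needed for the present lemma.) I would also remark in passing that no minimality or irreducibility of $V$ is used, so the lemma holds for an arbitrary finite-dimensional $\mathbb{F}_r G$-module.
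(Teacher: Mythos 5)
Your proof is correct and follows essentially the same route as the paper: write each base vector of $V \otimes \mathbb{F}_{r^i}$ in coordinates with respect to an $\mathbb{F}_r$-basis of $\mathbb{F}_{r^i}$, and note that any $g \in G$ fixing all the coordinate vectors must fix the original base vectors, so the at most $ci$ coordinate vectors form a base for $G$ on $V$. Your write-up is slightly more detailed (spelling out both directions of $G_w = \bigcap_j G_{v_j}$ and the role of $\mathbb{F}_r$-linearity), but the argument is the same.
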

\begin{proof}
Let $\{v_1, \dots v_c\}$ be a base for $G$ acting on $V \otimes \mathbb{F}_{r^i}$. Let $\eta_1, \dots \eta_i$ be a basis for $\mathbb{F}_{r^i}$ over $\mathbb{F}_r$. Write each $v_j$ as $v_j = \sum_{k=1}^i \eta_k w_{j,k}$, where $w_{j,k} \in V$. Then $\{w_{j,k}\mid 1\leq j \leq c, 1\leq k\leq i\}$ is a base for $G$, since any non-trivial $g\in G$ stabilising the set must also stabilise $\{v_1, \dots v_c\}$.
\end{proof}

\begin{proposition}
	\label{permsquare}
	Let $\{a_1, a_2, \dots a_t\}$ be a non-decreasing sequence of natural numbers. The permutation $\nu\in S_t$ that maximises $\sum_{i=1}^t a_i a_{\nu(i)}$ is the identity permutation.
\end{proposition}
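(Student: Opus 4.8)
The plan is to prove this by an exchange argument: I will show that if $\nu$ is not the identity, then I can find a transposition that, when composed with $\nu$, does not decrease the sum $\sum_{i=1}^t a_i a_{\nu(i)}$, and use this to argue that the identity is optimal. Concretely, suppose $\nu \ne \mathrm{id}$. Then there exist indices $i < j$ with $\nu(i) > \nu(j)$ (an inversion of $\nu$). Let $\nu'$ be obtained from $\nu$ by swapping the values at $i$ and $j$, i.e.\ $\nu' = \nu \cdot (i\, j)$ in the appropriate sense, so that $\nu'(i) = \nu(j)$ and $\nu'(j) = \nu(i)$, and $\nu'$ agrees with $\nu$ elsewhere. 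The key computation is
\[
\sum_{m=1}^t a_m a_{\nu'(m)} - \sum_{m=1}^t a_m a_{\nu(m)} = a_i a_{\nu(j)} + a_j a_{\nu(i)} - a_i a_{\nu(i)} - a_j a_{\nu(j)} = (a_j - a_i)(a_{\nu(i)} - a_{\nu(j)}).
\]
Since $i < j$ the sequence being non-decreasing gives $a_j - a_i \geq 0$, and since $\nu(i) > \nu(j)$ it gives $a_{\nu(i)} - a_{\nu(j)} \geq 0$; hence this difference is non-negative, so $\nu'$ does at least as well as $\nu$.

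Next I would turn this local improvement into a global statement. One clean way: $\nu'$ has strictly fewer inversions than $\nu$ (swapping the values at an inversion pair $i<j$ with $\nu(i)>\nu(j)$ reduces the inversion count by at least one — this is the standard fact underlying bubble sort). Since the number of inversions is a non-negative integer, iterating this process terminates after finitely many steps at a permutation with no inversions, namely the identity, and at each step the sum $\sum_m a_m a_{\nu(m)}$ never decreases. Therefore $\sum_{i=1}^t a_i a_{\nu(i)} \leq \sum_{i=1}^t a_i a_{i}$ for every $\nu \in S_t$, which is exactly the claim that the identity permutation maximises the sum. (This is essentially the rearrangement inequality, but stating it via the inversion-count induction keeps the proof self-contained.)

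I do not anticipate a serious obstacle here — the argument is elementary. The only point requiring a little care is the wording of "maximises": the statement asks for \emph{the} maximising permutation, but when the $a_i$ are not all distinct there can be several permutations (e.g.\ any permutation fixing the multiset structure) achieving the maximum, so the precise claim is that the identity is \emph{a} maximiser, which is all that is needed in the applications. I would phrase the conclusion accordingly, noting that the identity always attains the maximum value $\sum_i a_i^2$. If a strict statement were wanted one would additionally observe that when all $a_i$ are distinct the difference $(a_j-a_i)(a_{\nu(i)}-a_{\nu(j)})$ is strictly positive at any inversion, forcing uniqueness, but this refinement is not required.
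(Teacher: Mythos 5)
Your proof is correct, and the exchange computation $(a_j-a_i)(a_{\nu(i)}-a_{\nu(j)})\geq 0$ together with the inversion-count descent is sound; this is the standard rearrangement-inequality argument. It is, however, a genuinely different route from the paper's: the paper inducts on $t$ itself, splitting according to whether $\nu$ decomposes into shorter disjoint cycles (in which case the inductive hypothesis applies factor by factor) or is a full $(t)$-cycle, in which case it reroutes the cycle through the largest index to produce a permutation $\eta$ fixing $t$ and then massages the resulting inequality algebraically. Your approach replaces that cycle-structure case analysis with a monotone local improvement indexed by the number of inversions, which is arguably cleaner and avoids the somewhat delicate bookkeeping in the paper's $(k+1)$-cycle step; the paper's induction buys nothing extra here. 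One small point you handle well and that is worth keeping: since the $a_i$ need not be distinct, the identity is only \emph{a} maximiser rather than \emph{the} maximiser, and both your argument and the paper's establish exactly the inequality $\sum_i a_i a_{\nu(i)} \leq \sum_i a_i^2$ that is used in the applications (Proposition \ref{eigsp_from_permsquare}), so nothing is lost.
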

\begin{proof}
	We proceed by induction on $t$. The results holds true for $t=2$ since $(a_1-a_2)^2 \geq 0$. Suppose the result holds for $t=k$, and suppose $t=k+1$. If $\nu$ can be written as a product of disjoint cycles of length less than $k+1$, then the result follows by the inductive hypothesis. Otherwise, $\nu$ is a $(k+1)$-cycle. Let $\eta$ be the permutation defined by $\eta(i) = \nu(i)$, except that $\eta(\nu^{-1}(k+1))=\nu(k+1)$, and $\eta(k+1)=k+1$. Then, by the inductive hypothesis and recalling that the sequence of $a_i$s is increasing, we have
\[
 \sum_{i=1}^{k} a_ia_{\eta(i)} \leq \left(\sum_{i=1}^{k}a_i^2 \right) + (a_{k+1}-a_{\nu^{-1}(k+1)})(a_{k+1}-a_{\nu(k+1)}),
	\]
	so that 
\[
 \sum_{i=1}^{k} a_ia_{\eta(i)} + a_{\nu^{-1}(k+1)}a_{k+1}+ a_{k+1}a_{\nu(k+1)}-a_{\nu^{-1}(k+1)}a_{\nu(k+1)}\leq \sum_{i=1}^{k+1}a_i^2.
\]
	The left-hand side of the final inequality is equal to $\sum_{i=1}^{k+1} a_ia_{\nu(i)}$, so the result follows.
\end{proof}
We use Proposition \ref{permsquare} to bound the dimension of fixed point spaces of semisimple elements acting on certain modules. For a finite dimensional vector space $V$, Let $V_t(s)$ denote the $t$-eigenspace of an element $s \in \gl (V)$.
\begin{proposition}
	\label{eigsp_from_permsquare}
	Let $G$ be almost quasisimple with $E(G)/Z(E(G))\cong \mathrm{PSL}_n(q)$, let $s \in G$ be semisimple of projective prime order with eigenvalues $t_1, t_2, \dots, t_m$ on the natural module $W$ for $E(G)$ over $\overline{\mathbb{F}}_q$, arranged so that the multiplicities $a_i$ of the $t_i$ are weakly decreasing. Suppose $t \in \overline{\mathbb{F}}_q$. Then:
	\begin{enumerate}
		\item if $V$ is the symmetric square of $W$, then $\dim V_t(s) \leq a_1+ \frac{1}{2}  \sum a_i^2$,
		\item if $V$ is the exterior square of $W$, then $\dim V_t(s) \leq \frac{1}{2} \sum a_i^2$, and
		\item if $V$ is the exterior cube of $W$, then $\dim V_t(s) \leq \frac{1}{6} n \sum a_i^2$.

	\end{enumerate}
\end{proposition}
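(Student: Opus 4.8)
The plan is to diagonalise $s$ on the natural module and turn each of (i)--(iii) into a weighted count that is controlled by Proposition~\ref{permsquare}. Fix an eigenbasis $v_1,\dots,v_n$ of $W\otimes\overline{\mathbb{F}}_q$ with $sv_h=\mu_h v_h$, where the distinct values of the $\mu_h$ are $t_1,\dots,t_m$ and $t_i$ occurs with multiplicity $a_i$; thus $\sum_i a_i=n$ and every $\mu_h$ is nonzero. Since $(a_i)$ is weakly decreasing, Proposition~\ref{permsquare} applied to its reversal gives $\sum_i a_i a_{\sigma(i)}\le\sum_i a_i^2$ for every $\sigma\in S_m$. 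For a nonzero scalar $u$ the equation $t_\kappa t_{\kappa'}=u$ determines $\kappa'$ uniquely from $\kappa$ (the $t_i$ being distinct), so it defines an involution $\nu_u$ on the set $S_u$ of indices $\kappa$ with $u/t_\kappa\in\{t_1,\dots,t_m\}$, whose fixed points are the at most two indices with $t_\kappa^2=u$. Extending $\nu_u$ by the identity off $S_u$ and combining with the previous inequality yields the basic estimate $\sum_{\kappa\in S_u}a_\kappa a_{\nu_u(\kappa)}\le\sum_i a_i^2$, which drives all three parts.

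For (i) and (ii) I would note that the products $v_hv_l$ ($h\le l$), respectively the wedges $v_h\wedge v_l$ ($h<l$), form an eigenbasis of $V$ with eigenvalue $\mu_h\mu_l$, so $\dim V_t(s)$ counts the corresponding pairs with $\mu_h\mu_l=t$. Splitting this count by the eigenvalue blocks of $h$ and $l$, an unordered pair of distinct blocks $\{\kappa,\kappa'\}$ with $t_\kappa t_{\kappa'}=t$ contributes $a_\kappa a_{\kappa'}=\tfrac12\bigl(a_\kappa a_{\nu_t(\kappa)}+a_{\kappa'}a_{\nu_t(\kappa')}\bigr)$, while a block $\kappa$ with $t_\kappa^2=t$ contributes $\binom{a_\kappa+1}{2}=\tfrac12 a_\kappa a_{\nu_t(\kappa)}+\tfrac12 a_\kappa$ for the symmetric square and $\binom{a_\kappa}{2}=\tfrac12 a_\kappa a_{\nu_t(\kappa)}-\tfrac12 a_\kappa$ for the exterior square. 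Hence $\dim V_t(s)=\tfrac12\sum_{\kappa\in S_t}a_\kappa a_{\nu_t(\kappa)}\pm\tfrac12\sum_{\kappa\in F}a_\kappa$, with $F$ the fixed-point set of $\nu_t$ and sign $+$ for $\mathrm{Sym}^2 W$, $-$ for $\wedge^2 W$; the basic estimate bounds the main term by $\tfrac12\sum_i a_i^2$, and since $|F|\le2$ with each $a_\kappa\le a_1$ the positive correction is at most $a_1$. This proves (i), and discarding the nonpositive correction proves (ii).

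For (iii) the wedges $v_g\wedge v_h\wedge v_l$ ($g<h<l$) form an eigenbasis with eigenvalue $\mu_g\mu_h\mu_l$, so $6\dim V_t(s)\le N$, where $N$ is the number of ordered triples $(g,h,l)\in\{1,\dots,n\}^3$ (repeats allowed) with $\mu_g\mu_h\mu_l=t$. I would evaluate $N$ by conditioning on the block $\kappa$ of the \emph{middle} entry $h$: there are $a_\kappa$ choices for $h$, and then $(g,l)$ ranges over the ordered pairs with $\mu_g\mu_l=t/t_\kappa$, of which there are $\sum_{\kappa'\in S_{t/t_\kappa}}a_{\kappa'}a_{\nu_{t/t_\kappa}(\kappa')}\le\sum_i a_i^2$ by the basic estimate. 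Summing over $\kappa$ gives $N\le\bigl(\sum_\kappa a_\kappa\bigr)\sum_i a_i^2=n\sum_i a_i^2$, hence $\dim V_t(s)\le\tfrac16 n\sum_i a_i^2$. None of this is deep; the two points that need attention are the extra $a_1$ forced by the diagonal terms of the symmetric square, and the observation that conditioning on the middle index in (iii) is exactly what collapses the triple count to the pair count already bounded by the basic estimate.
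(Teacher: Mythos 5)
Your proof is correct and follows essentially the same route as the paper: diagonalise $s$, read off eigenvectors of $V$ from products of eigenvectors of $W$, and bound the block-pairing sums via Proposition~\ref{permsquare}. The only (minor) differences are that your averaging over the at most two fixed blocks of $\nu_t$ handles the diagonal terms of the symmetric square uniformly, where the paper splits into odd projective order versus order $2$, and that in part (iii) you re-derive by hand the ordered-triple count that the paper obtains by citing Proposition~\ref{tensorcodim}.
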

\begin{proof}
We will prove the proposition by considering the action of $s$ on $\overline{V} = V \otimes \overf_q$ for the various $V$.
Throughout, let $\overline{W}=W \otimes \overf_q$, and denote by  $\overline{W}^{\otimes a}$ the tensor product of $a$ copies of $\overline{W}$.  Also let $\{e_i \mid 1\leq i \leq n\}$ be a basis of $\overline{W}$ consisting of eigenvectors of $s$.

First suppose that $V$ is the symmetric square of $W$. Notice that $\{e_i \otimes e_j \mid 1\leq i\leq j \leq n\}$ is a basis of $\overline{W}^{\otimes 2}$ comprising eigenvectors of $s$, and let
$B_1=\{ e_i \otimes e_j+e_j\otimes e_i \mid 1\leq i<j\leq n\}$ and $B_2=\{ e_i\otimes e_i \mid 1\leq i \leq n\}$. Notice that $B_1 \cup B_2$ is a basis of $\overline{V}$ consisting of eigenvectors for $s$. The number of elements of $B_1$ lying in the $t$-eigenspace $\overline{V}_t(s)$ of $s$ on $\overline{V}$ is at most $\frac{1}{2} \sum a_i^2$ by Proposition \ref{permsquare}. Moreover, the number of vectors in $B_2$ contained in $\overline{V}_t(s)$ is at most $a_1$ if $s$ has odd projective prime order, and $2a_1$ if $s$ has projective prime order $2$. Therefore, the result follows for elements of odd projective prime order. If $s$ is of projective prime order 2, it has two eigenvalues $\pm \zeta$ of multiplicities $a_1,a_2$. Therefore, the eigenspaces of $s$ on $\overline{V}$ have dimensions $a_1a_2$ and $\frac{1}{2}(a_1^2+a_2^2+a_1+a_2)$. Both of these quantities are less than $\dim V_t(s) \leq a_1+ \frac{1}{2} \sum a_i^2$, as required.

Now suppose $\overline{V}$ is the exterior cube of $\overline{W}$. Note that  $\{e_i\otimes e_j \otimes e_k \mid 1\leq i\leq j\leq k\leq n\}$ is a basis of  $\overline{W}^{\otimes 3}
$ comprising eigenvectors of $s$. We can write $\overline{V} = \overline{W}^{\otimes 3}/U$, where $U$ is the subspace of $\overline{W}^{\otimes 3}$ generated by simple tensors with a repeated factor. Notice that  $e_i\otimes e_j \otimes e_k  + U = e_{\sigma(i)}\otimes e_{\sigma(j)} \otimes e_{\sigma(k)} +U$ for all $\sigma \in S_3$, and that vectors of the form $e_i\otimes e_j \otimes e_k  + U$ form a complete set of eigenvectors for $s$ on  $\overline{V}$. Therefore, by Propositions \ref{permsquare} and \ref{tensorcodim}, 
$
\dim \overline{V}_t(s) \leq  \frac{1}{6} \dim  W^{\otimes 3}_t(s) \leq \frac{1}{6} n \sum a_i^2
$
as required. The proof for the exterior square of $W$ is similar.
\end{proof}

 \section{Proof of Theorem \ref{main},  I : First steps}
In this section, we focus on the case where $G$ and $V$ have the same underlying field, $\mathbb{F}_q$. We reduce the proof of Theorem \ref{main} here to a finite list of cases given in Tables \ref{fulllist} and \ref{comptensor}. Here, as well as in future sections, we will consider the realisation of absolutely irreducible $\F_q\mathrm{SL}_n(q)$-modules $V(\lambda)$ up to quasiequivalence, i.e., up to duality and images under field automorphisms of $\mathrm{SL}_n(q)$.

\begin{proposition}
\label{n^3}
Let $V=V_d(q)$ be a $d$-dimensional vector space over $\F_q$. 
Suppose $G\leq \Gamma \mathrm{L}(V)$ is an almost quasisimple group with $E(G)/Z(E(G)) \cong \mathrm{PSL}_n(q)$, for $n\geq 2$ and $q=p^e$, such that the restriction of $V$ to $E(G)$ is absolutely irreducible. Also assume that $E(G)/Z(E(G))$ is not isomorphic to an alternating group i.e., $(n,q) \neq (2,4)$ or $(2,5)$. If $d  \geq n^3$, then $G$ has a regular orbit on $V$.
\end{proposition}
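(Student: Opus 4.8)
The plan is to use the counting bound Proposition~\ref{tools}\ref{crude}) together with the $\alpha$-bounds from Proposition~\ref{alphas} and the enumeration bounds on prime order elements from Propositions~\ref{invols} and~\ref{graph}. Since $d \geq n^3$ is much larger than $n^2$, the key leverage is that $\alpha(x) \leq n$ for all but a tiny handful of exceptional automorphisms (graph and graph-field automorphisms of $\mathrm{PSL}_2$, $\mathrm{PSL}_3$, $\mathrm{PSL}_4$), so $\dim C_V(x) \leq \lfloor (1-1/\alpha(x))d \rfloor \leq \lfloor (1-1/n)d \rfloor$ for generic $x$, while there are at most $|\mathrm{PGL}_n(q)| < q^{n^2}$ conjugacy classes' worth of elements to sum over (more precisely, $|x^G| < |\mathrm{Aut}(\mathrm{PSL}_n(q))| < q^{n^2+1}\log_p q$ or so). So the right-hand side of \eqref{crudeeqn} is crudely at most something like $2 q^{n^2+2} \cdot q^{(1-1/n)d}$ plus lower-order terms, and we need $q^d$ to beat this, i.e. $d/n > n^2 + O(1)$, which holds comfortably once $d \geq n^3$ provided $n$ is not too small.

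First I would set up the trichotomy of prime order elements of $G$ modulo $F(G)$: semisimple elements, unipotent elements (projective order $p$), and automorphisms outside $\mathrm{PGL}_n(q)$ — field, graph, and graph-field automorphisms. For each type I need (a) an upper bound on $\dim C_V(x)$ and (b) an upper bound on the number of such elements (or the relevant weighted class-size sum). For semisimple and unipotent elements, part (a) comes from $\alpha(x) \leq n$ via Proposition~\ref{tools}\ref{alphabound}), giving $\dim C_V(x) \leq (1-1/n)d$; for (b), the total number of elements of $\mathrm{PGL}_n(q)$ of any prime order is trivially at most $|\mathrm{PGL}_n(q)| < q^{n^2-1}$. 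For field automorphisms, Proposition~\ref{field} already packages the entire contribution $\sum_{x \in \mathcal F}|\bar x^H||C_V(x)|$ into $\log(\log_2 q + 2)\, q^{n^2/2 + (d + \zeta d^{1/2})/2}$ since $d \geq n^3 \geq n^2$ (note $k=1$ here); this is dominated by $q^d$ for $d$ large. For graph and graph-field automorphisms, I would use Propositions~\ref{graph} and~\ref{graphfix}: there are at most $2q^{(n^2-n)/2-1}$ graph involutions and fewer than $q^{(n^2-1)/2}$ graph-field involutions, and $\dim C_V(\tau) \leq d/2 + \tfrac12\sum_{\mu\in S}\dim V_\mu$. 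The coarse bound $\dim C_V(\tau) \leq d/2 + $ (something bounded by $d$, but we can do better since the fixed-weight subspace is genuinely a proper part) suffices — in the worst case $\dim C_V(\tau) \leq (1 - 1/n)d$ again works if $n$ is moderate, but for small $n$ one needs the sharper $d/2 + o(d)$; actually since graph automorphisms have $\alpha \leq n$ except the small exceptions, we get $\dim C_V(\tau)\le (1-1/n)d$ directly from Proposition~\ref{alphas} in most cases.

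Combining: the dominant term in \eqref{crudeeqn} is $2 q^{n^2-1} \cdot q^{(1-1/n)d}$ from semisimple/unipotent classes, and we need
\[
q^d > 2 q^{n^2 - 1 + (1 - 1/n)d} + (\text{field, graph, graph-field terms}),
\]
i.e. $d/n > n^2 - 1 + \log_q 2 + (\text{small correction})$. Since $d \geq n^3$ this reads $n^2 \gtrsim n^2 - 1 + \varepsilon$, which is true for all $n \geq 2$ once one is slightly careful with constants — and here I can afford to absorb the field-automorphism and graph-automorphism contributions since they are $O(q^{n^2/2 + d/2 + o(d)})$, negligible against $q^{d/n \cdot n} = q^d$ when $d \geq n^3$ and $d/n \geq n^2 \geq 4$. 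The exceptional automorphisms (graph-field of $\mathrm{PSL}_3$, graph of $\mathrm{PSL}_4$, etc., where $\alpha$ can be as large as $4$, $6$, or $7$) only occur for $n \in \{2,3,4\}$ and only finitely many of them, so they contribute at most $q^{n^2} \cdot q^{(1-1/7)d}$, still beaten by $q^d$ when $d \geq n^3 = 64$ for $n=4$ and $d/n = 16 \gg 7 \cdot n^2/d \cdot \dots$; one checks $d - (1-1/7)d = d/7 \geq 64/7 > 16 \geq n^2$ fails to be the right comparison — rather $d/7 \geq n^3/7$ vs the exponent $n^2$ of the class-size bound, and $n^3/7 > n^2 \iff n > 7$, so for $n \in \{3,4\}$ I would instead note these are finitely many classes and use a direct numerical check or the fact that $d\ge n^3$ forces $d$ large enough.

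I expect the main obstacle to be handling small $n$ — specifically $n = 2, 3, 4$ — where the exceptional graph and graph-field automorphisms have $\alpha(x)$ as large as $3$, $4$, $6$, or $7$, so the naive bound $\dim C_V(x) \leq (1-1/\alpha)d$ is weak, and simultaneously $n^2$ is not negligibly small compared to $d/n = d/n$. For these cases the resolution is that (i) there are only \emph{finitely many} such exceptional automorphism classes (bounded independent of $q$), so their combined contribution to \eqref{crudeeqn} is at most a constant times $q^{n^2+1}\log_p q \cdot q^{(1-1/7)d}$, and (ii) with $d \geq n^3$, i.e. $d \geq 8, 27, 64$ respectively, the inequality $d > n^2 + 1 + \log_q(\text{const}) + (1-1/7)d$, equivalently $d/7 > n^2 + O(\log q)$, needs checking; for $n=2$ ($d\ge 8$, $\alpha\le 4$) it is $d/4 > 3 + O(\log q)$ which may fail for very small $q$ and small $d$ and would require the field-automorphism packaging of Proposition~\ref{field} plus possibly handling $\mathrm{SL}_2$ representations individually — but $(n,q)\neq(2,4),(2,5)$ is excluded and for $n=2$ the only $p$-restricted modules with $d \geq 8$ begin at $V(7\lambda_1)$ (for $p \geq 11$) or are tensor products, which pushes $q$ up, keeping $\log_q$ terms tame. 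So the honest structure of the proof is: a clean asymptotic argument via Proposition~\ref{tools}\ref{crude}) that works outright for $n \geq 5$ (say), and a short finite-case analysis, leaning on Propositions~\ref{field}, \ref{graphfix}, \ref{graph}, for $n \in \{2,3,4\}$, possibly invoking the "if \eqref{crudeeqn} fails for $d$ it fails for all $d_1 \geq d$" clause to reduce each small-$n$ case to checking the single smallest admissible dimension $d$.
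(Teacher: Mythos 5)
Your strategy is exactly the paper's: apply Proposition \ref{tools}\ref{crude}) with $\dim C_V(x)\leq\lfloor(1-1/\alpha(x))d\rfloor$ from Propositions \ref{alphas} and \ref{tools}\ref{alphabound}), bound the inner-diagonal contribution by $|\mathrm{PGL}_n(q)|<q^{n^2-1}$, package the field automorphisms via Proposition \ref{field}, and count graph and graph-field involutions via Proposition \ref{graph}; for $n\geq 5$ this closes exactly as you compute, and the remaining work is the finite-case analysis for $n\in\{2,3,4\}$ that you correctly anticipate. However, two of the steps you leave vague are genuine gaps rather than routine checks.

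First, for $n\in\{3,4\}$ you bound the contribution of the exceptional large-$\alpha$ classes by roughly $q^{n^2+1}\log_p q\cdot q^{(1-1/7)d}$, correctly observe that this comparison fails (since $n^3/7>n^2$ only for $n>7$), and then retreat to ``finitely many classes and a direct numerical check''. The actual resolution is the sharper count you quoted earlier but did not carry through: by Proposition \ref{graph} the graph and graph-field involutions number only about $q^{(n^2+n)/2-1}$, roughly the square root of $|\mathrm{PGL}_n(q)|$, and with that exponent the inequality does close for $d\geq n^3$ (for $n=4$, $\alpha\leq 6$ and a count of $2q^{9}$ one needs $d>54\leq 64$; for $n=3$, $\alpha\leq 4$ and a count of order $q^{5}$ one needs $d>20\leq 27$). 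Second, the counting argument fails outright for $(n,q)=(4,2)$ and $(2,9)$, which your hypothesis does not exclude: $\mathrm{PSL}_4(2)\cong\mathrm{Alt}_8$ has graph automorphisms with $\alpha(x)=7$ by Proposition \ref{alphas}, so the inequality would require roughly $d>77$, while $V(\lambda_1+\lambda_2+\lambda_3)$ is an absolutely irreducible module of dimension exactly $64=n^3$ in characteristic $2$; similarly $\mathrm{PSL}_2(9)\cong\mathrm{Alt}_6$ has involutory field automorphisms with $\alpha(x)=5$, and $9$-dimensional modules exist. The paper disposes of both cases by citing the regular-orbit classification for alternating groups \cite{MR3500766}; some such external input or explicit computation is needed to complete your argument at these two points.
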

\begin{proof}
Throughout, we will assume that $d\geq n^3$.
First suppose $n\geq 5$.
For $x \in G\setminus F(G)$ we have, by Propositions \ref{alphas} and  \ref{tools}\ref{alphabound}, $\dim C_V(x) \leq \lfloor \frac{n-1}{n} \dim V\rfloor $.
If $G$ has no regular orbit on $V$, then by Propositions \ref{field} and \ref{graph}, as well as inequality \eqref{ogeqn},
\begin{align*}
q^{d} &\leq( |\pgl_n(q)|+q^{(n^2+n)/2-1}+q^{(n^2-1)/2})(q^{\lfloor \frac{n-1}{n}d \rfloor} + q^{\lceil \frac{d}{n} \rceil} )+2\log(\log_2q+2)q^{n^2/2+d/2}\\
& \leq q^{n^2-1}(q^{\lfloor \frac{n-1}{n}d \rfloor} + q^{\lceil \frac{d}{n} \rceil} )+2\log(\log_2q+2)q^{(\frac{1}{2}+\frac{1}{2n})d}\\
& < \frac{3}{2}q^{d-1},
\end{align*}
So this is a contradiction and $G$ has a regular orbit on $V$ when $d\geq n^3$.
%
%
Now suppose $n=3$ or $4$ with $(n,q)\neq (4,2)$. Then for  $x \in G\setminus F(G)$ of projective prime order, $\dim C_V(x) \leq \lfloor \frac{n-1}{n} d\rfloor$, except that $\dim C_V(x) \leq \lfloor \frac{3}{4} d\rfloor $ when $n=3$ and $x$ is a graph automorphism, or $\dim C_V(x) \leq \lfloor \frac{5}{6}d \rfloor $ when $n=4$ and $x$ is a graph-field automorphism. Therefore, if $V$ is not preserved by graph automorphisms, then we may use the argument above to prove the result.  If $V$ is preserved by graph automorphisms and $G$ has no regular orbit on $V$ then for $n=3$ we have, by Proposition \ref{graph},
\[
q^d \leq |\pgl_3(q)| (q^{\lfloor \frac{2}{3} d \rfloor}+q^{\lceil \frac{d}{3} \rceil} ) + (2q^{5}+2q^4)(q^{\lfloor \frac{3}{4} d \rfloor}+q^{\lceil \frac{d}{4} \rceil} ) + \field.
\]

So $G$ has a regular orbit on $V$.
On the other hand, if $n=4$, $q\geq 3$ and $G$ has no regular orbit on $V$ then by Proposition \ref{graph},

\[
q^d \leq (|\pgl_4(q)|+q^{10})(q^{\lfloor \frac{3}{4} d \rfloor}+q^{\lceil \frac{d}{4} \rceil} ) + (2q^{9})(q^{\lfloor \frac{5}{6} d \rfloor}+q^{\lceil \frac{d}{6} \rceil} ) + \field < 2q^{d-1},
\]
and again this implies that $G$ has a regular orbit on all $V$ of dimension at least $n^3$.  If instead $(n,q)=(4,2)$, then the result follows by \cite{MR3500766}.
Finally, if $n=2$, then by Proposition \ref{alphas}, 
provided $q \geq 7$ and $q\neq 9$, $\dim C_V(x) \leq \floor{\frac{1}{2} \dim V}$ for $x$ of odd prime order, and $\dim C_V(x) \leq \floor{\frac{2}{3} \dim V}$ for involutions, except that $\dim C_V(x) \leq \floor{\frac{3}{4} \dim V}$ for involutory field automorphisms. Therefore, by Proposition \ref{invols}, if $G$ has no regular orbit on $V$, then 
\[
q^d\leq 4(q^2+q)(q^{\floor{2d/3}}+q^{\lceil d/3 \rceil}) + 2|\pgl_2(q)|q^{\floor{d/2}} + \frac{|\mathrm{PGL}_2(q)|}{|\mathrm{PGL}_2(q^{1/2})|}q^{\floor{3d/4}}+ \field
\]
This gives a contradiction for $d \geq 9$ and $q\geq 7$, as well as $d=8$ and $q\geq 11$. When $(d,q)=(8,7)$, deleting the final two terms in the inequality (since they account for field automorphisms) gives the result. If $(n,q)=(2,9)$, then the result follows by \cite{MR3500766}.

\end{proof}

By Proposition \ref{n^3}, we only need consider those irreducible $\mathbb{F}_qG$-modules of dimension less than $n^3$. The following result gives a characterisation of all such modules.

\begin{proposition}
\label{complete}
Let $G=\mathrm{SL}_n(q)$ with $q=p^e$ and $p$ prime, and suppose $V = V(\lambda)$ is an absolutely irreducible $d$-dimensional $\mathbb{F}_qG$-module with $d< n^3$. If $\lambda$ is $p$-restricted, then it appears in Table \ref{fulllist}. Otherwise, we can write $V(\lambda)= V(\iota_1)^{(p^a)}\otimes  V(\iota_2)^{(p^b)}$ for $0\leq a<b <e$ and $\iota_1,\iota_2$ $p$-restricted. In this case, $\iota_1$ and $\iota_2$ appear in Table \ref{comptensor}.
\end{proposition}
\begin{proof}
First let $\lambda$ be $p$-restricted.
For $l\geq 19$, Martinez \cite{alvaro} provides a complete list of irreducible $G$-modules of dimension at most $(l+1)^3$, and all of these are included in Table \ref{fulllist}. For $2\leq l\leq 18$, L\"ubeck  [A.6--A.21]\cite{MR1901354} provides a complete list of such modules. If instead $\lambda$ is not $p$-restricted, then for $l\geq 12$, the result follows from inspection of \cite[Table 2]{MR1901354} and for $l\leq 11$, we instead inspect \cite[Appendices A.6--A.15]{MR1901354}.
\end{proof}

\begin{table}[!htbp]
\begin{minipage}{0.45 \textwidth}
\centering
\begin{tabular}{@{}cc@{}}
\toprule
$\lambda$ & $l$ \\ \midrule
$\lambda_1$ & $[1,\infty]$ \\
$\lambda_2$ & $[2,\infty]$ \\
$2\lambda_1$ & $[1,\infty]$ \\
$\lambda_1+\lambda_l$ &  \\
$\lambda_3$ & $[5,\infty]$ \\
$3\lambda_1$ & $[1,\infty]$ \\
$\lambda_1+\lambda_2$ & $[2,\infty]$ \\
$\lambda_1+\lambda_{l-1}$ & $[4,\infty]$ \\
$2 \lambda_1+\lambda_{l}$ & $[2,\infty]$ \\
$\lambda_4$ & $[7,28]$ \\
$2\lambda_2$ & $[3,17]$ \\
$\lambda_5$ & $[9,14]$ \\
$4 \lambda_1$ & $[1,13]$ \\
$\lambda_6$ & $[11,12]$ \\ \bottomrule
\end{tabular}
\quad
\begin{tabular}{cc}
\toprule
$\lambda$& $l$\\ 
\midrule 
$\lambda_1+\lambda_3$ & $[5,11]$\\
$\lambda_1+\lambda_{l-2}$ & $[6,8]$\\
$\lambda_1+\lambda_4$ & $7$\\
$\lambda_2+\lambda_3$ & $[4,7]$\\
$2\lambda_1+\lambda_2$ &$[2,5]$\\
$\lambda_2+\lambda_4$ & $5$\\
$2\lambda_3$ & $5$\\
$3\lambda_1+\lambda_2$ & $[2,4]$\\
$2\lambda_1+\lambda_3$ & $4$\\
$\lambda_1+\lambda_2+\lambda_3$&$3$\\
$\lambda_1+2\lambda_2$ & $3$\\
$5\lambda_1$ &$[2,3]$\\
$3 \lambda_2$ & $3$\\
&\\
\bottomrule
\end{tabular}
\caption{List of $p$-restricted modules $V(\lambda)$ to consider. \label{fulllist}}
\end{minipage}%
%
\begin{minipage}{0.55 \textwidth}
\centering
\begin{tabular}{@{}m{2.5cm}m{2cm}l@{}}
\toprule
$\iota_1$ & $\iota_2$ & $l$ \\
\midrule
$\lam 1$ 
 & $\lam 1$ & $[1,\infty)$ \\
 & $\lam l$ & $[2,\infty)$ \\
 & $\lam 2$ & $[2,\infty)$ \\
 & $\lam {l-1}$ & $[2,\infty)$ \\
 & $2\lam 1$ & $[1,\infty)$ \\
 & $2\lam l$ & $[2,\infty)$ \\
 & $\lam 1+\lam l$ & $[2,\infty)$ \\
 & $\lam 3$ & $[5,7]$ \\
 & $\lam {l-2}$ & $[5,7]$ \\
 \midrule
$\lam 2$ & $\lam 2$ & $[3,4]$ \\
 & $\lam {l-1}$ & $[4,5]$ \\
 & $2\lam 1$ & 3 \\
\bottomrule
\end{tabular}
\caption{The list of non-$p$-restricted modules $V(\lambda)= V(\iota_1)^{(p^a)}\otimes  V(\iota_2)^{(p^b)}$ to consider.\label{comptensor}}
\end{minipage}

\end{table}

We will aim to prove Theorem \ref{main} for the modules $V=V(\lambda)$ in Tables \ref{fulllist} and \ref{comptensor} using Proposition \ref{tools}. 
In order to do this, we need a method of computing tighter upper bounds on the sizes of fixed point spaces for projective prime order elements than those afforded by Proposition \ref{tools}\ref{alphabound}). We now describe this method, which has been pioneered by Guralnick and Lawther in \cite{bigpaper}.

Let $G \leq \Gamma \mathrm{L}(V)$ be almost quasisimple with $E(G)/Z(E(G)) \cong \mathrm{PSL}_n(q)$ such that $E(G)$ is absolutely irreducible on $V$. Let $\overline{G}$ be a simple algebraic group and $\sigma$ a Frobenius endomorphism of $\overline{G}$ such that $E(G) \leq \overline{G}_{\sigma}$ and $E(G) \leq \overline{G} \leq \mathrm{GL}(V)$. 

Let $\hat{s}\in G$ be a semisimple element of projective prime order $r\neq p$. 
Then there exists $ \nu \in \overline{\mathbb{F}}_q$ such that $s = \nu \hat{s} \in \overline{G}$. Note that $s$ is of projective prime order $r$, and that $s$ and $\hat{s}$ have eigenspaces of the same dimension on $V$. In addition, $s$ lies in a fixed maximal torus $T \leq \overline{G}$, and we define $\Phi$ to be the root system of $\overline{G}$ with respect to $T$. Let $\Phi(s) = \{ \alpha \in \Phi \mid \alpha(s)=1\}$ so that $C_{\overline{G}}(s) = \langle T, U_\alpha \mid \alpha \in \Phi(s) \rangle$, where $U_\alpha$ is the root subgroup in $\overline{G}$ corresponding to $\alpha$. For a closed subsystem $\Psi \subseteq \Phi$, we define the subsystem subgroup  $\overline{G}_\Psi = \langle U_\alpha \mid \alpha \in \Psi \rangle$.

Let $\Psi$ be a \textit{standard subsystem} of $\Phi$. That is, $\Psi = \langle S \rangle$ for some $S \subseteq \Delta$. We define an equivalence relation $\sim$ on the set of weights of $V=V(\lambda)$ of highest weight $\lambda$ by setting $\m1 \sim \m2$ if and only if $\m1 - \m2$ is a linear combination of roots in $\Psi$. We call the corresponding equivalence classes of \textit{$\Psi$-nets}. If $\Psi$ is generated by a single simple root, the resulting $\Psi$-nets are called \textit{weight strings}. 

The \textit{fixed point space} $C_V(g)$ of $g \in G$ acting on $V$ is its 1-eigenspace.
We use $\Psi$-nets to compute lower bounds for $\mathrm{codim} C_V(s)$ (and indeed the codimension of any eigenspace) for semisimple $s \in \overline{G}_\sigma$ of projective prime order $r$ as follows. Assume that $\Psi \cap \Phi(s)$ is empty; then in any $\Psi$-net, any pair of weights that differ by a multiple $M\alpha$ of a root $\alpha$ can only correspond to the same eigenvalue if $M \alpha(s)=1$, so unless $r \mid M$, the two weight spaces must lie in different eigenspaces for $s$. We can use this to compute a lower bound for the contribution of the $\Psi$-net to $\mathrm{codim} C_V(s)$, and we denote this lower bound by $c(s)$.

We may also use these $\Psi$-nets to compute a lower bound $c(u_\Psi)$ for regular unipotent elements $u_\Psi \in \overline{G}_\Psi$. The sum of weight spaces of a given $\Psi$-net forms a (not necessarily irreducible) $\overline{G}_\Psi$-module, and so if we assume $u_\Psi$ is of prime order $p$, we can compute the contribution of the $\Psi$-net to $\mathrm{codim}C_V(u_\Psi)$. We do this using the Jordan canonical form of $u_\Psi$ in its action on the composition factors of the $\Psi$-net module. We denote the lower bound on $\mathrm{codim}C_V(u_\Psi)$ achieved by analysing these $\Psi$-nets by $c(u_\Psi)$.
Once we have computed lower bounds for $\mathrm{codim} C_V(s)$ and $\mathrm{codim} C_V(u_\Psi)$, we then apply Proposition \ref{tools} to attempt to prove that there is a regular orbit of $G$ on $V$. For this to be successful, we need to calculate lower bounds on the numbers of semisimple elements $s$ with $\Phi(s) \cap \Psi = \emptyset$ and unipotent elements $u \in \overline{G}_\sigma$ conjugate to $u_\Psi$ for a given $\Psi$.

\begin{proposition}
\label{sscounts}
Let $\Psi$ and $N_s(\Psi)$ be as in Table \ref{ssboundtable}. The number of prime order semisimple elements $g \in \pgl_n(q)$ that are $\mathrm{PGL}_n(\overline{\F}_q)$-conjugate to $s\in T$ such that $\Phi(s)$ intersects every subsystem of type $\Psi\subseteq \Phi$ is at most $N_s(\Psi)$.
\begin{table}[!htbp]
\begin{tabular}{@{}cc@{}}
\toprule
$\Psi$ & $N_s(\Psi)$ \\ \midrule
$A_1^2$ & $q^{2n-1}$ \\\midrule
$A_1^3$ & $2 q^{4n-4}$ \\\midrule
$A_2$ & $\begin{cases} 2q^{ n^2/2 + 3/2} & n \textrm{ odd} \\ 4q^{n^2/2+2} & n \textrm{ even}\\ \end{cases}$ \\\midrule
$A_3$ & $4 q^{\lfloor n^2/2 + 2 \rfloor}+ +\frac{43}{3}q^{2n^2/3+2}+2q^{2n^2/3+1}$ \\ \bottomrule
\end{tabular}
\caption{Upper bounds on the number of prime order semisimple elements $s$ such that $\Phi(s) \cap \Psi \neq \emptyset$.\label{ssboundtable}}
\end{table}
\end{proposition}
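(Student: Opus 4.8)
The plan is to translate the hypothesis ``$\Phi(s)$ meets every subsystem of type $\Psi$'' into a purely combinatorial condition on the eigenvalue-multiplicity partition of $s$ acting on the natural module $W=\overf_q^{\,n}$, and then to bound, partition by partition, the number of semisimple elements of $\pgl_n(q)$ of projective prime order realising that condition, via conjugacy-class estimates in $\gl_n$. Throughout one may assume $\Psi\subseteq\Phi$, which is the only case in which the $\Psi$-net technique is applied.

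First I would set up the dictionary. Identify $\Phi=A_{n-1}$ with $\{e_i-e_j:1\le i\ne j\le n\}$ and a semisimple $s\in T$ with the partition of $\{1,\dots,n\}$ into the eigenspaces of $s$ on $W$, so that $e_i-e_j\in\Phi(s)$ precisely when $i$ and $j$ lie in the same block. Every closed subsystem of $A_{n-1}$ of type $A_k$ (resp.\ $A_1^k$) is $W$-conjugate to a standard one, so is specified by a choice of $k+1$ indices (resp.\ of $k$ pairwise disjoint index pairs), and $\Phi(s)$ meets it iff two of these indices share a block (resp.\ some chosen pair lies inside a block). Hence $\Phi(s)$ meets every $A_2$ (resp.\ every $A_3$) iff $s$ has at most $2$ (resp.\ at most $3$) distinct eigenvalues; and, writing $H$ for the complete multipartite graph on $\{1,\dots,n\}$ with the eigenspaces as parts, $\Phi(s)$ meets every $A_1^2$ (resp.\ every $A_1^3$) iff the matching number $\nu(H)$ is at most $1$ (resp.\ at most $2$), since a chosen family of disjoint pairs avoiding $\Phi(s)$ is exactly a matching in $H$. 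From the elementary identity $\nu(K_{a_1,\dots,a_m})=\min\{\lfloor n/2\rfloor,\,n-\max_i a_i\}$ one then reads off the admissible partitions: $(n-1,1)$ for $A_1^2$ (when $n\ge4$; vacuous for $n\le3$); $(n-1,1),(n-2,2),(n-2,1,1)$ for $A_1^3$ (when $n\ge6$; vacuous for $n\le5$); all partitions into at most two parts for $A_2$; and all partitions into at most three parts for $A_3$.

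Next I would carry out the count. Fix an admissible partition $\pi=(a_1\ge\cdots\ge a_m)$ of $n$. A Frobenius-stable semisimple class of $\gl_n(\overf_q)$ with these multiplicities is pinned down by how the Frobenius permutes the eigenvalues, which it does within blocks of equal size; grouping the blocks of $\pi$ into Frobenius orbits, a class whose $j$-th orbit consists of $c_j$ blocks of common size $b_j$ has centraliser $\prod_j\gl_{b_j}(q^{c_j})$ in $\gl_n(q)$, of order at least $2^{-t}q^{\sum_j c_jb_j^2}$ with $t$ the number of orbits, so its intersection with $\gl_n(q)$ — which has the same cardinality as the corresponding $\pgl_n(q)$-class, the scalars lying in the centraliser — has size less than $2^{t}q^{\,n^2-\sum_j c_jb_j^2}$. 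Since an element of projective prime order $r$ has all its eigenvalue-ratios equal to roots of unity of the single prime order $r$, for fixed $\pi$ and orbit type the number of admissible $\pgl_n(q)$-classes is at most a small constant times $q^{m-1}$; multiplying these two bounds and summing the resulting geometric series over the finitely many admissible $\pi$ and the boundedly many orbit types yields the entries of Table~\ref{ssboundtable}. The leading exponent is $\max\{n^2-\sum_j c_jb_j^2\}$ over admissible data, plus an $O(1)$ absorbing the eigenvalue count and the geometric summation: this maximum is $2n-2$, from $(n-1,1)$, for $A_1^2$; $4n-6$, from $(n-2,1,1)$, for $A_1^3$; of order $n^2/2$, from a single Frobenius orbit of two blocks of size $n/2$ (a $\gl_{n/2}(q^2)$-centraliser) when $n$ is even and from $(\lceil n/2\rceil,\lfloor n/2\rfloor)$ otherwise, for $A_2$; and of order $2n^2/3$, from the nearly balanced three-part partitions (a $\gl_{n/3}(q^3)$-centraliser when $3\mid n$), for $A_3$, with the extra $q^{\lfloor n^2/2+2\rfloor}$ summand for $A_3$ being the contribution of the partitions into at most two parts.

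The main obstacle is the bookkeeping in this last step: one must enumerate systematically every way the Frobenius can fuse eigenspaces of equal multiplicity — exactly what produces the $\gl_{n/2}(q^2)$- and $\gl_{n/3}(q^3)$-type centralisers responsible for the $q^{n^2/2}$ and $q^{2n^2/3}$ terms — keep the count of eigenvalues consistent with the projective-prime-order hypothesis, and sum carefully enough to recover the explicit constants $4$, $2$, $\tfrac{43}{3}$ and $2$. The combinatorial reduction of the first step, in particular the matching-number characterisation for $\Psi=A_1^3$, also needs some care but is short.
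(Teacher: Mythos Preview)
Your proposal is correct and follows essentially the same route as the paper: identify the admissible eigenvalue partitions, then bound the number of elements class by class using $|\gl_n(q)|/|C_{\gl_n(q)}(s)|$ and the number of eigenvalue choices, splitting according to how Frobenius fuses equal-size blocks (the $\gl_b(q^c)$-type centralisers).

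The one genuine difference is in the first step. The paper simply asserts which centraliser types occur---``the semisimple elements $s$ with $\Phi(s)$ intersecting every conjugate of $\Psi$ are those with centraliser type $A_{l-1}$'' for $\Psi=A_1^2$, and the analogous list for $A_1^3$---without justification. You instead derive these lists from the matching-number identity $\nu(K_{a_1,\dots,a_m})=\min\{\lfloor n/2\rfloor,\,n-a_1\}$, which makes the reduction transparent and uniform across the four cases. This is a modest improvement in exposition: it explains \emph{why} the admissible partitions for $A_1^k$ are exactly those with $a_1\ge n-k+1$, and why for $A_k$ they are exactly those with at most $k$ parts, rather than leaving this to the reader. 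The counting step is then identical in substance to the paper's, only organised more schematically around Frobenius orbit types rather than written out case by case.
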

\begin{proof}
First let $\Psi$ be of type $A_1^2$. The semisimple elements $s$ with $\Phi(s)$ intersecting every conjugate of $\Psi$ are those with centraliser type $A_{l-1}$. Let $r$ be the order of $s$. By \cite[Table B.3]{bg},  $r\mid q-1$ and each element $s$ has eigenvalues $(\gamma_1^{n-1}, \gamma_2)$ for $\gamma_1$, $\gamma_2$ both $r$th roots of unity. 
The total number of these elements in $\mathrm{PGL}_n(q)$ is less than
\[
(q-1)\frac{|\gl_n(q)|}{|\mathrm{GL}_{n-1}(q)|\gl_1(q)|}< q^{2n-1},
\]
as required.
Now suppose $\Psi$ is of type $A_1^3$.
The closed subsystems of $\Phi$ that intersect every conjugate of $\Psi$ are those of type $A_{l-1}$, $A_1A_{l-2}$, or $A_{l-2}$. We have already dealt with the first case, so
suppose $\Phi(s)$ is of type  $A_1A_{l-2}$. Then by \cite[Table B.3]{bg}, $s$ has eigenvalues $(\gamma_1^2,\gamma_2^{n-2})$ for $\gamma_i$ some $r$th roots of unity in $\mathbb{F}_q$ (so in particular, $q>2$). The total number of elements of this form in $\mathrm{PGL}_n(q)$ is less than
\[
(q-1)\frac{ |\gl_n(q)|}{|\mathrm{GL}_2(q)||\mathrm{GL}_{n-2}(q)|} < 2q^{4n-7}
\]
Now suppose $\Phi(s)$ is of type $A_{l-2}$. Then $s$ has centraliser type either $\mathrm{GL}_1(q)^2  \mathrm{GL}_{n-2}(q)$ or $\mathrm{GL}_1(q^2)\times  \mathrm{GL}_{n-2}(q)$. In the former case, $r\mid q-1$ and $s$ has two eigenvalues of multiplicity one, and a third of multiplicity $n-2$. The total number of elements of this type in $\mathrm{PGL}_n(q)$ is at most
\[
(q-1)^2\frac{ |\gl_n(q)|}{ |\mathrm{GL}_1(q)|^2|\mathrm{GL}_{n-2}(q)|}< q^{4n-4}
\]
Finally, if $s$ has centraliser type $\mathrm{GL}_1(q^2)\times  \mathrm{GL}_{n-2}(q)$, then $r>2$, $r|q+1$ and $s$ has eigenvalues $(\gamma, \gamma^q,1^{n-2})$ for $\gamma$ a primitive $r$th root of unity in $\mathbb{F}_{q^2} \setminus \mathbb{F}_q$.  The total number of these elements in $\mathrm{PGL}_n(q)$ is at most
\[
\frac{q+1}{2}\frac{ |\gl_n(q)|}{|\mathrm{GL}_1(q^2)||\mathrm{GL}_{n-2}(q)|}<\frac{3}{2} q^{4n-5}
\]
The result follows from taking the sum of bounds computed in each case. The proofs for $\Psi$ of type $A_2$ or $A_3$ are similar.
\end{proof}

Suppose $u_1,u_2 \in \pgl_n(q) $ are unipotent of prime order $p$. Define $\pi_i$ to be the partition of $n$ given by the Jordan blocks in the Jordan canonical form of $u_i$ on the natural module for $\gl_n(q)$, and arrange $\pi_i$ so that the parts are in weakly decreasing order. Now, the closure of $u_1^{\overline{G}}$ with respect to the Zariski topology contains $u_2$ if and only if $\pi_1$ dominates $\pi_2$ in the usual partial dominance ordering of partitions \cite[\S 4]{MR672610}. 

\begin{proposition}
\label{unipcounts}
Let $\Psi$ and $N_u(\Psi)$ be as in Table \ref{unipboundtable}, and define $\overline{G} = \mathrm{PGL}_n(\overline{\F}_q)$. The number of unipotent elements $u \in \pgl_n(q) $ of prime order such that the closure of $u^{\overline{G}}$ does not contain a regular unipotent element in $\overline{G}_{\Psi}$ is at most $N_u(\Psi)$.
\begin{table}[!htbp]
\begin{tabular}{@{}cc@{}}
\toprule
$\Psi$ & $N_u(\Psi)$ \\ \midrule
$A_1^2$ & $q^{2n-1}$ \\\midrule
$A_2$ &  $4 \left(\frac{q^{n^2/2+2}-1}{q^2-1} \right)$\\  \midrule
$A_3$ &  $8 \left(\frac{q^{2n^2/3+7/2}-1}{(q^2-1)(q^{3/2}-1)} \right)$ \\\bottomrule
\end{tabular}
\caption{Upper bounds on the number of unipotent elements $u$ of prime order for certain subsystems $\Psi$.\label{unipboundtable}}
\end{table}
\end{proposition}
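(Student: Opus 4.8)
The plan is to follow the proof of Proposition~\ref{sscounts}, replacing the centraliser-type analysis of semisimple elements by the dominance criterion recorded above: for unipotent $u_1,u_2\in\pgl_n(q)$ of prime order, $u_2$ lies in the Zariski closure of $u_1^{\overline{G}}$ if and only if the Jordan partition $\pi(u_1)$ dominates $\pi(u_2)$. A regular unipotent element of the subsystem subgroup $\overline{G}_\Psi$ has a transparent Jordan type on the natural $\gl_n$-module: for $\Psi$ of type $A_m$ it is $(m+1,1^{n-m-1})$, and for $\Psi$ of type $A_1^2$ (two non-adjacent nodes) it is $(2,2,1^{n-4})$. Hence $\overline{u^{\overline{G}}}$ fails to contain a regular unipotent of $\overline{G}_\Psi$ exactly when $\pi(u)$ does not dominate this partition, so the first step is to enumerate such $\pi(u)$.

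Comparing partial sums shows that $\pi$ dominates $(2,2,1^{n-4})$ unless $\pi=(2,1^{n-2})$ (the trivial partition being excluded since $u\neq1$); that $\pi$ dominates $(3,1^{n-3})$ unless $\pi_1\le2$, i.e.\ all Jordan blocks of $u$ have size at most $2$; and that $\pi$ dominates $(4,1^{n-4})$ unless $\pi_1\le3$. So for $\Psi=A_1^2$ the relevant $u$ are precisely the transvections, for $\Psi=A_2$ they are the elements of Jordan type $(2^a,1^{n-2a})$, and for $\Psi=A_3$ those of type $(3^c,2^b,1^a)$. Every prime-order unipotent element of $\pgl_n(q)$ lifts to one of $\gl_n(q)$ with the same Jordan type, and for each $p$ the ``bad'' elements form a subset of the unipotent elements whose Jordan blocks have size at most $3$ (resp.\ $2$); so it suffices to bound the total size of the corresponding $\overline{G}$-classes in $\gl_n(q)$, and this single count covers all $p$, in particular the cases $p\in\{2,3\}$ where the bad set can be all involutions or all elements of order~$3$.

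For $\Psi=A_1^2$ the transvections in $\gl_n(q)$ number $(q^n-1)(q^{n-1}-1)/(q-1)<q^{2n-1}$, and remain distinct in $\pgl_n(q)$, giving $N_u(A_1^2)=q^{2n-1}$. For $\Psi=A_2,A_3$ I would use the Wall formula $|C_{\gl_n(q)}(u_\lambda)|=q^{\sum_i(\lambda_i')^2}\prod_i\prod_{k=1}^{m_i}(1-q^{-k})$, where $\lambda'$ is the conjugate partition and $m_i$ the multiplicity of $i$ in $\lambda$. For $\lambda=(2^a,1^{n-2a})$ this makes the class size equal to $q^{2a(n-a)}$ times a unit factor bounded by $\prod_{k\ge1}(1-q^{-k})^{-1}<4$; since the exponents $2a(n-a)$ for $1\le a\le\lfloor n/2\rfloor$ are distinct even integers at most $n^2/2$, the class sizes sum to at most $4\,(q^{n^2/2}+q^{n^2/2-2}+\cdots)\le 4\,(q^{n^2/2+2}-1)/(q^2-1)$. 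For $\Psi=A_3$ the partition $(3^c,2^b,1^a)$ with $3c+2b+a=n$ has class size $q^{\,n^2-(c+b+a)^2-(c+b)^2-c^2}$ times a bounded unit factor; the exponent is maximised at $c=\lfloor n/3\rfloor$, and decreasing $c$ or $b$ by one drops it by a bounded-below amount, so the class sizes form a double geometric progression, which bounded generously in the style of Proposition~\ref{sscounts} is at most $8\,(q^{2n^2/3+7/2}-1)/((q^2-1)(q^{3/2}-1))$.

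The step I expect to be the main obstacle is the bookkeeping for $\Psi=A_3$: identifying the exact $q$-exponents in the class sizes under $3c+2b+a=n$, verifying that $n^2-(c+b+a)^2-(c+b)^2-c^2$ is maximised at $c=\lfloor n/3\rfloor$, and checking that the closed form above genuinely dominates the full two-parameter sum of class sizes \emph{including} the unit factors $\prod_i\prod_{k=1}^{m_i}(1-q^{-k})^{-1}$ --- this is where the loose constants $4$, $8$ and the half-integer exponent pull their weight. A few small values of $n$ then have to be treated directly, bearing in mind that $A_1^2$ and $A_3$ occur only for $n\ge4$ and $A_2$ only for $n\ge3$.
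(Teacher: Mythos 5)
Your proposal is correct and takes essentially the same route as the paper: the dominance criterion reduces the problem to counting elements with Jordan type $(2,1^{n-2})$, $(2^a,1^{n-2a})$ and $(3^c,2^b,1^a)$ respectively, and the paper then bounds each class size by $q^{2a(n-a)}$ (resp.\ the analogous exponent for $A_3$) times a unit factor at most $4$, summing the resulting geometric progression exactly as you describe. The paper likewise leaves the $A_3$ bookkeeping to the reader with the remark that it is ``similar,'' so your identification of that case as the residual work matches the source.
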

\begin{proof}
Let $\Psi$ be of type $A_1^2$. The unipotent elements $u$ satisfying the hypothesis are conjugates of a root element. The number of conjugates of a root element is the index of a root parabolic in $\mathrm{PGL}_n(q)$:
\[
\frac{|\gl_n(q)|}{q^{2n-3}|\mathrm{GL}_1(q)||\mathrm{GL}_{n-2}(q)|} = \frac{(q^n-1)(q^{n-1}-1)}{q-1}< q^{2n-1}.
\]
Now suppose $\Psi$ is of type $A_2$. Then $u$ must have associated partition $(2^j, 1^{n-2j})$. Therefore, by \cite[Table B.3]{bg} the total number of such elements is equal to 
\begin{align*}
\sum_{j=1}^{\lfloor n/2 \rfloor} \frac{|\mathrm{GL}_n(q)|}{q^{2(n-2j)j+j^2} |\mathrm{GL}(j,q)||\mathrm{GL}(n-2j,q)|}< \sum_{j=1}^{\lfloor n/2 \rfloor} \frac{4q^{n^2}}{q^{2(n-2j)j+2j^2+(n-2j)^2}} = \sum_{j=1}^{\lfloor n/2 \rfloor}4q^{2j(n-j)}
\end{align*}
Let $f(j) = 2j(n-j)$. Then $f$ is increasing with respect to $j$, achieving a maximum of $\lfloor n^2/2 \rfloor$  at $j = \lfloor n/2 \rfloor$. Moreover, $|f(j+1)-f(j)| = |2(n-2j)-2|\geq 2$, so
\[
\sum_{j=1}^{\lfloor n/2 \rfloor}4q^{2j(2n-j)} < 4(1+q^2+ \dots q^{n^2/2}) = 4\left(\frac{q^{n^2/2+2}-1}{q^2-1} \right).
\]
The proof for $\Psi$ of type $A_3$ is similar. 
\end{proof}

\subsection*{Summary of table organisation}
Let $V(\lambda)$ be a highest weight module with highest weight $\lambda$ in Table \ref{fulllist} or \ref{comptensor}.
In Sections \ref{prest} and \ref{tensor}, we use the technique outlined in the discussion preceding Proposition \ref{sscounts} to find upper bounds for the  eigenspace dimensions of projective prime order elements in $G$ acting on $V(\lambda)$.
We will summarise information about the weights and $\Psi$-nets of $V(\lambda)$, as well as the lower bounds for eigenspace dimensions obtained from these $\Psi$-nets in a collection of tables. We now give a general description of these tables and then give a worked example.

\subsubsection*{Weyl orbit tables}
We use a table to summarise information about the orbits of the Weyl group $W$ on the set of weights of the module. The first column of such a table assigns an index $i$ to each of the Weyl group orbits. The second column lists the unique dominant weight $\mu$ lying in each Weyl orbit, while the third column gives the size of $W.\mu$. Finally, the fourth column gives the multiplicity of each of the weights in the orbit.

\subsubsection*{Weight string tables}

After the Weyl orbit table, we usually proceed by analysing the \textit{weight strings} of the module i.e., the equivalence classes of the set of weights under addition or subtraction by a fixed simple root $\alpha$. Unless explicitly stated, we assume that $\alpha=\alpha_1$.  Let $s \in \overline{G}$ be a semisimple element of prime order, and for a subset $\Psi$ of the root system $\Phi$, let $u_\Psi$ denote a regular unipotent element in $\overline{G}_\Psi$. 


The first column of a weight string table describes the form $\mu_{i_1}\mu_{i_2} \dots \mu_{i_k}$ of weight strings, with each $\mu_i$ a weight lying in the $i$th Weyl orbit (as designated in the Weyl orbit table). The second column gives the number of weight strings of this form in the module. The next (possibly several) columns labelled $c(s)$ give the minimum contribution from each type of weight string to the codimension of the largest eigenspace of a semisimple element $s\in \overline{G}$ of prime order $r$. The final collection of columns labelled $c(u_{\Psi})$ give the minimum contribution from each type of weight string to the codimension of the fixed point space of a regular unipotent element $u_\Psi = u_{\alpha}(1)$. The final row of the table totals the contributions of each type of weight string to give the values of $c(s)$ and $c(u_\Psi)$ arising from the analysis of weight strings.
\subsubsection*{$\Psi$-net tables}
If the lower bounds on $\codim C_V(s)$ and $\codim C_V(u)$ computed from the weight strings of $V(\lambda)$ are insufficient to prove that $G$ has a regular orbit on $V$ by Proposition \ref{tools}, we proceed by computing the $\Psi$-nets for larger standard subsystems $\Psi \subseteq\Phi$. We usually do this using GAP \cite{GAP4}.
As in the case of the weight strings, we summarise this information in a table. 
We will first explain our notation for $\Psi$-nets, which originates from \cite{bigpaper}. We may write a standard subsystem $\Psi = \langle \alpha_i \mid i \in S\rangle$ as a product $\Psi = \Psi_1 \dots \Psi_t$ where each $\Psi_j$ is an irreducible root system. For each $\alpha_i \in \Psi$, there exists $\Psi_j$ with $\alpha_i \in\Psi_j$ and we write $\omega_i$ for the fundamental dominant weight of $\overline{G}_{\Psi_j}$ corresponding to $\alpha_i$. We may then write the highest weight of any $\overline{G}_{\Psi}$-module as a non-negative linear combination of the $\omega_i$. 
 Each $\Psi$-net forms a $\overline{G}_\Psi$-module, and the highest weight $\nu$ of this module is used to denote the $\Psi$-net in the first column of a $\Psi$-net table.
 The next columns, labelled $n_i$ give the number of weights in each $\nu$ from the $i$th orbit under the Weyl group. The next column, labelled ``Mult" gives the number of $\Psi$-nets of this form in the module. The columns labelled $c(s)$ give the minimum contribution from each type of $\Psi$-net to the codimension of the largest eigenspace of a semisimple element $s\in \overline{G}$ of prime order $r$.
The columns labelled $c(u_{\Psi})$ give the minimum contribution from each type of weight string to the codimension of the fixed point space of a regular unipotent element $u_\Psi$ in the subsystem subgroup $\overline{G}_{\Psi}$. 

Throughout, we make extensive use of GAP \cite{GAP4} to determine the $\Psi$-nets of each module and compute the contributions of some $\Psi$-nets to $c(s)$. We also use GAP to give information about the number of prime order elements in some small linear groups and to explicitly construct some small modules to determine whether there is a regular orbit. We also use Mathematica \cite{Mathematica} to confirm that the inequalities arising from Proposition \ref{tools}\ref{eigsp1}--\ref{crude} fail, so as to prove the existence of a regular orbit.
We now demonstrate this technique with a worked example. In this example and throughout this paper, for $G=G(q)$ acting on $V$, we define $\epsilon_k=1$ if the characteristic of $\mathbb{F}_q$ divides $k$, and $\epsilon_k=0$ otherwise.
\begin{wexample}
Let $G$ be an almost quasisimple group with $E(G)/Z(E(G)) \cong \mathrm{PSL}_{l+1}(q)$, $l\in [4,\infty)$ such that $E(G)$ acts absolutely irreducibly on the realisation $V$ of $V(\lam1 + \lam{l-1})$ over $\mathbb{F}_q$. By \cite{MR1901354} ($l\leq 17$) and \cite{alvaro} ($l\geq 18$), we have $d = \dim V = 3\binom{l+2}{3}-\binom{l+2}{2}-\ep_l(l+1)$. 

Using GAP \cite{GAP4}, we compute the weights of the representation, and see that they lie in two orbits under the Weyl group, each with a unique dominant weight $\mu$. This information, along with the size of each Weyl orbit and the multiplicity of weights in each of the Weyl orbits is given in Table \ref{exampleweyl}. 
\begin{table}[!htbp]
	\begin{tabular}{cccc}
		\toprule
		$i$ & $\mu$ & $|W.\mu |$ & Multiplicity \\ 
		\midrule 
		1 & $\lambda_1+\lambda_{l-1}$& $3 \binom{l+1}{3}$  & 1 \\ 
		2 &  $\lambda_{l}$&  $l+1$& $l-1-\epsilon_l$ \\ 
		\bottomrule 
	\end{tabular} 
	\caption{The Weyl orbit table of $V(\lam1 + \lam {l-1})$.\label{exampleweyl}}
\end{table}
Let $\Psi = \langle \alpha_1 \rangle$. After computing the weight strings with respect to $\Psi$ using GAP, we write each of them as a list of $\mu_i$, based on the Weyl orbit of each weight in the string. We write these in the ``String" column of the weight string table (Table \ref{examplews}), and record their multiplicities in the ``Mult" column.
For each weight string, we then determine its minimum contribution to $c(s)$, a lower bound on $\mathrm{codim}C_V(s)$ for a semisimple element $s$ of prime order $r$, using our assumption that $\Psi \cap \Phi(s) = \emptyset$. We do this by trying to find the maximal dimension of a union of weight spaces that could correspond to the same eigenvalue of $s$.
For example, the contribution of $\mu_1$ to $c(s)$ is zero, because there is no restriction on the eigenvalue corresponding to the weight based on our assumption. If we instead consider the weight string $\m1 \m2\m1$, then each consecutive pair of weights differ by $\alpha_1$, so if they correspond to the same eigenvalue, it would imply that $\gamma := \alpha_1(s) = 1$, violating our assumption that $\Psi \cap \Phi(s) = \emptyset$. On the other hand, the first and third weights in the string may lie in the same eigenspace if and only if $2\alpha_1(s)=\gamma^2 = 1$. By our assumption, this can only occur if $r=2$. However, since $\m1$ has multiplicity 1 and $\m2$ has multiplicity $l-1-\ep_l$, the largest contribution to an eigenspace of $s$ from this weight string is $l-1-\ep_l$, so the contribution of each of the $l-1$ weight strings of this form to $c(s)$ is 2.

We subsequently compute the contributions of the weight strings to $c(u_\Psi)$. Define the $A_1$ type subgroup $A = \langle U_{\pm \alpha_1 } \rangle$. We take the sum of the weight spaces in each weight string and consider it as an $A$-module. For example, the string $\mu_2 \mu_2$
corresponds to an $\overline{\mathbb{F}}_pA$-module with $l-1-\ep_l$  composition factors isomorphic to the natural module. Now $u_{\Psi}$ has a 1-dimensional fixed point space on each natural module, so the total contribution to $c(u_\Psi)$ is $l-1-\ep_l$. On the other hand, consider the weight string $\m1 \m2 \m1$.
In characteristic $p=2$, the composition factors of this $A$-module are one copy of the twisted natural module $V(\om1)^{(2)}$ and $l-2-\ep_l$ copies of the trivial module. Now, $u_\Psi$ has a one-dimensional fixed point space on each of these, so the total contribution to $c(u_\Psi)$ by the $l-1$ strings of this type is $l-1$. If instead $p\geq3$, then the weight string has the following composition factors: one copy of the symmetric square $V(2\om1)$, and $l-2-\ep_l$ copies of the trivial module. Again, $u_\Psi$ has a one-dimensional fixed point space on each of these, so the total contribution to $c(u_\Psi)$ by the $l-1$ strings of this type is $2l-2$.
We continue in this way for the remaining weight strings, and the results are summarised in Table \ref{examplews}.
\begin{table}[!htbp]
\begin{tabular}{cccccc}
	\toprule
	&  & \multicolumn{2}{c}{$c(s)$} &  \multicolumn{2}{c}{$c(u_{\Psi})$} \\ 
	
	String & Multiplicity & $r=2$ & $r\geq 3$ & $p=2$ & $p\geq 3$ \\ 
	\midrule 
	$\mu_1$ & $(l-4) \binom{l-1}{2}+ \binom{l}{2}$ & 0 & 0 & 0 & 0 \\ 
	
	$\mu_1 \mu_1$ &$ 3 \binom{l-1}{2}$ & $ 3 \binom{l-1}{2}$ &$ 3 \binom{l-1}{2}$ &$ 3 \binom{l-1}{2}$& $ 3 \binom{l-1}{2}$ \\ 
	
	$\mu_1 \mu_2 \mu_1$ &$ l-1$ & $2l-2$ & $2l-2 $& $l-1$ &$ 2l-2 $\\ 
	
	$\mu_2 \mu_2$ & 1 & $l-1-\ep_l$&$ l-1-\ep_l$ & $l-1-\ep_l$ & $l-1-\ep_l$ \\ 
	
	\midrule 
	Total & &$3 \binom{l}{2}-\ep_l$&$3 \binom{l}{2}-\ep_l$ & $3 \binom{l}{2}-(l-1)-\ep_l$&$3 \binom{l}{2}-\ep_l$\\
	\bottomrule
\end{tabular} 
\caption{Weight string table for $V(\lam1 + \lam{l-1})$.\label{examplews}}
\end{table}
Now let $\Psi = \langle \a_1 , \a_2 \rangle$ be a subsystem of $\Phi$ of type $A_2$.   The $\Psi$-net table is given in Table \ref{l_1+l_l-1_A2}, where $\delta_{l,4}$ is the Kronecker delta function. We compute the $\Psi$-nets of the weights of $V$ using GAP \cite{GAP4}, consider each $\Psi$-net as a $\overline{G}_\Psi$-module and determine its highest weight.
We then perform a similar analysis to that for the weight string table. For instance, take the $\overline{G}_\Psi$ module of highest weight $2\om1$. The weights lying in the first Weyl orbit are $s_1=\{2\om1, 2\om1-2\a_1, 2\om1-2\a_1-2\a_2\}$, while those lying in the second are $s_2=\{2\om1-\a_1, 2\om1-\a_1-\a_2, 2\om1-2\a_1-\a_2\}$. Examining Table \ref{exampleweyl}, we see that each weight in $s_1$ has multiplicity 1, while each weight in $s_2$ has multiplicity $l-1-\ep_l$. The collection of weights that we can take with maximal weight space dimension, which also do not pairwise differ by an element of $\Psi$ is obtained by taking one element each of $s_1$ and $s_2$, for example $\{2\om1,2\om1-2\a_1-\a_2 \}$. Therefore, the contribution to $c(s)$ for this $\Psi$-net is the sum of multiplicities of the remaining  four weights (two from each of $s_1$ and $s_2$), which is $2(l-1-\ep_l+1) = 2(l-\ep_l)$. We now consider the contribution of this $\Psi$-net to $c(u_\Psi)$. For $p\geq 3$, the composition factors of the $\Psi$-net with highest weight $2\om1$ are: one copy of $V(2\om1)$, and $l-2-\ep_l$ copies of $V(\om2)$. A regular unipotent element $u_\Psi$ has a fixed point space of dimension two on the former 
 and 1 on the latter, giving a total contribution to $c(u_\Psi)$ of $4+2(l-2-\ep_l) = 2l-2\ep_l$. We continue in this manner with the other $\Psi$-nets to complete the $\Psi$-net table.
\begin{table}[!htbp]
	\begin{tabular}{ccccccc}
		\toprule
		&&&&$c(s)$ & \multicolumn{2}{c}{$c(u_\Psi)$}\\
		\midrule 
		$\nu$ & $n_1$ & $n_2$ & Mult & $r\geq 3$ & $p=3$ & $p=5$ \\ 
		\midrule 
		$\om1+\om2$ & 6 & 1 & $l-2$ & $(6-\ep_l \delta_{l,4})(l-2)$ & $4(l-2)$ & $6(l-2)$ \\ 
		
		$2\om1$ & 3 & 3 & 1 & $2(l-\ep_l)$ & $2l-2\ep_l$ & $2l-2\ep_l$ \\ 
		
		$\om2$ & 3 & 0 & $(l-2)(l-3)$ & $2(l-2)(l-3)$ & $2(l-2)(l-3)$ & $2(l-2)(l-3)$ \\ 
		
		$\om1$ & 3 & 0 & $\binom{l-1}{2}$ & $(l-1)(l-2)$ & $(l-1)(l-2)$ & $(l-1)(l-2)$ \\ 
		
		0 & 1 & 0 & $3\binom{l-2}{3}$ & 0 & 0 & 0 \\ 
		\midrule 
		\textbf{Total} &  &  & & $3l^2-5l+2-2\ep_l(1+\ep_2)$ & $3l^2-7l+6-2\ep_l$ & $3l^2-5l+2-2\ep_l$ \\ 
		\bottomrule 
	\end{tabular} 
	\caption{The $A_2$-net table for $V(\lam1+\lam{l-1})$. \label{l_1+l_l-1_A2}}
\end{table}

We are now ready to prove that $G$ has a regular orbit on $V$, where in this example $V=V(\lam1+\lam{l-1})$. We will use Proposition \ref{tools}\ref{crude}), and also Proposition \ref{field} for field automorphisms, and Propositions \ref{sscounts} and \ref{unipcounts} to determine an upper bound on the number of elements of prime order in $G/F(G)$ that we cannot use bounds computed in Table \ref{l_1+l_l-1_A2} for.
If $G$ has no regular orbit on $V$, then
\begin{align*}
q^d \leq & 2 |\pgl_n(q)|q^{d-(3l^2-5l+2-2\ep_l(1+\ep_2)}+(8q^{n^2/2+2}+4\left(\frac{q^{n^2/2+2}-1}{q^2-1}\right))q^{d-3\binom{l}{2}+\ep_l}\\
&+2q^{2n-1}q^{d-3\binom{l}{2}+l-1+\ep_l}+ 2\log(\log_2q+2)q^{n^2/2+d/2}.
\end{align*}
This is a contradiction for $l\geq 4$ and $q\geq 2$, so $G$ has a regular orbit on $V$.
\end{wexample}

We now analyse the modules in Table \ref{fulllist} individually. We will start by fixing some notation. Firstly, $V=V(\lambda)$ will denote an absolutely irreducible $\F_q\mathrm{SL}_n(q)$ module of highest weight $\lambda$.
Moreover, $G \leq \Gamma \mathrm{L}(V)$ will be an almost quasisimple group with $E(G)/Z(E(G)) \cong \mathrm{PSL}_n(q)$ such that the restriction of $V$ to $E(G)$ is absolutely irreducible.
We will also write $d= \dim V$, $q=p^e$ and let $r \neq p$ be a prime. We will also use both $l$ and $n = l+1$ throughout for convenience in notation. 

 \section{Proof of Theorem \ref{main},  II : $p$-restricted modules}
\label{prest}
The main result of this section is as follows.
\begin{theorem}
Let $V=V_d(q)$ be a $d$-dimensional vector space over $\mathbb{F}_q$ with $q=p^e$, and let $G \leq \Gamma \mathrm{L}(V)$ be almost quasisimple with $E(G)/Z(E(G)) \cong \mathrm{PSL}_n(q)$. Further suppose that the restriction of $V=V(\lambda)$ to $E(G)$ is an absolutely irreducible module of $p$-restricted highest weight $\lambda$. Either $G$ has a regular orbit on $V$, or $b(G)>1$ and $n$, $b(G)$ and $\lambda$ (up to quasiequivalence) appear in Table \ref{rem1}.
\label{prestmain}
\end{theorem}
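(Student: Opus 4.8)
The proof proceeds by a case analysis over the finitely many $p$-restricted highest weights $\lambda$ listed in Table~\ref{fulllist}, which by Propositions~\ref{n^3} and~\ref{complete} exhausts (up to quasiequivalence, and excluding the alternating-group coincidences $(n,q)=(2,4),(2,5)$ handled in \cite{crosscharpaper}) every absolutely irreducible $\F_q\mathrm{SL}_n(q)$-module of $p$-restricted highest weight with $d<n^3$. For each such $\lambda$ I would isolate a proposition (these are the Propositions referenced in the Remarks, e.g.\ \ref{3l1}, \ref{l3}, \ref{k=1_leftovers}, \ref{tensornatural}, etc.) treating that family, and then Theorem~\ref{prestmain} follows by collecting the outcomes: either the relevant proposition exhibits a regular orbit, or it places $(n,\lambda,b(G))$ into Table~\ref{rem1}.

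The engine for the generic case is Proposition~\ref{tools}, in the sharp form \ref{tools}\ref{crude}) (or \ref{tools}\ref{eigsp1}--\ref{qsgood}) when more care is needed), combined with the $\Psi$-net machinery described after Proposition~\ref{sscounts}. Concretely, for a fixed $\lambda$ I would: (i) compute the Weyl orbits, multiplicities and dimension $d$ of $V(\lambda)$ (via \cite{MR1901354,alvaro} and GAP); (ii) for an increasing sequence of standard subsystems $\Psi$ (types $A_1$, $A_1^2$, $A_2$, $A_1^3$, $A_3$ as needed) build the $\Psi$-net tables, extracting lower bounds $c(s)$ on $\codim C_V(s)$ for semisimple $s$ of prime order $r\ne p$ with $\Phi(s)\cap\Psi=\emptyset$, and $c(u_\Psi)$ on $\codim C_V(u_\Psi)$ for a regular unipotent $u_\Psi\in\overline G_\Psi$; (iii) bound the number of semisimple and unipotent prime-order classes \emph{not} covered by a given $\Psi$ using Propositions~\ref{sscounts} and~\ref{unipcounts}, bound unipotent fixed spaces not covered via Proposition~\ref{tools}\ref{alphabound}) with $\alpha(x)$ from Proposition~\ref{alphas}, bound field-automorphism contributions via Proposition~\ref{field} and graph/graph-field contributions via Propositions~\ref{graph} and~\ref{graphfix}; (iv) assemble all of this into the master inequality \eqref{crudeeqn} and check (with Mathematica) that it fails for all $q$ and all $n$ in the relevant range, forcing a regular orbit. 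The Weyl-orbit, weight-string and $\Psi$-net tables for each $\lambda$, together with the final arithmetic, constitute the bulk of the section; the worked example for $\lambda=\lam1+\lam{l-1}$ is the template.

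The residual cases are where the inequality cannot be made to fail, and these are exactly the rows of Table~\ref{rem1}. For the natural module $\lambda=\lam1$ the stabiliser structure is analysed directly (the asterisked formula $b(G)=\lceil n/k\rceil+c$ in the Remarks). For the small-rank sporadic entries ($\lambda=\lam2$ with $n=4,5,6$; $\lambda=\lam3$ with $n=6$; $\lambda=3\lam1$, $n=2$; $\lambda=2\lam2,\lam3,\lam4,3\lam1$ in the lower block of Table~\ref{rem1}) the relevant modules are small enough to be constructed explicitly in GAP and the fixed-point data / base sizes computed or bounded by hand, using Proposition~\ref{tools}\ref{alphabound}) for the upper bounds and Lemma~\ref{fieldext}/Lemma~\ref{extfield} where a field extension simplifies matters. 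For the tensor-type weights $(p^a+1)\lam1$ and $\lam1+p^a\lam{n-1}$ one invokes Proposition~\ref{tensorcodim} together with Proposition~\ref{tensornatural}; strictly these are not $p$-restricted, but the statement of Theorem~\ref{prestmain} routes them through Table~\ref{rem1} by quasiequivalence, so I would cross-reference the corresponding proposition rather than reprove it here.

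\textbf{Main obstacle.}
The principal difficulty is not any single inequality but the uniformity in $n$: for each $\lambda$ one needs $c(s)$, $c(u_\Psi)$ and the class-count bounds to be \emph{polynomial identities in $n$} valid for the whole stated range of $n$, so that \eqref{crudeeqn} can be verified once and for all. This forces one to read the $\Psi$-net composition factors in a rank-free way (e.g.\ via the general combinatorics of Propositions~\ref{permsquare} and~\ref{eigsp_from_permsquare} for $\lam2,\lam3$, rather than case-by-case GAP computation), and to track the characteristic-dependent corrections $\ep_k$ and the exceptional behaviour of small primes $r=2,3$ and of graph automorphisms of $\mathrm{PSL}_4$ and $\mathrm{PSL}_3$ (Proposition~\ref{alphas}(ii)--(iii),(vi)) separately. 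The boundary cases — smallest $n$ and smallest $q$ for each family — are where the generic inequality is tightest and most often fails, and disposing of them cleanly (either by a sharper choice of $\Psi$, by switching from Proposition~\ref{tools}\ref{crude}) to \ref{eigsp1}), or by an explicit computation) is the delicate part of the argument and the source of the exact entries recorded in Table~\ref{rem1}.
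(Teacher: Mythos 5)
Your plan reproduces the paper's architecture for the generic part of the argument essentially verbatim: reduce via Propositions~\ref{n^3} and~\ref{complete} to the weights in Table~\ref{fulllist}, then for each $\lambda$ assemble Weyl-orbit, weight-string and $\Psi$-net tables, feed the resulting codimension bounds and the class counts of Propositions~\ref{sscounts}, \ref{unipcounts}, \ref{invols}, \ref{field}, \ref{graph} and \ref{graphfix} into Proposition~\ref{tools}, and record the survivors in Table~\ref{rem1}. Your identification of the main obstacle (uniformity in $n$ and the tightness at small $n$, $q$) is also exactly where the paper spends its effort.

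There are, however, two concrete soft spots in your treatment of the residual cases. First, for the lower block of Table~\ref{rem1} ($2\lam2$ with $n=4$, $3\lam1$ with $n=3$, $\lam3$ with $n=9$, $\lam4$ with $n=8$) you propose to "construct the modules explicitly in GAP and compute the fixed-point data". These are infinite families in $q$ (e.g.\ the $84$-dimensional module for $\mathrm{SL}_9(q)$ for every $q$), so a machine computation cannot settle them uniformly; the counting inequality genuinely fails here, and the paper instead imports the explicit regular-orbit representatives constructed by Guralnick and Lawther for the corresponding simple algebraic groups (\cite[Propositions 3.1.1--3.1.4]{bigpaper}) and checks that the same vectors work over $\F_q$ when $G$ is quasisimple, falling back on the field-extension bound $b(G)\le 2$ otherwise. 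Without some such input your method has no way to close these cases. Second, the theorem asserts $b(G)>1$ together with the specific values of $b(G)$ recorded in Table~\ref{rem1}, and your plan does not say how either is obtained: the lower bounds come from order comparisons ($\log|G|/\log|V|$, or $|V|<|G|$, or the embedding $\mathrm{L}_2(q)\cong\Omega_3(q)$ for $2\lam1$ with $n=2$), while the exact upper bounds require bespoke arguments — most notably, for the adjoint module $\lam1+\lam{n-1}$ the paper proves $b(G)=2$ by exhibiting a regular semisimple element whose centraliser has a regular orbit on $V$ (Proposition~\ref{adjoint}), and nonexistence of a regular orbit there and for $(p^a+1)\lam1$, $\lam1+p^a\lam{n-1}$ rests on rank-preservation and Shintani-descent arguments, not on the fixed-point-space inequality at all. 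These are missing ideas rather than routine verifications.
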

By Propositions \ref{n^3} and \ref{complete}, the proof of Theorem \ref{prestmain} reduces to an analysis of the modules $V(\lambda)$ with highest weight $\lambda$ in Table \ref{fulllist}.
We begin this analysis with some modules where there is no regular orbit under the action of $G$.
\begin{proposition}
Theorem \ref{prestmain} holds for $G$ with $E(G)/Z(E(G)) \cong \mathrm{PSL}_n(q)$ acting on $V=V(\lam1)$ and $V(2\lam1)$  with $l \in [1, \infty)$, and $V=V(\lam2)$ for $l \in [3,\infty)$. Namely:
\begin{enumerate}
	\item if $V=V(\lam1)$ then $b(G)\leq n+1$ if $G$ contains field automorphisms and $b(G)=n$ otherwise,
	\item if $V=V(2\lam1)$ then $2\leq b(G)\leq 3$, and 
	\item if $V=V(\lam2)$ then $b(G)=3$ if $n\geq 7$,  $3\leq b(G)\leq 4$ for  $n \in [5,6]$ and $3\leq b(G)\leq 5$ for $n=4$.
\end{enumerate}
\end{proposition}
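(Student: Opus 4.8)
The plan is to treat the three modules separately, handling $V(\lambda_1)$ by a direct argument and $V(2\lambda_1)$, $V(\lambda_2)$ by the $\Psi$-net method and counting results established above, following the pattern of the worked example. For $V=V(\lambda_1)$, the natural module, $E(G)\cong\mathrm{SL}_n(q)$ acts faithfully, so any $n-1$ vectors lie in a hyperplane $H$ and are fixed by a nontrivial transvection of $\mathrm{SL}_n(q)$ with axis $H$; hence $b(G)\geq n$. If $G$ has no field automorphisms then $G\leq\mathrm{GL}_n(q)$ and any basis is a base, so $b(G)=n$; if $G$ has field automorphisms, the pointwise stabiliser of a basis in $G$ is a cyclic group of field automorphisms, killed by adjoining one further vector a coordinate of which generates $\mathbb{F}_q$ over $\mathbb{F}_p$, so $b(G)\leq n+1$.

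For the lower bounds in the remaining two cases I would argue as follows. When $V=V(2\lambda_1)\cong\mathrm{Sym}^2 W$ (so $p\geq 3$), identify a vector $v$ with its support $U\leq W$, the smallest subspace with $v\in\mathrm{Sym}^2 U$; if $\dim U<n$ then $v$ is fixed by any transvection acting trivially on $U$ and moving a complement of $U$, while if $\dim U=n$ then $v$ corresponds to a nondegenerate quadratic form $Q$ with stabiliser $\mathrm{O}(Q)$ in $\mathrm{GL}(W)$, and $\mathrm{O}(Q)\cap\mathrm{SL}(W)=\mathrm{SO}(Q)\neq 1$ (here $q=p^e\geq 3$ covers $n=2$). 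None of these elements acts as a scalar on $V$, so $G$ has no regular orbit and $b(G)\geq 2$. When $V=V(\lambda_2)$ with $n\geq 4$ I would instead invoke $b(G)\geq\lceil\log|G|/\log|V|\rceil$ together with $\log_q|G|\geq\log_q|\mathrm{SL}_n(q)|-1$ and $\log_q|V|=\binom{n}{2}$; a short estimate shows this ratio strictly exceeds $2$ for all $n\geq 4$ and all $q$, giving $b(G)\geq 3$.

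For the upper bounds the strategy is that, to prove $b(G)\leq c$, it suffices by Lemma~\ref{fieldext} to produce a regular orbit of $G$ on $\widehat V=V\otimes\mathbb{F}_{q^c}$; I would establish this by the counting method applied over $\mathbb{F}_{q^c}$, where the key point is that the eigenspace dimensions are computed absolutely (as over $\overline{\mathbb{F}}_q$) while the conjugacy-class sizes stay over $\mathbb{F}_q$, so that replacing $q$ by $q^c$ makes the Proposition~\ref{tools} inequality correspondingly easier to violate. In the sum, $\dim C_V(x)$ for semisimple $x$ is bounded by Proposition~\ref{eigsp_from_permsquare}, sharpened for the elements with centraliser of type $A_{n-1}$ (which no $\Psi$-net reaches) by a direct Jordan-form computation on $\mathrm{Sym}^2 W$, respectively $\Lambda^2 W$; unipotent elements and the other semisimple elements are handled by the $\Psi$-net bounds with Propositions~\ref{sscounts} and~\ref{unipcounts}, and by Proposition~\ref{tools}\ref{alphabound} for root and near-regular unipotents; field automorphisms by Proposition~\ref{field}; and, in the case $V=V(\lambda_2)$ with $n=4$, where the Dynkin diagram automorphism fixes $\lambda_2$ and so graph and graph-field automorphisms act on $V$, by Propositions~\ref{graphfix} and~\ref{graph}. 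For $V(2\lambda_1)$ I expect $c=3$ to work for all $n$ and all $q=p^e\geq 3$ (finitely many small $(n,q)$ checked directly), giving $b(G)\leq 3$. For $V(\lambda_2)$, $c=3$ should succeed once $n\geq 7$, giving $b(G)=3$; for $n\in\{5,6\}$ the module is too small relative to $|G|$ and $c=4$ is needed; for $n=4$ the small dimension together with the graph automorphisms forces $c=5$, with $(n,q)=(4,2)$, where $\mathrm{PSL}_4(2)\cong A_8$, handled via \cite{MR3500766}.

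The hard part is the upper bound: closing the summation, that is, ensuring the refined eigenspace bounds beat the conjugacy-class sizes, for exactly the classes on which the $\Psi$-net machinery is weakest — semisimple elements with a large centraliser, and root or near-regular unipotent elements. This is most delicate in the small-$n$ range of $V(\lambda_2)$, where $\dim V$ is small and, for $n=4$, graph automorphisms enter; there one is forced to sharpen the eigenspace bounds or else to enlarge $c$, and the very smallest cases must be settled by direct computation or by appeal to \cite{MR3500766}.
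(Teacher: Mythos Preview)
Your proposal is correct and follows essentially the same strategy as the paper: lower bounds via showing there can be no regular orbit (the paper uses $\log|G|/\log|V|>n-1,\,2,\,1$ for the three modules, plus the isomorphism $\mathrm{PSL}_2(q)\cong\Omega_3(q)$ for the exceptional $V(2\lambda_1)$, $n=2$ case; your direct stabiliser argument for $V(2\lambda_1)$ is a harmless variant), and upper bounds via Lemma~\ref{fieldext} applied to a regular orbit over $\mathbb{F}_{q^c}$ established by the $\Psi$-net/counting method, exactly as carried out in Propositions~\ref{L1_base_size}, \ref{2L1_base_size}, \ref{L2_base_size}. Your anticipated values $c=3,\,3,\,4,\,5$ for $V(2\lambda_1)$ and for $V(\lambda_2)$ with $n\geq 7$, $n\in\{5,6\}$, $n=4$ respectively are precisely what the paper obtains.
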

\begin{proof}
We have $\log |G|/\log|V|> n-1, 2$ and 1 for $V(\lam1)$, $V(\lam2)$ and $V(2\lam1)$ respectively, except that  $\log |G|/\log|V|<1$ for some $G$ with $l=1$ acting on $V(2\lam1)$. In this case, $q$ is odd and since $L_2(q) \cong \Omega_3(q)$, there is no regular orbit of $G$ on $V$ by \cite[Lemma 2.10.5(iv)]{KL}.
We will prove the upper bounds on base size for these three modules later in Propositions \ref{L1_base_size} and \ref{k=1_leftovers}.
\end{proof}

As before, for $G$ acting on $V$, we define $\epsilon_k=1$ if the characteristic of $\mathbb{F}_q$ divides $k$, and $\epsilon_k=0$ otherwise.

\subsection{$\lambda = 4\lam1$, with $l \in [2,13]$}
\begin{proposition}
Theorem \ref{prestmain} holds for $G$ with $E(G)/Z(E(G)) \cong \mathrm{PSL}_{l+1}(q)$ acting on $V=V(4\lam1)$ for $l \in [1, 13]$. Namely, $G$ has a regular orbit on $V$.
\end{proposition}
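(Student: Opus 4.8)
The plan is to apply the machinery of Proposition \ref{tools}, with eigenspace bounds coming from the weight-string and $\Psi$-net analysis described before Proposition \ref{sscounts}, exactly as in the Worked Example. The module $V=V(4\lam1)$ is $p$-restricted only when $p\geq 5$, so first I would dispose of the small-characteristic cases ($p=2,3$, where $4\lam1$ is not $p$-restricted as a weight but the module still makes sense, or rather $V(4\lam1)$ with $p\le 3$ collapses to a field twist of a smaller module) — in fact since we are assuming $\lambda$ is $p$-restricted we may take $p\geq 5$, so $d=\binom{l+4}{4}$ for all $l\in[1,13]$. For $l=1$ this is $d=5$ and the claim should be checked directly (e.g.\ by the explicit construction in GAP, or noting $\log|G|/\log|V|$ is small enough that a direct argument works); I expect $\mathrm{SL}_2(q)$ on its $5$-dimensional module $V(4\lam1)$ to have a regular orbit for all relevant $q$, which can be confirmed computationally for small $q$ and via Proposition \ref{tools}\ref{crude}) for large $q$ using $\alpha(x)\leq 3$ (or $\leq 4$ for field automorphisms) from Proposition \ref{alphas}.

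For $l\in[2,13]$ the main work is to compute, for each $l$, the Weyl orbit table, the weight string table with respect to $\alpha_1$, and if necessary the $A_2$- or $A_3$-net tables, to obtain lower bounds $c(s)$ on $\codim C_V(s)$ for semisimple prime-order $s$ with $\Phi(s)\cap\Psi=\emptyset$ and lower bounds $c(u_\Psi)$ on $\codim C_V(u_\Psi)$ for regular unipotent $u_\Psi\in\overline G_\Psi$. Since $\dim V = \binom{l+4}{4}$ grows like $l^4/24$ while the number of prime-order elements in $\mathrm{PGL}_n(q)$ is bounded by $q^{n^2-1}\sim q^{l^2}$ and the fixed-point-space codimensions coming from an $A_2$-net analysis will be of order $l^3$ (this is the point of using $\Psi$ of rank $2$ rather than rank $1$: the codimension estimate $c(s)$ scales with the number of $\Psi$-nets times a constant, which for $4\lam1$ should grow cubically in $l$), the inequality in Proposition \ref{tools}\ref{crude}) — augmented by Proposition \ref{field} for field automorphisms, Proposition \ref{graph} for graph and graph-field automorphisms, and Propositions \ref{sscounts}, \ref{unipcounts} for the counts of ``bad'' semisimple and unipotent classes — should fail comfortably for $q\geq 2$ once $l$ is not too small. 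Concretely, after computing $c(s)\geq \gamma_s(l)$ and $c(u_\Psi)\geq \gamma_u(l)$ with $\gamma_s,\gamma_u$ cubic in $l$, one writes
\[
q^d \leq 2|\pgl_n(q)|q^{d-\min(\gamma_s,\gamma_u)} + (\text{class-count terms from Props.~\ref{sscounts},~\ref{unipcounts}})\,q^{d-(\text{$A_1$-bound})} + 2|\pgl_n(q)|q^{d-(\text{graph bound})} + \field,
\]
and checks (by hand for the leading exponents, with Mathematica confirming the borderline $q=2$ or $q=3$ cases) that the right side is $<q^d$. Because $l\in[2,13]$ is a finite range, each case is a finite computation, carried out in GAP for the $\Psi$-nets and Mathematica for the inequality, following the template of the Worked Example.

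The main obstacle I anticipate is the genuinely small cases — roughly $l\in\{2,3,4\}$ and small $q$ — where $\dim V$ is not yet large relative to $|G|$: here the crude bound \ref{crude}) of Proposition \ref{tools} may not suffice, and one must pass to the sharper parts \ref{eigsp1})--\ref{qsgood}), which require summing $|C_V(\kappa x)|$ over all scalars $\kappa$ and hence more delicate control of the full eigenspace structure (not just the largest eigenspace) of prime-order elements. For the very smallest of these (e.g.\ $l=2$, $q\in\{5,7,\dots\}$, where $d=15$), it may be necessary to construct the module explicitly in GAP and verify the existence of a regular vector directly, or to invoke Lemma \ref{fieldext}/Lemma \ref{extfield} to transfer a computation over a field extension back down. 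A secondary nuisance is bookkeeping the graph and graph-field automorphisms when $\lambda=4\lam1$ is preserved by the Dynkin diagram automorphism (which happens only for $l$ such that $\lam1$ is fixed, i.e.\ essentially never for $l\geq 2$ since the diagram automorphism sends $\lam1\mapsto\lam l$) — so in fact for $4\lam1$ with $l\geq 2$ no graph automorphism preserves $V$, and this simplifies the inequality by removing the graph terms; I would note this explicitly and use Proposition \ref{graphfix} only if a graph-field automorphism arises, which for $4\lam1$ it does not unless $l=1$. Thus the dominant contributions to control are the semisimple classes via the $A_2$-net bound and the field automorphisms via Proposition \ref{field}.
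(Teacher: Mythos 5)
Your proposal is correct and follows essentially the same route as the paper: Proposition \ref{tools}\ref{crude}) combined with weight-string codimension bounds, Proposition \ref{invols} for the involution and order-3 counts, Proposition \ref{field} for field automorphisms, and (for $l=1$) sharper element counts in $\mathrm{PGL}_2(q)$ plus explicit GAP computation for small $q$. The only real difference is that you over-prepare: the $A_1$ weight strings alone already give codimension bounds of order $l^3$ (namely $\binom{l+3}{3}$ for $r\geq 5$, $\binom{l+2}{3}+\binom{l-1}{2}+2l$ for $r=3$ and $\binom{l}{3}+2\binom{l+1}{2}$ for $r=2$), so no $A_2$- or $A_3$-net analysis and none of the class counts from Propositions \ref{sscounts} and \ref{unipcounts} are needed, the resulting single inequality fails uniformly for all $l\geq 2$ (note $q\geq 5$ is automatic since $p\geq 5$, so the small-$q$ and small-$l$ fallbacks you anticipate never arise), and graph automorphisms play no role for $l\geq 2$ exactly as you observe.
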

\begin{proof}
Here $d = \binom{l+4}{4}$ and $p\geq 5$. The Weyl orbit and weight string tables are given in Tables \ref{4L1_orb} and \ref{4L1_ws} respectively.\\
{
\renewcommand{\arraystretch}{1.3}
\begin{table}[!htbp]
\begin{tabular}{cccc}
\toprule 
$i$ & $\mu$ & $|W.\mu |$ & Mult \\ 
\midrule 
1 & $4\lambda_1$& $l+1$& 1 \\ 
2 &  $2\lam1+\lam2 $&$2\binom{l+1}{2}$& 1 \\ 
3& $2\lam2$ &$\binom{l+1}{2}$&1\\
4& $\lam1+\lam3$&$3\binom{l+1}{3}$&1\\
5& $\lam 4$ & $\binom{l+1}{4}$ & 1\\
 \bottomrule
\end{tabular}
\caption{The Weyl orbit table of $V(4\lam1)$.\label{4L1_orb}}
\end{table}
}
\begin{table}[!htbp]
\begin{tabular}{cccccc}
\toprule
 &  & \multicolumn{3}{c}{$c(s)$} & $c(u_\Psi)$ \\ 
 \midrule
String & Mult & $r=2$ & $r=3$ & $r\geq 5$ & $p\geq 5$ \\ 
\midrule
$\m1$ & $l-1$ & 0 & 0 & 0 & 0 \\ 
$\m1\m2\m3\m2\m1$ & 1 & 2 & 3 & 4 & 3 \\ 
$\m2$ & $2\binom{l-1}{2}$ & 0 & 0 & 0 & 0 \\ 
$\m2\m2$ & $l-1$ & $l-1$ & $l-1$ & $l-1$ & $l-1$ \\ 
$\m2\m4\m4\m2$ & $l-1$ & $2(l-1)$ & $2(l-1)$ & $3(l-1)$ & $3(l-1)$ \\ 
$\m3$ & $\binom{l-1}{2}$ & 0 & 0 & 0 & 0 \\ 
$\m3\m4\m3$ & $l-1$ & $l-1$ & $2(l-1)$ & $2(l-1)$ & $2(l-1)$ \\ 
$\m4$ & $3\binom{l-1}{3}$ & 0 & 0 & 0 & 0 \\ 
$\m4\m4$ & $2\binom{l-1}{2}$ & $2\binom{l-1}{2}$ & $2\binom{l-1}{2}$ & $2\binom{l-1}{2}$ & $2\binom{l-1}{2}$ \\ 
$\m4\m5\m4$ & $\binom{l-1}{2}$ & $\binom{l-1}{2}$ & $2\binom{l-1}{2}$ & $2\binom{l-1}{2}$ & $2\binom{l-1}{2}$ \\ 
$\m5$ & $\binom{l-1}{4}$ & 0 & 0 & 0 & 0 \\ 
$\m5\m5$ & $\binom{l-1}{3}$ & $\binom{l-1}{3}$ & $\binom{l-1}{3}$ & $\binom{l-1}{3}$ & $\binom{l-1}{3}$ \\ 
\midrule
\textbf{Total} &  & $\binom{l}{3}+2\binom{l+1}{2}$ & $\binom{l+2}{3}+\binom{l-1}{2}+2l$ & $\binom{l+3}{3}$ &  $\binom{l+3}{3}$-1 \\ 
\bottomrule 
\end{tabular} 
\caption{The weight string table of $V(4\lam1)$.\label{4L1_ws}}
\end{table}

If $G$ has no regular orbit on $V$, then by Propositions \ref{invols}, \ref{field} and \ref{tools} \ref{crude}),
\begin{align*}
q^{\binom{l+4}{4}} \leq& 2|\mathrm{PGL}_{l+1}(q)| q^{d-\binom{l+2}{3}-\binom{l+1}{2}-2l}+ 4(q^{\binom{l+2}{2}-1} + q^{\binom{l+2}{2}-2})q^{d-\binom{l}{3}-2\binom{l+1}{2}}+ 2\log(\log_2 q+2)q^{n^2/2+ d/2}
\end{align*}
This gives a contradiction for all $l\geq 2$ and $q\geq 5$. When $l=1$, we use more accurate upper bounds on the number of involutions, elements of order 3 and semisimple elements in $\mathrm{PGL}_2(q)$, and this gives the result for $q\geq 180$. For the remaining values of $q$, we use GAP \cite{GAP4} to construct each module $V$ and find regular orbits explicitly.
\end{proof}
\subsection{$\lambda = \lam1+\lam3$, with $l\in[5,11]$}
\begin{proposition}
Theorem \ref{prestmain} holds for $G$ with $E(G)/Z(E(G)) \cong \mathrm{PSL}_{l+1}(q)$ acting on $V=V(\lam1+\lam3)$ for $l \in [5, 11]$. Namely, $G$ has a regular orbit on $V$.
\end{proposition}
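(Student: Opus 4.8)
The plan is to follow the template established by the worked example for $V(\lam1+\lam{l-1})$ and the preceding proof for $V(4\lam1)$: fix $G$ with $E(G)/Z(E(G)) \cong \mathrm{PSL}_{l+1}(q)$ acting absolutely irreducibly on the realisation $V$ of $V(\lam1+\lam3)$ over $\F_q$, read off $d=\dim V$ from \cite{MR1901354} (for $l\leq 11$ these are all in the appropriate appendix), and aim to show that $G$ has a regular orbit by contradiction via Proposition \ref{tools}\ref{crude}) supplemented by Proposition \ref{field} for field automorphisms. First I would compute, using GAP, the Weyl orbits of the weights of $V(\lam1+\lam3)$ together with their multiplicities, and record them in a Weyl orbit table; the dominant weights appearing will be $\lam1+\lam3$ (multiplicity $1$), together with lower weights such as $\lam4$, $\lam1+\lam2$-type weights, $\lam2$ and possibly $\lam3$, with multiplicities that are small polynomials in $l$ and that may involve $\ep_l$ or $\ep_2$.

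Next I would analyse the weight strings with respect to $\Psi=\langle\alpha_1\rangle$ (the default choice) to obtain a first lower bound $c(s)$ on $\codim C_V(s)$ for semisimple $s$ of prime order $r$ with $\Psi\cap\Phi(s)=\emptyset$, and $c(u_\Psi)$ for a regular unipotent element of $\overline{G}_\Psi$, splitting the $c(s)$ columns by the cases $r=2,3,\geq 5$ and the $c(u_\Psi)$ analysis by $p$ (the representation $\lam1+\lam3$ is $p$-restricted for all $p$, so no restriction $p\geq 3$ or $p\geq 5$ is forced, though characteristic-$2$ and characteristic-$3$ composition factors may differ). If the weight-string bound is not strong enough to beat the counting estimates, I would pass to $\Psi$ of type $A_2$ (or, if needed, $A_3$), compute the $\Psi$-nets in GAP, identify each as a $\overline{G}_\Psi$-module by its highest weight, and tabulate $n_i$, multiplicities, the $c(s)$ contributions (by choosing, within each $\Psi$-net, a maximal-dimension union of weight spaces no two of which differ by an element of $\Psi$, using Proposition \ref{permsquare}-type reasoning on multiplicities), and the $c(u_\Psi)$ contributions (from Jordan forms of regular unipotents on the composition factors of each net). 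With these codimension bounds in hand, I would then invoke Propositions \ref{sscounts} and \ref{unipcounts} to bound the number of prime-order semisimple and unipotent elements in $\pgl_{l+1}(q)$ for which the $A_2$- (or $A_3$-) net bound is unavailable, and Proposition \ref{field} for field automorphisms, writing down the resulting inequality of the form
\[
q^{d}\leq 2|\pgl_n(q)|\,q^{d-c(u_\Psi)} + N_s(A_2)\,q^{d-c(s)} + N_u(A_2)\,q^{d-c(u_{A_1^2})} + 2\log(\log_2 q+2)\,q^{n^2/2+d/2},
\]
and checking (in Mathematica) that it fails for all $l\in[5,11]$ and all $q\geq 2$, giving the contradiction.

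For $l=5$ one must be slightly careful: $\lam3$ and $\lam{l-2}=\lam3$ coincide there, so $\lam1+\lam3$ is self-dual-ish and there may be an involutory graph automorphism preserving $V$; in that case I would additionally use Proposition \ref{graphfix} (computing $\dim C_V(\tau)$ from the set $S$ of Dynkin-diagram-fixed weights) together with Proposition \ref{graph} to bound the graph-automorphism contribution, adding a term $q^{(n^2-n)/2-1}\cdot q^{\dim C_V(\tau)}$ to the inequality. I expect the main obstacle to be exactly this: ensuring the $\Psi$-net bounds on $\codim C_V(s)$ and $\codim C_V(u)$ are tight enough that the leading term $2|\pgl_n(q)|q^{d-c(u_\Psi)}$, of size roughly $q^{n^2-1+d-c(u_\Psi)}$, stays below $q^d$ uniformly in the narrow range $l\in[5,11]$ — if $A_2$-nets do not suffice for the smaller values of $l$ (particularly $l=5$, where $d$ is smallest relative to $n^2$), I would need to compute $A_3$-nets and use the corresponding rows of Tables \ref{ssboundtable} and \ref{unipboundtable}; everything else is routine tabulation and a final numerical verification.
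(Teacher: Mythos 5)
Your proposal follows essentially the same route as the paper: the paper's proof uses exactly the Weyl orbit table and the $\alpha_1$-weight-string table (no $A_2$- or $A_3$-nets are needed), obtaining $c(s)\geq \tfrac12 l(l-1)^2-\ep_2\binom{l-1}{3}$ and $c(u_\Psi)\geq\binom{l+1}{3}+\binom{l}{3}$, and then applies Propositions \ref{invols}, \ref{field} and \ref{tools}\ref{crude}) to get a contradiction for all $l\in[5,11]$ and $q\geq 2$. Your one point of hesitation is moot: even for $l=5$ the diagram automorphism sends $\lam1+\lam3$ to $\lam5+\lam3\neq\lam1+\lam3$, so no graph automorphism preserves $V$ and no such term enters the inequality.
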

\begin{proof}
Here $d = 3 \binom{l+2}{4} - \ep_2\binom{l+1}{4}$. The Weyl orbit and weight string tables are found in Tables \ref{weyll1l3} and \ref{wsl1l3} respectively.\\
\begin{table}[!htbp]
\begin{tabular}{cccc}
\toprule 
$i$ & $\mu$ & $|W.\mu |$ & Mult \\ 
\midrule 
1 & $\lambda_1+\lambda_{3}$& $3 \binom{l+1}{3}$ & 1 \\ 
2 &  $\lam4 $&$\binom{l+1}{4}$& $3-\ep_2$ \\ 
\bottomrule 
\end{tabular}
\caption{Weyl orbit table for $V( \lam1+\lam3)$.\label{weyll1l3}}
\end{table}
\begin{table}[!htbp]
\begin{tabular}{cccccc}
\toprule
 &  & \multicolumn{2}{c}{$c(s)$} &  \multicolumn{2}{c}{$c(u_{\Psi})$}\\ 
 \midrule
String & Mult & $r=2$  & $r\geq 3$ & $p=2$ & $p\geq 3$ \\ 
\midrule
$\m1$ & $3\binom{l-1}{3}+l-1$ & 0 & 0 & 0 & 0 \\ 
$\m1\m1$ & $(l-1)^2$ & $(l-1)^2$ & $(l-1)^2$ & $(l-1)^2$ & $(l-1)^2$ \\ 
$\m1\m2\m1$ & $\binom{l-1}{2}$ & $2 \binom{l-1}{2}$ & $2 \binom{l-1}{2}$ & $ \binom{l-1}{2}$ & $2 \binom{l-1}{2}$ \\ 
$\m2$ & $\binom{l-1}{4}$ & 0 & 0 & 0 & 0 \\ 
$\m2\m2$ & $\binom{l-1}{3}$ & $3\binom{l-1}{3}$ & $\binom{l-1}{3}(3-\ep_2)$ & $2\binom{l-1}{3}$ & $3\binom{l-1}{3}$ \\ 
\midrule
\textbf{Total} &  & $\frac{1}{2} l(l-1)^2$ & $\frac{1}{2} l(l-1)^2 - \ep_2 \binom{l-1}{3}$ & $\binom{l+1}{3} + \binom{l}{3}$ & $\frac{1}{2} l(l-1)^2$ \\ 
\bottomrule 
\end{tabular} 
\caption{Weight string table for $V( \lam1+\lam3)$.\label{wsl1l3}}
\end{table}

If $G$ has no regular orbit on $V$ then by Propositions \ref{invols}, \ref{field} and \ref{tools} \ref{crude}),
\[
q^d \leq 2 |\pgl_n(q)| q^{d-(l(l-1)^2/2-\ep_2 \binom{l-1}{3}}+ \itwo q^{d-\binom{l+1}{3}-\binom{l}{3}}+\field
\]
This is a contradiction for $l\geq 5$ and $q\geq 2$, so $G$ has a regular orbit on $V$ here.
\end{proof}
\subsection{$\lambda = \lam2 + \lam3$, $l\in [4,7]$}
\begin{proposition}
Theorem \ref{prestmain} holds for $G$ with $E(G)/Z(E(G)) \cong \mathrm{PSL}_{l+1}(q)$ acting on $V=V(\lam2+\lam3)$ for $l \in [4, 7]$. Namely, $G$ has a regular orbit on $V$.
\end{proposition}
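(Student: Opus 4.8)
The plan is to follow the template already used above for $V(\lam1+\lam3)$ and the other $p$-restricted modules of Table \ref{fulllist}, namely to derive a contradiction from Proposition \ref{tools}\ref{crude}). First I would record $d=\dim V(\lam2+\lam3)$ for each $l\in[4,7]$ from \cite[Appendix A]{MR1901354}, keeping track of the correction terms coming from $\ep_2$ (and, for the relevant $l$, $\ep_3$ and $\ep_5$), since the relevant weight multiplicities drop in those characteristics. Using GAP \cite{GAP4} I would then compute the $W$-orbits on the weights of $V$ and assemble the Weyl orbit table: the dominant weights occurring are $\lam2+\lam3$ with multiplicity $1$, the weight $\lam1+\lam4$ (obtained from $\lam2+\lam3$ by subtracting $\alpha_2+\alpha_3$) with multiplicity depending on $l$ and on the $\ep_k$, and, for $l\ge5$, possibly $\lam5$ as a third orbit.

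Next I would compute the weight string table with respect to $\Psi=\langle\alpha_1\rangle$: for each string type I record its multiplicity, its contribution $c(s)$ to $\codim C_V(s)$ for semisimple $s\in\overline{G}$ of prime order $r$ under the assumption $\Psi\cap\Phi(s)=\emptyset$, and its contribution $c(u_\Psi)$ to $\codim C_V(u_{\alpha_1}(1))$, read off from the Jordan form of a regular unipotent element on the composition factors of each string module, exactly as in the worked example. I expect the weight-string bounds to be only of order $\tfrac12 d$, which once the class number $|\pgl_{l+1}(q)|$ is factored in is too weak to beat the $q^{d-1}$-scale right-hand side of the crude inequality; so the main work is to pass to a standard subsystem $\Psi$ of type $A_2$ (and, if $l=4$ or $5$ still resist, of type $A_3$), compute the corresponding $\Psi$-net table with GAP, and combine it with Propositions \ref{sscounts} and \ref{unipcounts} to bound the number of semisimple elements $s$ with $\Phi(s)\cap\Psi\neq\emptyset$ and the number of prime-order unipotent elements whose closure misses a regular unipotent of $\overline{G}_\Psi$, so that the sharper $\Psi$-net codimensions apply to every remaining class.

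For the field automorphisms I would invoke Proposition \ref{field} (the module is realised over $\F_q$ and $d\ge n^2$ throughout $l\in[4,7]$, so the term $\field$ is available), and in the single case $l=4$ — where the Dynkin diagram flip fixes $\lam2+\lam3$, so graph and graph-field automorphisms preserve $V$ — I would additionally bound $\dim C_V(\tau)$ for an involutory graph automorphism via Proposition \ref{graphfix} (computing the set $S$ of diagram-fixed weights from the Weyl orbit table), bound the graph-field contribution likewise, and count these elements with Proposition \ref{graph}. Substituting everything into Proposition \ref{tools}\ref{crude}) yields an inequality of the shape
\[
q^{d}\le 2\,|\pgl_{l+1}(q)|\,q^{d-c_{\mathrm{ss}}}+N\,q^{d-c_{\Psi}}+\field+\delta_{l,4}\bigl(2q^{(n^2-n)/2-1}q^{d-c_\tau}+q^{(n^2-1)/2}q^{d/2}\bigr),
\]
with $c_{\mathrm{ss}},c_{\Psi},c_\tau$ the codimensions extracted from the tables, $N$ the class count from Propositions \ref{sscounts}--\ref{unipcounts}, and $\delta_{l,4}$ the Kronecker delta, which I would check in Mathematica \cite{Mathematica} fails for all $l\in[4,7]$ and $q\ge2$, giving the regular orbit. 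The hardest point is making the $A_2$-net (or $A_3$-net) codimension $c_{\Psi}$ large enough at $l=4$, where $d$ is smallest while the extra graph terms must also be absorbed; should the crude bound still fail for a handful of small $q$ at $l=4$, I would fall back on the sharper Proposition \ref{tools}\ref{eigsp1}) with per-class eigenspace bounds, or construct $V$ explicitly in GAP and exhibit a regular vector.
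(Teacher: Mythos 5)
Your proposal is correct and follows essentially the same route as the paper: weight-string codimension bounds fed into Proposition \ref{tools}\ref{crude}), with Proposition \ref{field} for field automorphisms and Propositions \ref{graphfix} and \ref{graph} for the graph and graph-field automorphisms that arise only at $l=4$. The one place you overestimate the difficulty is in expecting to need $A_2$- or $A_3$-nets: since $d$ is already far larger than $n^2-1$ for these modules, the paper finds that the $\langle\alpha_1\rangle$ weight-string bounds (roughly $d/3$ to $d/2$) applied to all of $\pgl_{l+1}(q)$, refined only by the involution count of Proposition \ref{invols} at $l=4$, already yield a contradiction for all $q\geq 2$ without any GAP fallback.
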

\begin{proof}
First suppose $l=4$.
Here $d=75-\ep_2-24\ep_3$. The Weyl orbit and weight string tables are given in Table \ref{l2+l3weyl4}.\\
\begin{table}[!htbp]
\begin{tabular}{cccc}
\toprule 
$i$ & $\mu$ & $|W.\mu |$ & Mult \\ 
\midrule 
1 & $\lambda_2+\lambda_3$& $30$ & 1 \\ 
2 &  $\lam1+\lam4$&20& $2-\epsilon_3$ \\ 
3 &  $0$&1& $5-\ep_2-4\epsilon_3$ \\ 
\bottomrule 
\end{tabular}
\begin{tabular}{ccccccc}
\toprule
&&\multicolumn{2}{c}{$c(s)$}&\multicolumn{3}{c}{$c(u_{\Psi})$}\\
\midrule 
String & Mult & $r=2$ & $r\geq 3$ & $p=2$ & $p=3$ & $p\geq 5$ \\ 
\midrule 
$\m1$ & 6 & 0 & 0 & 0 & 0 & 0 \\ 
$\m1\m1$ & 6 & 6 & 6 & 6 & 6 & 6 \\ 
$\m1\m2\m1$ & 6 & $6(2-\epsilon_3)$ & 12 & 6 & 12 & 12 \\ 
$\m2\m2$ & 6 & $6(2-\epsilon_3)$ & $6(2-\epsilon_3)$ & 12 & 6 & 12 \\ 
$\m2\m3\m2$ & 1 & $4-3\epsilon_3$ & $4-2\epsilon_3$ & 2 & 2 & 4 \\ 
\midrule 
\textbf{Total} &  & $34-15\epsilon_3$ & $34-8\epsilon_3$ & 26 & 26 & 34 \\ 
\bottomrule 
\end{tabular} 
\caption{Weyl orbit and weight string tables for $V(\lam2+\lam3)$, with $l=4$.\label{l2+l3weyl4}}
\end{table}

Here we must also consider graph and graph field automorphisms, since they preserve the set of weights of the module.
If $\tau$ is a graph automorphism, then we determine using Proposition \ref{graphfix} that $\dim C_V(\tau) \leq 46-16\ep_3-\ep_2$.
Therefore, if $G$ has no regular orbit on $V$, then by Propositions \ref{invols}, \ref{field}, \ref{graph} and \ref{tools} \ref{crude})
\begin{align*}
q^{75-\ep_2-24\ep_3} \leq& 2|\mathrm{PGL}_5(q)|q^{d-34+8\ep_3}+4(q^{14}+q^{13})q^{d-34+15\ep_3}+ 2\log(\log_2 q+2)q^{25/2 + d/2}\\
& + 4(q^{12}+q^{14})q^{46-16\ep_3-\ep_2}
\end{align*}
which gives a contradiction for all $q\geq 2$. Therefore, $G$ has a regular orbit on $V$.
Now suppose $l=5$.
Here $d=210-6\ep_2-84\ep_3$. The Weyl orbit and weight string tables are given in Table \ref{l2+l3_5}\\
\begin{table}[!htbp]
\begin{minipage}{0.35 \textwidth}
\begin{tabular}{cccc}
\toprule 
$i$ & $\mu$ & $|W.\mu |$ & Mult \\ 
\midrule 
1 & $\lambda_2+\lambda_3$& $60$ & 1 \\ 
2 &  $\lam1+\lam4$&60& $2-\ep_3$ \\ 
3 &  $\lam5$&6& $5-\ep_2-4\ep_3$ \\ 
\bottomrule 
\end{tabular}
\end{minipage}%
\quad
\begin{minipage}{0.6\textwidth}
\begin{tabular}{ccccccc}
\toprule
&& \multicolumn{2}{c}{$c(s)$} & \multicolumn{3}{c}{$c(u_{\Psi})$}\\
\midrule
String & Mult & $r=2$ & $r\geq 3$ & $p=2$ & $p=3$ & $p\geq 5$ \\ 
\midrule
$\m1$ & 16 & 0 & 0 & 0 & 0 & 0 \\ 
$\m1\m1$ & 10 & 10 & 10 & 10 & 10 & 10 \\ 
$\m1\m2\m1$ & 12 & $24-2\ep_3$ & 24 & 12 & 24 & 24 \\ 
$\m2$ & 4 & 0 & 0 & 0 & 0 & 0 \\ 
$\m2\m2$ & 18 & $36-18\ep_3$ & $36-18\ep_3$ & 36 & 18 & 36 \\ 
$\m2\m3\m2$ & 4 & $16-8\ep_3$ & $16-8\ep_3$ & 8 & 8 & 16 \\ 
$\m3\m3$ & 1 & $5-4\ep_3$ & $5-\ep_2-4\ep_3$ & 4 & 1 & 5 \\ 
\midrule
\textbf{Total} &  & $91-32 \ep_3$ & $91-30\ep_3-\ep_2$ & 70 & 61 & 91 \\ 
\bottomrule 
\end{tabular} 
\end{minipage}
\caption{Weyl orbit  and weight string tables for $V(\lam2+\lam3)$ with $l=5$. \label{l2+l3_5}}
\end{table}

If $G$ has no regular orbit on $V$, then by Propositions \ref{field} and \ref{tools} \ref{crude}),
\[
q^d \leq 2|\mathrm{PGL}_6(q)|q^{d-59}+ 2\log(\log_2q+2)q^{36/2+d/2}
\]
This gives a contradiction for $q\geq 2$.
The arguments for $l=6,7$ are similar.
\end{proof}
\subsection{$\lambda = \lam2+\lam4$, with $l=5$}
\begin{proposition}
Theorem \ref{prestmain} holds for $G$ with $E(G)/Z(E(G)) \cong \mathrm{PSL}_6(q)$ acting on $V=V(\lam2+\lam4)$. Namely, $G$ has a regular orbit on $V$.
\end{proposition}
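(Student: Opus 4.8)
The plan is to run the standard machinery of this section and conclude via Proposition~\ref{tools}\ref{crude}). Note first that the diagram automorphism of $A_5$ fixes $\lambda_2+\lambda_4$, so the module $V$ is preserved by involutory graph and graph-field automorphisms, and these must be treated alongside the inner, diagonal and field automorphisms. Here $d=\dim V$; since $\wedge^2 W\otimes\wedge^4 W\cong \wedge^2 W\otimes(\wedge^2 W)^\ast\cong V(\lambda_2+\lambda_4)\oplus V(\lambda_1+\lambda_5)\oplus V(0)$ in characteristic coprime to $6$, one gets $d=225-35-1=189$ there, with the corrections in characteristics $2$ and $3$ read off from \cite{MR1901354}. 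Computing the weights with GAP \cite{GAP4}, they lie in the three Weyl orbits of $\lambda_2+\lambda_4$ (orbit size $90$, multiplicity $1$), $\lambda_1+\lambda_5$ (orbit size $30$, multiplicity $3$) and $0$ (orbit size $1$, multiplicity $9$), the last two multiplicities being reduced in characteristics $2$ and $3$.

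Next I would build the weight-string table for $\Psi=\langle\alpha_1\rangle$ exactly as in the worked example, reading off lower bounds $c(s)$ for $\codim C_V(s)$ (semisimple $s$ of prime order $r\neq p$ with $\Phi(s)\cap\Psi=\emptyset$, with a separate weaker column for $r=2$) and $c(u_\Psi)$ for a regular unipotent $u_\Psi\in\overline G_\Psi$. Because $|\mathrm{PGL}_6(q)|<q^{35}$ and $N_2=\dim\overline G-\tfrac12|\Phi|=20$, all that is actually needed from these tables in Proposition~\ref{tools}\ref{crude}) is roughly $c(s),c(u_\Psi)\gtrsim 36$ and $c(s)_{r=2}\gtrsim 21$; since the actual codimensions on this $189$-dimensional module are large (every nontrivial prime-order element already satisfies $\codim C_V(x)\ge\lceil d/6\rceil$ by Propositions~\ref{alphas} and~\ref{tools}\ref{alphabound})), the $A_1$-nets should suffice. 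If they do not, I would pass to $\Psi=\langle\alpha_1,\alpha_2\rangle$ of type $A_2$, compute the $\Psi$-net table with GAP, and use Propositions~\ref{sscounts} and \ref{unipcounts} to bound (by $\atwoeven$ and by $4\bigl(\tfrac{q^{n^2/2+2}-1}{q^2-1}\bigr)$ with $n=6$) the number of semisimple, resp.\ unipotent, classes missed by this $\Psi$; those classes contribute only the crude bound of Proposition~\ref{tools}\ref{alphabound}).

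For the involutory graph automorphisms I would apply Proposition~\ref{graphfix} with $S$ the set of weights of $V$ fixed by the diagram involution $e_i\mapsto -e_{7-i}$: in the top orbit this fixes the $12$ weights $e_i+e_j-e_{7-i}-e_{7-j}$ with $i+j\neq 7$, in the middle orbit the $6$ roots $e_i-e_{7-i}$, and the zero weight, so $\sum_{\mu\in S}\dim V_\mu\le 12+18+9=39$ and hence $\dim C_V(\tau)\le\tfrac{d}{2}+\tfrac{39}{2}\le 114$; graph-field automorphisms contribute at most $q^{\frac12(d+d^{1/2})}$ each by Proposition~\ref{graphfix}, and their numbers are bounded by Proposition~\ref{graph}. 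Field automorphisms are handled by Proposition~\ref{field}, contributing at most $\field$ (up to the $d^{1/2}/2$ term there). Assembling these bounds, if $G$ has no regular orbit on $V$ then
\[
q^{d}\le 2|\mathrm{PGL}_6(q)|\,q^{d-c(s)}+\itwo\, q^{d-c(s)_{r=2}}+(\text{bad-class terms})+(\text{graph and graph-field terms from Propositions~\ref{graphfix}, \ref{graph}})+\field ,
\]
and I would check with Mathematica \cite{Mathematica} that this fails for every $q\ge 2$; for any residual small $q$ one constructs $V$ in GAP \cite{GAP4} and exhibits a regular vector directly.

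The main obstacle is not any single estimate — the codimensions on a module of dimension $189$ are comfortably large — but the bookkeeping: choosing $\Psi$ so that the $\Psi$-net bound is simultaneously valid for, and strong enough after multiplying by the class count for, each family of prime-order classes (in particular ensuring that the involutions $\mathrm{diag}(1^j,(-1)^{6-j})$, whose $\Phi(s)$ need not miss a standard $A_2$, are absorbed either by a suitable $\Psi$ or by the global bound $\itwo$), together with carrying the characteristic-$2$ and characteristic-$3$ corrections to the weight multiplicities through into $d$, into $\sum_{\mu\in S}\dim V_\mu$, and into the totals $c(s)$ and $c(u_\Psi)$.
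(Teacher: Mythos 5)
Your proposal follows essentially the same route as the paper: the identical Weyl orbit data, the $A_1$ weight-string bounds (which do suffice, the paper's worst total being $54$ against $|\mathrm{PGL}_6(q)|<q^{35}$), the same graph-automorphism bound $\dim C_V(\tau)\le 114$ via Proposition \ref{graphfix}, and the same closing inequality via Propositions \ref{field}, \ref{graph} and \ref{tools}. One small slip: the dimension and multiplicity corrections occur in characteristics $2$ and $5$ (the paper has $d=189-\ep_5-35\ep_2$ and zero-weight multiplicity $9-\ep_5-5\ep_2$), not characteristics $2$ and $3$, though this would be caught when consulting L\"ubeck's tables as you propose.
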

\begin{proof}
Here $d=189-\ep_{5}-35\ep_2$. The Weyl orbit and weight string tables are given in Table \ref{l2+l4_5}.
\begin{table}[!htbp]
\begin{minipage}{0.35\textwidth}
\begin{tabular}{cccc}
\toprule 
$i$ & $\mu$ & $|W.\mu |$ & Mult \\ 
\midrule 
1 & $\lambda_2+\lambda_4$& $90$ & 1 \\ 
2 &  $\lambda_1 +\lambda_5$&30& $3-\ep_2$ \\ 
3 &  $0$&1& $9-\ep_5-5\ep_2$ \\ 
\bottomrule 
\end{tabular}
\end{minipage}
\quad
\begin{minipage}{0.6 \textwidth}
\begin{tabular}{ccccccc}
\toprule 

&& \multicolumn{2}{c}{$c(s)$}& \multicolumn{3}{c}{$c(u_{\Psi})$}\\ 
\midrule 

String & Mult & $r=2$ & $r\geq 3$ & $p=2$ & $p=5$ & $p \neq 2,5$ \\ 
\midrule 

$\m1$ & 18 & 0 & 0 & 0 & 0 & 0 \\ 
$\m1\m1$ & 24 & 24 & 24 & 24 & 24 & 24 \\ 
$\m1\m2\m1$ & 12 & 24 & 24 & 12 & 24 & 24 \\ 
$\m2\m2$ & 8 & 24 & $24-8\ep_2$ & 16 & 24 & 24 \\ 
$\m2\m3\m2$ & 1 & 6 & $6-2\ep_2$ & 2 & 6 & 6 \\ 
\midrule 
\textbf{Total} &  & 78 & $78-10\ep_2$ & 54 & 78 & 78 \\ 
\bottomrule 
\end{tabular}
\end{minipage}
\caption{Weyl orbit  and weight string tables for $V(\lam2+\lam4)$ with $l=5$. \label{l2+l4_5}}
\end{table}

Using Proposition \ref{graphfix}, we also determine that for a graph automorphism $\tau$, we have $\dim C_V(\tau) \leq 114-23\ep_2 - \ep_{5}$.

Therefore, if $G$ has no regular orbit on $V$, then by Propositions \ref{field}, \ref{graph} and \ref{tools} \ref{crude})
\[
q^{189-\ep_{5}-35\ep_2} \leq 2|\pgl_6(q) | q^{d - 54} + 2\log(\log_2q+2)q^{36/2+ d/2} + (2q^{20}+ 2q^{35/2})q^{114-23\ep_2 - \ep_{5}}.
\]
This gives a contradiction for all $q\geq 2$, so $G$ has a regular orbit on $V$.
\end{proof}
\subsection{$\lambda = 3\lambda_1+\lambda_2$, $l\in [2,4]$}
\begin{proposition}
Theorem \ref{prestmain} holds for $G$ with $E(G)/Z(E(G)) \cong \mathrm{PSL}_{l+1}(q)$ acting on $V=V(3\lam1+\lam2)$ for $l \in [2, 4]$. Namely, $G$ has a regular orbit on $V$.
\end{proposition}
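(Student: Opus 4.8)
The plan is to treat $V = V(3\lam1 + \lam2)$ for each $l \in \{2,3,4\}$ exactly as the other $p$-restricted modules handled in this section. Note first that since $\lambda = 3\lam1+\lam2$ is $p$-restricted with a coefficient $3$, necessarily $p \geq 5$; and for every $l \in [2,4]$ the Dynkin diagram automorphism of type $A_l$ sends $\lambda$ to $3\lam l + \lam{l-1} \neq \lambda$, so $V$ admits no graph or graph-field automorphism acting linearly, and it suffices to bound the fixed point spaces of semisimple and unipotent elements of projective prime order together with those of field automorphisms. For each $l$ and each admissible $p$ I would record $d = \dim V$ from L\"ubeck's tables \cite{MR1901354} (with the relevant $\ep_p$ corrections; for $l = 3, 4$ the Weyl module of highest weight $3\lam1+\lam2$ is reducible when $p = 5$). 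Whenever $d \geq n^3$ — which holds for $l \in \{3,4\}$ for all but the smallest one or two primes — Proposition \ref{n^3} already gives a regular orbit, so the real work is confined to $l = 2$ (where $d < n^3 = 27$ for every $p \geq 5$) together with the few small primes surviving for $l = 3, 4$.

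For these remaining cases I would, for each relevant $(l,p)$, use GAP \cite{GAP4} to compute the weights of $V(\lambda)$, sort them into Weyl orbits, and record the Weyl orbit table. Then, taking $\Psi = \langle\alpha_1\rangle$, I would compute the weight string table: for each shape of $\alpha_1$-string, its multiplicity, its contribution $c(s)$ to $\codim C_V(s)$ for a semisimple element $s$ of projective prime order $r$ — split as usual into the cases $r = 2$, $r = 3$, $r \geq 5$, according to which small powers of $\alpha_1(s)$ can collapse weights lying in a common $\alpha_1$-string — and its contribution $c(u_\Psi)$ to $\codim C_V(u_\Psi)$ for a regular unipotent $u_\Psi = u_{\alpha_1}(1)$, read off from the Jordan form of $u_\Psi$ on the composition factors of the $\alpha_1$-string module. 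When these bounds do not suffice to conclude via Proposition \ref{tools}\ref{crude}), I would pass to a standard subsystem $\Psi$ of type $A_2$ (for $l = 2$ this is all of $\Phi$) or, if still needed, of type $A_3$, compute the $\Psi$-net table with GAP, and invoke Propositions \ref{sscounts} and \ref{unipcounts} to bound the numbers of semisimple elements $s$ with $\Phi(s)\cap\Psi \neq \emptyset$ and of unipotent elements whose closure misses a regular unipotent of $\overline{G}_\Psi$ — the only classes not covered by the $\Psi$-net bounds.

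With these tables in hand, and assuming $G$ has no regular orbit on $V$, I would substitute into Proposition \ref{tools}\ref{crude}): a term $2|\mathrm{PGL}_{l+1}(q)|\,q^{d - c(u_\Psi)}$ for the classes covered by the largest subsystem used, terms $N_s(\Psi')\,q^{d - c'(s)}$ and $N_u(\Psi')\,q^{d - c'(u)}$ from Propositions \ref{sscounts}--\ref{unipcounts} for the exceptional classes governed by the next smaller subsystem $\Psi'$, a term $2\log(\log_2 q + 2)\,q^{n^2/2 + d/2}$ from Proposition \ref{field} for the field automorphisms, and — when $l = 2$ — the sharper bounds $2(q^{N_2}+q^{N_2-1})$ and $2(q^{N_3}+q^{N_3-1})$ of Proposition \ref{invols} in place of $|\mathrm{PGL}_3(q)|$ for involutions and order-$3$ elements. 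I would verify (in Mathematica \cite{Mathematica}, as throughout the paper) that the resulting inequality fails for all $q$ above a small threshold; should part \ref{crude}) of Proposition \ref{tools} prove too lossy for some $(l,p)$, the sharper parts \ref{eigsp1})--\ref{qsgood}), which sum genuine eigenspace sizes over $\kappa \in \F_q$, should close the remaining gap.

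The main obstacle will be the small cases. For $l = 2$ the module has dimension only around $24$ (and $18$ when $p = 5$), against $|\mathrm{PGL}_3(q)| \approx q^8$, so the counting inequality is tight and must be driven by an $A_2$-net estimate on $\codim C_V(u_\Psi)$; and for $l = 3$ with $p = 5$ the Weyl module collapses substantially, lowering $d$ and again forcing a careful $A_2$- or $A_3$-net analysis. For the finitely many genuinely small prime powers $q$ that still survive these estimates, I would construct $V$ explicitly in GAP \cite{GAP4} and exhibit a vector with trivial stabiliser, exactly as was done above for $V(4\lam1)$ with $l = 1$.
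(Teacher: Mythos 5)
Your proposal is correct and follows essentially the same route as the paper: Weyl orbit and $\alpha_1$-weight-string tables, then Proposition \ref{tools}\ref{crude}) with Proposition \ref{invols} bounding involutions and order-$3$ elements, Proposition \ref{field} for field automorphisms, and the observation that no graph automorphism preserves $\lambda$. The only difference is that your contingency plans ($A_2$/$A_3$-nets, Propositions \ref{sscounts}--\ref{unipcounts}, explicit GAP searches for small $q$) turn out to be unnecessary — the weight-string bounds alone (e.g.\ $c(u_\Psi)=13$ and $c(s)\geq 12-4\ep_5$ for $l=2$, $d=24-6\ep_5$) already yield a contradiction for every $q\geq 5$, and the paper treats $l=3,4$ by the same computation.
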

\begin{proof}
Note here that $p\geq 5$.
First suppose that $l=2$.
Here $d=24-6\ep_{5}$. The Weyl orbit and weight string tables are given in Tables \ref{3l1+l2weyl} and \ref{3l1+l2A1}.
\begin{table}[!htbp]
\begin{minipage}{0.35 \textwidth}
\centering
\begin{tabular}{cccc}
\toprule 
$i$ & $\mu$ & $|W.\mu |$ & Mult \\ 
\midrule 
1 & $3\lambda_1+\lambda_2$& $6$ & 1 \\ 
2 &  $\lambda_1 +2\lambda_2$&6& 1 \\ 
3 &  $2\lambda_1$&3& $2-\ep_5$ \\ 
4& $\lam2$ &3&$2-\ep_5$\\
\bottomrule 
\end{tabular} 
\caption{The Weyl orbit table of $V(3\lam1+\lam2)$, $l=2$.\label{3l1+l2weyl}}
\end{minipage}
\begin{minipage}{0.6 \textwidth}
\centering
\begin{tabular}{ccccccc}
\toprule 
&&\multicolumn{3}{c}{$c(s)$} &\multicolumn{2}{c}{$c(u_{\Psi})$}\\
String & Mult & $r=2$ & $r=3$ & $r\geq 5$ & $p=5$ & $p\geq 7$ \\ 
\midrule 
$\m1\m1$ & 1 & 1 & 1 & 1 & 1 & 1 \\ 

$\m1\m2\m2\m1$ & 1 & 2 & 2 & 3 & 3 & 3 \\ 

$\m1\m3\m4\m3\m1$ & 1 & $4-2\ep_5$ & $5-2\ep_5$ & $6-2\ep_5$ & 4 & 6 \\ 

$\m2\m3\m2$ & 1 & $2-\ep_5$ & 2 & 2 & 2 & 2 \\ 

$\m2\m4\m4\m2$ & 1 & $3-\ep_5$ & $4-2\ep_5$ & $4-\ep_5$ & 3 & 4 \\ 
\midrule 
\textbf{Total} &  & $12-4\ep_5$ & $14-4\ep_5$ & $16-3\ep_5$ & 13 & 16 \\ 
\bottomrule 
\end{tabular} 
\caption{The weight string table of $V(3\lam1+\lam2)$, $l=2$.\label{3l1+l2A1}}
\end{minipage}
\end{table}
If $G$ has no regular orbit on $V$, then 
\begin{align*}
q^{d}&\leq 2|\mathrm{PGL}_3(q)| q^{d-13}+ 4(q^{5}+q^4)q^{d-(12-4\ep_5)}+ 2(q^6+q^5)q^{d-(14-4\ep_5)}+2\log(\log_2q+2)q^{9/2+d/2}\\
\end{align*}
This gives a contradiction for all $q \geq 5$.
The proofs for $l=3,4$ are similar.
\end{proof}
%
%
\subsection{$\lambda=\lambda_1+\lambda_2+\lambda_3$, $l=3$}
\begin{proposition}
Theorem \ref{prestmain} holds for $G$ with $E(G)/Z(E(G)) \cong \mathrm{PSL}_4(q)$ acting on $V=V(\lam1+\lam2+\lam3)$. Namely, $G$ has a regular orbit on $V$.
\end{proposition}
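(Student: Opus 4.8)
The plan is to follow the scheme of the preceding propositions. Write $V=V(\lambda_1+\lambda_2+\lambda_3)$ and $d=\dim V$; from \cite{MR1901354} one has $d=64$ when $p\geq 3$, with the appropriate smaller value (recorded via an $\ep_2$ term) when $p=2$. The key structural point is that the Dynkin diagram automorphism of $A_3$ (interchanging $\alpha_1$ and $\alpha_3$, fixing $\alpha_2$) fixes the weight $\lambda_1+\lambda_2+\lambda_3$, so $V\cong V^{*}$ and $V$ is preserved by involutory graph and graph-field automorphisms; these therefore contribute extra terms to the inequality of Proposition~\ref{tools}\ref{crude}) and must be controlled.

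First I would build the Weyl orbit table. The dominant weights occurring are $\lambda_1+\lambda_2+\lambda_3$ (orbit size $24$, multiplicity $1$), $2\lambda_1$ and $2\lambda_3$ (orbit size $4$ each, multiplicity $2$, interchanged by the graph automorphism), and $\lambda_2$ (orbit size $6$, multiplicity $4$, dropping when $p=2$); the zero weight does not occur, since $\lambda_1+\lambda_2+\lambda_3$ lies outside the root lattice. Then, with $\Psi=\langle\alpha_1\rangle$, I would enumerate the weight strings and compute their contributions $c(s)$ (for semisimple $s$ of prime order $r$, distinguishing $r=2$ from $r\geq 3$) and $c(u_\Psi)$ (for a regular unipotent $u_\Psi=u_{\alpha_1}(1)$, distinguishing $p=2$, $p=3$ and $p\geq 5$), exactly as in the worked example. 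Since $d=64$ lies only just below $|\mathrm{PGL}_4(q)|\approx q^{15}$, the weight-string bounds alone are unlikely to suffice, so I would pass --- using GAP, as in the worked example --- to $\Psi$-net tables for $\Psi$ of types $A_2$ and $A_3$, recording the sharper values of $c(s)$ and $c(u_\Psi)$; here $\Psi=A_3=\Phi$ corresponds to regular semisimple and regular unipotent elements, and for $r\geq 3$ it already gives $c(s)\geq d-4$, which is ample.

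For the graph automorphisms I would invoke Proposition~\ref{graphfix}: letting $S$ be the set of weights of $V$ fixed by the diagram automorphism, $\dim C_V(\tau)\leq \tfrac d2+\tfrac12\sum_{\mu\in S}\dim V_\mu$, which I would evaluate from the orbit data (the $\lambda_2$-orbit and part of the principal orbit lie in $S$, whereas the $2\lambda_1$- and $2\lambda_3$-orbits are interchanged and contribute nothing), and for graph-field automorphisms it gives $|C_V(\tau x)|\leq q^{(d+d^{1/2})/2}$. Feeding these, together with the counts $2q^{(n^{2}-n)/2-1}$ and $q^{(n^{2}-1)/2}$ of Proposition~\ref{graph}, the field-automorphism estimate of Proposition~\ref{field}, the involution and order-$3$ counts of Proposition~\ref{invols}, and the semisimple and unipotent class counts of Propositions~\ref{sscounts} and \ref{unipcounts} (Tables~\ref{ssboundtable} and \ref{unipboundtable}) applied along the chain $A_3\supset A_2\supset A_1^{2}$, with Proposition~\ref{tools}\ref{alphabound}) covering the residual classes with $A_2$-type centraliser, into Proposition~\ref{tools}\ref{crude}), I would obtain an inequality of the shape
\[
q^{d}\leq 2|\mathrm{PGL}_4(q)|\,q^{d-c(u_\Psi)}+\big(N_s(A_2)+N_u(A_2)\big)q^{d-c(s)}+\big(N_s(A_1^{2})+N_u(A_1^{2})\big)q^{d-c'}+2\log(\log_2 q+2)\,q^{16/2+d/2}+2q^{5}\,q^{\dim C_V(\tau)}+q^{15/2}\,q^{(d+d^{1/2})/2},
\]
where $c(u_\Psi),c(s)$ are the totals from the $A_2$-net table and $c'$ the weaker weight-string total, and I would check (with Mathematica) that it gives a contradiction for all $q\geq 3$, forcing a regular orbit.

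The main obstacle I anticipate is the case $q=2$: there $\mathrm{PSL}_4(2)\cong\mathrm{Alt}_8$, the $p=2$ reduction makes $d$ smaller, and Proposition~\ref{alphas}(vi) only supplies $\alpha(x)=7$ for a graph automorphism, so the crude estimate above is too weak; here I would instead cite \cite{MR3500766} or construct $V$ explicitly in GAP and exhibit a regular orbit. Any other value of $q$ for which the general inequality turns out to be marginal would likewise be settled by a direct GAP computation or by replacing the crude class bounds with exact conjugacy-class counts in $\mathrm{PGL}_4(q)$. The remaining, routine but error-prone, task is pinning down the $\Psi$-net contributions $c(s)$, $c(u_\Psi)$ and the fixed-weight set $S$ exactly, for which --- as in the worked example --- I would rely on GAP.
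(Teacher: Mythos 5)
Your plan matches the paper's proof in all essentials: a Weyl orbit table, a weight string table for $\Psi=\langle\alpha_1\rangle$, Proposition \ref{graphfix} for graph automorphisms, and Proposition \ref{tools}\ref{crude}) with the counts from Propositions \ref{field}, \ref{invols} and \ref{graph}. Two factual slips are worth correcting, though neither derails the argument since your GAP computations would expose them: the dimension degenerations occur at $p=3$ and $p=5$, not $p=2$ (one has $\dim V = 64-20\ep_3-6\ep_5$; the $2\lambda_1$, $2\lambda_3$ orbits have multiplicity $2-\ep_3$ and the $\lambda_2$ orbit has multiplicity $4-\ep_5-2\ep_3$), and the weight-string totals ($c(s)\geq 32-12\ep_3-4\ep_5$, $c(u_\Psi)\geq 24$) already comfortably beat $|\mathrm{PGL}_4(q)|\approx q^{15}$, so the $A_2$- and $A_3$-net tables and the class counts of Propositions \ref{sscounts}--\ref{unipcounts} are unnecessary, and the resulting inequality gives a contradiction for all $q\geq 2$ with no separate treatment of $q=2$ needed.
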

\begin{proof}
Here $\dim V= 64-20\ep_{3}-6\ep_{5}$ and $V$ is preserved by graph automorphisms.
We determine that a graph automorphism $\tau$ has $\dim C_V(\tau) \leq 44-5\ep_{5}-14\ep_3$.
\begin{table}[!htbp]
\begin{tabular}{cccc}
\toprule 
$i$ & $\mu$ & $|W.\mu |$ & Mult \\ 
\midrule 
1 & $\lambda_1+\lambda_2+\lambda_3$& $24$ & 1 \\ 
2 &  $2\lambda_3$&4& $2-\ep_{3}$ \\ 
3 &  $2\lambda_1$&4& $2-\ep_{3}$ \\ 
4& $ \lambda_2$ &6&$4-\ep_{5}-2\ep_{3}$\\
\bottomrule 
\end{tabular}
\caption{Weyl orbit table for $V(\lambda_1+\lambda_2+\lambda_3)$, $l=3$.}
\end{table}
\begin{table}[!htbp]
\begin{tabular}{ccccccccc}
\toprule
 &  & \multicolumn{3}{c}{$c(s)$} & \multicolumn{4}{c}{$c(u_{\Psi})$} \\ 
 \midrule
String & Mult & $r=2$ & $r=3$ & $r\geq 5$ & $p=2$ & $p=3$ & $p=5$ & $p\geq 7$ \\ 
\midrule
$\m1\m1$ & 6 & 6 & 6 & 6 & 6 & 6 & 6 & 6 \\ 
$\m1\m2\m1$ & 2 & $4-2 \ep_{3}$ & 4 & 4 & 2 & 4 & 4 & 4 \\ 
$\m1\m3\m1$ & 2 & $4-2 \ep_{3}$ & 4 & 4 & 2 & 4 & 4 & 4 \\ 
$\m1\m4\m4\m1$ & 2 & $10-2\ep_{5}-4\ep_{3}$ & $12-2\ep_{5}$ & $12-2\ep_{5}-4\ep_{3}$ & 10 & 6 & 10 & 12 \\ 
$\m2\m4\m2$ & 1 & $4-\ep_{5}-2\ep_{3}$ & 4 & $4-2\ep_{3}$ & 2 & 2 & 4 & 4 \\ 
$\m3\m4\m3$ & 1 & $4-\ep_{5}-2\ep_{3}$ & 4 & $4-2\ep_{3}$ & 2 & 2 & 4 & 4 \\ 
\midrule
\textbf{Total} &  & $32-12\ep_{3}-4\ep_{5}$ & $34-2\ep_{5}$ & $34-8\ep_{3}-2\ep_{5}$ & 24 & 24 & 32 & 34 \\ 
\bottomrule 
\end{tabular} 
\caption{Weight string table for $V(\lambda_1+\lambda_2+\lambda_3)$, $l=3$.}
\end{table}

If $G$ has no regular orbit on $V$, then 
\[
q^{d} \leq 2|\pgl_4(q)| q^{d- 32+12\ep_3+4\ep_{5}} + 2\log(\log_2 q+2)q^{16/2+d/2} + (2q^{15/2}+2q^{9})q^{44-5\ep_{5}-14\ep_3}
\]
and this gives a contradiction for all $q\geq 2$.
\end{proof}

There are a number of highest weight modules $V(\lambda)$ with $\lambda$ in Table \ref{fulllist} for which the proof of Theorem \ref{main} is a straightforward application of Proposition \ref{tools}\ref{crude}) with the lower bounds on codimension computed from the corresponding weight string tables. For conciseness, we treat these cases together in the following proposition.
\begin{proposition}
	\label{ws_mods}
Theorem \ref{prestmain} holds for $G$ with $E(G)/Z(E(G)) \cong \mathrm{PSL}_{l+1}(q)$ acting on $V=V(\lambda)$ for $l$ and $\lambda$ given in Table \ref{condtable}. Namely, $G$ has a regular orbit on $V$ in each case.
\end{proposition}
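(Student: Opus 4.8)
# Proof Proposal for Proposition~\ref{ws_mods}

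The plan is to treat this as a uniform dispatch of a list of highest weights $\lambda$, each handled by the same template: compute $d = \dim V(\lambda)$, extract the weight string data into a table, read off the total $c(s)$ and $c(u_\Psi)$ values, and then feed these bounds into Proposition~\ref{tools}(v) together with the element counts from Propositions~\ref{invols}, \ref{field}, \ref{graph}, \ref{sscounts}, and~\ref{unipcounts}. For each $\lambda$ in Table~\ref{condtable}, the argument is the same shape as the worked example and the preceding individual propositions: assume for contradiction that $G$ has no regular orbit, so that inequality~\eqref{crudeeqn} holds; bound the right-hand side using $|\bar x^H| \le |\mathrm{PGL}_n(q)|$ for the inner (non-semisimple, non-field, non-graph) elements with exponent $d - c(s)$ or $d - c(u_\Psi)$, use Proposition~\ref{field} for field automorphisms with the term $2\log(\log_2 q + 2)q^{n^2/2 + d/2}$, use Proposition~\ref{graph} and Proposition~\ref{graphfix} for graph and graph-field automorphisms where $\lambda$ is fixed by the Dynkin diagram symmetry, and conclude that the resulting inequality fails for all $q \ge p$ (with $p$ the relevant characteristic constraint, e.g.\ $p \ge 5$ for weights like $5\lam1$ or $3\lam2$), giving the contradiction.

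First I would organise the bookkeeping: for each row of Table~\ref{condtable}, state $d$ (citing L\"ubeck~\cite{MR1901354} or Martinez~\cite{alvaro} as in Proposition~\ref{complete}), state the characteristic restriction, and note whether $V(\lambda)$ is preserved by graph automorphisms --- this is exactly the condition that the Dynkin diagram automorphism fixes $\lambda$, which for type $A_l$ means $\lambda = \sum a_i\lambda_i$ with $a_i = a_{l+1-i}$. Then I would present the Weyl orbit table and weight string table for each $\lambda$ (defaulting to $\Psi = \langle \alpha_1 \rangle$ as in the table-organisation conventions), carrying out the composition-factor analysis of each weight string as an $A_1$-module to get the $c(u_\Psi)$ entries and the eigenvalue-collision analysis to get the $c(s)$ entries, split by the relevant residues of $r$ modulo small primes and of $p$. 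The totals from these tables are the codimension lower bounds $c(s)$ and $c(u_\Psi)$ that enter the final inequality.

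The main obstacle --- to the extent there is one --- is that a single $\Psi = \langle\alpha_1\rangle$ weight string analysis must suffice for \emph{every} module in the table; that is the whole point of collecting these cases into one proposition rather than escalating to $A_2$- or $A_3$-nets as in harder cases elsewhere in the paper. So the real content is verifying, module by module, that the weight string totals are already large enough that~\eqref{crudeeqn} fails for all admissible $q$. For most of these $\lambda$ (which have relatively large dimension $d$ compared to $n^2$), the dominant right-hand-side term is $2|\mathrm{PGL}_n(q)|q^{d - c}$ with $c = c(s)$ roughly of order $d$, and since $|\mathrm{PGL}_n(q)| < q^{n^2-1}$ while $c$ grows faster than $n^2$ as $l$ increases within the stated ranges, the inequality fails comfortably; the field-automorphism term $q^{n^2/2 + d/2}$ is negligible once $d > n^2$. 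The delicate points are the boundary values (smallest $l$ in each range, smallest $q$, and the characteristic-dependent $\epsilon_k$ corrections to $d$ and to the $c$-values), which I would check by substituting explicit numbers and, where a clean inequality is not immediate, invoke GAP~\cite{GAP4} to construct the module and exhibit a regular orbit directly, as was done for the small leftover cases in the $4\lam1$ proposition. Finally I would remark that none of these $\lambda$ appear in Table~\ref{rem1}, so the conclusion ``$G$ has a regular orbit on $V$'' is consistent with the statement of Theorem~\ref{prestmain}.
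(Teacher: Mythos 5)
Your proposal follows essentially the same route as the paper: weight string tables for each $\lambda$ giving lower bounds on $c(s)$ and $c(u_\Psi)$, Proposition \ref{graphfix} for graph automorphisms where $\lambda$ is symmetric, and a single application of Proposition \ref{tools} together with Propositions \ref{field} and \ref{graph} to derive a contradiction; the paper simply takes $m$ to be the minimum of all the codimension bounds in each row and absorbs everything into one term $2|\pgl_{l+1}(q)|q^{d-m}$ rather than splitting semisimple and unipotent contributions via Propositions \ref{sscounts} and \ref{unipcounts}. The paper also finds that the resulting inequality already fails for every admissible $(\lambda,l,q)$, so no GAP check of boundary cases is needed.
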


{\footnotesize
\begin{table}[]
\begin{tabular}{@{}m{1.5cm}llllllllll@{}}
\toprule
&&& \multicolumn{3}{c}{$c(s)$} &&\multicolumn{3}{c}{$c(u_\Psi)$} &\\
\cmidrule{4-6} \cmidrule{8-10}
$\lambda$ & $l$ & $\dim V(\lambda)$ & $r=2$ & $r=3$ & $r\geq 5$ &  & $p=2$ & $p=3$ & $p\geq 5$ & $\dim C_V(\tau)$ \\
\midrule
$\lam6$ & 11 & 924 & 252 & 252 & 252 &  & 252 & 252 & 252 & 430 \\
 & 12 & 1716 & 462 & 462 & 462 &  & 462 & 462 & 462 & -- \\
$\lam1+\lam4$ & 7 & $504-56\ep_5$ & $175-15\ep_5$ & $175-15\ep_5)$ & $175-15\ep_5$ &  & 155 & $175-15\ep_5$ & $175-15\ep_5$ & -- \\
$2\lam1+\lam2$ & 2 & 15 & 7 & 9 & 9 &  & -- & 8 & 9 & -- \\
 & 3 & 45 & 20 & 24 & 24 &  & -- & 22 & 24 &  \\
 & 4 & 105 & 43 & 50 & 50 &  & -- & 47 & 50 & -- \\
 & 5 & 210 & 79 & 90 & 90 &  & -- & 86 & 90 & -- \\
 $\lam1+2\lam2$ & 3& 60& 27&34&34&&-- & 31& 34 & --\\
$2\lam3$ & 5 & $175-34\ep_3$ & $74-23\ep_3$ & $80-10\ep_3$ & $80-10\ep_3$ &  & -- & 70 & 80 & $69-12\ep_3$ \\
$2\lam1+\lam3$ $(p=5)$ & 4 & $103$ & 45 & 52 & 52 &  & -- & -- & 52 & -- \\
$5\lam1$ & 1&6& 3 & 4 & 4&& -- &-- & 3 $(p\geq 7)$ & --\\
& 2 & 21 & 9 & 12 & 14 &  & -- & -- & 12 $(p\geq 7)$ & -- \\
 & 3 & 56 & 22 & 29 & 34 &  & -- & -- & 31 $(p\geq 7)$ & -- \\
$3\lam2$ & 3 & 50 & 22 & 28 & 30 &  & -- & -- & 30 & 15 \\
$\lam1+\lam4$ & 6 & $371-84\ep_{l-1}$&$80-5\ep_{l-1}$ & $80-5\ep_{l-1}$ &$80-5\ep_{l-1}$&&$70-5\ep_{l-1}$&$80-5\ep_{l-1}$&$80-5\ep_{l-1}$&--\\
$\lam1+\lam5$ & 7 & $630-112\ep_{l-1}$ & $140-6\ep_{l-1}$&$140-6\ep_{l-1}$&$140-6\ep_{l-1}$&&$125-6\ep_{l-1}$&$140-6\ep_{l-1}$&$140-6\ep_{l-1}$&--\\
$\lam1+\lam6$ & 8& $990-144\ep_{l-1}$ & $224-7\ep_{l-1}$ & $224-7\ep_{l-1}$ & $224-7\ep_{l-1}$ && $203-7\ep_{l-1}$ &$224-7\ep_{l-1}$&$224-7\ep_{l-1}$ & --\\
\bottomrule
 \end{tabular}
\caption{Lower bounds for codimensions of eigenspaces in some $p$-restricted modules $V(\lambda)$.}
\label{condtable}
\end{table}
}
\begin{proof}
Using weight string tables for each case (omitted here), we compute lower bounds on $c(s)$ for prime order semisimple elements $s \in G$ and $c(u_\Psi)$ for unipotent elements $u_\Psi \in G$. We also, if applicable, bound the size of fixed point space of an involutory graph automorphism using Proposition \ref{graphfix}. For a given $\lambda$ and $l$, let $m$ be the minimum value of the $c(s)$ and $c(u_\Psi)$ listed in the row corresponding the $\lambda$ and $l$ in Table \ref{condtable}. Also let $m_g$ be equal to the entry listed in the $\dim C_V(\tau)$ column if it exists, otherwise set $m_g=\infty$. If $G$ has no regular orbit on $V$, then applying Propositions  \ref{field} \ref{graph} and \ref{tools}\ref{qsgood}, we see that $q^{\dim V(\lambda)}$ is less than
\[
2|\pgl_{l+1}(q)|q^{\dim V(\lambda)-m}+ 2(2q^{(n^2+n)/2-1}+2q^{(n^2-1)/2})q^{\dim V(\lambda)-m_g} +2\log(\log_2q+2)q^{(l+1)^2/2+\dim V(\lambda)/2}.
\]
This gives a contradiction for every choice of $\lambda$ and $l$, so the result follows.
\end{proof}
\subsection{$\lambda = 2\lam2$, with $l \in [3,17]$}
\begin{proposition}
 \label{2l2}
Theorem \ref{prestmain} holds for $G$ with $E(G)/Z(E(G)) \cong \mathrm{PSL}_{l+1}(q)$ acting on $V=V(2\lam2)$ for $l \in [3, 17]$. Namely, $G$ has a regular orbit on $V$ if $l\geq 4$ or $l=3$ and $G$ is quasisimple, and $b(G)\leq 2$ otherwise.
\end{proposition}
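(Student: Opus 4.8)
The plan is to follow the standard template established in this section: reduce to the finitely many $l \in [3,17]$, build Weyl orbit and weight string tables for $V(2\lambda_2)$, extract lower bounds $c(s)$ and $c(u_\Psi)$ for semisimple and unipotent elements of projective prime order, and then feed these into Proposition \ref{tools}\ref{crude}) together with Propositions \ref{field}, \ref{graph} and (since $V(2\lambda_2)$ has a self-dual-looking highest weight only when $l=3$, where $\lambda_2$ is fixed by the graph automorphism) Proposition \ref{graphfix} to bound $\dim C_V(\tau)$ for an involutory graph automorphism. The dimension is $d = \dim V(2\lambda_2) = \binom{l+1}{2}^2 \cdot \tfrac{?}{?}$ --- more precisely one reads it off from \cite{MR1901354} (for $l\le 17$), with a correction term involving $\epsilon_3$ since $2\lambda_2$ is not always $p$-restricted-irreducible of the generic dimension in characteristic $3$; for $l=4$ this matches the dimension $20-\epsilon_3$ recorded in Table \ref{rem1}. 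First I would compute, via GAP \cite{GAP4}, the Weyl orbits on the weight set (the dominant weights are $2\lambda_2$, $\lambda_1+\lambda_3$, $\lambda_4$, $2\lambda_1$, $\lambda_2$, $0$ and their multiplicities, truncated according to $l$), then the $\langle\alpha_1\rangle$-weight strings, recording for each string type its multiplicity and its contribution to $c(s)$ (using $\Psi\cap\Phi(s)=\emptyset$) and to $c(u_\Psi)$ (using Jordan forms on the composition factors of the corresponding $A_1$-module).

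For most values of $l$ in the range, the weight-string bounds should already be large enough: if $G$ has no regular orbit on $V$, then by Propositions \ref{invols}, \ref{field}, \ref{graph} and \ref{tools}\ref{crude}),
\[
q^{d} \leq 2|\mathrm{PGL}_{l+1}(q)|\,q^{d-m} + (4q^{(n^2+n)/2-1}+4q^{(n^2-1)/2})\,q^{d-m_g} + 2\log(\log_2 q+2)\,q^{(l+1)^2/2 + d/2},
\]
where $m = \min\{c(s),c(u_\Psi)\}$ from the table and $m_g=\dim C_V(\tau)$ from Proposition \ref{graphfix} when $\tau$ preserves $V$ (only the $l=3$ case), otherwise $m_g=\infty$. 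Since $d$ grows like $l^4/4$ while $m$ grows like $l^3$ and $|\mathrm{PGL}_{l+1}(q)|\le q^{(l+1)^2}$, the inequality is comfortably violated for $l\ge 4$ and all $q\ge 2$, and Mathematica \cite{Mathematica} confirms this numerically case by case. The genuinely delicate case is $l=3$ (i.e.\ $\mathrm{PSL}_4(q)$), where $d=20-\epsilon_3$ is small, $|\mathrm{PGL}_4(q)|$ is comparatively large relative to $d$, and the graph automorphisms have a large fixed space; here the crude bound will \emph{not} suffice.

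The hard part, therefore, is $l=3$. Here I expect that Proposition \ref{tools}\ref{crude}) fails to give a contradiction, so I would instead: (i) for $G$ quasisimple, rule out graph and field automorphisms entirely, so only inner-diagonal classes contribute, refine the $c(s)$ and $c(u_\Psi)$ bounds (possibly passing to an $A_2$-net table as in the worked example, or using Proposition \ref{eigsp_from_permsquare} since $V(2\lambda_2)$ is a summand of the symmetric square of $V(\lambda_2)=\wedge^2 W$), and then apply Proposition \ref{tools}\ref{qsgood}) to force a regular orbit; (ii) for general $G$ with $E(G)/Z(E(G))\cong\mathrm{PSL}_4(q)$ containing graph automorphisms, the module is small enough ($d\le 20$) that for small $q$ one constructs $V$ explicitly in GAP and searches for a base of size $\le 2$, while for large $q$ one combines the refined semisimple/unipotent counts with the graph-automorphism bound $\dim C_V(\tau)\le$ (value from Proposition \ref{graphfix}) in Proposition \ref{tools}\ref{eigsp1}) to show $b(G)\le 2$. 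The lower bound $b(G)\ge 2$ when $G$ is not quasisimple follows because $\log|G|/\log|V|>1$ once $G$ properly contains $E(G)$ (diagonal, field, or graph automorphisms are present), so no regular orbit can exist and $b(G)=2$ is the best possible. I would also double-check the borderline subcases $(l,q)=(3,2),(3,3)$ directly by computer, since the asymptotic estimates are weakest there.
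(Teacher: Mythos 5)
Your template for $l\ge 4$ matches the paper's: weight string tables feeding Proposition \ref{tools}\ref{crude}) dispose of $l\ge 5$, and $l=4$ needs one further refinement (the paper uses an $A_1^2$-net table for $\Psi=\langle\alpha_1,\alpha_3\rangle$ rather than an $A_2$-net, but that is within the spirit of what you describe). Two small slips there: $p\ge 3$ for this module, so "all $q\ge 2$" should be $q\ge 3$; and the weight-string bounds alone are \emph{not} enough at $l=4$, contrary to your "comfortably violated for $l\ge4$".

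The genuine gap is at $l=3$ with $G$ quasisimple. Your plan (i) is to refine $c(s)$ and $c(u_\Psi)$ and push the counting through with Proposition \ref{tools}\ref{qsgood}); but no refinement can succeed here, because the obstruction is a single bad class rather than loose bounds. Take an involution $s\in\mathrm{SL}_4(q)$ with eigenvalue multiplicities $(2,2)$ on the natural module $W$. On $\wedge^2W$ it has eigenvalues $+1$ with multiplicity $2$ and $-1$ with multiplicity $4$, so on $S^2(\wedge^2W)$ its $+1$-eigenspace has dimension $13$, hence $\dim C_V(s)\approx 12$--$13$ on the $(20-\ep_3)$-dimensional composition factor $V(2\lam2)$, while $|s^{\mathrm{PGL}_4(q)}|\approx q^{8}$. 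Thus $|s^{G}|\,|C_V(s)|\approx q^{21}>q^{20}\ge|V|$, and the right-hand side of every inequality in Proposition \ref{tools} already exceeds $|V|$ from this one class, for every $q$. This is exactly why the paper abandons counting at $l=3$ and instead imports the explicit regular-orbit vector of Guralnick--Lawther \cite[Proposition 3.1.3]{bigpaper} (viewing $A_3=D_3$ and $V(2\lam2)$ inside the symmetric square of the $6$-dimensional natural $D_3$-module), checking that the same vector works over $\F_q$. Your fallback (ii) of explicit GAP search only covers boundedly many $q$, so the quasisimple $l=3$ case is left unproved for general $q$. Separately, your closing claim that $b(G)\ge2$ for non-quasisimple $G$ because $\log|G|/\log|V|>1$ is false -- adjoining diagonal, field or graph automorphisms multiplies $|G|$ by at most $\log$-bounded factors and $\log_q|G|\approx15<20=\log_q|V|$ -- but this is harmless since the proposition only asserts $b(G)\le2$ in that case, which the paper obtains from the extension-field result (Proposition \ref{ext_2L2}) via Lemma \ref{fieldext}.
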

\begin{proof}
Here $d = \binom{l+1}{2} ^2 - (l+1)\binom{l+1}{3} - \ep_{3} \binom{l+1}{4}$ and $p\geq 3$. The Weyl orbit and weight string tables are given in Tables \ref{2l2weyl} and \ref{2l2A1} respectively.\\
\begin{table}[!htbp]
\begin{tabular}{cccc}
\toprule 
$i$ & $\mu$ & $|W.\mu |$ & Mult \\ 
\midrule 
1 & $2\lambda_{2}$& $\binom{l+1}{2}$ & 1 \\ 
2 & $\lam1+\lam3$ & $3\binom{l+1}{3}$ & 1 \\
3 & $\lam4$ &$\binom{l+1}{4}$& $2-\ep_3$\\
\bottomrule 
\end{tabular}
\caption{Weyl orbit table of $V(2\lam2)$. \label{2l2weyl}}
\end{table}
\begin{table}[!htbp]
\begin{tabular}{ccccc}
\toprule 
 & & \multicolumn{2}{c}{$c(s)$} & $c(u_\Psi)$ \\ 
 \midrule 
String & Mult & $r=2$ & $r\geq 3$ & $p\geq 3$ \\ 
\midrule 
$\m1$ & $\binom{l-1}{2}+1$ & 0 & 0 & 0 \\ 
$\m1\m2\m1$ & $l-1$ & $l-1$ & $2(l-1)$ & $2(l-1)$ \\ 
$\m2$ & $3 \binom{l-1}{3}$ & 0 & 0 & 0 \\ 
$\m2\m2$ & $(l-1)^2$ & $(l-1)^2$ & $(l-1)^2$ & $(l-1)^2$ \\ 
$\m2\m3\m2$ & $\binom{l-1}{2}$ & $(2-\epsilon_3) \binom{l-1}{2}$ & $2 \binom{l-1}{2}$ & $2 \binom{l-1}{2}$ \\ 
$\m3$ & $\binom{l-1}{4}$ & 0 & 0 & 0 \\ 
$\m3\m3$ & $\binom{l-1}{3}$ & $(2-\ep_3)\binom{l-1}{3}$ & $(2-\ep_3)\binom{l-1}{3}$ & $(2-\ep_3)\binom{l-1}{3}$ \\ 
\midrule 
\textbf{Total} &  & $2 \binom{l+1}{3} - \ep_3 \binom{l}{3}$ & $2 \binom{l+1}{3}+ l-1 -\ep_3\binom{l-1}{3}$  & $2 \binom{l+1}{3}+ l-1 -\ep_3\binom{l-1}{3}$ \\ 
\bottomrule 
\end{tabular} 
\caption{Weight string table of $V(2\lam2)$. \label{2l2A1}}
\end{table}

If $l\geq 4$ and $G$ has no regular orbit on $V$, then 
\[
q^{d} \leq 2|\mathrm{PGL}_{n}(q)|q^{d-2 \binom{l+1}{3}- l+1 +\ep_3\binom{l-1}{3}} +4(q^{\binom{l+2}{2}-1} + q^{\binom{l+2}{2}-2})q^{d-2 \binom{l+1}{3} +\ep_3 \binom{l}{3}} +\field
\]
which gives a contradiction for $l\geq 5$ and all $q\geq 3$. It remains to consider $l=3,4$.

Suppose $l=4$, and let $\Psi = \langle \alpha_1, \alpha_3 \rangle$. The $\Psi$-net table is given in Table \ref{2L2_A1^2}.
\begin{table}[!htbp]
\begin{tabular}{cccccccc}
\toprule 
&&&&\multicolumn{2}{c}{$c(s)$} & \multicolumn{2}{c}{$c(u_\Psi)$} \\ 
\midrule 
$\nu$ & $n_1$ & $n_2$ & $n_3$ & Mult & $r=2$ & $r\geq 3$ & $p\geq 3$ \\ 
\midrule
0 & 1 & 0 & 0 & 2 & 0 & 0 & 0 \\ 
$2\om1$ & 2 & 1 & 0 & 1 & 1 & 2 & 2 \\ 
$2\om3$ & 2 & 1 & 0 & 1 & 1 & 2 & 2 \\ 
$2\om1+2\om3$ & 4 & 4 & 1 & 1 & 4 & 6 & 6 \\ 
$\om1$ & 0 & 2 & 0 & 1 & 1 & 1 & 1 \\ 
$\om3$ & 0 & 2 & 0 & 1 & 1 & 1 & 1 \\ 
$\om1+\om3$ & 0 & 4 & 0 & 3 & 6 & 6 & 6 \\ 
$2\om1+\om3$ & 0 & 4 & 2 & 1 & $4-\ep_3$ & $5-\ep_3$ & $-\ep_3$ \\ 
$\om1+2\om3$ & 0 & 4 & 2 & 1 & $4-\ep_3$ & $5-\ep_3$ & $4-\ep_3$ \\ 
\midrule
Total &  &  &  &  & $22-2\ep_3$ & $28-2\ep_3$ & $26-2\ep_3$ \\ 
\bottomrule 
\end{tabular} 
\caption{The $\Psi$-net table for $V(2\lam2)$ and $\Psi = \langle \alpha_1, \alpha_3\rangle$. \label{2L2_A1^2}}
\end{table}

If $G$ has no regular orbit on $V$, then by Propositions \ref{invols}, \ref{tools}\ref{qsgood}), \ref{sscounts} and \ref{unipcounts},
\[
q^d \leq 2|\pgl _5(q)|q^{d-28+2\ep_3} + 4(q^{14}+q^{13})q^{d-22+2\ep_3} + (2q^{9}+ q^{9})q^{d-20+4\ep_3} +2 \log(\log_2 q+2)q^{25/2 + d/2}
\]
This gives a contradiction for all $q\geq 3$.

Finally, suppose $l=3$. This case is examined in \cite[Proposition 3.1.3]{bigpaper} for $H$ a simple algebraic group of type $A_3$. The authors consider the equivalent action of a group of type $D_3$ acting on the symmetric square of its natural module. They present a vector which they subsequently prove is a regular orbit representative. Applying the same argument to the realisation of this module over $\mathbb{F}_q$, we see that if $G$ is quasisimple, then it has a regular orbit on $V$. Otherwise, $G$ has $b(G)\leq 2$ by Proposition \ref{k=1_leftovers}.
\end{proof}

\subsection{$\lambda = \lam5$, with $l\in [9,14]$}
\begin{proposition}
Theorem \ref{prestmain} holds for $G$ with $E(G)/Z(E(G)) \cong \mathrm{PSL}_{l+1}(q)$ acting on $V=V(\lam5)$ for $l \in [9, 14]$. Namely, $G$ has a regular orbit on $V$.
\end{proposition}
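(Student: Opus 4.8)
The plan is to follow the template of the worked example: record the weight data of $V=V(\lam5)$, read off codimension bounds for semisimple and unipotent elements from weight strings (and, for the smallest $l$, from $A_2$-nets), and feed these into Proposition~\ref{tools}. Since $\lam5$ is a fundamental, hence minuscule, weight of $A_l$, we have $V(\lam5)=\Lambda^5 W$ with $W$ the natural module, so $d=\binom{l+1}{5}$ and all weights form a single Weyl orbit with multiplicity $1$; in particular $d$ does not drop in characteristic $p$. Taking $\Psi=\langle\alpha_1\rangle$, the weight strings under $\alpha_1$ have length $1$ (basis vectors $e_{i_1}\wedge\cdots\wedge e_{i_5}$ containing both or neither of $e_1,e_2$) or length $2$ (the pairs $S\wedge e_2\leftrightarrow S\wedge e_1$, with $S$ a $4$-fold wedge from $e_3,\dots,e_{l+1}$), so the weight string table gives $c(s)=c(u_\Psi)=\binom{l-1}{4}$ for every prime $r$ and every $p$. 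Because the only $A_1$-subsystems of $\Phi$ are the single-root subsystems, every non-central semisimple element $s$ of prime order has $\Phi(s)$ disjoint from some conjugate of $\Psi$, and the closure of every non-trivial unipotent class contains a root element; hence this bound applies to \emph{all} non-identity semisimple and unipotent elements, with no ``bad'' elements to count separately.

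For $l\in[10,14]$ one has $\lam5\neq\lam{l-4}$, so $V$ is not preserved by graph automorphisms, and $(l+1)^2-1<\binom{l-1}{4}$. Combining the bound above with Propositions~\ref{invols}, \ref{field} and \ref{tools}(\ref{qsgood}) I would obtain
\[
q^{d}\ \le\ 2|\pgl_{l+1}(q)|\,q^{d-\binom{l-1}{4}}+\field,
\]
which is a contradiction for all $q\ge 2$, so $G$ has a regular orbit on $V$ in these cases.

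The remaining case $l=9$ (so $n=10$, $d=252$) is the real obstacle, for two reasons: $\binom{l-1}{4}=70$ is smaller than $n^2-1=99$, so the weight string bound alone is too weak; and $\lam5=\lam{n-5}$ is now fixed by the graph automorphism, so graph and graph-field automorphisms must be handled. Here I would compute the $A_2$-net table for $\Psi=\langle\alpha_1,\alpha_2\rangle$. The $\Psi$-nets are the modules $\Lambda^j\langle e_1,e_2,e_3\rangle$ for $j\in\{0,1,2,3\}$: $\binom{7}{5-j}$ trivial nets when $j\in\{0,3\}$, and $\binom{7}{5-j}$ copies of the (dual) natural $\mathrm{SL}_3$-module when $j\in\{1,2\}$. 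Each $3$-dimensional net contributes $2$ to $c(s)$ (as $\alpha_1,\alpha_2,\alpha_1+\alpha_2\in\Psi$) and $2$ to $c(u_\Psi)$ (a regular unipotent of $\mathrm{SL}_3$ is a single Jordan block of size $3$), so $c(s)=c(u_\Psi)=2(\binom{7}{4}+\binom{7}{3})=140$. For the semisimple and unipotent elements not covered by an $A_2$-conjugate of $\Psi$ --- counted by Propositions~\ref{sscounts} and \ref{unipcounts} with bounds $4q^{52}$ and $4(q^{52}-1)/(q^2-1)$ --- I would retain the weaker weight string bound $\dim C_V\le d-70=182$; the crude estimate $\lfloor\tfrac{9}{10}d\rfloor$ from Proposition~\ref{tools}(\ref{alphabound}) is \emph{not} strong enough, so it is essential that the $A_1$-string bound applies to every such element. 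For an involutory graph automorphism $\tau$, the fixed set $S$ in Proposition~\ref{graphfix} consists of the weights $\sum_{i\in I}\epsilon_i$ for which $I$ and $\{11-i:i\in I\}$ partition $\{1,\dots,10\}$, of which there are $2^{5}=32$; thus $\dim C_V(\tau)\le 126+16=142$, and graph-field automorphisms are bounded via Propositions~\ref{graph} and \ref{graphfix}. Feeding everything into Proposition~\ref{tools}(\ref{qsgood}) gives
\[
q^{252}\ \le\ 3q^{99}q^{112}+12q^{52}q^{182}+\bigl(4q^{44}+2q^{99/2}\bigr)q^{142}+\field,
\]
whose right-hand side is dominated by its $q^{234}$ term and is $<q^{252}$ for all $q\ge 2$, a contradiction; so $G$ has a regular orbit on $V$.

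To summarise, essentially all of the difficulty is concentrated in $l=9$: the module is small relative to $|\pgl_{10}(q)|$, forcing use of the sharper $A_2$-net codimension $140$ rather than the weight string value $70$, while the $A_2$ bad-element counts are too large to absorb unless one falls back on the (still valid, since $A_1$-strings cover everything) bound $70$ for those elements, together with the graph and graph-field bookkeeping via Proposition~\ref{graphfix}. All other $l$ close immediately from the weight strings.
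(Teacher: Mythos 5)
Your argument is correct and, for $l\in[10,14]$, is exactly the paper's proof: the single Weyl orbit, the $\alpha_1$-weight-string bound $c(s)=c(u_\Psi)=\binom{l-1}{4}$ applied to every nontrivial element, and the resulting inequality $q^d\le 2|\pgl_{l+1}(q)|q^{d-\binom{l-1}{4}}+\field$. For $l=9$ you also correctly identify the two obstacles (the string bound $70$ is too weak against $|\pgl_{10}(q)|$, and $\lam5$ is graph-stable) and your bound $\dim C_V(\tau)\le 142$ agrees with the paper's. The only divergence is the choice of subsystem: the paper passes to $\Psi=\langle\alpha_1,\alpha_3\rangle$ of type $A_1^2$, getting codimension $100$ with tiny bad-element counts $N_s(\Psi)=N_u(\Psi)=q^{19}$ (its leading term $2|\pgl_{10}(q)|q^{152}$ is then quite tight at $q=2$), whereas you pass to $\Psi=\langle\alpha_1,\alpha_2\rangle$ of type $A_2$, getting the stronger codimension $140$ at the cost of much larger bad-element counts of order $q^{52}$, which you correctly absorb using the still-valid $A_1$-string bound $70$ (this also silently handles $p=2$, where no prime-order unipotent dominates a regular $A_2$-unipotent and hence all unipotent involutions land in the bad set). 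Both routes close for all $q\ge 2$, so this is a legitimate alternative instantiation of the same method rather than a gap.
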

\begin{proof}
Here $d = \binom{l+1}{5}$. The Weyl orbit and weight string tables are given in Table \ref{l5weyl}.\\

\begin{table}[!htbp]
\begin{tabular}{cccc}
\toprule 
$i$ & $\mu$ & $|W.\mu |$ & Mult \\ 
\midrule 
1 & $\lambda_{5}$& $\binom{l+1}{5}$ & 1 \\ 
\bottomrule 
\end{tabular}
\quad
\begin{tabular}{cccc}
\toprule 
 &  & $c(s)$ & $c(u_\Psi)$ \\ 
 \midrule 
String & Mult & $r\geq 2$ & $p\geq 2$ \\ 
\midrule 
$\m1$ &$ \binom{l+1}{5}-2\binom{l-1}{4}$ & 0 & 0 \\ 
$\m1\m1$ &$\binom{l-1}{4}$&$\binom{l-1}{4}$&$\binom{l-1}{4}$ \\ 
\midrule 
\textbf{Total} &  & $\binom{l-1}{4}$&$\binom{l-1}{4}$ \\ 
\bottomrule 
\end{tabular} 
\caption{The Weyl orbit and weight string table for $V(\lam5)$.\label{l5weyl}}
\end{table}

If $l>9$ and $G$ has no regular orbit on $V$ then
\[
q^{\binom{l+1}{5}}\leq 2|\mathrm{PGL}_{l+1}(q)|q^{\binom{l+1}{5}-\binom{l-1}{4}}+ 2\log(\log_2q+2)q^{n^2/2+\tfrac{1}{2}\binom{l+1}{5}}
\]
This gives a contradiction for $l\geq 10$ and all $q\geq 2$, so it remains to consider $l=9$.
When $l=9$, we have $d=252$, and $V = V(\lam5)$ is preserved by graph automorphisms in $\mathrm{Aut}(\mathrm{PSL}_{10}(q))$.
Using Proposition \ref{graphfix}, we determine that a graph automorphism $\tau$ has $\dim C_V(\tau) \leq 142$.
We give the $\Psi$-net table for $\Psi$ of type $A_1^2$ in Tables \ref{l5A12}.\\
\begin{table}[!htbp]

\centering
\begin{tabular}{ccccc}
\toprule 
& & & $c(s)$ & $c(u_\Psi)$ \\ 
\midrule 
$\nu$ & $m_1$ & Mult & $r\geq 2$ & $p\geq 2$ \\ 
\midrule 
0 & 1 & 52 & 0 & 0 \\ 
$\om1$ & 2 & 30 & 30 & 30 \\ 
$\om3$ & 2 & 30 & 30 & 30 \\ 
$\om1+\om3$ & 4 & 20 & 40 & 40 \\ 
\midrule 
\textbf{Total} &  &  & 100 & 100 \\ 
\bottomrule 
\end{tabular} 
\caption{$\Psi = \langle \alpha_1,\alpha_3 \rangle$.\label{l5A12}}

\end{table}

Therefore, if $G$ has no regular orbit on $V$, then by Propositions \ref{tools}\ref{qsgood}, \ref{sscounts} and \ref{unipcounts}, 
\[
q^{252} \leq 2|\pgl_{10}(q)|q^{152}+4q^{19}q^{d-70}+2\log(\log_2q+2)q^{50+126}+2(2q^{54}+2q^{99/2})q^{142}
\]
This gives a contradiction for all $q\geq 2$, so $G$ has a regular orbit on $V$.
\end{proof}
\subsection{$\lambda = 2 \lambda_1 + \lambda_{l}$}
\begin{proposition}
Theorem \ref{prestmain} holds for $G$ with $E(G)/Z(E(G)) \cong \mathrm{PSL}_{l+1}(q)$ acting on $V=V(2\lam1+\lam l)$ for $l \in [2, \infty)$. Namely, $G$ has a regular orbit on $V$.
\end{proposition}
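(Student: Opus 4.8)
The plan is to apply the standard machinery of Section~\ref{prest}, exactly as in the Worked Example. Note first that $p\ge3$ (since $2\lam1$ is $p$-restricted only then), so $q\ge3$, and by Propositions~\ref{n^3} and~\ref{complete} only the modules $V(2\lam1+\lam l)$ with $d:=\dim V<n^3$ need be considered. I would read off from L\"ubeck~\cite{MR1901354} and Martinez~\cite{alvaro} the value $d=\tfrac12(l+1)l(l+3)-(l+1)\ep_{l+2}$ together with the Weyl-orbit structure of the weight set: there are three orbits, with dominant representatives $2\lam1+\lam l$ (size $l(l+1)$, multiplicity $1$), $\lam2+\lam l$ (size $\tfrac12(l+1)l(l-1)$, multiplicity $1$) and $\lam1$ (size $l+1$, multiplicity $l-\ep_{l+2}$); these sizes and multiplicities add up to $d$, which is a useful internal check. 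A key point is that $V$ admits no graph automorphism: the nontrivial Dynkin-diagram automorphism of $A_l$ carries $2\lam1+\lam l$ to $\lam1+2\lam l\neq 2\lam1+\lam l$ for all $l\ge2$, so by the discussion preceding Proposition~\ref{graphfix} no graph or graph-field automorphism of $\mathrm{SL}_{l+1}(q)$ acts linearly on $V$; only diagonal automorphisms, field automorphisms, and unipotent elements have to be controlled. The case $l=2$ is $V(2\lam1+\lam2)$, already handled in Proposition~\ref{ws_mods}, so I may assume $l\ge3$.

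Next I would build the weight-string table for $\Psi=\langle\a_1\rangle$. Under $\a_1$ the weights fall into strings of the shapes $\m1\m2\m1$, $\m1\m3\m3\m1$ (the string through the weight $\lam1$), $\m2\m3\m2$, and strings internal to a single Weyl orbit such as $\m1\m1$ and $\m2\m2$, together with singletons. For each shape I would compute, as in the Worked Example, its minimal contribution $c(s)$ to $\codim C_V(s)$ for semisimple $s$ of prime order $r$ (using $\Psi\cap\Phi(s)=\emptyset$) and its minimal contribution $c(u_\Psi)$ to $\codim C_V(u_\Psi)$ for a regular unipotent $u_\Psi\in\overline G_\Psi$ (from the Jordan type on the composition factors of the string-module). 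The intra-orbit strings coming from the orbit of $\lam2+\lam l$ alone already contribute codimension of order $d=\Theta(l^3)$, so the totals $m:=\min\{c(s),c(u_\Psi)\}$ are of order $l^3$, in particular comfortably above $(l+1)^2$.

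Assuming $G$ has no regular orbit on $V$, I would then combine Proposition~\ref{tools}\ref{crude}) with Proposition~\ref{field} for the field-automorphism term to obtain
\[
q^{d}\ \le\ 2\,|\pgl_{l+1}(q)|\,q^{\,d-m}\ +\ \field .
\]
Since $|\pgl_{l+1}(q)|<q^{(l+1)^2-1}$, $m>(l+1)^2$, and $(l+1)^2<d$ for $l\ge3$, the right-hand side is strictly less than $q^d$ for every $l\ge3$ and every $q\ge3$, which is the required contradiction. For the smallest rank, $l=3$ with $\overline G=\mathrm{SL}_4$, the weight-string bound is tightest — and when $p=3$ the value $c(u_\Psi)$ is reduced, because in that characteristic the $\langle U_{\pm\a_1}\rangle$-module attached to the string through $\lam1$ involves $V(3\om1)$, which is a Frobenius twist rather than the symmetric cube of the natural $\mathrm{SL}_2$-module. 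If this makes the inequality fail for some small $q$, I would instead enlarge $\Psi$ to type $A_1^2$ or $A_2$, compute the corresponding $\Psi$-net table, bound the elements not covered by these nets via Propositions~\ref{sscounts} and~\ref{unipcounts}, and conclude from Proposition~\ref{tools}\ref{qsgood}); any finitely many genuinely residual pairs $(l,q)$ could be finished by constructing $V$ in GAP~\cite{GAP4} and exhibiting a regular vector, as is done elsewhere in this section.

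The main obstacle I anticipate is purely bookkeeping: getting the weight-string (and, if needed, $A_2$-net) contributions for the string through $\lam1$ right, since its composition factors have Jordan type under a regular unipotent that genuinely depends on whether $p\mid3$, and correctly tracking the $\ep_{l+2}$ (and any other small-prime) corrections to $d$ and to the multiplicity of $\lam1$. Once the tables are correct, the displayed inequality closes the argument uniformly in $l$, with $l=3$ the only case that might need the extra refinement with a larger subsystem.
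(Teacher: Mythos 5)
Your setup is correct and matches the paper's: the dimension $\tfrac12(l+1)l(l+3)-(l+1)\ep_{l+2}$, the three Weyl orbits with the stated sizes and multiplicities, the absence of graph automorphisms, and the reduction of $l=2$ to the $2\lam1+\lam2$ case all agree with what the paper does. The problem is the key quantitative claim on which your main inequality rests. The intra-orbit strings from the orbit of $\lam2+\lam l$ do \emph{not} contribute codimension of order $l^3$: that orbit has $3\binom{l+1}{3}$ weights, but $3\binom{l-1}{3}$ of them sit in singleton $\m2$ strings contributing nothing, and only the $3\binom{l-1}{2}$ strings of shape $\m2\m2$ contribute (one each). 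The paper's totals are $c(s)=\tfrac12(3l^2+3l-2\ep_{l+2})$ for $r\ge3$ and $c(u_\Psi)=\tfrac12(3l^2+l-2\ep_{l+2}-2)$ for $p=3$, i.e.\ $m=\Theta(l^2)$, only marginally above $(l+1)^2$. Consequently your displayed inequality $q^d\le 2|\pgl_{l+1}(q)|q^{d-m}+\field$ is \emph{not} a contradiction for all $l\ge3$: for $l=3$ one has $m\le 15$ against $|\pgl_4(q)|<q^{15}$, so the first term already exceeds $q^d$ for every $q$, and the same failure occurs at $(l,p)=(4,3)$, where $c(u_\Psi)=24=(l+1)^2-1$. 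The crude one-term bound only closes for $l\ge5$.

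The paper repairs exactly this by refining the class sum rather than enlarging $\Psi$: it pairs the generic bound $c(s)$ ($r\ge3$) with $2|\pgl_n(q)|$, the involution bound ($r=2$) with the count $4(q^{\binom{n+1}{2}-1}+q^{\binom{n+1}{2}-2})$ from Proposition~\ref{invols}, and the $p=3$ unipotent bound with the count $q^{n(n-1)}$ of all unipotent elements; this yields a contradiction for $l\ge3$, $q\ge3$ and for $l=2$, $q\ge5$, with a sharper unipotent count finishing $(l,q)=(2,3)$. Your contingency plan (larger $\Psi$-nets, Propositions~\ref{sscounts} and~\ref{unipcounts}, GAP for residual cases) would also work, but you invoke it only for ``some small $q$'' at $l=3$, whereas the failure of your headline inequality at $l=3$ and at $(l,p)=(4,3)$ is for \emph{all} $q$ and stems from the $\Theta(l^3)$ miscount, not from small-field effects. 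So the argument as written has a genuine gap at $l\in\{3,4\}$ that needs either the paper's finer class-by-class splitting or a genuinely larger subsystem $\Psi$.
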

\begin{proof}
Here $d= 3 \binom{l+2}{3}+\binom{l+1}{2}-\ep_{l+2} (l+1)$
.The Weyl group has three orbits on the weights of the module, which are summarised in Table \ref{2l1+llweyl}.
\begin{table}[!htbp]
\begin{tabular}{cccc}
\toprule
$i$ & $\mu$ & $|W.\mu |$ & Multiplicity \\ 
\midrule 
1 & $2 \lambda_1+\lambda_{n-1}$ & $l^2+l$ & 1 \\ 
2 & $\lambda_2+\lambda_{n-1}$ & $3 \binom{l+1}{3}$ & 1 \\ 
3 & $\lambda_1$ & $l+1$ & $l - \ep_{l+2}$ \\ 
\bottomrule 
\end{tabular} 
\caption{Weyl orbit table of $V(2\lam1 + \lam l)$.\label{2l1+llweyl}}
\end{table}
The weight string table is given in Table \ref{ws2l1ll}. For conciseness, define $\xi = \ep_{l+2}$. 
\begin{table}[!htbp]
\begin{tabular}{cccccc}
\toprule
 &  & \multicolumn{2}{c}{$c(s)$} &  \multicolumn{2}{c}{$c(u_{\Psi})$} \\ 
\cmidrule{3-6}
String & Multiplicity & $r=2$ & $r\geq 3$ & $p=3$ & $p\geq 5$ \\ 
\midrule 
$\mu_1$ & $(l-1)(l-2)$ & 0 & 0 & 0 & 0 \\ 
 
$\mu_1 \mu_1$ &$ l-1$ & $l-1$ &$ l-1$ &$ l-1 $& $l-1$ \\ 

$\mu_1 \mu_2 \mu_1$ &$ l-1$ & $l-1$ & $2l-2 $& $2l-2$ &$ 2l-2 $\\ 

$\mu_1 \mu_3 \mu_3 \mu_1$ & 1 & $l+1-\xi $& $l+2-\xi $&$ l+1-\xi $& $l+2-\xi$ \\ 

$\mu_2$ & $3 \binom{l-1}{3}$ & 0 & 0 & 0 & 0 \\ 
 
$\mu_2 \mu_2$ & $3 \binom{l-1}{2}$ & $3 \binom{l-1}{2} $&$ 3 \binom{l-1}{2}$ & $3 \binom{l-1}{2}$ & $3 \binom{l-1}{2}$ \\ 

$\mu_2 \mu_3 \mu_2$ &$ l-1 $& $2l-2 $& $2l-2$ & $2l-2$ & $2l-2$ \\ 
\midrule 
Total & & $\frac{1}{2}(3l^2+l-2\xi)$ & $\frac{1}{2}(3l^2+3l-2\xi)$ & $\frac{1}{2}(3l^2+l-2\xi-2)$ & $\frac{1}{2}(3l^2+3l-2\xi)$ \\ 
\bottomrule 
\end{tabular} 
\caption{Weight string table for $V(2\lam1 + \lam l)$. \label{ws2l1ll}}
\end{table}

%
If $G$ has no regular orbit on $V$, then by Propositions \ref{sscounts}, \ref{unipcounts} and \ref{invols},
\begin{align*}
q^d \leq & 2|\pgl_n(q)|q^{d- \frac{1}{2}(3l^2+3l-2\xi)}+ 4(q^{\binom{n+1}{2}-1}+q^{\binom{n+1}{2}-2})q^{d- \frac{1}{2}(3l^2+l-2\xi)}+ q^{n(n-1)}q^{d- \frac{1}{2}(3l^2+l-2\xi-2)}\\
&+2\log(\log_2q+2)q^{n^2/2+d/2}
\end{align*}
which gives a contradiction for $l\geq 3$ and $q\geq 3$, and also $l=2$, $q\geq 5$. Using more accurate counts of unipotent elements gives the result for $q=3$. So $G$ has a regular orbit on $V$.
\end{proof}
\subsection{$\lambda = \lambda_1+\lambda_l$}
\begin{proposition}
Theorem \ref{prestmain} holds for $G$ with $E(G)/Z(E(G)) \cong \mathrm{PSL}_{l+1}(q)$ acting on $V=V(\lam1 + \lam l)$ with $l \in [2, \infty)$. Namely, $b(G) = 2$.
\label{adjoint}
\end{proposition}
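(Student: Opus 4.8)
The plan is to exploit that $V=V(\lam1+\lam l)$ is the adjoint module for $\mathrm{SL}_n(q)$, where $n=l+1\ge 3$: concretely $V\cong\mathfrak{sl}_n(\F_q)$ when $\ep_n=0$ and $V\cong\mathfrak{sl}_n(\F_q)/\F_q I$ when $\ep_n=1$, so that $d=\dim V=n^2-1-\ep_n$ and $E(G)\cong\mathrm{PSL}_n(q)$ acts by conjugation. I would prove $b(G)=2$ by establishing the inequalities $b(G)\ge 2$ and $b(G)\le 2$ separately.

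For $b(G)\ge 2$ the point is that \emph{every} $v\in V$ is fixed by a nontrivial element of $E(G)$. Lifting $v$ to $\tilde v\in\mathfrak{sl}_n(\F_q)$, the stabiliser of $v$ in $E(G)$ contains the image of $C_{\mathrm{SL}_n(q)}(\tilde v)$, so it suffices to check $|C_{\mathrm{SL}_n(q)}(\tilde v)|>|Z(\mathrm{SL}_n(q))|=(n,q-1)$ for all $\tilde v\ne 0$. Writing the characteristic polynomial of $\tilde v$ as a product of prime powers $\prod p_i(t)^{e_i}$ and using that the determinant restricts $C_{\mathrm{GL}_n(q)}(\tilde v)$ to a subgroup of $\F_q^\times$, one gets $|C_{\mathrm{SL}_n(q)}(\tilde v)|\ge q^{n-1}$ in the single–generalised–Jordan–block case and $|C_{\mathrm{SL}_n(q)}(\tilde v)|\ge (q^{\deg p_i}-1)/(q-1)\ge q+1$ whenever $\tilde v$ has an irreducible block; the general case only makes this larger. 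Since both $q^{n-1}$ and $q+1$ exceed $(n,q-1)\le q-1$ for $n\ge 3$, $q\ge 2$, and since $v=0$ is fixed by all of $E(G)$, no vector has trivial stabiliser, so $G$ has no regular orbit and $b(G)\ge 2$. (Alternatively, this follows from \cite{bigpaper}, where the generic stabiliser of a simple algebraic group of type $A_l$, $l\ge 2$, on its adjoint module is shown to be a maximal torus.)

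For $b(G)\le 2$ I would fix a first base vector $v_1$ with a small stabiliser and then check that this stabiliser has a regular orbit on $V$. Take $\tilde v_1\in\mathfrak{sl}_n(\F_q)$ regular semisimple with irreducible characteristic polynomial $f$; then $C_{\mathrm{GL}_n(q)}(\tilde v_1)$ is a Singer cycle $\cong\F_{q^n}^\times$, and the eigenvalues of any of its non-scalar elements on the natural module form a single Galois orbit, hence have equal multiplicities $n/d$ for some divisor $d\ge 2$ of $n$; such an element therefore has a fixed-point space of dimension $n^2/d\le n^2/2$ on $\mathfrak{gl}_n$ and of dimension at most $\lfloor n^2/2\rfloor-1$ on $V$. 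I would choose $f$ so that no outer automorphism fixes $v_1$ (and, when $\ep_n=1$, so that the root multiset of $f$ admits no nonzero translation symmetry): a graph automorphism together with an inner one fixes $v_1$ only if $f(t)=\pm f(-t)$ up to a scalar, impossible for irreducible $f$ of odd degree and avoidable for even degree; a field automorphism fixes $v_1$ only if every coefficient of $f$ lies in a proper subfield of $\F_q$, avoidable once $e>1$; and the excluded translation polynomials are $O(q^{n-1})$ in number. With such $v_1$ the group $H:=\mathrm{Stab}_G(v_1)$ is cyclic of order at most $(q^n-1)/(q-1)<2q^{n-1}$, so
\[
\sum_{1\ne h\in H}|C_V(h)|\;\le\;\Bigl(\tfrac{q^n-1}{q-1}-1\Bigr)q^{\lfloor n^2/2\rfloor-1}\;<\;2\,q^{\,n-1+\lfloor n^2/2\rfloor-1}\;<\;q^{\,n^2-1-\ep_n}\;=\;|V|
\]
for all $n\ge 3$ and $q\ge 2$ (the exponent inequality reduces to $\log_q 2<\tfrac12(n-1)^2+c_n-\ep_n$ with $c_n\in\{\tfrac12,1\}$, whose right side is at least $2$). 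Hence $H$ has a regular orbit on $V$; taking $v_2$ in such an orbit gives $\mathrm{Stab}_G(v_1)\cap\mathrm{Stab}_G(v_2)=1$, so $b(G)\le 2$ and thus $b(G)=2$.

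The step I expect to be most delicate is the genericity argument: showing that a regular semisimple $\tilde v_1$ with an irreducible, ``asymmetric'' characteristic polynomial, not stabilised by any outer automorphism and without translation symmetry when $\ep_n=1$, actually exists over \emph{every} $\F_q$, since the counting estimate above is tight only up to a factor $q^{O(1)}$ and leaves little room for the smallest fields and for even $n$. If that proves awkward, the fallback is to allow graph and field automorphisms into $H$: it is then metacyclic of order at most $2e(q^n-1)/(q-1)$, its graph- and field-type elements have fixed-point space of dimension at most $\tfrac12(d+d^{1/2})$ on $V$ by Proposition \ref{graphfix}, and the same count still beats $|V|$ for $n\ge 3$ because $\log_q(2e)$ is negligible against $\tfrac12(n-1)^2$.
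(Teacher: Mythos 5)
Your proposal is correct and follows essentially the same route as the paper: both establish $b(G)\ge 2$ by elementary counting on the adjoint module, and $b(G)\le 2$ by taking a trace-zero regular semisimple matrix with irreducible characteristic polynomial as the first base point, whose stabiliser is essentially a Singer torus, and then beating $|V|$ with a fixed-point-space count over that stabiliser. The only differences are cosmetic: you bound centraliser orders where the paper counts matrices of each rank, and you treat the possible outer automorphisms in the stabiliser of $v_1$ more explicitly than the paper does.
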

\begin{proof}
Here $d = (l+1)^2-1-\ep_{l+1}$.  If $p \mid l+1$, then $|V|<|G|$ and there is no regular orbit. So suppose $p \nmid l+1$. To prove that $G$ has no regular orbit on $V$, it is sufficient to show it for $H = \mathrm{PSL}_{l+1}(q)$.
Now, $V$ is the adjoint module for $H$ and the action of $H$  equivalent to conjugation on $n\times n$ matrices with trace 0. This action is rank preserving. The number of matrices of rank $k$ for  $0\leq k\leq n-1$ is less than $|H|$, so there cannot be a regular orbit here. Moreover, there is no regular orbit of $H$ on the rank $n$ matrices, since $\mathrm{GL}_n(q)$ has no conjugacy classes of size $|H|$ \cite[Theorem 6.4]{MR2888238}.
So $H$ has no regular orbit on $V$, so neither does $G$.

We now show that $b(G)=2$. There are $q^{n-1}-1$ elements of $\mathbb{F}_{q^n}^\times $ with relative field trace 0 over $\mathbb{F}_q$. Since $n\geq 3$, we can always find $\xi \in \mathbb{F}_{q^n}^\times $ not contained in any proper subfield of $\mathbb{F}_{q^n}^\times $ with relative field trace 0. Therefore, let $A$ be a regular semisimple element of $\mathrm{GL}_n(q)$ with eigenvalues $\{\xi, \xi^q, \dots, \xi^{q^{n-1}}\}$ over $\mathbb{F}_{q^n}$. Then $A$ has matrix trace 0, and $|G_A|= |C_G(A)|< q^{n}$. We claim that $C_G(A)$ has a regular orbit on $V$.
Take $\Psi$ to be of type $A_1$. The relevant Weyl orbit and $\Psi$-net tables are given below.
\begin{table}[!htbp]
\begin{tabular}{cccc}
\toprule
$i$ & $\mu$ & $|W.\mu |$ & Multiplicity \\ 
\midrule 
1 & $\lam1+\lam l$ &  $ l^2+l$& 1 \\
2 & $0$ & 1& $l-\ep_{l+1}$\\
\bottomrule 
\end{tabular} 
\quad 
\begin{tabular}{ccccc}
\toprule
 &  & $c(s)$ & \multicolumn{2}{c}{$c(u_\Psi)$} \\ 
\midrule 
String & Mult & $r\geq 2$ & $p=2$ & $p\geq 3$ \\ 
\midrule 
$\m1$  & $2\binom{l-1}{2}$ & 0 & 0 & 0 \\ 
$\m1\m1$ & $2(l-1)$ & $2(l-1)$ & $2(l-1)$ & $2(l-1)$ \\  
$\m1\m2\m1$ & 1 & 2 & 1 & 2 \\ 
\midrule 
\textbf{Total} &    & $2l$ & $2l-1$ & $2l$ \\ 
\bottomrule 
\end{tabular} 
\caption{The Weyl orbit and weight string tables of $V(\lam1+\lam l)$.}
\end{table}

If $C_G(A)$ has no regular orbit on $V$, then 
\[
q^d \leq |C_G(A)| q^{d-2l} < q^{d-l+1}
\]
which gives a contradiction for all $l>1$. Therefore, $b(G)=2$ on the adjoint module.
\end{proof}

\subsection{$\lambda = \lam3$}

For $s \in \mathrm{GL}(V)$, we denote the $t$-eigenspace of $s$ on $V$ by $V_t(s)$.
\begin{proposition}
\label{l3}
Theorem \ref{prestmain} holds for $G$ with $E(G)/Z(E(G)) \cong \mathrm{PSL}_{l+1}(q)$ acting on $V=V(\lam3)$ with $l \in [5, \infty)$. Namely, one of the following holds.
\begin{enumerate}
	\item $l \geq 9$, or $l=8$ with $G$ quasisimple,  and $G$ has a regular orbit on $V$,
	\item $l=8$, $E(G) \neq G$ and $b(G)\leq 2$,
	\item $l\in [6,7]$ and $b(G)=2$, or 
	\item $l=5$ and we have $2 \leq b(G) \leq 3+\delta$, where $\delta=1$ if $G$ contains a graph automorphism, and $\delta=0$ otherwise. 
\end{enumerate}
\end{proposition}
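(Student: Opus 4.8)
Throughout write $n=l+1$ and realise $V=V(\lambda_3)$ as the exterior cube $\bigwedge^3 W$ of the natural $\mathrm{SL}_n(q)$-module $W$, so $d=\binom{n}{3}$; the weights of $V$ are the $e_i+e_j+e_k$ $(i<j<k)$, forming a single Weyl orbit each of multiplicity one, which keeps every weight-string and $\Psi$-net table one- or two-rowed. The plan is to split on $l$ exactly as in the statement: the small cases rest on the trivial bound together with Proposition~\ref{k=1_leftovers}, the case $n=9$ on the algebraic-group classification, and the case $n\ge 10$ on the counting inequality of Proposition~\ref{tools}.

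For the lower bounds: when $n\in\{6,7,8\}$ one has $|G|\ge|\mathrm{PSL}_n(q)|>q^{\binom n3}=|V|$, so $b(G)\ge 2$. The matching upper bounds --- $b(G)\le 2$ for $n\in\{7,8\}$, $b(G)\le 2$ for $n=9$ with $E(G)\ne G$, and $b(G)\le 3+\delta$ for $n=6$ (where $\delta=1$ exactly when $G$ contains a graph automorphism; note the Dynkin diagram automorphism fixes $\lambda_3$ only when $n=6$, so for $n\ge 7$ no graph or graph-field automorphism preserves $V$) --- are supplied by Proposition~\ref{k=1_leftovers}. For $n=9$ with $G$ quasisimple I would argue as in the $\mathrm{PSL}_4(q)$ case of Proposition~\ref{2l2}: the simple algebraic group $A_8$ acting on $V(\lambda_3)$ is treated in Guralnick--Lawther~\cite{bigpaper}, and the regular orbit representative found there, being defined over the prime field, descends to one for $\mathrm{SL}_9(q)$ over $\mathbb{F}_q$.

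The substantive case is $n\ge 10$, where one must show $G$ has a regular orbit. Using GAP, compute the weight-string table (with respect to $\alpha_1$) and the $\Psi$-net tables for $\Psi$ of types $A_1^2$, $A_2$, $A_3$, reading off the codimension lower bounds $c(s)$ and $c(u_\Psi)$. These $\Psi$-net bounds are all of order $n^2$ and so are by themselves too weak for the bulk of the semisimple classes; instead I would bound $\dim C_V(s)$ for a semisimple $s$ of projective prime order with eigenvalue multiplicities $a_1\ge a_2\ge\cdots$ on $W$ by splitting on the centraliser type. For the types carrying one dominant block (centraliser of type $A_{n-1}$, $A_{n-2}$, $A_1A_{n-2}$, $\dots$ --- precisely the elements counted by $N_s(A_1^2),N_s(A_2),N_s(A_3)$ of Proposition~\ref{sscounts} and by Proposition~\ref{invols}) use the direct decomposition $\bigwedge^3\!\big(\bigoplus_i W_i\big)=\bigoplus_{b_1+b_2+\cdots=3}\bigotimes_i\bigwedge^{b_i}W_i$ into $s$-eigenspaces --- for a pseudoreflection this already gives $\mathrm{codim}\,C_V(s)\ge\binom n3-\binom{n-1}{3}=\binom{n-1}{2}$ --- while for the remaining, more balanced, types use Proposition~\ref{eigsp_from_permsquare}(iii), $\dim C_V(s)\le\tfrac n6\sum a_i^2$, against the centraliser index. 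For unipotent elements use $c(u_\Psi)$ when $u$ dominates a regular unipotent of $\overline{G}_\Psi$ and $\mathrm{codim}\,C_V(u)=\binom{n-1}{2}$ for root elements, counted by Proposition~\ref{unipcounts}; bound the field-automorphism contribution by Proposition~\ref{field}; and recall there is no graph contribution. Feeding all of this into Proposition~\ref{tools}\ref{crude}) (or~\ref{tools}\ref{qsgood})) yields, if $G$ has no regular orbit, an inequality of the shape
\[
q^{\binom n3}\;\le\;2|\mathrm{PGL}_n(q)|\,q^{\binom n3-m}\;+\;\big(N_s\text{- and }N_u\text{-terms}\big)\,q^{\binom n3-m'}\;+\;2\log(\log_2 q+2)\,q^{n^2/2+\binom n3/2},
\]
with $m$ a codimension of order $n^3$ and $m'=\binom{n-1}{2}$ for the few exceptional classes; a check over the finitely many centraliser/partition types shows this fails for every $n\ge 10$ and every $q$, and the residual small pairs $(n,q)$ are handled by constructing $V$ in GAP and exhibiting a regular vector.

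The main obstacle is precisely this splitting: on $\bigwedge^3 W$ the semisimple elements with a dominant eigenvalue, and the root unipotent elements, have fixed spaces of codimension only $\binom{n-1}{2}\approx n^2/2$, which is smaller than $\log_q|\mathrm{PGL}_n(q)|<n^2$, so no uniform estimate --- neither the $\alpha(x)$-bound of Proposition~\ref{tools}\ref{alphabound}), nor the $\Psi$-net bounds for $\Psi$ up to $A_3$, nor the global inequality of Proposition~\ref{eigsp_from_permsquare}(iii) --- suffices on its own; one has to weigh the sharp exterior-power codimensions against the sharp class counts $N_s(\Psi)$, $N_u(\Psi)$, and it is this balance that forces the threshold $n\ge 10$ and leaves $n=9$ with $E(G)\ne G$ accessible only through $b(G)\le 2$.
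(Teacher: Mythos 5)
Your overall strategy matches the paper's: trivial lower bound $|V|<|G|$ for $6\le n\le 8$; descent of the Guralnick--Lawther regular vector for $n=9$ quasisimple; and for $n\ge 10$ the same balancing act, pitting $\dim C_V(s)\le\tfrac n6\sum a_i^2$ from Proposition~\ref{eigsp_from_permsquare}(iii) against the class sizes $2^mq^{n^2-\sum a_i^2}$, with the large-eigenspace semisimple classes ($a_1\ge n-3$) and the root/near-root unipotent classes split off and counted separately via Propositions~\ref{sscounts} and~\ref{unipcounts}. Two points deserve correction, though.

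First, the upper bounds $b(G)\le 2$ for $l\in[6,7]$ (i.e.\ $n=7,8$) are \emph{not} supplied by Proposition~\ref{k=1_leftovers}: its table covers $\lambda_3$ only for $n=6$ and $n=9$. For $n=7,8$ the paper establishes $b(G)=2$ inside the proof itself, by running the Proposition~\ref{tools}\ref{crude}) inequality for $V\otimes\F_{q^2}$ with the $\Psi$-net codimensions and invoking Lemma~\ref{fieldext}; you would need to carry out that step rather than cite a result that does not apply. Second, your claim that the generic inequality ``fails for every $n\ge 10$'' with only residual small $(n,q)$ left to GAP understates the work in the range $10\le n\le 15$: the first-pass inequality only closes $n\ge 16$, and the paper must introduce larger subsystems ($A_1^4$, $A_1^5$, $\langle\a_1,\a_2,\a_4,\a_5\rangle$, $A_6$) to get $\dim C_V(x)+\log_q|x^{\mathrm{PGL}_n(q)}|\le d-3$ (resp.\ $d-n$) classwise, and for $n=10$ a genuinely finer, class-by-class count of semisimple elements by eigenvalue multiplicity before Proposition~\ref{tools}\ref{eigsp2}) succeeds for all $q$. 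Neither issue changes the architecture of the argument, but both are places where your sketch, taken literally, would not yet close.
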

\begin{proof}
Here $d = \binom{l+1}{3}$, and we note that $|V|<|G|$ for $5\leq l\leq 7$, so $b(G) \geq 2$ here. We give the Weyl and $\Psi$-net tables in Table \ref{l3table}. If $\Psi$ is of type $A_1$, $A_1^2$, $A_1^3$ or $A_2A_1^2$, then the $\Psi$-net tables are computed in \cite[Proposition 2.6.3]{bigpaper}, so in these cases, we give the  
lower bounds on $c(s)$ and $c(u_{\Psi})$ computed there. We also give the $\Psi$-net tables for $\Psi$ of type $A_2$, $A_3$ and $A_1^4$ as well as the $A_1^5$, $A_2A_2$ and $A_6$ tables for semisimple elements with $n=10$ in Table \ref{L3_n=10_tabs}. For conciseness, we give a condensed version of the $A_1^4$ table and the $A_1^5$ table for $n=10$. In each row of one of these tables, the indices $i,j$ and $k$ are distinct, and $i,j,k\in \{1,3,5,7\}$ or $\{1,3,5,7,9\}$ for $\Psi=A_1^4$ and $A_1^5$ respectively. The entries in the $c(s)$ and $c(u_\Psi)$ columns give the total contribution of the $\Psi$-nets with highest weight of the given type. 

Suppose $s\in G$ is semisimple of projective prime order $r$, and write the eigenvalues of $s$ on the $n$-dimensional natural module $W$ for $E(G)$ over $\overline{\mathbb{F}}_q$ as $t_1, t_2, \dots , t_m$, ordered so that the multiplicities $a_i$ of the $t_i$ are weakly decreasing. Since $V$ is isomorphic to the exterior cube of $W$,  $\dim C_V(s) \leq \frac{1}{6}n\sum a_i^2$ by Proposition \ref{eigsp_from_permsquare}, and by \cite[Table B.3]{bg}, $|s^{\mathrm{PGL}_n(q)}|< 2^m q^{n^2-\sum a_i^2}$, where $m$ is the number of distinct $t_i$. Therefore, 
\begin{align*}
\mathrm{codim}_{\mathbb{F}_q}C_V(s) - \log_q(|s^{\pgl_n(q)}|) \geq & \left(\binom{n}{3}-\frac{1}{6}n\sum a_i^2\right)-\left(n^2-\sum a_i^2+m\right)\\
&> \left(\frac{n}{6}-1\right)(n^2-\sum a_i^2) -\frac{n^2}{2} +\frac{n}{3}-m\numberthis \label{l3eqn}\\
\end{align*}
If $a_1 \leq n-4$, we have $n^2-\sum a_i^2\geq 8n-32$, so the right hand side of 
\eqref{l3eqn}  is at least $\frac{1}{6} (5n^2-84n+192)$. If $a_1 \geq n-3$, we instead note that  $\dim C_V(s) \leq \frac{1}{6}n\sum a_i^2$ and treat them separately in the inequality to come.
The number of semisimple elements of $\mathrm{PGL}_n(q)$ with an $(n-3)$-dimensional eigenspace is at most $4q^{6n-9}$, while the number of elements with an eigenspace of dimension $n-1$ or $n-2$ is counted in the proof of Proposition \ref{sscounts}.
By \cite[Lemma 3.7]{MR2888238}, the number of conjugacy classes in $\mathrm{PGL}_n(q)$ is at most $q^{n-1}+5q^{n-2}$. Therefore, if $n\geq 7$ and $G$ has no regular orbit on $V$, then
\begin{align*}
q^d \leq & 2(q^{n-1}+5q^{n-2})q^{d- \frac{1}{6} (5n^2-84n+192)} +4q^{6n-9}q^{d-(3l^2-21l+50)/2}+ 4q^{4n-4}q^{d-\binom{l-1}{2}}+q^{2n-1}q^{d-\binom{l-1}{2}}\\
&+q^{l(l+1)}q^{d-(3l^2/2-13l/2+9)}+ 8 \left(\frac{q^{2n^2/3+7/2}-1}{(q^2-1)(q^{3/2}-1)} \right)q^{d-(l^2-3l+2)}+4 \left(\frac{q^{n^2/2+2}-1}{q^2-1} \right)q^{d-(l^2-5l+8)}\\
&+ 2\log(\log_2q+2)q^{n^2/2+d/2}
\end{align*}
This is a contradiction for $n\geq 16$ and $q\geq 2$.
Using the $\Psi$-net tables given in Table \ref{l3table}, we determine that every unipotent element $u \in G$ of prime order has $\dim C_V(u) + \log_q(u^{\mathrm{PGL}_n(q)})\leq d -3$ for $10\leq n \leq 14$, with the exception of $u \in \mathrm{PGL}_{10}(q)$ with associated partition $(3^2,1^4)$. Here, we compute that $\dim C_V(u)=44$.  Moreover, if $n\geq 11$, we compute from Table \ref{l3table} that every semisimple $s$ element of $G$ of projective prime order has $\dim V_t(s) + \log_q(s^{\mathrm{PGL}_n(q)})\leq d -n$.
Therefore, if $G$ has no regular orbit on $V$ with $11\leq n \leq 14$,
\begin{align*}
q^d \leq & 2(q^{n-1}+5q^{n-2})q^{d- n} +4q^{6n-9}q^{d-(3l^2-21l+50)/2}+ 4q^{4n-4}q^{d-(l^2-5l+8)}+ 2q^{2n-1}q^{d-\binom{l-1}{2}}\\
&+c_p(n)q^{d-3}+ 2\log(\log_2q+2)q^{n^2/2+d/2}
\end{align*}
where $c_p(n)$ is the number of partitions of $n$ with parts of size at most $p$. This gives a contradiction for $n\geq 11$ and $q\geq 2$.

When $n=10$, we conduct a detailed analysis of conjugacy classes of semisimple elements of prime order in $G/F(G)$ including finding tighter upper bounds for the number of classes with given eigenvalue multiplicities. Using this technique and applying Proposition \ref{tools}\ref{eigsp2}), we see that $G$ has a regular orbit on $V$ for $q\geq 2$.

The case where $n=9$ is treated in \cite[Propositions 3.1.1, 3.1.4]{bigpaper}, where the authors construct a connected simple algebraic group $\hat{G}$ of type $A_8$ acting on $V(\lambda_3)$ as a subgroup of a simply connected group of type $E_8$ acting on its adjoint module $\mathfrak{L}$. They give an explicit vector $v \in \mathfrak{L}$ and show  that it is a representative of a regular orbit under the action of $\hat{G}$. We can apply the same argument to the realisation over $\mathbb{F}_q$, and find if $G$ is quasisimple, then there is a regular orbit of $G$ on $V$. If $l=8$ and $G$ is not quasisimple, then we show  $b(G)\leq 2$ in Proposition \ref{k=1_leftovers}.

Finally, for $6\leq n \leq 8$, we apply Proposition \ref{tools}\ref{crude} with the information in Table \ref{l3table} and Lemma \ref{fieldext} and find that $b(G) = 2$ for $n=7,8$ and $b(G)\leq 3$ for $n=6$ if $G$ contains no graph automorphisms, and $b(G)\leq 4$ if it does.
\end{proof}

\renewcommand{\arraystretch}{1}

\begin{table}[!htbp]
\begin{minipage}{0.4\textwidth}
\centering
\begin{tabular}{cccc}
\toprule
$i$ & $\mu$ & $|W.\mu |$ & Multiplicity \\ 
\midrule 
1 & $\lam3$ &  $ \binom{l+1}{3}$& 1 \\
\bottomrule 
\end{tabular} 
\caption{Weyl orbit table. \label{L3_weyl}}
\end{minipage}
\quad
\begin{minipage}{0.5 \textwidth}
\centering
\begin{tabular}{ccc}
\toprule 
 & $c(s)$ & $c(u_\Psi)$ \\ 
\midrule 
$\Psi$ & $r\geq 2$ & $p\geq 2$ \\ 
\midrule 
$A_1$ & $\binom{l-1}{2}$ & $\binom{l-1}{2}$ \\ 

$A_1^2$ & $l^2-5l+8$ & $l^2-5l+8$ \\ 
$A_1^3$ & $\frac{1}{2}(3l^2-21l+50)$  &$\frac{1}{2}(3l^2-21l+50)$\\
$A_2A_1^2$ & $2l^2-14l+34$ & $2l^2-14l+34$.\\
$A_3A_2$ & $\frac{1}{2}(5l^2-35l+78)$ &$\frac{1}{2}(5l^2-35l+78)$\\
\bottomrule 
\end{tabular}
\caption{Bounds for $c(s)$ and $c(u_\Psi)$.}

\end{minipage}

\begin{minipage}{0.4\textwidth}
\centering
\begin{tabular}{ccccc}
\toprule 
 &  & & $c(s)$ & $c(u_\Psi)$ \\ 
\midrule 
$\nu$ & $n_1$ & Mult & $r\geq 2$ & $p\geq 2$ \\ 
\midrule 
0 & 1 & $\binom{l-2}{3}+1$ & 0 & 0 \\ 
$\om2$ & 3 & $l-2$ & $2(l-2)$ & $2(l-2)$ \\ 
$\om1$ & 3 & $\binom{l-2}{2}$ & $2\binom{l-2}{2}$ & $2\binom{l-2}{2}$ \\ 
\midrule 
\textbf{Total} &  &  & $2\binom{l-1}{2}$ & $2\binom{l-1}{2}$ \\ 
\bottomrule 
\end{tabular} 
\caption{$\Psi$-net table for $\Psi = \langle \alpha_1, \alpha_2 \rangle$.}
\end{minipage}
\qquad
\begin{minipage}{0.5\textwidth}
\centering
\begin{tabular}{ccccc}
\toprule
 &  & & $c(s)$ & $c(u_\Psi)$ \\ 
\midrule 
$\nu$ & $n_1$ & Mult & $r\geq 2$ & $p\geq 2$ \\ 
\midrule 
0 & 1 & $\binom{l-3}{3}$ & 0 & 0 \\ 
$\om3$ & 4 & $1$ & 3 & 3 \\ 
$\om1$ & 4& $\binom{l-3}{2}$ & $3\binom{l-3}{2}$ & $3\binom{l-3}{2}$ \\ 
$\om2$ & 6& $l-3$& $4(l-3)$&$4(l-3)$\\
\midrule 
\textbf{Total} &  &  & $\frac{3l^2}{2}-\frac{13l}{2}+9$ & $\frac{3l^2}{2}-\frac{13l}{2}+9$ \\ 
\bottomrule 
\end{tabular} 
\caption{$\Psi$-net table for $\Psi = \langle \alpha_1, \alpha_2,\alpha_3 \rangle$.}
\end{minipage}
\quad
\begin{minipage}{0.55\textwidth}
\centering
\begin{tabular}{ccccc}
\toprule 
 &  & & $c(s)$ & $c(u_\Psi)$ \\ 
\midrule 
$\nu$ & $n_1$ & Mult & $r\geq 2$ & $p\geq 2$ \\ 
\midrule 
0 & 1 & $\binom{l-7}{3}+4(l-7)$ & 0 & 0 \\ 
$\om i$ & 2 & $\binom{l-7}{2}+3$ & $4(\binom{l-7}{2}+3)$ & $4(\binom{l-7}{2}+3)$ \\ 
$\om i +\om j$ & 4& $l-7$ & $12(l-7)$&$12(l-7)$ \\ 
$\om i + \om j +\om k$ & 8& $1$& $16$&$16$\\
\midrule 
\textbf{Total} &  &  & $4\binom{l-4}{2} +16$& $4\binom{l-4}{2} +16$\\ 
\bottomrule 
\end{tabular} 
\caption{$\Psi$-net table for $\Psi = \langle \alpha_1, \alpha_3,\alpha_5, \alpha_7\rangle$.}
\end{minipage}
\caption{The Weyl and $\Psi$-net tables for $V(\lambda_3)$.\label{l3table}}
\end{table}

\begin{table}
	\begin{minipage}{0.32 \textwidth}
	\begin{tabular}{cccc}
		\toprule
		&  &  & $c(s)$ \\ \midrule
		$\nu$ & $n_1$ & Mult & $r\geq 2$ \\
		\midrule
		$\om i$ & 2 & 4 & 20 \\
		$\om i + \om j +\om k$ & 8 & 1 & 40 \\
		\midrule
		\textbf{Total} &  &  & 60 \\ \bottomrule
	\end{tabular}
\caption{$\Psi$-net table for $\Psi = \langle \a_1, \a_3, \a_5, \a_7, \a_9 \rangle$.}
\end{minipage}
\begin{minipage}{0.36\textwidth}
	\centering
\begin{tabular}{cccc}
	\toprule
	&  &  & $c(s)$ \\ \midrule
	$\nu$ & $n_1$ & Mult & $r\geq 2$ \\
	\midrule
	0 & 1 & 6 & 0 \\
	$\om1$ & 3 & 4 & 8 \\
	$\om2$ & 3 & 6 & 12 \\
	$\om4$ & 3 & 4 & 8 \\
	$\om5$ & 3 & 6 & 12 \\
	$\om2+\om4$ & 9 & 1 & 6 \\
	$\om1+\om5$ & 9 & 1 & 6 \\
	$\om2+\om5$ & 9 & 4 & 24 \\
	\midrule
	\textbf{Total} &  &  & 76 \\ \bottomrule
\end{tabular}
\caption{$\Psi$-net table for $\Psi = \langle \a_1, \a_2, \a_4, \a_5 \rangle$.}
\end{minipage}
\begin{minipage}{0.28 \textwidth}
\centering
\begin{tabular}{cccc}
\toprule
 &  & & $c(s)$  \\ 
\midrule 
$\nu$ & $n_1$ & Mult & $r\geq 2$\\ 
\midrule 
0 & 1 & $1$ & 0  \\ 
$\om1$ & 7 & $3$ & $18$  \\ 
$\om2$ & 21& $3$ & $54$\\ 
$\om3$ & 35& 1& 28\\
\midrule 
\textbf{Total} &  &  & $100$\\ 
\bottomrule 
\end{tabular} 
\caption{$\Psi$-net table for $\Psi = \langle \alpha_1, \alpha_2,\alpha_3, \alpha_4,\alpha_5, \alpha_6\rangle$.}
\end{minipage}
	\caption{Some $\Psi$-net tables for $V(\lam3)$ with $n=10$.\label{L3_n=10_tabs}}
\end{table}

\subsection{$\lambda = 3\lambda_1$}
\begin{proposition}
\label{3l1}
Theorem \ref{prestmain} holds for $G$ with $E(G)/Z(E(G)) \cong \mathrm{PSL}_{l+1}(q)$ acting on $V=V(3\lam1)$ for $l \in [1, \infty)$. Namely, if $l\geq 3$ or $l=2$ and $G$ is quasisimple, then $G$ has a regular orbit on $V$. If $l=2$ and $G$ is not quasisimple, then $b(G)\leq 2$. If $l=1$ then, letting $K$ denote  the kernel of the action of $\gl_2(q)$, $G$ has a regular orbit on $V$ for $G$ a subgroup of $\mathrm{GL}_2(q)/K$ of index at least 3,  and has $b(G)\leq 2$ otherwise, with equality if $G$ is a subgroup of index at most 2 in  $\mathrm{GL}_2(q)/K$.
\end{proposition}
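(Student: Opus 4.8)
The plan is to reduce, via Propositions \ref{n^3} and \ref{complete}, to the module $V=V(3\lam1)$ appearing in Table \ref{fulllist}, and then to treat the ranges $l\ge 3$, $l=2$ and $l=1$ separately, the dimension-$4$ and dimension-$10$ cases being genuinely exceptional. Here $V\cong S^3W$ with $W$ the natural module, $p\ge 5$, and $\dim V=\binom{l+3}{3}<n^3$. For $l\ge 3$ I would argue as in the worked example: record the Weyl orbit data (the weights of $V$ lie in three $W$-orbits with dominant representatives $3\lam1$, $\lam1+\lam2$ and $\lam3$, the last being the zero weight when $l=3$, all of multiplicity one), build the $\langle\a_1\rangle$-weight-string table to extract lower bounds $c(s)$, $c(u_\Psi)$ for prime-order semisimple $s$ and regular unipotent $u_\Psi$, and sharpen these for the smallest $l$ by passing to $A_2$- or $A_1^2$-nets. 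A useful simplification is that $V(3\lam1)$ is not self-dual for $l\ge 2$, so no graph or graph-field automorphism preserves $V$ and the only outer elements to bound are field automorphisms, via Proposition \ref{field}. Feeding these bounds, with Propositions \ref{invols}, \ref{sscounts} and \ref{unipcounts}, into Proposition \ref{tools}\ref{qsgood}) (or \ref{crude})) should yield a contradiction for every $l\ge 3$ and every $q$, hence a regular orbit; a few small pairs $(l,q)$ may fall just outside the inequality and would be cleared by constructing $V$ in GAP \cite{GAP4} and exhibiting a regular-orbit representative.

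The case $l=2$ (ternary cubic forms, $\dim V=10$) reduces to \cite{bigpaper}: there an explicit cubic with trivial stabiliser in the simple algebraic group of type $A_2$ is produced, and the same vector over $\F_q$ shows that a quasisimple $G$ has a regular orbit on $V$. If $G$ is not quasisimple --- so $G$ involves a diagonal or field automorphism --- I would deduce $b(G)\le 2$ from Proposition \ref{k=1_leftovers} (alternatively by a direct $C_G(A)$-style argument as in Proposition \ref{adjoint}).

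The case $l=1$ is the main obstacle. Now $V$ is the space of binary cubic forms, $\hat G:=\gl_2(q)$ acts with kernel $K=\{\mu I:\mu^3=1\}$ of order $\gcd(3,q-1)$, and we set $\bar G=\hat G/K$; crude summation over $\sum_x|x^G||C_V(x)|$ is useless, since involution classes of $\hat G$ already have size $\approx q^4$, so the argument must be structural. The pivotal observation is that any $g\in\hat G$ fixing a binary cubic $v$ with distinct roots $\{a,b,c\}\subseteq\mathbb{P}^1(\overf_q)$ induces an $\F_q$-rational permutation of $\{a,b,c\}$, and a short Frobenius-descent argument pins down which permutations occur: a $3$-cycle whenever $v$ is irreducible over $\F_q$, a transposition whenever $v$ has exactly one rational root (while $v$ with a repeated root has a stabiliser containing a nontrivial torus). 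Combining this with a cube-residue computation --- tracking the scalar $\delta$ by which such a projectivity rescales $v$, together with the determinant of its minimal lift to $\gl_2(q)$ --- I would show that every binary cubic has a nontrivial $\bar G$-stabiliser, whence $\bar G$ and its index-$2$ subgroup have $b(G)\ge 2$; two cubics of type ``linear $\times$ irreducible quadratic'' with distinct order-$2$ stabilisers then furnish a base of size $2$, so $b(G)=2$ in these cases. For $G$ of index $\ge 3$ in $\bar G$ one argues conversely, choosing $v$ whose $\bar G$-stabiliser (of order dividing $6$, contained in a quotient of $Z\Sigma_v$ with $Z$ the scalar torus) meets $G$ trivially; the decisive and most delicate step is this last bookkeeping, which must be uniform over all index-$\ge 3$ subgroups and split into cases on the residue of $q$, with finitely many small $q$ checked in GAP \cite{GAP4}.
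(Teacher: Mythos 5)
Your treatment of $l\ge 2$ is essentially the paper's: for $l\ge 3$ the paper runs exactly the weight-string/$\Psi$-net machinery you describe (it uses the $A_1$, $A_1^2$ bounds from \cite{bigpaper} together with an $A_3$-net table, closes $l\ge 4$ for all $q\ge 5$, and handles $l=3$ with sharper counts of non-regular semisimple classes plus GAP for $q\le 11$), and your observation that $V(3\lam1)$ is not self-dual for $l\ge 2$, so only field automorphisms need Proposition \ref{field}, matches the paper. (One small slip: the third Weyl orbit is the zero weight when $l=2$, not $l=3$; for $l=3$ it is the orbit of $\lam3=\lam l$.) The $l=2$ case is also handled identically, via the explicit regular-orbit vector of \cite[Proposition 3.1.2]{bigpaper} realised over $\F_q$ and Proposition \ref{k=1_leftovers} for the non-quasisimple groups.

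Where you genuinely diverge is $l=1$. The paper does not argue structurally via the permutation action on the roots; it writes down the explicit cubics $v_j=e_1^2e_2+je_2^3$ (and, when $3\mid q-1$, also $e_1^3+z^ie_2^3$), computes their stabilisers in $H=\F_q^\times\circ(\gl_2(q)/K)$ case-by-case on $q\bmod 4$, and then concludes that $\gl_2(q)/K$ has no regular orbit by a \emph{counting} step: the vectors not lying in the listed non-regular orbits number fewer than $|\gl_2(q)/K|$, so they cannot fill a regular orbit. Your route — showing directly that every cubic with distinct roots admits an $\F_q$-rational symmetry of its root set and then lifting it via a cube-residue analysis — proves the same statement (the two are logically equivalent, since a regular orbit exists iff some vector has trivial stabiliser), but the lifting step is exactly where the work hides: when $3\mid q-1$ the kernel $K$ has order $3$, the cube map on $\F_q^\times$ is not surjective, and it is not immediate that the scalar $\delta$ by which a projective symmetry rescales a given cubic can always be normalised away; you would have to verify this for every Galois type of cubic, whereas the paper's counting sidesteps it entirely. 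Your claim is true, so the computation must close, but I would recommend either adopting the counting shortcut for the "no regular orbit of $\gl_2(q)/K$" step or being prepared for a genuinely fiddly residue analysis there. The remaining bookkeeping you flag (which index-$\ge 3$ subgroups avoid the order-$2$ and order-$6$ stabilisers, and the base of size $2$ for the index-$\le 2$ subgroups) corresponds to the paper's analysis of the sets $S_1$, $S_2$ together with Proposition \ref{k=1_leftovers}; your plan there is sound.
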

\begin{proof}
Here $d = \binom{l+3}{3}$ and $p\geq 5$. The Weyl orbit table is given in Table \ref{3L1_tab_1}. The $\Psi$-nets for $\Psi$ of types $A_1$ and $A_1^2$ are given in \cite[Proposition 2.6.2]{bigpaper}, and so in Table \ref{3L1_tab_1} we give the bounds on $c(s)$ and $c(u_\Psi)$ from these tables. 
\begin{table}[!htbp]
\begin{tabular}{cccc}
\toprule
$i$ & $\mu$ & $|W.\mu |$ & Multiplicity \\ 
\midrule 
1 & $3\lam1$ &  $l+1$& 1 \\
2& $\lam1+\lam2$ &$l(l+1)$& 1\\
3 & $\lam3$&$\binom{l+1}{3}$&1\\
\bottomrule 
\end{tabular} 
\quad 
\begin{tabular}{ccccc}
\toprule
&\multicolumn{3}{c}{$c(s)$} & $c(u_\Psi)$\\
\midrule 
$\Psi$ & $r=2$ & $r=3$ & $r\geq5$ & $p\geq 5$ \\ 
\midrule 
$A_1$ & $\frac{1}{2}(l^2+l+2)$ & $\frac{1}{2} l(l+3)$ & $\frac{1}{2} (l^2+3l+2)$ & $\frac{1}{2}(l^2+3l+2)$ \\ 
$A_1^2$ & $l^2-l+4$ & $l(l+1)$ & $l^2+l+2$ & $l^2+l+2$ \\ 
\bottomrule 
\end{tabular} 
\caption{The Weyl orbit table and some $\Psi$-net bounds for $V(3\lam 1)$\label{3L1_tab_1}}.
\end{table}

Let $\Psi = \langle \alpha_1, \alpha_2, \alpha_3 \rangle$, a subsystem of type $A_3$. Then the $\Psi$-net table for semisimple elements is given in Table \ref{3l1_A3}.

\begin{table}[!htbp]
\begin{tabular}{cccccccc}
\toprule
&&&&& \multicolumn{3}{c}{$c(s)$}\\
\midrule 
$\nu$ & $n_1$ & $n_2$ & $n_3$ & Mult & $r=2$ & $r=3$ & $r\geq 5$ \\ 
\midrule 
0 & 1 & 0 & 0 & $l-3$ & 0 & 0 & 0 \\ 
$3\om1$ & 4 & 12 & 4 & 1 & 15 & 15 & 16 \\  
0 & 0 & 1 & 0 & $2\binom{l-3}{2}$ & 0 & 0 & 0 \\  
$\om1$ & 0 & 4 & 0 & $l-3$ & $3(l-3)$ & $3(l-3)$ & $3(l-3)$ \\ 
$2\om1$ & 0 & 4 & 6 & $l-3$ & $6(l-3)$ & $8(l-3)$ & $8(l-3)$ \\ 
0 & 0 & 0 & 1 & $\binom{l-3}{3}$ & 0 & 0 & 0 \\ 
$\om1$ & 0 & 0 & 4 & $\binom{l-3}{2}$ & $3 \binom{l-3}{2}$ & $3 \binom{l-3}{2}$ & $3 \binom{l-3}{2}$ \\ 
\midrule 
\textbf{Total} &  &  & &  & $\frac{3}{2}(l^2-l+4)$ & $\frac{1}{2}(3l^2+l)$ & $\frac{1}{2}(3l^2+l+2)$ \\ 
\bottomrule 
\end{tabular} 
\caption{The $A_3$-net table of $V(3\lam1)$. \label{3l1_A3}}
\end{table}
So if $G$ has no regular orbit on $V$, then 
\begin{align*}
q^d \leq & 2(|\mathrm{PGL}_n(q)|- i_2(G)-i_3(G))q^{d-\frac{1}{2}(3l^2+l+2)}+2q^{2n-1}q^{d-\frac{1}{2}(l^2+l+2)}+ 2(q^{2n}+q^{2n-1})q^{d-\frac{1}{2}(l^2+3l)}\\
& + 4(q^{\binom{n+1}{2}-1}+ q^{\binom{n+1}{2}-2})q^{d-(l^2-l+4)} + 4(q^{\frac{1}{3}n(2n+1)-1}+ q^{\frac{1}{3}n(2n+1)-2})q^{d-l(l+1)}\\
& +2 \log (\log_2 q +2) q^{n^2/2+d/2}+ 2( \athree)q^{d-(l^2+l+2)} + q^{n(n-1)}q^{d-(l^2+l+2)}.\\
\end{align*}
This gives a contradiction for $l\geq 4$ and $q\geq 5$.
Now suppose $l=3$. The number of non-regular semisimple elements in $\pgl_4(q)$ is at most
\begin{align*}
(q-1)& \frac{|\mathrm{GL}_4(q)|}{|\mathrm{GL}(1,q)||\mathrm{GL}_3(q)|} + \binom{q-1}{2} \frac{|\mathrm{GL}_4(q)|}{|\mathrm{GL}_2(q)||\mathrm{GL}(1,q)|^2} +(q-1)\frac{|\mathrm{GL}_4(q)|}{|\mathrm{GL}_2(q)|^2}+ \frac{q+1}{2}  \frac{|\mathrm{GL}_4(q)|}{|\mathrm{GL}(2,q^2)|} \\
& +  \frac{q+1}{2}  \frac{|\mathrm{GL}_4(q)|}{|\mathrm{GL}(1,q^2)||\mathrm{GL}_2(q)|}< \frac{1}{2}(q^{12}+3q^9)
\end{align*}
If $G$ has no regular orbit on $V$, then
\begin{align*}
q^{20}\leq& 2|\pgl_4(q)|q^{20-16} + 2(\frac{1}{2}q^{12}+\frac{3}{2}q^{9})q^{20-14} + q^{12}q^{20-14} + 2(q^7+q^7)q^{20-9} +\frac{|\mathrm{GL}_4(q)|}{|\mathrm{GL}(1,q)||\mathrm{GL}_3(q)|} q^{20-7}\\
& + 2\log(\log_2q+2)q^{16/2+20/2}+ 4(q^9+q^8)q^{20-10}+ 4(q^{11}+q^{10})q^{20-12}.
\end{align*}
This gives a contradiction for $q\geq 13$. We use GAP \cite{GAP4} to compute the number of elements of $\mathrm{PGL}_4(q)$ of each prime order for $q\leq 11$, and substituting these into the inequality we have a contradiction in each case, so $G$ has a regular orbit on $V$.

Suppose now that $l=2$. This case is treated in \cite[Proposition 3.1.2]{bigpaper}, where the authors construct a group of type $A_2$ acting on $V(3\lam1)$ as a subgroup of a simply connected group of type $D_4$ acting on its adjoint module. They prove the existence of a regular orbit by providing an explicit vector with trivial stabiliser. Applying the same argument to the realisation of the module over $\mathbb{F}_q$, we see that if $G$ is quasisimple, then $G$ has a regular orbit on $V$. Otherwise, we show in Proposition \ref{k=1_leftovers} that $b(G)\leq 2$.

Finally, we consider $l=1$. Let $K$ be the kernel of the action of $\mathrm{GL}_2(q)$ on $V$. 
Note that $|K| = (3,q-1)$, and let $H= \F_q^{\times}\circ (\mathrm{GL}_2(q)/K)$. Let $v_j = e_1^2e_2+je_2^3$ for $j\in \F_q$ and $z$ be a generator of $\F_q^{\times}$.
 Define  $S_1=\{v_z, v_{z^3},v_{z^5}\}$, and $S_2=\{v_1, v_{z^2}, v_{z^4}\}$.
 Assume that  $q \equiv 1 \mod 4$. Then $e_1^2 e_2$ is a regular orbit representative for $\mathrm{SL}_2(q)$. Moreover, if 
$G \leq H$ does not contain the image of $\mathrm{Diag}(-1,1)$ in $\mathrm{GL}(V)$, then $S_1$  contains representatives of three distinct regular orbits of $G$. That is, each vector in $S_1$ has a stabiliser of order 2 in $H$.  Moreover, each element of $S_2$ lies in a distinct orbit of $H$ on $V$, and has a stabiliser of order 6 in $H$. Now suppose $q\equiv 3 \mod 4$. We again find that $e_1^2 e_2$ is a regular orbit representative for $\mathrm{SL}_2(q)$. Moreover, each vector in $S_2$ lies in a distinct orbit of $H$ of size $|H|/2$, while each element of $S_1$ lies in a distinct orbit of $H$ of size $|H|/6$.

We now show that there is no regular orbit of $\gl_2(q)/K$ (and therefore $H$) on $V$.
If $K$ is trivial, then the number of vectors of $V$ lying in orbits not already mentioned is less than $|\gl_2(q)|$, so there can be no regular orbit.
%

If instead $|K|=3$, we also determine that $e_1^3+ze_2^3$ and $e_1^3+z^2e_2^3$ lie in distinct $\gl_2(q)/K$ orbits and both have a stabiliser of size 3 in $\gl_2(q)/K$. The number of vectors in orbits not mentioned is then less than $|\gl_2(q)|/3$ for $q\geq 7$, so $\gl_2(q)/K$ has no regular orbit on $V$. We determine that the same is true for $q=5$ using an explicit construct of the module in GAP \cite{GAP4}.
In the cases where we have not shown that there is a regular orbit, we prove that $b(G)\leq 2$ in Proposition \ref{k=1_leftovers}.

\end{proof}
\subsection{$\lambda = \lambda_4$}
\begin{proposition}
\label{l4}
Theorem \ref{prestmain} holds for $G$ with $E(G)/Z(E(G)) \cong \mathrm{PSL}_{l+1}(q)$ acting on $V=V(\lam4)$ for $l \in [7, 28]$. Namely, if either $l\geq 8$ or $l=7$ and $G$ is quasisimple, then $G$ has a regular orbit on $V$. If $l=7$ and $G$ is not quasisimple, then $b(G)\leq 2$.
\end{proposition}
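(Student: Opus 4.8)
The plan is to follow the strategy used for $V(\lam3)$ in Proposition~\ref{l3}, exploiting the fact that $V=V(\lam4)$ is the exterior fourth power $\Lambda^4 W$ of the natural module $W$ for $E(G)$, so that $d=\dim V=\binom{l+1}{4}$, and that by Propositions~\ref{n^3} and~\ref{complete} it suffices to treat $l\in[7,28]$. The Dynkin diagram automorphism of $\mathrm{PSL}_{l+1}(q)$ fixes $\lam4$ precisely when $l=7$; hence for $l\geq 8$ the only non-linearly acting elements we must account for are field automorphisms, handled by Proposition~\ref{field}, whereas for $l=7$ an involutory graph automorphism $\tau$ preserves $V$ and we would bound $\dim C_V(\tau)$ via Proposition~\ref{graphfix}.

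For the upper part of the range I would bound eigenspaces of semisimple elements by the exterior-fourth-power analogue of Proposition~\ref{eigsp_from_permsquare}(iii): if $s\in G$ has projective prime order with eigenvalue multiplicities $a_1\geq a_2\geq\cdots$ on $W$, then realising $V=\Lambda^4 W$ as a quotient of $W^{\otimes 4}=W^{\otimes 2}\otimes W^{\otimes 2}$ and applying Propositions~\ref{permsquare} and~\ref{tensorcodim} gives $\dim W^{\otimes 4}_t(s)\leq n^2\sum a_i^2$, and since each $S_4$-orbit of $4$-element index sets has size $24$ one obtains $\dim V_t(s)\leq\tfrac{1}{24}n^2\sum a_i^2$. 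Combining this with the class-size bound $|s^{\pgl_n(q)}|<2^m q^{n^2-\sum a_i^2}$ of \cite[Table~B.3]{bg}, the class-number estimate $q^{n-1}+5q^{n-2}$ of \cite[Lemma~3.7]{MR2888238}, the weight-string and $\Psi$-net tables for unipotent elements (for $\Psi$ of types $A_1$, $A_1^2$, $A_1^3$, $A_1^4$, $A_2$, $A_3$, computed in GAP~\cite{GAP4} exactly as in the worked example), and Proposition~\ref{field}, I would substitute into Proposition~\ref{tools}(v). Splitting on whether the largest multiplicity $a_1$ is at most $n-7$ or larger — the latter producing only boundedly many "near-scalar" classes that are counted precisely, as in the proof of Proposition~\ref{l3} — this crude inequality should give a contradiction for all $l$ above some explicit bound.

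For the remaining moderate values of $l$ I would refine exactly as in the $11\leq n\leq 14$ portion of Proposition~\ref{l3}: use the $\Psi$-net tables to verify that $\dim C_V(u)+\log_q|u^{\pgl_n(q)}|\leq d-c$ for every unipotent $u$ of prime order (handling a handful of exceptional partition types, those with several parts of size $\geq 3$, by direct computation) and that $\dim V_t(s)+\log_q|s^{\pgl_n(q)}|\leq d-n$ for semisimple $s$, then apply Proposition~\ref{tools}(iii), bounding the number of unipotent classes by the partition count $c_p(n)$ and treating field automorphisms via Proposition~\ref{field}. A couple of intermediate values of $n$ may need a more careful enumeration of prime-order semisimple classes with prescribed eigenvalue multiplicities, in the spirit of the $n=10$ analysis in Proposition~\ref{l3}.

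The hard part is the case $l=7$, that is, $n=8$ and $d=70$: here $|V|=q^{70}$ barely exceeds $|G|$, and $\mathrm{Aut}(\mathrm{PSL}_8(q))$ contains involutory graph automorphisms whose fixed spaces on $V$ have dimension close to $d/2=35=\dim\overline{G}-\tfrac12|\Phi|=N_2$, so every counting argument above breaks down. Instead I would invoke \cite{bigpaper}: the group $\mathrm{SL}_8$ acting on $V(\lam4)$ occurs as the subgroup of a simply connected group of type $E_7$ corresponding to the decomposition $\mathfrak{L}=\mathfrak{sl}_8\oplus V(\lam4)$ of its adjoint module, and Guralnick and Lawther exhibit an explicit $v\in\mathfrak{L}$ that is a regular-orbit representative for this action. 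Applying their argument to the realisation of $V(\lam4)$ over $\F_q$ shows that $G$ has a regular orbit whenever $G$ is quasisimple; and if $l=7$ with $G$ not quasisimple, then $b(G)\leq 2$ follows from Proposition~\ref{k=1_leftovers}.
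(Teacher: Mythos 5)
Your treatment of the critical case $l=7$ coincides exactly with the paper's: the same appeal to \cite[Propositions 3.1.1, 3.1.4]{bigpaper} via the embedding of $A_7$ on $V(\lambda_4)$ inside the adjoint module of $E_7$, the same explicit regular-orbit vector for the quasisimple case, and the same reduction to Proposition \ref{k=1_leftovers} otherwise; your observation that $i_2(\mathrm{Aut}(\mathrm{PSL}_8(q)))\cdot q^{\dim C_V(\tau)}$ already reaches $q^{70}=|V|$ correctly identifies why no counting argument can work there. For $l\geq 8$, however, you take a genuinely different and much heavier route than the paper, which simply quotes the $A_1$, $A_1^2$ and $A_2$ $\Psi$-net bounds from \cite[Proposition 2.6.4]{bigpaper} (codimensions $\binom{l-1}{3}$, $2(\binom{l-2}{3}+l-3)$ and $2\binom{l-1}{3}$), multiplies by the full order $2|\mathrm{PGL}_n(q)|$, and finds this closes for $l\geq 9$; only $l=8$ needs one extra $A_3$-net giving codimension $85$. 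No eigenvalue-multiplicity analysis of semisimple classes is needed because $\binom{l+1}{4}$ dwarfs $\dim\mathrm{PGL}_n$. Two cautions about your alternative. First, the generic bound $\dim V_t(s)\leq\frac{1}{24}n^2\sum a_i^2$ is weaker than you suggest at the bottom of the range: for $n=9$ it already fails to beat $\log_q|s^{\mathrm{PGL}_n(q)}|+(n-1)$ for multiplicity patterns such as $(2,2,2,2,1)$, which lie inside your ``generic'' regime $a_1\leq n-7$, so your fallback of direct verification per class would have to carry $n=9$ and $10$ essentially in full. Second, the exceptional classes with $a_1\geq n-6$ are not ``boundedly many'': their number grows polynomially in $q$, and each of the (many) multiplicity patterns with largest part at least $n-6$ needs its own eigenspace computation and element count, a substantially larger case analysis than the three patterns $a_1\in\{n-1,n-2,n-3\}$ handled in Proposition \ref{l3}. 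Neither point is fatal — the direct checks all succeed — but the paper's route reaches the same conclusion with a fraction of the work.
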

\begin{proof}
The Weyl group has one orbit on the weights of the module, and the Weyl orbit table is given in Table \ref{L4_weyl}. The analysis of $\Psi$-nets for $\Psi$ of type $A_1$, $A_1^2$ and $A_2$ is conducted in \cite[Proposition 2.6.4]{bigpaper}. Set $c=2(\binom{l-2}{3}+l-3)$. The values of $c(s)$ and $c(u_\Psi)$ computed in  \cite[Proposition 2.6.4]{bigpaper} are given in Table \ref{l4tab}.

\begin{table}[!htbp]
\begin{minipage}{0.3 \textwidth}
\centering
\begin{tabular}{cccc}
\toprule
$i$ & $\mu$ & $|W.\mu |$ & Multiplicity \\ 
\midrule 
1 & $ \lambda_4$ & $\binom{l+1}{4}$ & 1 \\
\bottomrule 
\end{tabular} 
\caption{The Weyl orbit table of $V(\lam4)$. \label{L4_weyl}}
\end{minipage}%
%
\begin{minipage}{0.65 \textwidth}
\centering
\begin{tabular}{c|cc|cc}
\toprule
$\Psi$ & $c(s)$ & $r\geq$ & $c(u_\Psi)$ & $p\geq$\\
\midrule 
$A_1$ & $\binom{l-1}{3}$ &2& $\binom{l-1}{3}$&2 \\ 
$A_1^2$ & $2(\binom{l-2}{3}+l-3)$ & 2&$2(\binom{l-2}{3}+l-3)$ & 2 \\ 
$A_2$ &  $2\binom{l-1}{3}$&3 &$2\binom{l-1}{3}$&3\\
\bottomrule 
\end{tabular} 
\caption{Some values of $c(s)$ and $c(u_\Psi)$ for $V(\lam4)$. \label{l4tab}}
\end{minipage}
\end{table}

If $G$ has no regular orbit on $V$, then
\begin{align*}
q^{\binom{l+1}{4}} \leq & 2|\pgl _n(q)| q^{d-2 \binom{l-1}{3}} + 2(\atwoeven)q^{d-c} +2(q^{2n-1}+q^{2n-1})q^{d-\binom{n-2}{3}}+  \itwo q^{d-c} \\
&+ \field.
\end{align*}
This gives a contradiction for $l=9$ and $q\geq 2$.
Now let $l=8$, and $\Psi = \langle \alpha_1,\alpha_2,\alpha_3 \rangle$. The $\Psi$-net table is given in Table \ref{l4a3}.

\begin{table}[!htbp]
\begin{tabular}{ccccc}
\toprule
 &  &  & $c(s)$ & $c(u_\Psi)$ \\ 
\midrule 
$\nu$ & $n_1$ & Mult & $r\geq 2$ & $p\geq 2$ \\ 
\midrule 
$\om1$ & 4 & 10 & 30 & 30 \\ 
$\om2$ & 6 & 10 & 40 & 40 \\ 
$\om3$ & 4 & 5 & 15 & 15 \\ 
0 & 1 & 6 & 0 & 0 \\ 
\midrule 
\textbf{Total} &  &  & 85 & 85 \\ 
\bottomrule 
\end{tabular} 
\caption{$A_3$-net table for $V(\lambda_4)$ and $l=8$.\label{l4a3}}
\end{table}

So if $l=8$ and $G$ has no regular orbit on $V$ then
\begin{align*}
q^{126} \leq & 2|\pgl _9(q)| q^{126-85} +2( \athree) q^{d-70} + 2(\atwoodd )q^{d-50} \\
&+ 2(q^{2n-1}+q^{2n-1})q^{d-35} + \itwo q^{d-50}+ \field
\end{align*}
This gives a contradiction for $l=8$ and all $q \geq 2$.

The case where $l=7$ is treated in \cite[Propositions 3.1.1, 3.1.4]{bigpaper}, where the authors construct a connected simple algebraic group $\hat{G}$ of type $A_7$ acting on $V(\lambda_4)$ as a subgroup of a simply connected group of type $E_7$ acting on its adjoint module $\mathfrak{L}$. They give an explicit vector $v+Z(\mathfrak{L}) \in \mathfrak{L}/Z(\mathfrak{L})$ and show that it is a representative of a regular orbit under the action of $\hat{G}$. We can apply the same argument to the realisation over $\mathbb{F}_q$, and find that if $G$ is quasisimple, then there is a regular orbit of $G$ on $V$. If instead $G$ is not quasisimple, then we show $b(G)\leq 2$ in Proposition \ref{k=1_leftovers}.

\end{proof}

\subsection{$\lambda = \lambda_1 + \lambda_2$}

\begin{proposition}
\label{l1+l2}
Theorem \ref{prestmain} holds for $G$ with $E(G)/Z(E(G)) \cong \mathrm{PSL}_{l+1}(q)$ acting on $V=V(\lam1+\lam2)$ for $l \in [2, \infty)$. Namely, $G$ has a regular orbit on $V$ unless $l=3$, $q=3^e$ , where $b(G)=2$.
\end{proposition}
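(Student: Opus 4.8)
The plan is to run the standard argument of Section~\ref{prest}: assuming $G$ has no regular orbit on $V$, bound $\dim C_V(x)$ for the projective prime order elements $x$ of $G$ using $\Psi$-net tables, and contradict Proposition~\ref{tools}. First, when $l=2$ we have $\lambda_1+\lambda_2=\lambda_1+\lambda_l$, so $V$ is the adjoint module of $\mathrm{PSL}_3(q)$ and the statement is contained in Proposition~\ref{adjoint}; hence from now on $l\ge 3$. In this range $d=\dim V=\tfrac{1}{3} l(l+1)(l+2)$, apart from a short list of characteristic drops recorded in \cite{MR1901354} ($l\le 17$) and \cite{alvaro} ($l\ge 18$), the only one that will matter being the drop to $d=16$ when $l=3$ and $q=3^e$. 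Since the diagram automorphism sends $\lambda_1+\lambda_2$ to $\lambda_l+\lambda_{l-1}\ne\lambda_1+\lambda_2$ for $l\ge 3$, the module $V$ is not preserved by any graph or graph-field automorphism, so $G/F(G)\le\pgaml_{l+1}(q)$ and we need only bound fixed point spaces of inner-diagonal semisimple and unipotent elements together with field automorphisms (the last via Proposition~\ref{field}).

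Next I would record the Weyl orbit table: the weights of $V$ fall into the $W$-orbit of $\lambda_1+\lambda_2$ (size $l(l+1)$, multiplicity $1$) and that of $\lambda_3$ (size $\binom{l+1}{3}$, multiplicity $2$, the latter dropping by $1$, and hence $d$ dropping to $16$, when $l=3$, $q=3^e$), and then compute the weight string table with respect to $\alpha_1$. This gives only $c(s)\approx l^2$, which is too weak, since $\log_q|\pgl_{l+1}(q)|=l^2+2l$; so I would pass to the $\Psi$-net table for $\Psi$ of type $A_2$, with entries written as functions of $l$ (obtained in GAP~\cite{GAP4} for small $l$ and confirmed from the weight combinatorics in general), which, exactly as in the worked example of Section~\ref{techniques}, yields $c(s),c(u_\Psi)\approx 3l^2-O(l)$. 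For $l=3$ I would additionally compute the $\Psi$-net tables for $\Psi$ of type $A_1^2$ and $A_3=\Phi$.

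With these bounds, Proposition~\ref{tools}\ref{qsgood}) applied together with Propositions~\ref{sscounts} and \ref{unipcounts} (governing the semisimple and unipotent classes outside the reach of the $A_2$-net table, for which one falls back on the $A_1^2$- or $A_1$-net bounds), Proposition~\ref{invols} (involutions and order-$3$ elements) and Proposition~\ref{field} (field automorphisms) will give an inequality of the usual shape,
\[
q^d\le 2|\pgl_{l+1}(q)|\,q^{d-c_2}+2\bigl(q^{\binom{l+2}{2}-1}+q^{\binom{l+2}{2}-2}\bigr)q^{d-c_1}+\bigl(N_s(A_2)+N_u(A_2)\bigr)q^{d-c_1}+\field ,
\]
where $c_2$ and $c_1$ are the minimal codimensions recorded in the $A_2$- and $A_1^2$-net tables. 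This gives a contradiction for all $l\ge 4$ and $q\ge 2$. For $l=3$ with $q\ne 3^e$ (so $d=20$) the $A_3$-net bound — supplemented, where needed, by the sharper count of prime order semisimple conjugacy classes of $\pgl_4(q)$ and an appeal to Proposition~\ref{tools}\ref{eigsp2}) exactly as in the $n=10$ sub-case of Proposition~\ref{l3}, with GAP~\cite{GAP4} covering the finitely many remaining small $q$ — again closes the argument, so $G$ has a regular orbit.

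The genuinely exceptional case is $l=3$, $q=3^e$, with $d=16$: here the dimension drop weakens every codimension bound by $\ep_3$-terms and Proposition~\ref{tools} no longer yields a contradiction, which is unavoidable since $G$ really has no regular orbit. To prove the latter I would invoke \cite{bigpaper}: in characteristic $3$ the generic stabiliser of $\overline{G}=\mathrm{SL}_4(\overline{\mathbb{F}}_3)$ on $V(\lambda_1+\lambda_2)$ is positive-dimensional, so for every $v\in V=V_{16}(q)$ the stabiliser $\overline{G}_v$ is a positive-dimensional algebraic subgroup defined over $\mathbb{F}_q$; its connected component therefore has a non-trivial $\mathbb{F}_q$-rational point, which lies in $E(G)\le G$ and fixes $v$. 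Hence $G$ has no regular orbit, so $b(G)\ge 2$, while the matching bound $b(G)\le 2$ is supplied by Proposition~\ref{k=1_leftovers}. The main obstacle is precisely this case: for $l\ge 4$, and for $l=3$ away from characteristic $3$, everything reduces to a (lengthy but routine) $\Psi$-net computation, whereas for $l=3$, $q=3^e$ the counting method is intrinsically inadequate and one must instead transfer the algebraic group result of \cite{bigpaper} to $\mathbb{F}_q$.
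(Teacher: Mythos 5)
Your treatment of $l\ge 4$, and of $l=3$ with $p\ne 3$, is essentially the paper's: weight string and $\Psi$-net tables (the paper imports the $A_1$- and $A_2$-net bounds from \cite[Proposition 2.6.6]{bigpaper} and computes an $A_1^2$-net table for $p\ne3$, and uses $A_1$, $A_1^2$, $A_3$ for $p=3$), followed by Proposition \ref{tools} with Propositions \ref{sscounts}, \ref{unipcounts}, \ref{invols} and \ref{field}, with sharper class counts and GAP mopping up small $(l,q)$. Your reduction of $l=2$ to the adjoint module via Proposition \ref{adjoint} is correct and in fact more careful than the paper, whose proof silently omits $l=2$ (note that the adjoint module has $b(G)=2$, so the ``Namely'' clause of the statement is only accurate for $l\ge3$; the module is nonetheless listed in Table \ref{rem1} as $\lambda_1+\lambda_{n-1}$, $n=3$). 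One small numerical slip: the $A_2$-nets give $c(s)\approx 2l^2-O(l)$, not $3l^2-O(l)$; this still beats $\log_q|\mathrm{PGL}_{l+1}(q)|=l^2+2l$ once $l\ge4$, which is exactly why $l=3$ needs separate treatment.

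The genuine gap is in your argument that there is no regular orbit when $l=3$, $q=3^e$. You assert that the generic stabiliser of $\mathrm{SL}_4(\overline{\F}_3)$ on the $16$-dimensional module is positive-dimensional and then descend via Lang's theorem. That assertion is not justified, and it is exactly what one would doubt on dimension grounds: $\dim V=16=\dim\overline{G}+1$, so the expected generic orbit has dimension $15$ and the expected generic stabiliser is finite. If the stabilisers are merely finite and non-trivial, your descent collapses entirely: a non-trivial finite $\sigma$-stable subgroup of $\overline{G}$ can have trivial group of $\sigma$-fixed points (a Frobenius endomorphism can permute its non-identity elements without fixed points), so the non-triviality of $\overline{G}_v$ gives no non-trivial element of $\mathrm{SL}_4(q)_v$. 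Even granting positive-dimensionality, you would still need to check that the $\F_q$-points of $(\overline{G}_v)^\circ$ supplied by Lang's theorem are non-central (an issue for $q=3$, where $|H^\circ_\sigma|$ could a priori be as small as $q-1=2=|Z(\mathrm{SL}_4(3))|$). The paper avoids all of this by citing Cohen and Wales \cite{MR1409976}, who determine the orbit structure of $\mathrm{GL}_4$ on this module in characteristic $3$ directly, yielding the finite-group statement; you need either that reference or an honest computation of the stabilisers. Your upper bound $b(G)\le 2$ via the field-extension argument agrees with the paper's use of Lemma \ref{fieldext}.
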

\begin{proof}
This module is of dimension $d = 2 \binom{l+2}{3}-\ep_{3} \binom{l+1}{3}$. The Weyl orbit table can be found in Table \ref{L1+L2_weyl}.

\begin{table}[!htbp]
\begin{tabular}{cccc}
\toprule
$i$ & $\mu$ & $|W.\mu |$ & Multiplicity \\ 
\midrule 
1 & $\lambda_1+\lambda_2$& $l(l+1)$ & 1 \\ 
2 &  $\lambda_3$&$\binom{l+1}{3}$& $2-\ep_{3}$ \\ 
\bottomrule 
\end{tabular} 
\caption{The Weyl orbit table of $V(\lam1+\lam2$. \label{L1+L2_weyl}}
\end{table}
First suppose $p\neq 3$. The $\Psi$-net tables for $\Psi$ of type $A_1$ and $A_2$ may be found in the proof of \cite[Proposition 2.6.6]{bigpaper}, and we summarise the bounds obtained there in Table \ref{l1l2tab}.

\begin{table}[!htbp]
\begin{tabular}{ccccc}
\toprule 
& \multicolumn{2}{c}{$c(s)$} & \multicolumn{2}{c}{$c(u_\Psi)$} \\ 
\midrule 
$\Psi$ & $r=2$ & $r\geq 3$ & $p=2$& $p=5$ \\ 
\midrule 
$A_1$ & $l^2$& $l^2$ & $l^2-l+1$ & $l^2$ \\ 
$A_2$ &--& $2l^2-2l+1$ & -- & $2l^2-2l+2$ \\ 
\bottomrule 
\end{tabular} 
\caption{Values of $c(s)$ and $c(u_\Psi)$ on $V(\lam1+\lam2)$ with $p \neq 3$ for different choices of $\Psi$.\label{l1l2tab}}
\end{table}

%
%
%
%
%

Now let $\Psi = \langle \alpha_1, \alpha_3 \rangle$ of type $A_1^2$. The corresponding $\Psi$-net table is given in Table \ref{a12l1l2}.
\begin{table}[!htbp]
\begin{tabular}{ccccm{2.5cm}m{2.5cm}m{2.5cm}}
\toprule
 &  &  & & $c(s)$ &  \multicolumn{2}{c}{$c(u_\Psi)$}  \\ 
 \midrule
$\nu$ & $n_1$ & $n_2$ & Mult &  $r\geq 2$ & $p=2$ & $p\geq 5$ \\ 
\midrule
0 & 1 & 0 & $2\binom{l-3}{2}$  & 0 & 0 & 0 \\ 
$\om1$ & 2 & 0 &  $l-2$ & $l-2$ & $l-2$ & $l-2$ \\ 
$\om3$ & 2 & 0 &  $l-2$ & $l-2$ & $l-2$ & $l-2$ \\ 
$2\om1+\om3$ & 4 & 2 & 1  & $4$ & 4 & $4$ \\ 
$2\om1$ & 2 & 1 & $l-3$ &  $2(l-3)$ & $l-3$ & $2(l-3)$ \\ 
$2\om3$ & 2 & 1 & $l-3$ &  $2(l-3)$ & $l-3$ & $2(l-3)$ \\ 
$\om1+2\om3$ & 4 & 2 & 1 & $4$ & 4 & $4$ \\ 
0 & 0 & 1 & $\binom{l-3}{3}$ & 0 & 0 & 0 \\ 
$\om1$ & 0 & 2 & $\binom{l-3}{2}$ & $2\binom{l-3}{2}$ & $2\binom{l-3}{2}$ & $2\binom{l-3}{2}$ \\ 
$\om3$ & 0 & 2 & $\binom{l-3}{2}$ &  $2\binom{l-3}{2}$ & $2\binom{l-3}{2}$ & $2\binom{l-3}{2}$ \\ 
$\om1+\om3$ & 0 & 4 & $l-3$ &  $4(l-3)$ & $4(l-3)$ & $4(l-3)$ \\ 
\midrule
Total &  &  &  & $2(l^2-2l+2)$ & $2(l^2-3l+5)$ & $2(l^2-2l+2)$ \\ 
\bottomrule 
\end{tabular} 
\caption{$A_1^2$-net table for $V(\lambda_1 + \lambda_2)$ with $p\neq 3$. \label{a12l1l2}}
\end{table}

If $G$ has no regular orbit on $V$ then 
\begin{align*}
q^d \leq &2| \pgl _n(q)| q^{d-(2l^2-2l+1)} + 2(\atwoeven)q^{d-2(l^2-2l+2)}+4q^{2n-1}q^{d-l^2}+  4\left(\frac{q^{n^2/2+2}-1}{q^2-1}\right)q^{d-2(l^2-2l+2)}\\
&\itwo q^{d-l^2}+ \field.
\end{align*}
This gives a contradiction for $l\geq 5$ and $q\geq 2$, as well as $l=4$ with $q\geq 4$. When $(l,q)=(4,2)$, we replace $|\pgl_5(2)|$ with the number of elements of prime order in $\pgl_5(2)$ in the inequality above, and $\itwo$ with twice the number of involutions in 
$\pgl_5(2)$. This gives a contradiction and the result follows.
Now suppose that $l=3$ and $p\neq 3$. 
We compute using \cite[Table B.3]{bg} that the number of semisimple elements of prime order in $\pgl_4(q)$ with centralisers of type $A_2$, $A_1^2$ and $A_1$ is less than $q^7$, $\tfrac{3}{2}q^9$ and $\tfrac{1}{3}q^{12}$ respectively. Using GAP, we also compute that a regular semisimple element has all eigenspaces of dimension at most 4.
Therefore, if $G$ has no regular orbit on $V$, then by Propositions \ref{tools}\ref{qsgood}, \ref{invols} and \ref{unipcounts},
\begin{align*}
q^{20} \leq & 2|\pgl_4(q)|q^{20-16} + \frac{4}{3}q^{12} q^{20-13}+ 3 q^{9}q^{20-10}+ 2q^7q^{20-9} + 4(q^9+q^8)q^{20-10} + 4 \left(\frac{q^{10}-1}{q^2-1}\right) q^{20-10}\\
&+ q^{12}q^{20-14} + 2\log(\log_2 q+2)q^{8+10}+ \frac{(q^4-1)(q^3-1)}{q-1} q^{20-7}.
\end{align*}
This gives a contradiction for $q\geq 13$, so $G$ has a regular orbit on $V$. For the remaining $q$, an explicit construction of $G\leq \Gamma \mathrm{L}(V)$ in GAP \cite{GAP4} shows that $G$ has a regular orbit on $V$ here as well.

Now, for the remainder of the proof, suppose that $p=3$.
Here $d=2\binom{l+2}{3} - \binom{l+1}{3}$. The $\Psi$-net tables for $\Psi$ of type $A_1$, $A_1^2$ and $A_3$ are given in the proof of \cite[Proposition 2.6.6]{bigpaper} and we summarise the resulting lower bounds on $c(s)$ and $c(u_\Psi)$ in Table \ref{l1+l2p=3}.
\begin{table}[!htbp]
\begin{tabular}{cccc}
\toprule 
& \multicolumn{2}{c}{$c(s)$} & $c(u_\Psi)$ \\ 
\midrule 
$\Psi$ & $r=2$ & $r\geq 5$ & $p=3$ \\ 
\midrule 
$A_1$ & $\frac{1}{2} l(l+1)$ & $\frac{1}{2}(l^2+3l-2)$ & $\frac{1}{2}(l^2+3l-2)$ \\ 
$A_1^2$ & $l^2-l+2$ & $l^2+l-2$ & $l^2+l-2$ \\ 
$A_3$ & - & $\frac{1}{2}(3l^2+l-6)$ & - \\ 
\bottomrule 
\end{tabular} 
\caption{Values of $c(s)$ and $c(u_\Psi)$ on $V(\lam1+\lam2)$ for different choices of $\Psi$.\label{l1+l2p=3}}
\end{table}
If $G$ has no regular orbit on $V$,
\begin{align*}
q^d \leq & (2|\mathrm{PGL}_n(q)|)q^{d-\frac{1}{2}(3l^2+l-6)}+ 2(q^{2n-1}+q^{2n-1})q^{d-\frac{1}{2}l(l+1)}+ 4(q^{\binom{n+1}{2}-1}+q^{\binom{n+1}{2}-2})q^{d-(l^2-l+2)} \\
&+ 4(q^{\frac{1}{3} n(2n+1)-1}+q^{\frac{1}{3} n(2n+1)-2})q^{d-(l^2+l-2)} +2(\athree)q^{d-(l^2+l-2)}\\
&+ 2\log(\log_2q+2)q^{n^2/2+d/2}
\end{align*}
This gives a contradiction for $l\geq 6$ and $q\geq 3$, and also for $l=5$ when $q>9$, so $G$ has a regular orbit here. When $l=5$ and $q=3,9$, we explicitly compute the number of prime order elements of $\pgl _6(q)$, as well as the number of unipotent elements of prime order. These, along with the co-dimension bounds for $\Psi = A_1$, $A_1^2$ and $A_3$ show that $G$ has a regular orbit on $V$ by an application of Proposition \ref{tools}.
When $l=4$, we use \cite[Tables B.2, B.3]{bg} to more precisely count elements of prime order in $\pgl_5(q)$, including those enumerated in Propositions \ref{sscounts} and \ref{unipcounts}. Substituting this information into the inequality gives the result for $l=4$ and $p=3$.
Finally, if $l=3$, then there is no regular orbit of $G$ on $V$ by \cite{MR1409976}, so $b(G) \geq 2$.
Now, if $G$ has no base of size 2 in its action on $V$, then by Lemma \ref{fieldext},
\begin{align*}
q^{2\times 16} \leq & 2|\pgl _4(q)| q^{2(d-(l^2+l-2))} + 2(q^{2n-1}+q^{2n-1})q^{2(d-\frac{1}{2}l(l+1))}+ \itwo q^{2(d-(l^2-l+2))}\\
&+ 2\log(\log_2q+2)q^{n^2/2+d/2}
\end{align*}
This gives a contradiction for $q\geq 3$, so $b(G)=2$.
\end{proof}

 This concludes our analysis of highest weight modules $V=V(\lambda)$ over $\mathbb{F}_q$ with $p$-restricted highest weight $\lambda$.
\section{Proof of Theorem \ref{main}, III: Tensor product modules}
\label{tensor}
In this section, we aim to prove the following result which will complete the proof of Theorem \ref{main} for $G,V$ having the same underlying field $\mathbb{F}_q$.
\begin{theorem}
\label{nonprestmain}
Let $V=V(\iota_1)\otimes V(\iota_2)^{(p^a)}$ be a highest weight module defined over $\mathbb{F}_q$, $q=p^e$, with $\iota_1$, $\iota_2$ in Table \ref{comptensor} and $1\leq a < e$. Let $G \leq \Gamma \mathrm{L}(V)$ be almost quasisimple with $E(G)/Z(E(G)) \cong \mathrm{PSL}_{l+1}(q)$ such that the restriction of $V$ to $E(G)$ is absolutely irreducible. Then either $G$ has a regular orbit on $V$, or $\iota_1=\lam1$ and $\iota_2= \lam1$ or $\lam l$ up to quasiequivalence. In the latter case, $b(G)=2$ unless $l=1$, $(a,e)=1$, $p=2,3$ and $G=\mathrm{SL}_2(q)$, in which case there is a regular orbit.
\end{theorem}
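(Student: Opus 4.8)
The plan is to split Theorem~\ref{nonprestmain} into the two regimes visible in Table~\ref{comptensor}: the ``generic'' tensor products of dimension at least $n^3$-ish, where the eigenspace-codimension machinery of Section~\ref{techniques} together with Proposition~\ref{tensorcodim} suffices, and the small list of exceptional factorisations $V(\iota_1)\otimes V(\iota_2)^{(p^a)}$ with $\{\iota_1,\iota_2\}$ from Table~\ref{comptensor} for which a separate, more delicate argument is needed. First I would reduce to Table~\ref{comptensor} exactly as Proposition~\ref{complete} does: by Proposition~\ref{n^3} we may assume $d<n^3$, and then the non-$p$-restricted irreducibles of this dimension are the ones listed. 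For each pair $(\iota_1,\iota_2)$ in the table I would, as in the worked example, compute the Weyl orbits, the weight strings and (where needed) $\Psi$-nets for $\Psi$ of type $A_1$, $A_1^2$, $A_2$, $A_3$, obtaining lower bounds $c(s)$, $c(u_\Psi)$. The key extra tool here is Proposition~\ref{tensorcodim}: since $\overline V = \overline V_1\otimes\overline V_2^{(p^a)}$, any element $g$ acting linearly has $\emax^V(g)\le \min\{d_2\emax^{\overline V_1}(g_1),\,d_1\emax^{\overline V_2}(g_2)\}$, which for the larger tensor factorisations already forces $\dim C_V(g)$ well below $d$. Combining this with Propositions~\ref{field}, \ref{graph}, \ref{graphfix}, \ref{sscounts}, \ref{unipcounts}, \ref{invols} and feeding the totals into Proposition~\ref{tools}\ref{qsgood}) (or \ref{crude})) should give $|V|>\sum |x^G||C_V(x)|$ for every pair except $\iota_1=\iota_2=\lam1$ and $\iota_1=\lam1,\iota_2=\lam l$, establishing a regular orbit in all other cases; the cases $(\lam1,\lam2)$, $(\lam1,\lam{l-1})$, $(\lam1,2\lam1)$, $(\lam1,2\lam l)$, $(\lam1,\lam1+\lam l)$, $(\lam1,\lam3)$, $(\lam1,\lam{l-2})$, $(\lam2,\lam2)$, $(\lam2,\lam{l-1})$, $(\lam2,2\lam1)$ each get a one-line inequality of the shape used throughout Section~\ref{prest}.

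For the two surviving cases $V=W\otimes W^{(p^a)}$ (dimension $n^2$) and $V=W\otimes (W^*)^{(p^a)}$ (dimension $n^2$) with $W$ the natural module, I would invoke Proposition~\ref{tensornatural} (referenced in Remark~(viii) of the introduction): these modules carry the structure of $n\times n$ matrices over $\mathbb F_q$ with $\mathrm{SL}_n(q)$ acting by $A\mapsto gAg^{\sigma^a}$ (respectively $A\mapsto gA(g^{\sigma^a})^{-T}$), where $\sigma$ is the Frobenius $x\mapsto x^p$. The orbits of $\mathrm{SL}_n(q)$ on such ``twisted conjugation'' modules are again controlled by a rank-type invariant together with a determinant-type invariant, and since the number of matrices of each fixed rank $k<n$ is smaller than $|\mathrm{SL}_n(q)|$, and there is no single $\mathrm{SL}_n(q)$-orbit of full size on the invertible matrices (the argument of Proposition~\ref{adjoint} via \cite[Theorem~6.4]{MR2888238} adapts, using that $\mathrm{GL}_n(q)$ has no conjugacy class, twisted or not, of size $|\mathrm{SL}_n(q)|$ when $(a,e)>1$ or $n\ge 3$), there is no regular orbit; hence $b(G)\ge 2$. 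Conversely $b(G)\le 2$ is obtained, as in Proposition~\ref{adjoint}, by choosing a first vector whose stabiliser $C_G(A)$ is a small torus-like group (e.g.\ $A$ a Singer-type element so $|C_G(A)|<q^n$ up to the twist) and then running Proposition~\ref{tools}\ref{crude}) for $C_G(A)$ against the weight-string data of $W\otimes W^{(p^a)}$ to show $C_G(A)$ has a regular orbit; the inequality $q^{n^2}\le |C_G(A)|\,q^{n^2-c}$ with $c$ the relevant codimension bound will close for all $l\ge 2$ and all $q$.

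The genuinely exceptional subcase is $l=1$: then $V=W\otimes W^{(p^a)}$ is $4$-dimensional and $\mathrm{SL}_2(q)$ acts via $A\mapsto gAg^{\sigma^a}$ on $2\times 2$ matrices. When $(a,e)>1$ the representation is not quasiequivalent to a faithful one of this twisted form in the same way and the rank argument still denies a regular orbit; when $(a,e)=1$ one has the striking phenomenon flagged in the introductory remark: although $\mathrm{SL}_2(\overline{\mathbb F}_p)$ has no regular orbit on $V((p^a+1)\lam1)$ by \cite[Prop.~3.1.8]{bigpaper}, the finite group $\mathrm{SL}_2(q)$ does when $p=2$ or $3$. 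Here I would argue directly: exhibit an explicit matrix $A\in M_2(\mathbb F_q)$ (invertible, of suitable ``twisted determinant'') and show $g A g^{\sigma^a}=A$ forces $g=1$ by a short computation in $\mathrm{SL}_2(q)$ that exploits $p\in\{2,3\}$ (the characteristic-$2$ and $3$ cases each reducing to a separable polynomial identity with no nontrivial root in $\mathbb F_q$); for $p\ge 5$ or for non-quasisimple $G$ the same computation produces a nontrivial stabiliser, so only $b(G)\le 2$ holds. The main obstacle I anticipate is precisely this $l=1$, $(a,e)=1$ analysis: unlike the bulk cases it is not handled by counting fixed-point spaces at all (the counting inequality in Proposition~\ref{tools} genuinely fails to decide it because $|V|=q^4$ is too small relative to $|G|$ and the field-automorphism contributions), so one must produce and verify an explicit regular vector, and separate the characteristics $p=2,3$ (regular orbit) from $p\ge5$ (only $b(G)\le 2$), keeping careful track of which subgroups $G$ of $\mathrm{P}\Gamma\mathrm{L}_2(q)$ containing $\mathrm{PSL}_2(q)$ the claim is being made for. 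Everything else is bookkeeping of the kind already illustrated in the worked example and Section~\ref{prest}.
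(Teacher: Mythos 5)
Your overall architecture — reduce via Propositions \ref{n^3} and \ref{complete} to Table \ref{comptensor}, dispatch the generic tensor products with Proposition \ref{tensorcodim} plus the counting machinery, and isolate $V(\lam1)\otimes V(\lam1)^{(p^a)}$ and $V(\lam1)\otimes V(\lam l)^{(p^a)}$ as the only non-regular cases — matches the paper. The gaps are all in your treatment of that exceptional case, and they are substantive. First, your claim that the number of matrices of each rank $k<n$ is smaller than $|\mathrm{SL}_n(q)|$ fails for $k=n-1$: that stratum has roughly $q^{n^2-1}$ elements, comparable to $|\mathrm{SL}_n(q)|$, so a regular orbit of rank-$(n-1)$ matrices is not excluded by counting alone. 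The paper has to exhibit an explicit $|\mathrm{SL}_n(q)|$-sized union $S^G$ of non-regular orbits inside the rank-$(n-1)$ stratum and count what is left over. Second, for the invertible matrices your appeal to \cite[Theorem 6.4]{MR2888238} does not adapt: the orbits here are \emph{twisted} conjugation orbits $A\mapsto g^{-1}Ag^{(p^a)}$, and their point stabilisers are not ordinary centralisers in $\mathrm{GL}_n(q)$ but, via Shintani descent, centralisers $C_{\mathrm{SL}_n(p^b)}(h)$ computed in the \emph{subfield} group with $b=\gcd(a,e)$. Knowing that $\mathrm{GL}_n(q)$ has no ordinary class of size $|\mathrm{SL}_n(q)|$ says nothing about these twisted classes; the actual argument is that every element of $\mathrm{GL}_n(p^b)$ is centralised by a non-scalar of $\mathrm{SL}_n(p^b)$, so every twisted stabiliser is non-central. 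This also explains why your restriction to ``$(a,e)>1$ or $n\ge 3$'' is misplaced: there is never a regular orbit on the invertible matrices, for any $n$ and any $a$.

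Third, and most concretely, your plan for $l=1$, $(a,e)=1$, $p\in\{2,3\}$ is to exhibit an \emph{invertible} $A$ with trivial twisted stabiliser. No such $A$ exists: by the Shintani argument just described, the stabiliser of any invertible $2\times2$ matrix contains a non-scalar element of $\mathrm{SL}_2(p)$. The regular orbit in this case consists of \emph{rank-one} matrices — the paper's representative is $\mathrm{Diag}(1,0)$ — and the point is that the stabiliser of such a matrix contains $\mathrm{Diag}(\nu,\nu^{-1})$ only for $\nu\in\F_{p^b}^{\times}$, which collapses into the kernel of the action precisely when $b=1$ and $p\in\{2,3\}$. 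So the dichotomy between $p\in\{2,3\}$ and $p\ge5$ is decided on the singular locus, not the invertible one. Finally, your route to $b(G)\le 2$ via a Singer-type first vector would need its stabiliser computed for the twisted action (again a subfield centraliser, not $C_G(A)$); the paper instead gets $b(G)\le 2$ by establishing a regular orbit over $\F_{q^k}$, $k\ge2$ (Proposition \ref{exttensor}), and descending via Lemma \ref{fieldext} — a cleaner mechanism you may want to adopt.
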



We begin by examining the modules with $\iota_1=\lambda_2$.
\subsection{Tensor product modules with $V(\lambda_2)$ as a factor}
\label{tensorl2}
In this subsection, we investigate the modules in Table \ref{comptensor} with $\iota_1=\lambda_2$. We assume that $\iota_2 \neq \lam1$ or $\lam l$ here; we treat tensor product modules with a factor quasiequivalent to $\lam1$ or $\lam l$ in Section \ref{tensorl1}. We also suppose in this section that $l>2$, because if $l=2$, then $V(\lambda_2) = V(\lam1)^*$ and $V(\lambda)$ is quasiequivalent to a module with $\iota_1=\lam1$. We begin by giving the Weyl orbit table and some $\Psi$-net tables for $V(\lambda_2)$ in Table \ref{L2_tabs}.

\begin{table}[H]
\begin{minipage}{0.45\textwidth}
\centering
\begin{tabular}{cccc}
\toprule 
$i$ & $\mu$ & $|W.\mu |$ & Mult \\ 
\midrule 
1 & $\lambda_2$& $\binom{l+1}{2}$ & 1 \\ 
\bottomrule 
\end{tabular}
\caption{Weyl orbit table.}
\end{minipage}
\quad
\begin{minipage}{0.5\textwidth}
\centering
\begin{tabular}{ccccc}
\toprule 
&  & & $c(s)$ & $c(u_\Psi)$ \\ 
\midrule 
$\nu$ & $n_1$ & Mult & $r\geq 2$ & $p\geq 2$ \\ 
\midrule 
0 & 1 & $\binom{l+1}{2}-2(l-1)$ & 0 & 0 \\ 
$\lam1$ & 2 & $l-1$ & $l-1$ & $l-1$ \\ 
\midrule 
\textbf{Total} &  &  & $l-1$ & $l-1$ \\ 
\bottomrule 
\end{tabular} 
\caption{\textbf{$\Psi = \langle \alpha_1 \rangle$ }}
\end{minipage}\\
\vspace{1cm}
\begin{minipage}{0.45\textwidth}
\centering
\begin{tabular}{ccccc}
\toprule 
&  & & $c(s)$ & $c(u_\Psi)$ \\ 
\midrule 
$\nu$ & $n_1$ & Mult & $r\geq 2$ & $p\geq 2$ \\ 
\midrule 
0 & 1 & $\binom{l-2}{2}$ & 0 & 0 \\ 
$\om1$ & 3 & $l-2$ & $2(l-2)$ & $2(l-2)$ \\ 
$\om2$ & 3 & 1 & $2$ & $2$ \\ 
\midrule 
\textbf{Total} &  &  & $2l-2$ & $2l-2$ \\ 
\bottomrule 
\end{tabular} 
\caption{$\Psi = \langle \alpha_1,\alpha_2 \rangle$.}
\end{minipage}
\begin{minipage}{0.53\textwidth}
\centering
\begin{tabular}{ccccc}
\toprule 
&  & & $c(s)$ & $c(u_\Psi)$ \\ 
\midrule 
$\nu$ & $n_1$ & Mult & $r\geq 2$ & $p\geq 2$ \\ 
\midrule 
0 & 1 & $\binom{l+1}{2}-4(l-2)$ & 0 & 0 \\ 
$\om1$ & 2 & $l-3$ & $l-3$ & $l-3$ \\ 
$\om3$ & 2 & $l-3$ & $l-3$ & $l-3$ \\ 
$\om1+\om3$ &4&1&2&2 \\
\midrule 
\textbf{Total} &  &  & $2l-4$ & $2l-4$ \\ 
\bottomrule 
\end{tabular} 
\caption{$\Psi = \langle \alpha_1,\alpha_3 \rangle$.}
\end{minipage}
\caption{The Weyl orbit table and some $\Psi$-net tables for $V(\lam2)$.\label{L2_tabs}}
\end{table}

We are now ready to prove the following result.
\begin{proposition}
		\label{L2_tens_l=3}
Let $G$ be as in Theorem \ref{nonprestmain}, with $l \geq 3$ and $V = V(\lambda_2) \otimes V(\iota_2)^{(p^a)}$ with $\iota_2$ appearing in Table \ref{comptensor}.Then $G$ has a regular orbit on $V$. 
\end{proposition}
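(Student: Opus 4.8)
The proof is a finite check: by Table~\ref{comptensor}, the hypotheses $\iota_1=\lambda_2$, $l\geq 3$ and $\iota_2\notin\{\lambda_1,\lambda_l\}$ leave only the pairs $(l,\iota_2)\in\{(3,\lambda_2),(4,\lambda_2),(4,\lambda_{l-1}),(5,\lambda_{l-1}),(3,2\lambda_1)\}$. Since $\lambda_2$ and $\iota_2$ are $p$-restricted and $1\leq a<e$, Theorem~\ref{steintens} gives $V=V(\lambda_2)\otimes V(\iota_2)^{(p^a)}$ with $d=\binom{l+1}{2}\dim V(\iota_2)$ and no $\ep$-correction; explicitly $d\in\{36,100,100,225,60\}$, and in every case $d$ exceeds $n^2=(l+1)^2$ by a wide margin, which is exactly what makes the field-automorphism contribution of Proposition~\ref{field} (of size roughly $q^{n^2/2+d/2}$) harmless.

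The engine of the argument is to transport eigenspace bounds from the small factor $V(\lambda_2)$ to $V$. If $g\in G$ has projective prime order, acts linearly on $V$ and induces an inner-diagonal automorphism of $E(G)/Z(E(G))$, then, choosing $\nu\in\overf_q$ with $\nu g\in\overline{G}=\mathrm{SL}_n(\overf_p)$, we may write $\nu g=g_1\otimes g_2$ on $\overline{V}=\overline{V(\lambda_2)}\otimes\overline{V(\iota_2)^{(p^a)}}$; Proposition~\ref{tensorcodim} then yields $\emax^{V}(g)\leq(\dim V(\iota_2))\,\emax^{\overline{V(\lambda_2)}}(g_1)$, so $\codim_{\overline V}C_{\overline V}(g)\geq(\dim V(\iota_2))\cdot c$, where $c$ is a lower bound for the codimension of the largest eigenspace of $g_1$ on $V(\lambda_2)$. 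For semisimple $g_1$ with natural-module eigenvalue multiplicities $a_1\geq a_2\geq\cdots$, Proposition~\ref{eigsp_from_permsquare}(ii) gives $c\geq\binom{n}{2}-\tfrac12\sum a_i^2$, and by \cite[Table~B.3]{bg} there are fewer than $2^mq^{n^2-\sum a_i^2}$ such classes (with $m$ the number of distinct eigenvalues); a short finite sum over admissible multiplicity vectors for $n\leq 6$ bounds the semisimple contribution, in the style of Proposition~\ref{l3}. For unipotent $g_1$ I instead use the $\Psi$-net tables of Table~\ref{L2_tabs}: every non-trivial unipotent element has $c\geq l-1$ (from the $\langle\alpha_1\rangle$-table), and every unipotent whose Jordan type dominates $(3,1^{n-3})$ has $c\geq 2(l-1)$ (from the $\langle\alpha_1,\alpha_2\rangle$-table), the exceptions being the transvections, of which there are fewer than $q^{2n-1}$ by Proposition~\ref{unipcounts}.

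It then remains to assemble these bounds in Proposition~\ref{tools}\ref{qsgood}) (or \ref{eigsp2})), together with the field-automorphism term of Proposition~\ref{field} and, in those cases where $V$ is preserved by a graph or graph-field automorphism, a further term bounded by Propositions~\ref{graphfix} and~\ref{graph}. Inspecting the action of the Dynkin diagram automorphism on $\lambda_2+p^a\iota_2$ shows that this extra term is needed only for $(3,\lambda_2)$, where $V(\lambda_2)\otimes V(\lambda_2)^{(p^a)}$ is self-dual and graph-invariant, and for $(4,\lambda_{l-1})$ and $(5,\lambda_{l-1})$ when $e$ is even and $a=e/2$, where a graph-field automorphism preserves $V$; for $(4,\lambda_2)$ and $(3,2\lambda_1)$ the graph image of $V$ is $V^{*}\not\cong V$, so $G$ contains no graph automorphisms and no extra term arises. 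In each of the five cases the resulting inequality in $q$ then fails for all $q\geq 2$ (for $(3,2\lambda_1)$ one has $p\geq 3$, hence $q\geq 3$), apart from possibly a few small $q$, which are dispatched by constructing $V\leq\Gamma\mathrm{L}(V)$ explicitly in GAP~\cite{GAP4} exactly as in the proofs of Propositions~\ref{3l1} and~\ref{l1+l2}. The one genuinely delicate point is the semisimple elements with a large eigenvalue multiplicity --- precisely those counted by $N_s(A_2)$ in Table~\ref{ssboundtable} --- for which the $\Psi$-net estimate $c\geq 2(l-1)$ is too weak to outrun $|\pgl_n(q)|$; checking that the multiplicity-indexed count above rescues these, and correctly pinning down the graph-automorphism behaviour of $V$ in each case, is where the work lies.
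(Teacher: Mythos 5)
Your proposal is correct in substance and rests on the same engine as the paper's proof: Proposition \ref{tensorcodim} transports eigenspace codimension bounds from the factor $V(\lambda_2)$ to the tensor product, and one then sums over classes via Proposition \ref{tools} together with Propositions \ref{field}, \ref{graphfix} and \ref{graph}. The organisational differences are real but modest. The paper runs a single inequality uniform in $l$ and $d_2=\dim V(\iota_2)\geq\binom{l+1}{2}$, using only the $A_1$- and $A_1^2$-net bounds $l-1$ and $2l-4$ together with $N_s(A_1^2)=N_u(A_1^2)=q^{2n-1}$, and inserts a crude graph/graph-field term (via $\alpha(x)\leq n$) in every case; this kills everything except $(l,\iota_2)=(3,\lambda_2)$, which is then finished with an explicit computation of $\dim C_V(\tau)\leq 25$ and sharper class counts. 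You instead treat the five cases as a finite check, use the multiplicity-indexed semisimple bound of Proposition \ref{eigsp_from_permsquare}(ii) with the $2^mq^{n^2-\sum a_i^2}$ class count (finer than the paper needs here, but certainly sufficient), and determine precisely which of the five modules admit graph or graph-field automorphisms rather than always paying the graph term. Your observation that a graph-field automorphism preserves $V$ for $\iota_2=\lambda_{l-1}$ only when $a=e/2$ is correct and slightly sharper than what the paper uses. Note also that $q$ is automatically non-prime here (since $1\leq a<e$ forces $e\geq 2$), so the paper needs no GAP verification for this proposition; your hedge about small $q$ is harmless but probably unnecessary.

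One point in your unipotent analysis needs repair. You assert that the elements failing the bound $c\geq 2(l-1)$ from the $\langle\alpha_1,\alpha_2\rangle$-net table are exactly the transvections, counted by $q^{2n-1}$. That conflates the two subsystems: for $\Psi$ of type $A_2$ the exceptional classes are all those with partition $(2^j,1^{n-2j})$ (counted by $N_u(A_2)\approx 4q^{n^2/2+2}/q^2$ in Proposition \ref{unipcounts}), not just the transvections; for $\Psi$ of type $A_1^2$ the exceptions are indeed only the transvections, but the bound is then $c\geq 2l-4$, not $2l-2$. The classes $(2^j,1^{n-2j})$ with $j\geq 2$ therefore fall through your stated dichotomy. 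This does not break the argument — they are covered by the $A_1^2$-net bound $2l-4$ (which for $l=3$ coincides with $l-1$ anyway) and their count is small enough that the inequalities still fail — but as written the case division is not exhaustive and should be corrected, most easily by adopting exactly the paper's split: transvections with codimension $(l-1)d_2$, everything else with codimension $(2l-4)d_2$.
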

\begin{proof}
Let $d_2 = \dim V(\iota_2)$. From Table \ref{comptensor}, we see that $d_2 \geq \binom{l+1}{2}$.
If $G$ has no regular orbit on $V$, then from the $\Psi$-net tables for $V(\lam2)$ given above, as well as Propositions \ref{tensorcodim} and \ref{tools}\ref{alphabound}
\begin{align*}
q^{\binom{l+1}{2} d_2} \leq  &2|\pgl_n(q)|q^{d_2(\binom{l+1}{2}-(2l-4))}+\field + (q^{2n-1}+2 q^{2n-1})q^{d_2 ( \binom{l+1}{2}  - (l-1))}\\
& + (q^{(n^2+1)/2}+q^{(n^2+n)/2})q^{\lfloor \frac{l}{l+1} \binom{l+1}{2} d_2\rfloor}.
\end{align*}
This gives a contradiction for $d_2\geq \binom{l+1}{2}$, when $l\geq 4$ and $q\geq 2$ as required. The inequality is also false for $l=3$ and $d\geq 8$, leaving $\iota_2=\lambda_2$ to consider. So suppose $\iota_2=\lambda_2$.
Here we must take graph automorphisms $\tau$ into consideration, since they preserve the module. We compute that $\dim C_V(\tau) \leq 25$.
Therefore, if $G$ has no regular orbit on $V$, then by Propositions \ref{tensorcodim}, \ref{sscounts}, \ref{unipcounts} and \ref{graph}
\[
q^{36} \leq 2|\mathrm{PGL}_4(q)|q^{36-4\times 6}+ 8q^{10}q^{36-12}+ q^{12}q^{36-24}+4\left(\frac{q^{10}-1}{q^2-1}\right)q^{36-12} + 2\log(\log_2q+2)q^{26}+ (2q^{15/2}+2q^9)q^{25}.
\]
This gives a contradiction for all non-prime $q$, so $G$ has a regular orbit on $V$.
\end{proof}

\subsection{Tensor product modules with $V(\lambda_1)$ as a factor}
\label{tensorl1}
We begin with some discussion about the Weyl orbits and $\Psi$-nets of  $V(\lambda_1)$.

Let $V = V(\lambda_1)$. The weights of $V$ are of the form $\{\lam1 - \alpha_i \mid \alpha_i \in \Delta \} \cup \{\lam1 \}$, where $\Delta$ is a fixed base for $\Phi$. All of these weights lie in one orbit under the Weyl group.

Suppose $\Psi = \Psi_1 \Psi_2\dots \Psi_k$ is a standard subsystem of $\Phi$, where each of the $\Psi_i$ are irreducible. There is one $\Psi$-net for each $\Psi_i$ whose weight space sum is a copy of the natural module for $\overline{G}_{\Psi_i}$. Each of the remaining weights individually forms a $\Psi$-net.


Therefore, if $|\Delta_i|$ is the size of a base of $\Psi_i$, we find that for a semisimple element $s\in G$ of prime order with $\Psi \cap \Phi(s)=\emptyset$, we have 
\[
c(s) \geq \sum_{i=1}^{k} |\Delta_i|,
\]
and the same bound holds for $c(u_\Psi)$.

\begin{proposition}

Suppose $G\leq \Gamma \mathrm{L}(V)$ is almost quasisimple with $E(G)/Z(E(G))\cong \mathrm{PSL}_n(q)$, and that $E(G)$ acts absolutely irreducibly on $V = V(\lambda_1)\otimes V(\iota_2)^{(p^a)}$, where $1\leq a < e$ and $\iota_2$ appears in Table \ref{comptensor} . If either  $n\geq 4$ and $\dim V(\iota_2) \geq \frac{n^2+1}{2}$, or $n=3$ and $\dim V(\iota_2) \geq 6$, then $G$ has a regular orbit on $V$. 
\end{proposition}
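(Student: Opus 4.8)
The plan is to show that, under the stated dimension hypotheses, the inequality of Proposition~\ref{tools}\ref{qsgood}) (or, where it already suffices, the cruder \ref{tools}\ref{crude})) cannot hold, forcing $G$ to have a regular orbit on $V$. Write $W=V(\lambda_1)$ for the natural module, $d_2=\dim V(\iota_2)$ and $d=\dim V=nd_2$. Under the hypotheses the admissible $\iota_2$ are $2\lambda_1$, $2\lambda_l$, $\lambda_1+\lambda_l$, and (for $n\in[6,8]$) $\lambda_3$, $\lambda_{l-2}$; in all cases $2d_2\ge n^2+1$ when $n\ge4$ and $d_2\ge6$ when $n=3$, so in particular $d\ge n^2$ and Proposition~\ref{field} applies to elements inducing field automorphisms.

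The key structural input is that every non-identity projective-prime-order element $x\in G$ acting linearly on $V$ restricts to a decomposable non-scalar element $x_1\otimes x_2$ of $\mathrm{GL}(W)\otimes\mathrm{GL}(V(\iota_2))$, so Proposition~\ref{tensorcodim} gives $\emax^{V}(x)\le\min\{d_2\,\emax^{\overline W}(x_1),\;n\,\emax^{\overline{V(\iota_2)}}(x_2)\}$. On the natural module the fixed space of a non-identity prime-order element has codimension at least $1$, and at least $2$ unless $x$ is a transvection or $x$ is semisimple with an $(n-1)$-dimensional eigenspace (that is, $C_{\overline G}(x_1)$ of type $A_{n-2}$); by Propositions~\ref{sscounts} and \ref{unipcounts} taken with $\Psi$ of type $A_1^2$, there are at most $q^{2n-1}$ such exceptional elements of $\mathrm{PGL}_n(q)$ of each kind. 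Feeding these bounds, together with $|\mathrm{PGL}_n(q)|<q^{n^2-1}$, Proposition~\ref{invols} and Proposition~\ref{field}, into Proposition~\ref{tools}\ref{qsgood}) yields, assuming no regular orbit, an inequality of the shape
\[
q^{nd_2}\le 2|\mathrm{PGL}_n(q)|\,q^{\,d-2d_2}+3q^{2n-1}\,q^{\,d-d_2}+2\log(\log_2 q+2)\,q^{\,n^2/2+(d+d^{1/2})/2},
\]
to which one adds the (few, easily enumerated) contributions of graph and graph--field automorphisms in the cases where $V$ is stabilised by them, bounded by combining Propositions~\ref{graph} and \ref{graphfix} with Proposition~\ref{tensorcodim}. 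Since $2d_2\ge n^2+1>n^2-1$, the first term is already below $\tfrac12 q^{nd_2}$, and the dimension bounds make the whole right-hand side strictly smaller than $q^{nd_2}$ once $q$ exceeds a small explicit constant, giving the contradiction.

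The work then concentrates on the borderline instances where the $|\mathrm{PGL}_n(q)|$-term nearly exhausts the slack from $2d_2\ge n^2+1$: namely $n=3$, $n=4$ over $\mathbb{F}_2$ and $\mathbb{F}_3$, and any $\iota_2$ realising the minimal admissible $d_2$. For these I would sharpen the estimate using the second entry of the minimum in Proposition~\ref{tensorcodim}, bounding $\emax^{\overline{V(\iota_2)}}(x_2)$ directly via Proposition~\ref{eigsp_from_permsquare} when $\iota_2\in\{2\lambda_1,2\lambda_l,\lambda_3,\lambda_{l-2}\}$ (symmetric square, resp.\ exterior cube) and via the known centraliser dimension when $\iota_2=\lambda_1+\lambda_l$ (the adjoint module), together with unipotent Jordan-form computations on these factors; and replacing the crude class counts of Propositions~\ref{sscounts} and \ref{unipcounts} by the exact conjugacy-class data of \cite[Tables~B.2, B.3]{bg} stratified by eigenvalue multiplicities, exactly as in the proofs of Propositions~\ref{l3} and \ref{3l1}, passing to Proposition~\ref{tools}\ref{eigsp2}) if needed. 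For the finitely many remaining small $q$ an explicit construction of $V$ in GAP \cite{GAP4} settles the matter.

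I expect the main obstacle to be precisely this borderline bookkeeping, especially for $n=3$, where the minimum in Proposition~\ref{tensorcodim} is essential and must be tracked class-by-class rather than absorbed into a single uniform bound; a secondary but logically prior point is pinning down which pairs $(\iota_2,a)$ make $V$ graph-stable (for the admissible $\iota_2$ this happens only in special configurations of $a$ relative to $e$) and, when it does, whether the induced graph automorphism acts linearly, so that the graph terms above can be written down with the correct exponents.
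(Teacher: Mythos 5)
Your proposal matches the paper's argument: the same combination of Proposition \ref{tensorcodim} with the $A_1^2$ counts of Propositions \ref{sscounts} and \ref{unipcounts} and the field-automorphism bound produces exactly the inequality you display, and it already yields a contradiction for all $q\geq 4$ — which holds automatically here since $1\leq a<e$ forces $q$ to be non-prime. The borderline class-by-class sharpening, the GAP checks, and the graph/graph-field terms you anticipate are all unnecessary: for $n\geq 3$ the highest weight $\lambda_1+p^a\iota_2$ (for the admissible $\iota_2$ of dimension at least $(n^2+1)/2$) is never preserved by the graph automorphism up to field twist, since its $p$-restricted $p^0$-component is $\lambda_1\neq\lambda_l$, so no such classes act on $V$.
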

\begin{proof}
Let $d_2=\dim V(\iota_2)$.
Note that for $n\geq 3$, $V$ is never fixed by a graph automorphism, and that $q$ is not prime.
If $G$ has no regular orbit on $V$ then by Propositions \ref{tensorcodim} \ref{sscounts} and \ref{unipcounts}, as well as the discussion preceding this proposition,
\[
q^{n d_2} \leq  2|\pgl_n(q)|q^{d_2 (n-2)} + 2\log(\log_2q+2)q^{n^2/2+n d_2/2}+(2q^{2n-1} + q^{2n-1}) q^{(n-1)d_2}\\
\]
This gives a contradiction for $d\geq (n^2+1)/2$ and  $n \geq 4$ and $q\geq 4$, and also $n=3$ with $d\geq 6$ and $q\geq 4$. 

\end{proof}

The remaining possibilities for $V(\iota_2)$ are given in the table below.
\begin{table}[!htbp]
\centering
\begin{tabular}{cccccc}
\toprule 
$\iota_2$ & $\lambda_1$ & $\lam l$ & $\lambda_2$&$\lam{l-1}$& $2\lambda_1$\\ 
\midrule 
$l$ & $[1, \infty)$ &$[1, \infty)$ & $[3, \infty)$&$[3, \infty)$&$1$  \\ 
\bottomrule 
\end{tabular} 
\caption{Remaining possibilities for $V(\iota_2)$.}
\end{table}

\begin{proposition}
Theorem \ref{nonprestmain} holds for $V = V(\lam1) \otimes V(2\lam1)^{(p^a)}$ and $l=1$. Namely, $G$ has a regular orbit on $V$.
\end{proposition}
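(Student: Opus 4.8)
The plan is to apply Proposition~\ref{tools}\ref{qsgood}) after bounding the eigenspace dimensions of projective prime order elements via the tensor decomposition $V = V_1 \otimes V_2$, where $V_1 = V(\lam1)$ has dimension $2$ and $V_2 = V(2\lam1)^{(p^a)}$ has dimension $3$, so $d = 6$. Since $2\lam1$ is $p$-restricted we have $p \geq 3$, and since $1 \leq a < e$ we have $e \geq 2$, so $q \geq 9$; the excluded isomorphisms $\mathrm{PSL}_2(4)\cong\mathrm{PSL}_2(5)\cong A_5$ do not occur here, as they force $p=2$ or $e=1$. First I would bound $\emax(g)$ via Proposition~\ref{tensorcodim}. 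A semisimple element $s$ of projective order $2$ is conjugate on $V_1$ to $\mathrm{diag}(a,-a)$, so its eigenvalues on $V(2\lam1)=\mathrm{Sym}^2 V_1$ are $a^2,a^2,-a^2$, a pattern unchanged up to scalars by the twist since $p^a$ is odd; hence $\emax^{V_2}(s)=2$ and $\emax^V(s)\leq\min\{3,4\}=3$. For $s$ of projective prime order $r\geq 3$ (necessarily prime to $p$) the eigenvalues on $V_1$ are $t,t^{-1}$ with $t$ of order $r$, so those on $V_2$ are $t^{2p^a},1,t^{-2p^a}$, which are distinct because $\gcd(p^a,r)=1$; thus $\emax^{V_2}(s)=1$ and $\emax^V(s)\leq 2$. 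A unipotent element $u$ of order $p$ in $E(G)$ acts on $V_1$ as a single Jordan block $J_2$ and on $V(2\lam1)^{(p^a)}$ as a single block $J_3$ (a direct check, valid in characteristics $3$ and $\geq 5$), so $\emax^V(u)\leq\min\{3,2\}=2$.

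Next I would treat the outer elements. As $\overline G$ has type $A_1$ there are no graph or graph--field automorphisms. For field automorphisms I would note that $V$ is realised as $V((2p^a+1)\lam1)$, and since $1\leq a<e$ and $2<p$ the base-$p$ digit string of $2p^a+1$ contains the digit $1$ exactly once, hence is fixed by no nontrivial cyclic shift; so $V$ is not quasi-fixed by any nontrivial field twist, and no field automorphism acts linearly on $V$. Therefore Proposition~\ref{field} applies with $\zeta=0$ and $k=1$, bounding the total contribution of the field automorphisms (up to the factor $2$ supplied by Proposition~\ref{tools}\ref{qsgood})) by $\field$.

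Combining these with Proposition~\ref{invols}, which gives $i_2(\mathrm{Aut}(\mathrm{PSL}_2(q)))<2(q^2+q)$, and with the $q^2-1$ unipotent elements of $\mathrm{PGL}_2(q)$, if $G$ had no regular orbit on $V$ then Proposition~\ref{tools}\ref{qsgood}) would give
\[
q^6 \leq 2|\pgl_2(q)|\,q^2 + 4(q^2+q)\,q^3 + \frac{q^2-1}{p-1}\,q^2 + \field,
\]
a contradiction for every relevant $q\geq 25$. For the one remaining value $q=9$ I would verify directly that $G$ has a regular orbit, either by constructing $V$ in GAP~\cite{GAP4} or by a refined count of prime order elements in $\mathrm{Aut}(\mathrm{PSL}_2(9))$ (which is comfortable once one observes that there are no diagonal automorphisms of order $2$, since $9\equiv 1\bmod 4$). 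The main obstacle is the handling of field automorphisms: if $V$ were quasi-fixed by a nontrivial field twist, Proposition~\ref{field} would only yield a bound of order $q^{n^2/2+(d+d^{1/2})/2}$, which exceeds $q^6$ and would break the argument, so ruling this out via the digit-string observation is the crucial point.
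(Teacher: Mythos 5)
Your argument is correct and follows essentially the same route as the paper: decompose $V$ as $V(\lam1)\otimes V(2\lam1)^{(p^a)}$, bound the largest eigenspace of each projective prime order element via Proposition \ref{tensorcodim} (getting dimension at most $3$ for involutions and at most $2$ otherwise), and feed these into Proposition \ref{tools} together with Propositions \ref{invols} and \ref{field}; the paper simply quotes the eigenspace codimensions on $V(2\lam1)$ from Guralnick--Lawther rather than computing the eigenvalue patterns directly, and uses part \ref{eigsp2}) instead of \ref{qsgood}). Your extra care is warranted rather than superfluous: the explicit justification that no field automorphism acts linearly (so $\zeta=0$ in Proposition \ref{field}) and the separate treatment of $q=9$ both address points where the paper's one-line inequality is at its tightest.
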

\begin{proof}
Here $\dim V =6$. By the proof of \cite[Proposition 2.7.3]{bigpaper}, the codimension of the largest eigenspace of an element $x$ of prime order on $V(2\lam 1)$ is at least 2 if $x$ has odd order, and 1 otherwise.
%
%
%
%
%
%
Therefore, if $G$ has no regular orbit on $V$, then by Propositions \ref{tools}\ref{eigsp2}), \ref{tensorcodim} and \ref{invols},
\[
q^6 \leq \frac{3}{2}|\pgl_2(q)|q^2+2(q^2+q)(2q^3)+2\log(\log_2q+2)q^{2+6/2}
\]
and this is a contradiction for $q\geq 9$.
\end{proof}
\begin{proposition}
Theorem \ref{nonprestmain} holds for $V = V(\lam1)\otimes V(\lam2)^{(p^a)}$ and $V = V(\lam1)\otimes V( \lam{l-1})^{(p^a)}$ for $l\geq 3$. Namely, $G$ has a regular orbit on $V$.
\end{proposition}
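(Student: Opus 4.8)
The plan is to reduce both modules to $\iota_2=\lam2$: writing $V=V(\lam1)\otimes V(\iota_2)^{(p^a)}$ with $\iota_2\in\{\lam2,\lam{l-1}\}$, we have $\dim V(\iota_2)=\binom n2$ and $d=n\binom n2$, and since $V(\lam{l-1})\cong V(\lam2)^{*}$ while passing to the contragredient inverts the eigenvalues of a semisimple element and preserves the Jordan type of a unipotent element, the two choices of $\iota_2$ yield identical eigenspace-dimension data, so it is enough to treat $\iota_2=\lam2$. I would first record the structural facts: as $1\le a<e$ the field $\mathbb F_q$ is not prime; for $n\ge 4$ the highest weight $\lam1+p^a\lam2$ is fixed by no symmetry of the Dynkin diagram of type $A_l$, even after composing with a Frobenius twist (its $\lam l$-coefficient is $0$ and stays $0$ under any such symmetry, but would have to become nonzero), so no graph or graph--field automorphism preserves $V$ and no nontrivial field automorphism acts linearly on $V$; and a Frobenius twist permutes the $r$th roots of unity bijectively and preserves Jordan types, so the $\Psi$-net data for $V(\lam2)$ in Table~\ref{L2_tabs} transfer unchanged to the second tensor factor $V(\lam2)^{(p^a)}$.

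Next I would push these bounds onto $V=W\otimes V(\lam2)^{(p^a)}$, with $W$ the natural module, via Proposition~\ref{tensorcodim}, which gives
\[
\operatorname{codim}_V C_V(x)\ \ge\ \max\{\,\binom n2\operatorname{codim}_W C_W(x),\ n\operatorname{codim}_{V(\lam2)}C_{V(\lam2)}(x)\,\}
\]
for every $x\in G$ of projective prime order. By the $A_2$-row of Table~\ref{L2_tabs} one has $c(s)=c(u_\Psi)=2l-2$; a semisimple $s$ of projective prime order has $\Phi(s)$ disjoint from a standard $A_2$ exactly when it has at least three distinct eigenvalues on $W$, and a prime-order unipotent $u$ has a regular $A_2$-unipotent in the closure of $u^{\overline G}$ exactly when its Jordan type on $W$ has a part at least $3$, so for all such $x$ we get $\operatorname{codim}_V C_V(x)\ge n(2l-2)$. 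The remaining semisimple elements (at most two distinct eigenvalues on $W$, hence Jordan type $(a_1,a_2)$ with $a_1+a_2=n$) and unipotent elements (Jordan type $(2^j,1^{n-2j})$) are exactly the classes counted by $N_s(A_2)$ and $N_u(A_2)$ in Propositions~\ref{sscounts} and~\ref{unipcounts} (with transvections counted more sharply by $N_u(A_1^2)=q^{2n-1}$), and for these the $A_1$-row of Table~\ref{L2_tabs} still gives $\operatorname{codim}_V C_V(x)\ge n(l-1)$, while the displayed inequality gives the stronger bound $\operatorname{codim}_V C_V(x)\ge n(n-1)$ for all of them except transvections (for which $\operatorname{codim}_V C_V(x)\ge n(l-1)$). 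Field automorphisms contribute at most $\log(\log_2q+2)\,q^{n^2/2+d/2}$ by Proposition~\ref{field}, since none acts linearly.

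Finally I would substitute this into Proposition~\ref{tools}\ref{qsgood}): if $G$ has no regular orbit on $V$ then $q^d$ is at most
\[
2|\pgl_n(q)|\,q^{\,d-n(2l-2)}+\bigl(q^{2n-1}+N_s(A_2)+N_u(A_2)\bigr)q^{\,d-n(l-1)}+\log(\log_2q+2)\,q^{\,n^2/2+d/2},
\]
and, since $d-n(2l-2)=d-2n^2+4n$ with $n^2-4n+1>0$ for $n\ge 4$, the leading term is at most $2q^{d-1}$, so for $n\ge 5$ a routine estimate makes the whole right-hand side strictly smaller than $q^d$ for every $q$, a contradiction. For $n=4$, where $d=24$ and $\dim V(\lam2)=\binom n2$ is smallest relative to $n$ and the estimate is essentially tight, I would, for the smallest fields (recall $q\ge 4$ since $q$ is not prime), replace $|\pgl_4(q)|$ and $N_s(A_2),N_u(A_2)$ by the exact numbers of prime-order elements of each type obtained from \cite[Table B.3]{bg} — using that the exceptional semisimple classes have $\operatorname{codim}_V C_V\ge 12$ and transvections have $\operatorname{codim}_V C_V\ge 8$ — and settle $q=4$ by an explicit construction of $V$ in GAP~\cite{GAP4} if needed. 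I expect this $n=4$ bookkeeping to be the only genuine obstacle, since there the $A_2$-net bound only just dominates $|\pgl_n(q)|$ and the contributions of semisimple elements with a large eigenspace must be counted more delicately than the generic bound $N_s(A_2)$ of Proposition~\ref{sscounts} provides.
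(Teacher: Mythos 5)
Your proposal follows the paper's proof essentially verbatim: both reduce $\lam{l-1}$ to $\lam2$ by duality, transfer the $A_1$-, $A_1^2$- and $A_2$-net bounds for $V(\lam2)$ from Table \ref{L2_tabs} to the tensor product via Proposition \ref{tensorcodim}, count the exceptional semisimple and unipotent classes with Propositions \ref{sscounts} and \ref{unipcounts}, feed everything into Proposition \ref{tools}, and finish the tight case $n=4$ (and small $q$) with exact class counts. The one point to tighten is that in your displayed inequality the $A_2$-exceptional classes must be charged the sharper codimension $n(n-1)$ that you already derived (or the paper's $n(2l-4)$ coming from the $A_1^2$-nets) rather than the blanket $n(l-1)$, since once the factor of $2$ from Proposition \ref{tools}\ref{qsgood}) is included the single term $2N_s(A_2)\,q^{d-n(l-1)}$ already equals $q^d$ at $(n,q)=(5,4)$ and the contradiction would be lost.
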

\begin{proof}
We will make use of the $\Psi$-net tables for $V(\lambda_2)$ given at the beginning of Section \ref{tensorl2}.
Suppose $l\geq4$, and set $d = \dim V = (l+1)\binom{l+1}{2}$. If $G$ has no regular orbit on $V$, then by Propositions \ref{sscounts}, \ref{unipcounts} and \ref{tensorcodim},
\begin{align*}
q^d \leq &2|\mathrm{PGL}_n(q)| q^{d-(2l-2)(l+1)}+ 8q^{n^2/2+2} q^{d-(2l-4)(l+1)}+ 4(\frac{q^{n^2/2+2}-1}{q^2-1}) q^{d-(2l-4)(l+1)}\\ 
&+ 4q^{2n-1}q^{d-(l-1)(l+1)}+ 2\log(\log_2q+2)q^{n^2/2+d/2}
\end{align*}
This gives a contradiction for all $q\geq 4$, so $G$ has a regular orbit on $V$. Now suppose that $l=3$. 
If $G$ has no regular orbit on $V$ then by Proposition \ref{tensorcodim},
\begin{align*}
q^{24} \leq& 2|\mathrm{PGL}_4(q)|q^{24-16}+ (8q^{10}+ 4\left(\frac{q^{10}-1}{q^2-1}\right) )q^{24-12}+ 4q^7q^{24-8} +2\log(\log_2q+2)q^{20}\
\end{align*}
This gives a contradiction for all $q\geq 8$, so $G$ has a regular orbit on $V$ here. For $q \leq 8$, we replace $|\pgl_4(q)|$ with the number of elements of prime order in $\pgl_4(q)$, and also use more precise versions of Proposition \ref{sscounts} and \ref{unipcounts} in order to give a contradiction in the above inequality.
\end{proof}
\begin{proposition}
\label{tensornatural}
Theorem \ref{nonprestmain} holds for $V = V(\lam1)\otimes V(\lam1)^{(p^a)}$ and $V = V(\lam1)\otimes V(\lam{l})^{(p^a)}$ for $l\geq 3$. Namely, $b(G)=2$, unless $(a, \log_pq)=1$, $p=2$ or $3$ and $G= \mathrm{SL}_2(q)/K$, where $K$ is the kernel of the action of $\mathrm{SL}_2(q)$ on $V$. In these cases, $b(G)=1$. 
\end{proposition}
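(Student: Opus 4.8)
The module $V=V(\lambda_1)\otimes V(\lambda_1)^{(p^a)}$ (resp.\ $V(\lambda_1)\otimes V(\lambda_l)^{(p^a)}$) is the tensor product of the natural module with a Frobenius twist of itself (resp.\ its dual), so $\dim V = n^2$. The plan is first to rule out regular orbits for all but a handful of small cases by a counting argument, then to show $b(G)=2$ in general, and finally to handle the exceptional small cases $(n,p)\in\{(2,2),(2,3)\}$ by an explicit construction. For the first part, observe that $V$ is realised over $\F_q$ precisely when $(a,e)$ behaves appropriately, and that $\log|G|/\log|V|$ is roughly $2$, so a regular orbit is not ruled out on order grounds alone; instead I would use the structure of $V$. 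When $(a,e)=d>1$, the module $V(\lambda_1)^{(p^a)}$ is itself a Frobenius twist defined over a proper subfield, and $V$ is not quasiequivalent to a module over $\F_q$ of the claimed kind — more precisely one argues via \cite[Proposition 5.4.6]{KL} that $V$ descends to $\F_{q^{1/d}}$, so the relevant finite group acting absolutely irreducibly is smaller; I would reduce to the case $(a,e)=1$. With $(a,e)=1$, the element $x=g\otimes g^{(p^a)}$ for $g\in\mathrm{SL}_n(q)$ acts on $V$, and the key computation is: for a semisimple or unipotent $g$ of prime order, $\emax^V(x)\geq $ a substantial fraction of $n^2$ by Proposition \ref{tensorcodim} (bounding $\emax$ of $g$ on the natural module from below by $\lceil n/2\rceil$ or similar). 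Summing $|x^G|\,|C_V(x)|$ over conjugacy classes of prime order elements using Propositions \ref{invols}, \ref{field}, \ref{sscounts}, \ref{unipcounts} and \ref{graph}, I expect to get $\sum_x |x^G||C_V(x)| > |V|$ for all $n\geq 3$ and all $q$, and for $n=2$ when $q$ is not tiny; this shows $G$ has no regular orbit. For $n=2$ and $q\in\{2,3,4,5,8,\ldots\}$ one checks directly in GAP.

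For the bound $b(G)=2$, I would exhibit a pair of vectors with trivial joint stabiliser. Write $W=\F_q^n$ for the natural module, identify $V(\lambda_1)^{(p^a)}$ with $W$ as a vector space but with twisted scalar action, so $V\cong W\otimes W$ is the space of $n\times n$ matrices, with $E(G)$ acting by $A\mapsto g A (g^{(p^a)})^{T}$ in the $V(\lambda_l)$ case (and $A\mapsto gAg^{(p^a)}$ in the $V(\lambda_1)$ case). Choosing $A_1$ to be the identity and $A_2$ a suitable regular diagonal-type matrix (eigenvalues generating $\F_{q^n}$, say), the stabiliser of $A_1$ is a twisted form of the orthogonal/unitary group and the stabiliser of $A_2$ inside it is a maximal torus whose action on $V$ has all eigenspaces small; a short counting argument of the type used throughout (e.g.\ $|C_G(A_1)\cap C_G(A_2)|\leq q^{O(n)} < q^{\text{codim}}$) then forces the joint stabiliser to be trivial. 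I would also need to incorporate the outer part of $G$: field automorphisms and (when present) graph automorphisms moving $A_1,A_2$; since field automorphisms of order $r$ fix only an $\F_{q^{1/r}}$-rational subspace and our chosen $A_2$ is not rational over any proper subfield, these are excluded, and graph automorphisms interchanging the two tensor factors are excluded by $A_1\neq A_1^T$-type choices or handled separately.

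For the exceptional cases $G=\mathrm{SL}_2(q)/K$ with $p\in\{2,3\}$ and $(a,e)=1$: here $V\cong W\otimes W^{(p^a)}$ has dimension $4$, and $\mathrm{SL}_2(q)$ acts (projectively) as a subgroup of $\Omega_4^{\pm}(q)$ or $\mathrm{Sp}_4$-type group. The claim is that there is a \emph{regular} orbit, in contrast with the algebraic-group statement \cite[Proposition 3.1.8]{bigpaper}. I would follow the strategy flagged in the Remark after Theorem \ref{main}: pick an explicit vector $v\in V$ (a matrix in $M_2(\F_q)$), compute its stabiliser in $\mathrm{SL}_2(\overline\F_p)$ — which is $0$-dimensional but nontrivial as a finite group over $\overline\F_p$ — and check that over $\F_q$ the finite stabiliser is in fact trivial when $p=2,3$, using that $\mathrm{SL}_2(q)$ is small relative to $q^4$ and a direct case analysis (the key point is that the residual finite stabiliser in the algebraic case consists of elements of order dividing $p$ or small order, and these can be avoided over $\F_q$ by a suitable choice of $v$). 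A GAP verification for small $q$ closes any residual gaps.

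The main obstacle I anticipate is the $n=2$, small $q$ regime: the counting inequality from Proposition \ref{tools} is delicate there because $|G|$ and $|V|=q^4$ are comparable, so one must use the sharpened involution/semisimple counts (as in the $q=3$ refinements elsewhere in the paper) and, for the very smallest $q$, fall back on explicit GAP computation — and one must carefully separate the generic "$b(G)=2$, no regular orbit" conclusion from the genuinely exceptional "$b(G)=1$" cases with $p\in\{2,3\}$, which requires pinning down exactly which finite subgroups $G$ of $\mathrm{GL}_2(q)/K$ the argument applies to.
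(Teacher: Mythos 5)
There is a genuine logical gap at the heart of your first step. You propose to show that $G$ has \emph{no} regular orbit by verifying that $\sum_x |x^G|\,|C_V(x)| > |V|$. But the inequality in Proposition \ref{tools} only runs in one direction: ``no regular orbit'' \emph{implies} $|V| \leq \sum_x |x^G|\,|C_V(x)|$, so the failure of that inequality proves a regular orbit \emph{exists}, while its truth proves nothing at all. The counting machinery of this paper can never establish the non-existence of a regular orbit, and indeed here $\dim V = n^2$ while $\log_q|G| \approx n^2-1$, so the sum is genuinely larger than $|V|$ for trivial reasons. The paper instead exploits the invariant-theoretic structure of the action: $g$ acts on $n\times n$ matrices by $A\mapsto g^{\top}Ag^{(p^a)}$ or $A\mapsto g^{-1}Ag^{(p^a)}$, which preserves rank and determinant. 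Matrices of rank $k\leq n-2$ are fewer than $|G|$, so cannot support a regular orbit; for rank $n$ one shows via Shintani descent that every stabiliser $C_H(A\varphi^{-1})$ equals $C_{\mathrm{SL}_n(p^b)}(h)$ for some $h\in\mathrm{GL}_n(p^b)$ and hence is non-central; for rank $n-1$ one exhibits an explicit $G$-stable family $S^G$ of matrices with non-trivial stabilisers and counts that the complement within the rank-$(n-1)$ matrices is smaller than $|G|/|Z|$. None of this is recoverable from fixed-point counting, so your proposal as written does not prove the main assertion of the proposition.

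Two further points. First, your reduction of the case $(a,e)=d>1$ to $(a,e)=1$ via descent to $\F_{q^{1/d}}$ is not what happens: the module remains an absolutely irreducible $\F_q\mathrm{SL}_n(q)$-module for any $1\leq a<e$; the role of $b=(a,e)$ is only that the kernel $Z$ has order $\gcd(p^b\pm 1,n)$ and that the diagonal stabilisers $\mathrm{Diag}(\nu,\nu^{-1})$, $\nu\in\F_{p^b}^\times$, arising in the rank-$1$ analysis for $n=2$ lie in the kernel precisely when $b=1$ and $p\in\{2,3\}$ --- this is exactly where the exceptional regular orbits come from ($\mathrm{Diag}(1,0)$ is the representative), not from avoiding a residual finite stabiliser of a generic vector. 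Second, your plan for $b(G)=2$ by exhibiting two explicit matrices is plausible but unnecessary: the paper obtains $b(G)\leq 2$ from Lemma \ref{fieldext} together with Proposition \ref{exttensor}, which shows the extended group has a regular orbit on $V\otimes\F_{q^k}$ for $k\geq 2$. If you wished to pursue the explicit two-matrix route you would still need to verify triviality of the joint stabiliser including diagonal, field and graph(-field) automorphisms, which your sketch does not yet do.
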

\begin{proof}
Let $q=p^e$. To show that there is no regular orbit of $G$ on $V$, it is sufficient to consider
 $G=\mathrm{SL}_n(q)/Z$, where $Z$ is the kernel of the action of $\mathrm{SL}_n(q)$. Let $b = \gcd(a,e)$, then $|Z| = \gcd(p^b+1,n)$ and $\gcd(p^b-1,n)$ for $\lambda = (p^a+1)\lam1$ and $p^a\lam1+\lam l $ respectively.
The action of $g \in G$ on $V(\lam1)\otimes V(\lam1)^{(p^a)}$ or $ V(\lam1)\otimes V(\lam{l})^{(p^a)}$  is equivalent to the action of $G$ on $n\times n$ matrices as 
\[
A \mapsto g^\top A g^{(p^a)} \quad \textrm{or} \quad A \mapsto g^{-1} A g^{(p^a)}
\]
respectively. These maps are rank and determinant preserving.
The number of matrices of rank $k$ ($1\leq k \leq n$) is (see \cite[p. 338]{MR1871828} for example):
\[
q^{\binom{k}{2}} \frac{\prod_{i=1}^k (q^{n-k+i}-1)^2}{\prod_{j=1}^k(q^j-1)}.
\]
This is less than $|G|$ for $1\leq k\leq n-2$. So if $G$ has a regular orbit on $V$, then it is composed of matrices of rank $n$ or rank $n-1$. We show that $G$ has no regular orbit on matrices of rank $n$ using Shintani descent (see \cite[\S 2.6]{MR3032138} for an overview).

Suppose $\lambda = \lam1+p^a\lam l$. Let $\sigma: x \mapsto x^{(p^{b})}$ be a field automorphism of $\mathrm{GL}_n(q)$, let $\varphi = \sigma^{c}: x \mapsto x^{(p^{a})}$, where $c=a/b$, and define the semidirect product $K=\mathrm{GL}_n(q) \rtimes \langle \sigma \rangle$. The stabiliser in $H=\mathrm{SL}_n(q) \leq K$ of $A \in \mathrm{GL}_n(q)$ under the action $A \mapsto g^{-1} A g^{(p^a)}$ is equal to $C_H(A\varphi^{-1})$. 
Let $\hat{c}$ be an integer such that $\hat{c} \equiv -c \mod e/b$ and such that $\gcd(\hat{c}, |K|)=1$. Then the map $f : K \to K$ defined by $x \mapsto x^{\hat{c}}$ is a bijection. Therefore, $A\varphi^{-1}$ has a preimage of the form $y\sigma \in K$. Moreover, we see that 
$C_H(A\varphi^{-1}) = C_H(y\sigma)$. Applying Shintani descent (see \cite[Lemma 2.13]{MR3032138}, for instance), $C_{H}(y\sigma)=C_{\mathrm{SL}_n(p^b)}(h)$ for some $h \in \mathrm{GL}_n(p^b)$. Since every element of $\mathrm{GL}_n(p^b)$ is centralised by a non-scalar in $\mathrm{SL}_n(p^b)$, $C_{H}(y\sigma)$ is not contained in the centre of $H$, and there is no regular orbit of $G$ on $V$. 
The argument for $\lambda = (p^a+1)\lam1$ is similar; there we define $\sigma$ to be a graph-field automorphism $x\mapsto (x^{(p^a)})^{-T}$.

It remains to consider whether there exists a regular orbit on the matrices of rank $n-1$. Suppose $n\geq 3$.
We first deal with $V(\lam1)\otimes V(\lam l)^{(p^a)}$. Let 
\[
S = \left\{ \left[\begin{array}{c|c}N& 0\\ \midrule 0&0\\\end{array} \right] \mid N \in \mathrm{GL}_{n-1}(q)\right\}.
\]
Note that every element of $S$ has non-trivial stabiliser in $\mathrm{SL}_n(q)/Z$. 
 The setwise stabiliser of $S$ in $\mathrm{SL}_n(q)$ is 
\[
K = \left\{ \left[\begin{array}{c|c}A& 0\\ \midrule 0&b\\ \end{array} \right] \mid A \in \mathrm{GL}_{n-1}(q), b=\det A^{-1}\right\}.
\]

Let $S^G=\{ g^{-1}Ag^{(p^a)} \mid g \in \mathrm{SL}_n(q), A \in S\}$, and note that $\mathrm{SL}_n(q)$ has no regular orbit that includes an element of $S^G$. We now count the number of elements in $S^G$. Let $g_1, g_2, \dots, g_t$ be a set of right coset representatives of $\mathrm{SL}_n(q)/Z$; then the set of $g_i^{-1}Sg_i^{p^a}$ for $1\leq i \leq t$ partition $S^G$. We have $|g_i^{-1}Sg_i^{p^a}|=|S|$, so $|S^G| = |\mathrm{SL}_n(q)| |S|/|Z| =  |\mathrm{SL}_n(q)|$. The number of remaining matrices of rank $n-1$ is less than $2q^{n^2-2}$, which is less than $|\mathrm{SL}_n(q)|/|Z|$, so there can be no regular orbit. For $V=V(\lam1)\otimes V(\lam1)^{(p^a)}$, we repeat the same argument, this time setting 
\[
S = \left\{\right.
 \left(\begin{array}{ccc}
 N& 0 & v_1\\ 
 v_2 &0&c\\
 0 & 0 &0\\ \end{array}
  \right) \mid  N \in \mathrm{GL}_{n-2}(q), v_1, v_2^{\top} \in \mathbb{F}_q^{n-2}, c \in \mathbb{F}_q \left. \right\}.
\]
and find that there is no regular orbit under $\mathrm{SL}_n(q)/Z$ here either. 

Finally, if $n=2$, then the stabiliser of an element of $S$ contains $\mathrm{Diag}(\nu, \nu^{-1})$ for $\nu \in \F_{p^b}^\times$. All such elements lie in the kernel of the action if and only if $b=1$ and $p=2$ or 3. In both cases, $\mathrm{Diag}(1,0)$ is a regular orbit representative of $G$ on $V$. When $p=3$, $\mathrm{Diag}(j,0)$, with $j\in \F_q^\times$ a non-square is a representative of a second regular orbit of $G$ on $V$. The two regular orbits are not interchanged by a graph or diagonal automorphism. No group properly containing $G$ has a regular orbit on $V$, since there is no regular orbit of $G$ containing rank 2 matrices, and the number of remaining rank 1 matrices is $q^2-1$, which is less than the order of $G$.
Where we show that is no regular orbit, we  have $b(G)=2$ by Proposition \ref{k=1_leftovers}.
\end{proof}
This completes the proof of Theorem \ref{nonprestmain}, and therefore Theorem \ref{main} for modules with $k=1$. 
\section{Absolutely irreducible representations over subfields} 
\label{subfields}
In this section, we consider the following embedding of $\mathrm{SL}_{n}(q^c) \leq \mathrm{SL}_{n^c}(q)$ for $c\geq 1$. Let $W$ be the $n$-dimensional natural module for $\mathrm{SL}_{n}(q^c)$ over $\F_{q^c}$  with standard basis $\{e_1, \dots ,e_n\}$, and let 
\[
V = W\otimes W^{(q)}\otimes W^{(q^2)}\otimes \dots \otimes W^{(q^{c-1})},
\]
so that $\dim V= n^c$ and $B = \{e_{i_1} \otimes e_{i_2} \otimes \dots \otimes e_{i_c} \mid 1 \leq i_j \leq n\}$ is a basis of $V$.  

The semilinear map on $V$ sending $\nu e_{i_1} \otimes e_{i_2} \otimes \dots \otimes e_{i_c}$ to $\nu^q e_{i_c} \otimes e_{i_1} \otimes \dots \otimes e_{i_{c-1}}$ for $\nu \in \mathbb{F}_{q^c}$ induces an automorphism of $\mathrm{SL}_{n}(q^c)$ defined by $\varphi: (a_{ij}) \mapsto (a_{ij}^q) $ . As discussed in the proof of Proposition \ref{field},  $\langle \varphi\rangle$ fixes $e_i \otimes e_i\otimes \dots \otimes e_i$ for $1\leq i \leq c$ and has orbits of length $c$ on the remaining elements of $B$. Let $\theta$ be a set of $\langle \varphi\rangle$-orbit representatives  and let $\{\nu_1, \nu_2, \dots \nu_c\}$ be a basis of $\mathbb{F}_{q^c}$ over $\mathbb{F}_q$. We define a new basis of $V$ as follows:
\[
B_2 = \{\sum_{i=0}^{c-1}\nu_j^{q^i}\varphi^i(v)\mid v \in \theta, 1\leq j \leq c\}
\]
Notice that every element of $B_2$ is preserved under the action of $\varphi$.
An element $g \in \mathrm{SL}_{n}(q^c)$ preserves the $\mathbb{F}_q$ span of $B_2$ since
 \[
 (\sum_{i=0}^{c-1}\nu_j^{q^i}\varphi^i(v))g =  \sum_{i=0}^{c-1}\nu_j^{q^i}\varphi^i(vg) =\sum_{i=0}^{c-1} \varphi^i(\nu_j vg)
\] 
which is fixed by the action of $\varphi$.
We can therefore realise $V$ as a $\mathbb{F}_q\mathrm{SL}_n(q^c)$-module, and \cite[Proposition 5.4.6]{KL} shows that all absolutely irreducible $\mathbb{F}_q\mathrm{SL}_n(q^c)$-modules arise in this way, replacing $W$ with some arbitrary irreducible $\mathrm{SL}_n(q^c)$-module.  

In the notation of Theorem \ref{main}, the absolutely irreducible $\mathbb{F}_q\mathrm{SL}_n(q^c)$-modules  $V$ constructed in this way correspond to cases where $k<1$ (indeed, $k=1/c$), and we further assume in this section that $V$ cannot be realised over any proper subfield of $\F_q$. The remaining cases with $k<1$ will be dealt with in Section \ref{ext_section}.

%
The main theorem of this section is as follows.
\begin{theorem}
	\label{submain}
Let $V=V_{m^c}(q)$ be a $m^c$-dimensional vector space over $\mathbb{F}_q$, with $c>1$. Let $G \leq \Gamma \mathrm{L}(V)$ be almost quasisimple
	with $E(G)/Z(E(G)) \cong \mathrm{PSL}_{n}(q^c)$, such that
	$V=V(\lambda)$ is an absolutely irreducible $\mathbb{F}_qE(G)$-module and $V$ cannot be realised over a proper subfield of $\F_q$.
Then one of the following holds.
	\begin{enumerate}
		\item $G$ has a regular orbit on $V$;
		\item $\lambda$, $k=1/c$ and $n$ appear in Table \ref{rem2}.
	\end{enumerate}
\end{theorem}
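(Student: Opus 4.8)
## Proof strategy for Theorem \ref{submain}

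The plan is to reduce the statement to the modules already classified in Sections \ref{prest} and \ref{tensor}, and then to control the passage from the field $\mathbb{F}_{q^c}$ down to $\mathbb{F}_q$. Recall from the construction preceding the theorem that an absolutely irreducible $\mathbb{F}_q\mathrm{SL}_n(q^c)$-module $V$ arises as the realisation over $\mathbb{F}_q$ of $M \otimes M^{(q)} \otimes \dots \otimes M^{(q^{c-1})}$, where $M=V(\mu)$ is an absolutely irreducible $\mathbb{F}_{q^c}\mathrm{SL}_n(q^c)$-module; here $m=\dim M$ and $\dim V = m^c$. Since $V$ is not realisable over a proper subfield of $\mathbb{F}_q$, the module $M$ is not realisable over a proper subfield of $\mathbb{F}_{q^c}$ either. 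So I would first apply Theorem \ref{main} in the already-proven case $k=1$ (Theorems \ref{prestmain} and \ref{nonprestmain}) to the group $\hat{G}=\mathbb{F}_{q^c}^{\times}\circ G \le \Gamma\mathrm{L}(M)$ acting on $M$: unless $M$ appears in Table \ref{rem1} (or is the natural module), $\hat G$ has a regular orbit on $M$, and I would leverage this. The case analysis then splits into (a) $M$ large or generic, where a counting argument over $\mathbb{F}_q$ succeeds, and (b) $M$ one of the finitely many small modules in Table \ref{rem1}, where the resulting $V$ are precisely the rows of Table \ref{rem2} with $k=1/c$ and must be treated individually (or shown to have a regular orbit).

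For the generic case, I would run the now-familiar argument of Proposition \ref{tools}. The key point is that $V = V_1 \otimes \dots \otimes V_c$ is a $c$-fold tensor product of modules of dimension $m$, so by repeated application of Proposition \ref{tensorcodim}, any projective prime order $x \in G$ that acts linearly and does not permute the tensor factors satisfies $\emax^V(x) \le m^{c-1}\emax^{\overline M}(x_0)$ for the corresponding factor $x_0$, so $\codim C_V(x) \ge m^{c-1}\codim C_{\overline M}(x_0)$; elements cyclically permuting the $c$ factors are handled exactly as in the linear case of Proposition \ref{field}, giving $\emax^V \le (m^c + m^{c/j})/j$ for a prime $j \mid c$, and field automorphisms of $\mathbb{F}_{q^c}$ of order $r$ are bounded via Proposition \ref{field} with the substitution $d = m^c$, $k=1/c$. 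Feeding these codimension bounds into Proposition \ref{tools}\ref{crude}), together with $|\mathrm{PGL}_n(q^c)| < q^{c n^2}$ and the class-number bounds, yields
\[
q^{m^c} \le 2|\mathrm{PGL}_n(q^c)|\,q^{m^c - m^{c-1}\kappa} + (\text{field-automorphism and permutation terms}),
\]
where $\kappa$ is the generic codimension bound for $M$ coming from the weight-string analysis in Sections \ref{prest}--\ref{tensor}. Since $\kappa$ grows like a fixed power of $n$ while $m^{c-1}$ grows rapidly, this inequality fails for all but small $m$, giving a regular orbit.

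The main obstacle will be the borderline small modules: when $M$ is one of the low-dimensional entries of Table \ref{rem1} (e.g. $\lambda=\lambda_2$ with $n\in\{4,5,6\}$, $\lambda=\lambda_3$ with $n\in\{6,9\}$, $\lambda=2\lambda_1$, $\lambda=3\lambda_1$ with $n\in\{2,3\}$, or the adjoint-type cases), the corresponding $V$ over $\mathbb{F}_q$ is exactly a row of Table \ref{rem2}, and for these I must either exhibit a regular orbit or pin down $b(G)$ to within the claimed interval. I expect these to require the same mix of techniques used earlier: Shintani descent (as in Proposition \ref{tensornatural}) to rule out regular orbits where $|V| < |G|$ forces $b(G)\ge 2$, explicit GAP constructions for the genuinely small cases, and centraliser estimates of the form used for the adjoint module in Proposition \ref{adjoint} to get $b(G)=2$. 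A second subtlety is bookkeeping the scalar subgroup $Z$: the kernel of the $\mathrm{SL}_n(q^c)$-action on $V$ depends on $\gcd$ conditions (cf. the analysis of $\lambda=\lambda_1$ in Remark (iii) and of $(p^a+1)\lambda_1$ in Proposition \ref{tensornatural}), and whether $G$ contains scalars lying in $\mathbb{F}_{q^i}^\times\setminus\mathbb{F}_q^\times$ affects the base size by at most $1$; I would track this exactly as in the $k=1$ analysis. Finally, I must confirm that $k=1/c$ is incompatible with $V$ being further realisable over a subfield, so that no case is double-counted between this section and Section \ref{ext_section}; this follows from \cite[Proposition 5.4.6]{KL} and the standing assumption of the theorem.
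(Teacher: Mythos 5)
Your overall route is the same as the paper's: a crude counting reduction to a finite list of $(n,m,c)$, followed by case-by-case analysis of the surviving modules using tensor-product eigenspace bounds, with the cases admitting no regular orbit (or resisting the counting argument) pinned down via Lemma \ref{fieldext} and regular orbits on extensions $V\otimes\F_{q^t}$. Two points of difference are worth recording. First, the paper's initial reduction (Proposition \ref{all_subnoro}) does not pass through the $k=1$ classification of $M$ at all: it simply feeds the $\a(x)$ bounds of Proposition \ref{alphas} into $q^{m^c}\leq 2|\mathrm{Aut}(\mathrm{PSL}_n(q^c))|q^{\lfloor(1-1/\a(x))m^c\rfloor}$ and uses $m\geq n$; this is cruder but avoids having to justify the (slightly delicate) transfer of "regular orbit on $M$ over $\F_{q^c}$" to information about orbits on $V$ over $\F_q$, which your sketch leaves implicit. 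Second, and more substantively: for the surviving infinite family $V_{n^c}(q)$ with $M$ the natural module (which lies in Table \ref{rem1}, so falls into your case (b) for \emph{every} $n$), the bound $\codim C_V(x)\geq m^{c-1}\codim C_{\overline{M}}(x_0)$ obtained by iterating Proposition \ref{tensorcodim} is too weak to close the argument: for a semisimple $s$ with eigenvalue multiplicities $(n-1,1)$ on the natural module it yields only $\codim C_V(s)\geq n^{c-1}$, whereas the paper's Proposition \ref{permsquare} (exploiting the multiplicities across all $c$ factors simultaneously) gives $\codim C_V(s)\geq n^{c-2}(n^2-\sum a_i^2)=2n^{c-1}-2n^{c-2}$, and even this must be compared term-by-term against $\log_q|s^{\pgl_n(q^c)}|$ rather than against a uniform $\kappa$ to handle $n=4,5$ when $c=3$ (Proposition \ref{subcube}). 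So when you come to "treat individually" the powers of the natural module, you will need Propositions \ref{permsquare}/\ref{eigsp_from_permsquare} and the class-size comparison, not just iterated applications of Proposition \ref{tensorcodim}. Your treatment of the tensor-factor-permuting elements via the linear case of Proposition \ref{field}, and the final list of exceptional rows of Table \ref{rem2}, match the paper.
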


We begin by reducing the proof of Theorem \ref{submain} to a finite list of modules that require further consideration.
\begin{proposition}
	\label{all_subnoro}
Suppose $G \leq \Gamma \mathrm{L}(V)$ is as in Theorem \ref{submain}, with $V=V_{m^c}(q)$, whose restriction to $E(G)$ is absolutely irreducible. Then one of the following holds.
\begin{enumerate}
\item $G$ has a regular orbit on $V$.
\item\label{subnoro} $(n,m,c) = (n,n,2)$ or $(2,2,3)$ and $G$ has no regular orbit on $V$.
\item The tuple $(n,m,c)$ appear in Table \ref{subfieldbad}.
\end{enumerate}
\end{proposition}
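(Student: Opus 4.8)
The plan is to follow the same reduction strategy as in the $k=1$ case (Propositions \ref{n^3} and \ref{complete}), but now adapted to the subfield embedding $\mathrm{SL}_n(q^c)\leq\mathrm{SL}_{n^c}(q)$. First I would observe that the absolutely irreducible $\mathbb{F}_q\mathrm{SL}_n(q^c)$-module $V=V_{m^c}(q)$ is, after extending scalars to $\overline{\mathbb{F}}_q$, of the form $M\otimes M^{(q)}\otimes\cdots\otimes M^{(q^{c-1})}$ for an absolutely irreducible $\mathrm{SL}_n(q^c)$-module $M=V(\iota)$ of dimension $m$ over $\overline{\mathbb{F}}_q$, and that the group acting is $G\leq\Gamma\mathrm{L}(V)$ with $E(G)/Z(E(G))\cong\mathrm{PSL}_n(q^c)$. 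The key point is that the relevant parameter controlling base sizes is $d_0=\dim M=m$ relative to $n$: since $\dim V=m^c$ grows like $m^c$ while $|G|\leq|\Gamma\mathrm{L}_n(q^c)|$ grows only polynomially in $q^c$ of degree roughly $n^2$, there should be a regular orbit as soon as $m$ (or $m^c$) is large enough. So the first step is a dimension-vs-order estimate analogous to Proposition \ref{n^3}: if $m\geq n^2$ (say), or more precisely if $m^c$ is sufficiently large compared to $\dim\Gamma\mathrm{L}_n(q^c)$, then using Propositions \ref{tools}\ref{alphabound}), \ref{field}, \ref{graph} and the eigenspace bounds from Proposition \ref{tensorcodim} (the element permuting tensor factors, and linear elements) one gets $|V|>\sum_{x}|x^G||C_V(x)|$, hence a regular orbit.

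Next I would invoke the classification of small-dimensional modules, exactly as in Proposition \ref{complete}: for the surviving cases $m=\dim V(\iota)$ is bounded (by something like $n^2$), so $\iota$ is drawn from a short explicit list coming from \cite{MR1901354} and \cite{alvaro} — essentially $\iota\in\{\lam1,\lam2,2\lam1,\lam3,\dots\}$ with small coefficients. This produces the finite list $(n,m,c)$ to be recorded in Table \ref{subfieldbad}, together with the two genuinely exceptional families in item \ref{subnoro}): $M=V(\lam1)$ the natural module (so $(n,m,c)=(n,n,c)$), and then among those the cases with no regular orbit — which by the analysis should be $c=2$ (corresponding to $V((p^{e/2}+1)\lam1)$-type modules, $k=1/2$) and $(n,m,c)=(2,2,3)$ (the $k=1/3$ module for $\mathrm{SL}_2(q)$). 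For these I would argue non-existence of a regular orbit directly: when $M$ is the natural module, $V\otimes\overline{\mathbb{F}}_q$ is the tensor product of Frobenius-twists of the natural module, and $g$ acts on $n^c\times\cdots$ arrays (rank-type invariants are preserved after a Shintani-descent argument as in Proposition \ref{tensornatural}), so one counts orbits and shows the non-identity stabilisers exhaust $V$.

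The key technical inputs are: (i) the tensor-factor-permuting element has its largest eigenspace of dimension $\leq(m^c+m^{c/2}\text{-ish})/2$ — more precisely $d_0^c$ splits into $d_0$ fixed basis vectors and $(d_0^c-d_0)/c$ orbits of size $c$, as computed in the proof of Proposition \ref{field} — which gives $\emax\leq d_0+(d_0^c-d_0)/c$; (ii) linear semisimple and unipotent elements get eigenspace bounds via Proposition \ref{tensorcodim} applied to the tensor decomposition, so $\emax^V(g)\leq m^{c-1}\emax^M(g_0)$, and $\emax^M(g_0)$ is controlled using Proposition \ref{tools}\ref{alphabound}) (i.e. $\alpha$-bounds for $\mathrm{PSL}_n(q^c)$ from Proposition \ref{alphas}) or the explicit $\Psi$-net bounds when $M$ is one of the small modules; (iii) the counts of semisimple/unipotent/field/graph elements from Propositions \ref{invols}, \ref{graph}, \ref{sscounts}, \ref{unipcounts}. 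Putting these into Proposition \ref{tools}\ref{crude}) or \ref{qsgood}) gives the contradiction $|V|>\sum$ whenever $m$ is not on the short list.

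The main obstacle I anticipate is handling the borderline small cases cleanly — in particular separating, among the natural-module tensor products, exactly which $(n,c)$ fail to have a regular orbit. The eigenspace of the tensor-permuting element has dimension roughly $m^c/c$, which is close to $(1-1/c)m^c$ only when $c$ is small, so for $c\geq 4$ or large $n$ the crude bound already wins, but for $c=2,3$ the estimate is tight and one needs the more refined Shintani-descent non-existence argument (as in Proposition \ref{tensornatural}) rather than a counting inequality, plus the small-field cases $q$ small may need explicit GAP computation. The other delicate point is bookkeeping: ensuring Table \ref{subfieldbad} lists precisely the tuples where the generic inequality fails but a regular orbit may still exist, to be resolved in the subsequent propositions of this section; this is routine but must be done carefully against the dimension formulas in \cite{MR1901354}.
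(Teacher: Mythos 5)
Your overall strategy for the reduction is the same as the paper's: bound every fixed point space crudely via Proposition \ref{tools}\ref{alphabound}) and Proposition \ref{alphas}, add the field- and graph-automorphism contributions from Propositions \ref{field} and \ref{graph}, compare against $|V|=q^{m^c}$, and then invoke L\"ubeck's tables to turn the surviving inequalities into the finite list of Table \ref{subfieldbad}. (Your detour through Proposition \ref{tensorcodim}, giving $\emax^V(g)\leq m^{c-1}\emax^M(g_0)\leq\frac{n-1}{n}m^c$, produces exactly the same numerical bound as applying the $\alpha$-bound directly to $V$, so nothing is gained or lost there; the thresholds you would obtain — exceptions only when $m=n$ with $c\in\{2,3\}$ or $m\lesssim\binom{n}{2}$ with $c=2$ — match the paper's.) Where you diverge is item (ii): you propose a Shintani-descent/rank-invariant argument in the style of Proposition \ref{tensornatural} to rule out a regular orbit for $(n,n,2)$ and $(2,2,3)$. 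That machinery is not needed here and would cost several pages: for these tuples one simply has $|V|=q^{n^2}<|\mathrm{PSL}_n(q^2)|\leq |G|$ (respectively $q^8<|\mathrm{PSL}_2(q^3)|$), and since $G$ acts faithfully on $V$ a regular orbit would have size $|G|\leq |V|$, so none can exist. This one-line order comparison is precisely what the paper uses, and it also dissolves the "main obstacle" you flag of separating which natural-module tensor products fail: the non-existence cases are exactly those where $|V|<|G|$, while the remaining borderline tuples (e.g.\ $(n,n,3)$ for $n\geq 3$) are merely parked in Table \ref{subfieldbad} and resolved — mostly positively — in the later propositions of the section. The only other loose end is $n=2$ with $q^c$ small ($q=9$ handled via the modular atlas, $q^c\in\{4,5,9\}$ being alternating-group cases), which your plan correctly anticipates as requiring separate small-case treatment.
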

\begin{proof}
Notice that if $(n,m,c)$ appears in part (\ref{subnoro}), then $|V|<|G|$ and there cannot be a regular orbit of $G$ on $V$.
Suppose $n\geq 5$. If $G$ has no regular orbit on $V$, then
$q^{m^c} \leq 2|\mathrm{Aut}(\mathrm{PSL}_n(q^c))| q^{\floor{\frac{n-1}{n}m^c}}$. Since $m\geq n$, this gives a contradiction for $c\geq 4$, for $c=3$ if $m>n$, and for $c=2$ if $m>\binom{n}{2}$ or $m=\binom{n}{2}$ and $n\geq 7$. By \cite{MR1901354}, a full list of the remaining modules not satisfying the inequality (excluding $(n,m,c)=(n,n,2)$) is given in Table \ref{subfieldbad}. Similarly, if $n=4$ and $G$ has no regular orbit on $V$, then by Propositions \ref{alphas}, \ref{field} and \ref{graph}, 
\[
q^{m^c} \leq 2(|\pgl_4(q^c)|+ 2q^{15c/2)}) q^{\floor{\frac{3}{4}m^c}}+ 4\log(\log_2q^c+2)q^{8c+(m^c+m^{c/2})/2}+ 2q^{9c}q^{6m^c/7}.
\]
 This gives a contradiction for $c\geq 5$, for $c=3,4$ if $m>4$ and for $c=2$ if $m \geq 14$ and the remaining modules are given in Table \ref{subfieldbad}. 
If instead $n=3$ and $G$ has no regular orbit on $V$ we have
  \[
 q^{m^c} \leq 2|\pgl_3(q^c)| (q^{\floor{\frac{2}{3}m^c}}+q^{\ceil{\frac{1}{3}m^c}}) 4\log(\log_2q^c+2)q^{9c/2+(m^c+ m^{c/2})/2}+ q^{5c}q^{3m^c/4}.
 \]
 This gives a contradiction for $c\geq 5$ for $m \geq 3$, for $c=3,4$ if $m>3$ and for $c=2$ if $m\geq 7$.
Finally let $n=2$ and suppose $q\geq 7$ and $q\neq 9$.
If $G$ has no regular orbit on $V$, then by Propositions \ref{alphas} and \ref{invols}, 
\[
q^{m^c} \leq 2|\mathrm{PGL}_2(q^c)| q^{\floor{m^c/2}} + 2(q^2+q)(q^{\floor{2m^c/3}}+ q^{\floor{m^c/3}})+4\log(\log_2q^c+2)q^{2c+(m^c+m^{c/2})/2}.
\] This gives a contradiction for $c\geq 5$ and $m\geq 2$, for $r=3,4$ if $m>2$ and for $r=2$ if $m\geq 4$. 
Lastly, if $q=9$, then examining \cite[p.4]{modatlas}, we see that the only absolutely irreducible $\mathrm{SL}_2(9)$ modules realised over $\F_3$ have $(m,c)=(2,2)$ or $(3,2)$, and so the result follows
\end{proof}
Suppose $(n,m,c) = (2,2,3)$ and let $G$ and $V$ be as in Theorem \ref{submain}. In Proposition \ref{sub_ext_prop} we show that $b(G)=1$ on $V \otimes \F_{q^2}$, so by Proposition \ref{fieldext}, $b(G)=2$ on $V$.
Similarly, if $(n,m,c)=(n,n,2)$, we show that $G$ has a regular orbit on $V\otimes \F_{q^3}$ in  Proposition \ref{exttensor}, so for the action of $G$ on $V$, we have $2 \leq b(G) \leq3$ by Proposition \ref{sub_ext_prop}. We will use similar arguments in this section for other $G$ and $V$ where we cannot show there is a regular orbit.
\begin{table}[!htbp]
\begin{tabular}{@{}ccc@{}}
\toprule
$n$      & $m$ & $c$ \\ \midrule
$\geq 3$ & $n$ & 3   \\
\midrule
$4\leq n \leq 6$        & $\binom{n}{2}$  & 2   \\ 
\midrule
4        & 10  & 2   \\ 
       & 4   & 4   \\ 
\midrule
3        & 6   & 2   \\
        & 3   & 4   \\ 
\midrule
2        & 3   & 2   \\
 & 2   & 4   \\
 \bottomrule
\end{tabular}
\caption{Representations of $SL_n(q^c) < \mathrm{GL}_{m^c}(q)$ left to consider.\label{subfieldbad}} 
\end{table}

%
\begin{proposition}
	\label{subcube}
Let $V=V_{n^3}(q)$ be an $n^3$-dimensional vector space over $\F_q$, where $n\geq 3$. Suppose that $G \leq \Gamma \mathrm{L}(V)$ is almost quasisimple with $E(G)/Z(E(G)) \cong \mathrm{PSL}_n(q^3)$ and that the restriction of $V$ to $E(G)$ is absolutely irreducible. If $n\geq 4$, then $G$ has a regular orbit on $V$ and if $n=3$, then $b(G)\leq 2$.
\end{proposition}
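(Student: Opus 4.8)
The plan is to run the standard counting argument from Proposition~\ref{tools}, using the module $V = W \otimes W^{(q)} \otimes W^{(q^2)}$ where $W$ is the natural module for $\mathrm{SL}_n(q^3)$, together with the tensor-codimension bound of Proposition~\ref{tensorcodim}. First I would record the relevant data: $\dim V = n^3$, the field automorphism $\varphi$ of $\mathrm{PSL}_n(q^3)$ permutes the three tensor factors, and $G \le \Gamma\mathrm{L}(V)$ has $E(G)/Z(E(G)) \cong \mathrm{PSL}_n(q^3)$. For $x \in G$ of projective prime order, I split into cases according to whether $x$ induces an inner-diagonal, graph, or field automorphism of $\mathrm{PSL}_n(q^3)$. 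For $x$ inner-diagonal or a graph automorphism acting linearly on $V$, Proposition~\ref{tensorcodim} applied to the factorisation $V = W \otimes (W^{(q)}\otimes W^{(q^2)})$ gives $\emax^V(x) \le n^2 \cdot \emax^{\overline W}(x_1) \le n^2 \cdot \lfloor \frac{n-1}{n} n\rfloor = n^2(n-1)$, so $\dim C_V(x) \le n^3 - n^2$; a graph automorphism of $\mathrm{PSL}_n(q^3)$, $n\ge 4$, still has $\alpha(x)\le n$ (Proposition~\ref{alphas}) except for the small exceptions $n=4$, which I would handle with the slightly weaker bound $\frac{n-1}{n}$-type estimate as in Proposition~\ref{n^3}. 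For $x$ a field or graph-field automorphism, I invoke Proposition~\ref{field} (with underlying field $q^3$) to bound $\sum |\bar x^H| |C_V(x)|$ by $\log(\log_2 q^3 + 2)\, q^{3n^2/2 + 3 n^3/2}$ or the analogue, which is negligible.

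The counting inequality then reads, if $G$ has no regular orbit on $V$,
\[
q^{n^3} \le 2\,|\mathrm{Aut}(\mathrm{PSL}_n(q^3))|\, q^{n^3 - n^2} + (\text{graph-automorphism term}) + 2\log(\log_2 q^3 + 2)\, q^{3n^2/2 + n^3/2},
\]
where $|\mathrm{Aut}(\mathrm{PSL}_n(q^3))| < q^{3n^2}\cdot 3\log_p q$ say. The dominant term on the right is of order $q^{3n^2 + n^3 - n^2} = q^{n^3 + 2n^2}$, and we need this to be smaller than $q^{n^3}$, i.e.\ we need $n^2 > 2n^2$ after accounting for the $\log$ factor — this does \emph{not} hold directly, so the crude bound $\dim C_V(x)\le n^3-n^2$ is not enough on its own. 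I would instead use the sharper codimension bound coming from the weight-string/$\Psi$-net analysis of the natural module combined with Proposition~\ref{tensorcodim}: for a semisimple $s$ of prime order the discussion preceding Proposition~\ref{sscounts} (applied to $V(\lambda_1)$) gives $c(s) \ge |\Delta_i|$ summed over components, and for $s$ with small centraliser type we get codimension $\ge 2(n-2)\cdot n^2 / n$ roughly; more to the point, I would partition semisimple classes by centraliser type exactly as in Propositions~\ref{sscounts} and via \cite[Table B.3]{bg}, so that classes with a large eigenspace on $W$ are few in number. Concretely, if $s$ has an eigenspace of dimension $n-j$ on $W$ then $\emax^V(s) \le n^2(n-j)$ and $|s^{\mathrm{PGL}_n(q^3)}| < 2^m q^{3(n^2 - \sum a_i^2)} \le 2^m q^{3(2jn - 2j^2)}$-ish, so the product $|s^G| q^{\emax^V(s)}$ is at most $q^{n^3 - jn^2 + 6jn}$, which beats $q^{n^3}$ for $n\ge 4$ since $jn^2 > 6jn$. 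Summing over the at most $q^{3n}$-ish conjugacy classes (using \cite[Lemma~3.7]{MR2888238}) then closes the argument for $n\ge 4$.

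The main obstacle will be the case $n=3$, where $\dim V = 27$ and the crude estimates are far too weak to give a regular orbit, and indeed the statement only asserts $b(G)\le 2$. Here I would follow the strategy used elsewhere in the paper for borderline subfield modules: show first that $G$ has a regular orbit on $V \otimes \mathbb{F}_{q^j}$ for a suitable $j$ (this is a field-extension version, to be handled in Section~\ref{ext_section}, cf.\ the forward references to Propositions~\ref{sub_ext_prop} and \ref{exttensor}), or directly exhibit a base of size $2$. For the $b(G)\le 2$ claim one approach is: pick a regular semisimple element $A \in \mathrm{SL}_3(q^3)$ whose eigenvalues over $\overline{\mathbb{F}}_q$ are a single Galois orbit, so $|C_G(A)|$ is small (of order $\approx q^9$), fix a vector $v$ in a regular orbit of $\langle A\rangle$-type, and then show that the pointwise stabiliser $(C_G(A))_v$ is trivial by a counting inequality $q^{27} > |C_G(A)|\, q^{27 - m}$ for the appropriate codimension $m$ read off from the $\Psi$-net tables of $V(\lambda_1)$ (here $m \ge 2\cdot 3 = 6 > \log_q|C_G(A)| \approx 9$ fails, so I would need $\Psi$ of larger rank, giving $m \ge 10$ or so, which suffices). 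The cleanest route, and the one I expect to adopt, is to defer to the field-extension machinery: prove $b(G) = 1$ on $V\otimes \mathbb{F}_{q^k}$ for some $k$ and apply Lemma~\ref{fieldext} to get $b(G)\le 2$ on $V$, exactly as flagged in the remarks following Proposition~\ref{all_subnoro}.
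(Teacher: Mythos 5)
Your overall architecture matches the paper's: decompose $\overline V \cong W\otimes W^{(q)}\otimes W^{(q^2)}$, bound eigenspaces via Proposition \ref{tensorcodim}, partition semisimple classes by eigenvalue multiplicities on $W$ using \cite[Table B.3]{bg}, handle field automorphisms with Proposition \ref{field}, sum over the $\approx q^{3(n-1)}$ classes via \cite[Lemma 3.7]{MR2888238}, and for $n=3$ defer to the field-extension machinery (Lemma \ref{fieldext} together with Proposition \ref{sub_ext_prop}), which is exactly the paper's route for that case. You also correctly diagnose that the crude bound $\dim C_V(x)\le n^3-n^2$ cannot beat $|\mathrm{Aut}(\mathrm{PSL}_n(q^3))|\approx q^{3n^2}$.

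However, the refined codimension bound you actually write down is too weak to close the argument in the critical range. Applying Proposition \ref{tensorcodim} only to the factorisation $W\otimes\bigl(W^{(q)}\otimes W^{(q^2)}\bigr)$ gives $\emax^V(s)\le n^2 a_1$, i.e.\ $\codim\, C_V(s)\ge jn^2$ where $j=n-a_1$; against the class size $|s^{G}|<2^{m}q^{3(n^2-\sum a_i^2)}\le 2^{m}q^{6jn-3j^2-3j}$ this requires $n^2-6n+3j+3>0$, which fails for $(n,j)=(4,1)$ and gives only a margin of $q^{1}$ for $(n,j)=(5,1)$ — your stated criterion ``$jn^2>6jn$'' in fact holds only for $n>6$. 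Once you also multiply by the number of conjugacy classes, the inequality fails for $n=4,5$. The missing ingredient is to combine Proposition \ref{permsquare} with Proposition \ref{tensorcodim}: bound the largest eigenspace on $W\otimes W^{(q)}$ by $\sum a_i^2$ first and then tensor with the third factor, giving $\codim\, C_V(s)\ge n(n^2-\sum a_i^2)$, which grows like $2jn^2$ rather than $jn^2$. After splitting off the classes with $a_1=n-1$ (at most $2q^{3(2n-1)}$ elements, with $\dim C_V(s)\le n^3-2n^2+2n$), this yields a margin of roughly $(n-3)(4n-8)$ over the class size and closes the argument for $n\ge 5$, with $n=4$ still needing a sharper count of prime-order classes. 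A parallel computation for unipotent classes — via the Jordan-block combinatorics of \cite[Lemma 1.3.3]{bigpaper}, with root elements split off separately — is also required and is essentially absent from your sketch, since the class-size estimate you quote is the semisimple one.
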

\begin{proof}
We have $V\otimes \overline{\mathbb{F}}_p \cong V(\lam1) \otimes V(\lam1)^{(q)} \otimes  V(\lam1)^{(q^2)}$, where $V(\lam1)$ is the natural module for $E(G)$.
Suppose $s \in  G$ is semisimple of projective prime order, and suppose the eigenvalues of $s$ on $V(\lam1)\otimes \overline{\mathbb{F}}_p$ are $t_1, t_2, \dots t_m$, ordered so that the multiplicities $a_i$ of the $t_i$ are weakly decreasing. By Propositions \ref{permsquare} and \ref{tensorcodim}, the codimension of $C_V(s)$ is at least $n(n^2 -\sum a_i^2)$, while $\log_q (|sF(G)^{\pgl_n(q)}|) < 3(n^2-\sum a_i^2)+m\log_q2$. Therefore, noting that $m\leq n$, we have
\begin{equation}
\label{n^3ss}
\mathrm{codim}_{\mathbb{F}_q} C_V(s) -\log_q (|sF(G)^{\pgl_n(q)}|) \geq (n-3)(n^2-\sum a_i^2)-n\log_q2
\end{equation}
Provided that $a_1<n-1$, the right hand side of \eqref{n^3ss} is at least $(n-3)(4n-8)$. If instead $a_1=n-1$, then $\mathrm{dim} C_V(s) \leq n^3-2n^2+2n$ and we will treat such classes separately.
Now suppose $uF(G) \in G/F(G)$ is of prime order with Jordan blocks $(J_p^{a_p}, \dots , J_2^{a_2}, J_1^{a_1})$ on the natural module for $G$. By \cite[Lemma 1.3.3.]{bigpaper} and Proposition \ref{tensorcodim}, the codimension of $C_V(u)$ is at least $n(n^2-\sum_{i<j} ia_ia_j -\sum i a_i^2)$, and $\log_q (|uF(G)^{\pgl_n(q)}|)<3(n^2-2\sum_{i<j} i a_ia_j -\sum i a_i^2)+m$, where $m$ is the number of non-zero $a_i$. Therefore, since $m \leq 1+ \sum_{i<j} i a_ia_j $, we have
 \begin{equation}
\label{n^3u}
\mathrm{codim}_{\mathbb{F}_q} C_V(u) -\log_q (|uF(G)^{\pgl_n(q)}|) \geq (n-3)(n^2-2\sum_{i<j} ia_ia_j -\sum a_i^2)-1.
\end{equation}
Provided that $u$ does not have Jordan canonical form $(J_2,J_1^{n-2})$ on $V(\lam 1)$, the right hand side of \eqref{n^3u} is at least $(n-3)(4n-8)-1$. If $u$ does have Jordan canonical form $(J_2,J_1^{n-2})$, then $\dim C_V(s) \leq n^3-3n^2+4n$. By \cite[Lemma 3.7]{MR2888238}, the number of conjugacy classes in $\mathrm{PGL}_n(q^3)$ is at most $q^{3(n-1)}+5q^{3(n-2)}$.
Therefore, if $G$ has no regular orbit on $V$,
\begin{align*}
q^{n^3} \leq &2(q^{3(n-1)}+5q^{3(n-2)})(q^{n^3-(n-3)(4n-8)-n\log_q2}+q^{n^3-(n-3)(4n-8)+1})+ 2q^{3(2n-1)}q^{n^3-2n^2+2n}\\
&+q^{3(2n-1)}q^{n^3-3n^2+4n}+2\log(\log_2 q^3+2)q^{3n^2/2+(n^3+n^{3/2})/2}
\end{align*}
This gives a contradiction for $n\geq 5$ and $q\geq 2$. Replacing $(q^{3(n-1)}+5q^{3(n-2)})$ with a more accurate count of the number of conjugacy classes of prime order elements in $\mathrm{PGL}_4(q^3)$ gives the result for $n=4$.
%
Finally, if $n=3$, then $b(G) \leq 2$ by Propositions \ref{fieldext} and \ref{sub_ext_prop}.
\end{proof}
\begin{proposition}
	\label{sub_symm_square}
Let $m=\binom{n+1}{2}$ with $n \in [2,4]$, and define $V=V_{m^2}(q)$. Suppose that $G \leq \Gamma \mathrm{L}(V)$ is an almost quasisimple group with $E(G)/Z(E(G)) \cong \mathrm{PSL}_n(q^2)$ such that the restriction of $V$ to $E(G)$ is absolutely irreducible. If $n\in [3,4]$, then $G$ has a regular orbit on $V$. If instead $n=2$, then $b(G) \leq 2$.
\end{proposition}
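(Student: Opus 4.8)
The plan is to treat $n\in\{3,4\}$ and $n=2$ separately. Write $U$ for the $n$-dimensional natural $\overline{\mathbb{F}}_pE(G)$-module and $W=V(2\lambda_1)$ for its symmetric square. Since $\dim V(2\lambda_1)=\binom{n+1}{2}=m$ only when $p\geq3$, we may assume $p\geq3$, and then the subfield construction of Section \ref{subfields} (with $c=2$) gives $V\otimes\overline{\mathbb{F}}_p\cong W\otimes W^{(q)}$, with each $g\in E(G)$ acting as $g\otimes g^{(q)}$.

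For $n\in\{3,4\}$, I would first note that the highest weight of $V|_{E(G)}$ is $2(p^e+1)\lambda_1$, a positive multiple of $\lambda_1$; as $n\geq3$ this is not fixed by the graph automorphism (nor by any graph--field automorphism), so $V$ is not self-dual and $G$ contains no graph or graph--field automorphisms. Thus only linear elements of projective prime order, together with field automorphisms handled by Proposition \ref{field} (applicable since $m^2\geq 2n^2=n^2c$), need to be considered. For a semisimple $s\in G$ of projective prime order $r$ with eigenvalue multiplicities $a_1\geq a_2\geq\cdots$ on $U$, Proposition \ref{eigsp_from_permsquare}(i) bounds $\emax^{\overline{W}}(s)$, and Proposition \ref{tensorcodim} then gives $\dim C_V(s)\leq m\cdot\dim C_{\overline{W}}(s)$; for a unipotent $u$ of order $p$ I would read off the Jordan form of $u$ on $W=\mathrm{Sym}^2U$ from that on $U$ (using the decomposition of the symmetric square of a sum of Jordan blocks, cf. \cite[Lemma 1.3.3]{bigpaper}, treating $p=3$ by hand if that decomposition degenerates) and again pass to $V$ by Proposition \ref{tensorcodim}. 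Feeding these bounds into Proposition \ref{tools}\ref{eigsp2}) (or \ref{qsgood})), together with the class-number estimate of \cite[Lemma 3.7]{MR2888238} and the class-size data of \cite[Table B.3]{bg} and Propositions \ref{sscounts}, \ref{unipcounts}, \ref{invols}, should force a contradiction for every $q$, the finitely many small $q$ being checked in GAP \cite{GAP4}; hence $G$ has a regular orbit on $V$.

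The delicate point is that for $n=3$ the crude estimate $\dim C_V(s)\leq m\cdot\dim C_{\overline{W}}(s)$ gives only $\codim C_V(s)\geq12$ for the semisimple class with multiplicities $(2,1)$ on $U$, which is insufficient to beat $|\mathrm{PGL}_3(q^2)|$ once one sums over the $\leq q^4+5q^2$ conjugacy classes. The remedy is the exact identity $\dim C_V(s)=\sum_\mu m_W(\mu)\,m_W(\mu^{-\bar q})$, where $\mu$ runs over the eigenvalues of $s$ on $W$, $m_W(\cdot)$ denotes multiplicity and $q\bar q\equiv1\pmod{r}$: this keeps $\dim C_V(s)$ small, and hence $\codim C_V(s)$ close to $m^2-m$, unless $r$ divides one of finitely many fixed affine expressions in $q$ (such as $q\pm1$ or $2q\pm1$), and for those exceptional $r$ only $O(q\log q)$ classes arise, which are absorbed easily; an analogous refinement handles unipotent classes. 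I expect this tensor-product eigenvalue bookkeeping to be the main obstacle.

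For $n=2$, where $m=3$ and $\dim_{\mathbb{F}_q}V=9$, I would instead argue indirectly: the module $V\otimes\mathbb{F}_{q^2}$ is an absolutely irreducible $\mathbb{F}_{q^2}E(G)$-module with $E(G)/Z(E(G))\cong\mathrm{PSL}_2(q^2)$ of dimension $9\geq2^3=n^3$, and since $p\geq3$ forces $q^2\geq9$, Proposition \ref{n^3} applies (the case $q^2=9$ being covered there via \cite{MR3500766}) and yields a regular orbit of $\mathbb{F}_{q^2}^\times\circ G$, hence of $G$, on $V\otimes\mathbb{F}_{q^2}$. Proposition \ref{fieldext} with $i=2$ then gives $b(G)\leq2$ on $V$, completing the proof.
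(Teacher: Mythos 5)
Your proposal is correct and follows essentially the same architecture as the paper: decompose $V\otimes\overline{\F}_p\cong V(2\lam1)\otimes V(2\lam1)^{(q)}$, bound eigenspace codimensions on the symmetric-square factor, pass to the tensor product via Proposition \ref{tensorcodim}, handle field automorphisms with Proposition \ref{field}, and for $n=2$ deduce $b(G)\leq 2$ exactly as the paper does, from Proposition \ref{n^3} applied to the $9$-dimensional module over $\F_{q^2}$ together with Lemma \ref{fieldext}. The one substantive difference is the tool you use on the factor $W=V(2\lam1)$: you invoke Proposition \ref{eigsp_from_permsquare}, whose bound $\dim W_t(s)\leq a_1+\tfrac{1}{2}\sum a_i^2$ is weaker for low-corank semisimple classes (e.g.\ multiplicities $(n-1,1)$), which is precisely what creates your ``delicate point'' for $n=3$. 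The paper instead reads off from the weight strings of $V(2\lam1)$ that \emph{every} noncentral element of odd projective prime order, and every unipotent element, has eigenspace codimension at least $n$ on $W$ (and at least $n-1$ for involutions), so a single uniform estimate against $|\pgl_n(q^2)|$ plus the involution count of Proposition \ref{invols} closes the argument with no class-by-class analysis. Your remedy does work — once you weight the problematic classes by their actual class sizes from \cite[Table B.3]{bg} rather than by $|\pgl_n(q^2)|$, the $(2,1)$ class for $n=3$ contributes at most roughly $q^{10}\cdot q^{36-12}=q^{34}<q^{36}$, so the elaborate exact eigenvalue bookkeeping you sketch is not actually needed — but the weight-string bound buys a visibly shorter proof.
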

\begin{proof}
We have $V\otimes \overline{\mathbb{F}}_p \cong V(2\lam1) \otimes V(2\lam1)^{(q)}$, and so $p\geq 3$. 
The Weyl orbit and weight string tables for $V(2\lam1)$ are given below.
\begin{table}[H]
\begin{tabular}{cccc}
\toprule 
$i$ & $\mu$ & $|W.\mu |$ & Mult \\ 
\midrule 
1 & $2\lam1$&$l+1$  & 1 \\ 
2& $\lam2$ &$\binom{l+1}{2}$& 1\\
\bottomrule
\end{tabular}
\quad
\begin{tabular}{ccccc}
\toprule 
 &  & \multicolumn{2}{c}{$c(s)$} & $c(u_\Psi)$ \\ 
\midrule 
String & Mult & $r=2$ & $r\geq 3$ & $p\geq 3$ \\ 
\midrule 
$\m1$ & $l-1$ & 0 & 0 & 0 \\ 

$\m1\m2\m1$  & 1 & 1 & 2 & 2 \\ 

$\m2$ & $\binom{l-1}{2}$ & 0 & 0 & 0 \\ 

$\m2\m2$  & $l-1$ & $l-1$ & $l-1$ & $l-1$ \\ 
\midrule 
\textbf{Total} &   & $l$ & $l+1$ & $l+1$ \\ 
\bottomrule 
\end{tabular} 
\caption{Weyl orbit and weight string tables for $V(2\lam1)$.}
\end{table}

By Propositions \ref{tensorcodim} and \ref{invols}, if $G$ has no regular orbit on $V$, then 
\[
q^{m^2} \leq 2 |\mathrm{PGL}_n(q^2)|q^{m(m-n)} + 4(q^{n^2+n-2}+q^{n^2+n-3})q^{m(m-n+1)}+2\log(\log_2q^2+2)q^{n^2+(m^2+m)/2}.
\]
This gives a contradiction for $n=4$ and $q\geq 3$, as well as $n=3$, $q\geq 4$. If $E(G)/Z(E(G)) \cong \mathrm{PSL}_3(9)$, then substituting the precise number of involutions and prime order elements (computed using GAP) into the equation, we also obtain a contradiction. 
Finally, if $n=2$, then $b(G)\leq 2$ by Propositions \ref{fieldext} and \ref{n^3}.
%
\end{proof}
\begin{proposition}
	\label{sub_n^4}
Let $V=V_{n^4}(q)$, with $n\in [2,4]$, and let $G \leq \Gamma \mathrm{L}(V)$ be an almost quasisimple group with $E(G)/Z(E(G)) \cong \mathrm{PSL}_n(q^4)$. Further suppose that $E(G)$ acts absolutely irreducibly on $V$. If $n\in [3,4]$, then $G$ has a regular orbit on $V$, and if $n=2$, then $b(G)\leq 2$.
\end{proposition}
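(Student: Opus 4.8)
The plan is to follow the template of Propositions~\ref{subcube} and \ref{sub_symm_square}. Over $\overline{\mathbb{F}}_p$ we have $V \otimes \overline{\mathbb{F}}_p \cong V(\lambda_1) \otimes V(\lambda_1)^{(q)} \otimes V(\lambda_1)^{(q^2)} \otimes V(\lambda_1)^{(q^3)}$, where $V(\lambda_1)$ is the natural module for $E(G)$; in particular the highest weight is $(1+q+q^2+q^3)\lambda_1$, which is not fixed by the nontrivial Dynkin diagram symmetry when $n \geq 3$. So for $n \in \{3,4\}$ no graph or graph-field automorphism preserves $V$, and it suffices to bound the contributions from linear elements of projective prime order and from field automorphisms.

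First I would bound fixed point spaces. For a semisimple $s \in G$ of projective prime order, writing $a_1 \geq a_2 \geq \cdots$ for the eigenvalue multiplicities of $s$ on $V(\lambda_1) \otimes \overline{\mathbb{F}}_p$ and $m$ for the number of distinct eigenvalues, iterating Proposition~\ref{tensorcodim} together with Proposition~\ref{permsquare} gives $\codim_{\F_q} C_V(s) \geq n^2(n^2 - \sum a_i^2)$, while $|s^{\pgl_n(q^4)}| < 2^m (q^4)^{n^2 - \sum a_i^2}$ by \cite[Table B.3]{bg}. Hence $\codim_{\F_q} C_V(s) - \log_q |s^{\pgl_n(q^4)}| \geq (n^2 - 4)(n^2 - \sum a_i^2) - n\log_q 2$, which is comfortably positive once $a_1 \leq n-2$ (so that $n^2 - \sum a_i^2 \geq 4n-8$). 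The classes with $a_1 = n-1$ are treated separately: there $\dim C_V(s) \leq n^4 - 2n^3 + 2n^2$, and the number of such elements of $\pgl_n(q^4)$ is at most $q^{4(2n-1)}$, as counted in the proof of Proposition~\ref{sscounts}. Similarly, for a unipotent element $u$ of prime order with Jordan type $(J_p^{a_p},\dots,J_1^{a_1})$ on $V(\lambda_1)$, \cite[Lemma 1.3.3]{bigpaper} and Proposition~\ref{tensorcodim} give $\codim_{\F_q} C_V(u) \geq n^2(n^2 - \sum_{i<j} i a_i a_j - \sum i a_i^2)$, which exceeds $\log_q |u^{\pgl_n(q^4)}|$ by a wide margin unless $u$ is a transvection, for which $\dim C_V(u) \leq n^4 - 3n^3 + 4n^2$ and the number of such elements is at most $q^{4(2n-1)}$.

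Next I would bound the number of projective prime order classes of $G/F(G)$ crudely by the number of conjugacy classes of $\pgl_n(q^4)$, which is at most $q^{4(n-1)} + 5q^{4(n-2)}$ by \cite[Lemma 3.7]{MR2888238}; bound the field automorphism contribution by Proposition~\ref{field} with $k = 1/4$ and $d = n^4$; and substitute all of this into Proposition~\ref{tools}\ref{qsgood}). This yields an inequality of the same shape as the one in the proof of Proposition~\ref{subcube},
\begin{align*}
q^{n^4} \leq & \; 2\bigl(q^{4(n-1)} + 5q^{4(n-2)}\bigr)\bigl(q^{n^4 - (n^2-4)(4n-8) + n\log_q 2} + q^{n^4 - (n^2-4)(4n-8) + 1}\bigr) \\
& + 2q^{4(2n-1)}\, q^{n^4 - 2n^3 + 2n^2} + q^{4(2n-1)}\, q^{n^4 - 3n^3 + 4n^2} + 2\log\bigl(\log_2 q^4 + 2\bigr)\, q^{2n^2 + (n^4 + n^2)/2},
\end{align*}
which fails for all $q \geq 2$ when $n = 3$; for $n = 4$ the same holds, replacing the crude class count by the sharper count of prime order elements of $\pgl_4(q^4)$ from \cite[Table B.3]{bg} (or from GAP \cite{GAP4}) if needed for the smallest values of $q$. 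Hence $G$ has a regular orbit on $V$ when $n \in \{3,4\}$. When $n = 2$ these dimension counts are too weak to force a regular orbit, so instead I would pass to a field extension, obtaining $b(G) \leq 2$ from Propositions~\ref{fieldext} and \ref{sub_ext_prop}, exactly as in Propositions~\ref{subcube} and \ref{sub_symm_square}.

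The main obstacle will be the separate treatment of the near-generic classes --- semisimple elements with an $(n-1)$-dimensional eigenspace and unipotent transvections --- where the uniform codimension bound is too weak; here one needs both the sharper estimates on $\dim C_V(\cdot)$ and the tighter class-size bounds, and then to verify that the resulting inequality genuinely fails, which for $n = 4$ and small $q$ may require counting only the prime order elements rather than all conjugacy classes.
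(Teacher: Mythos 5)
Your proposal is correct and rests on the same underlying method as the paper (bound fixed point spaces of projective prime order elements and apply Proposition \ref{tools}, with Proposition \ref{field} for field automorphisms and Propositions \ref{fieldext}/\ref{sub_ext_prop} for $n=2$), but it is substantially more elaborate than what the paper actually does. The paper's proof is essentially one line: every non-central element of projective prime order has an eigenspace of dimension at most $n-1$ on the natural module, so by Propositions \ref{tensorcodim} and \ref{permsquare} one gets the single uniform bound $\dim C_V(x)\leq n^2((n-1)^2+1)$ for \emph{all} such $x$, and then the crude inequality
\[
q^{n^4}\leq 2|\mathrm{PGL}_n(q^4)|\,q^{n^2((n-1)^2+1)}+2\log(\log_2 q^4+2)\,q^{2n^2+(n^4+n^2)/2}
\]
already fails for $n=3,4$ and all $q\geq 2$, because the codimension $n^2(n^2-(n-1)^2-1)=n^2(2n-2)$ comfortably exceeds $\log_q|\mathrm{PGL}_n(q^4)|\approx 4(n^2-1)$. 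In other words, the stratification by conjugacy class type, the class-size bounds from \cite[Table B.3]{bg}, the conjugacy class count $q^{4(n-1)}+5q^{4(n-2)}$, and the separate treatment of semisimple elements with an $(n-1)$-dimensional eigenspace and of transvections --- which you identify as the main obstacle --- are all unnecessary here: the near-generic classes are already absorbed by the uniform bound. Your finer analysis is not wrong (your inequality does fail for $q\geq 2$ when $n=3,4$, and your observation that no graph or graph-field automorphism preserves $V$ for $n\geq 3$ is correct and implicitly used by the paper as well), but the exponent gap at dimension $n^4$ is so large that the machinery of Proposition \ref{subcube} buys you nothing; it only becomes necessary in the $n^3$ and $\binom{n+1}{2}^2$ cases where the margin is genuinely tight.
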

\begin{proof}
We have  $V\otimes \overline{\mathbb{F}}_p \cong V(\lam1) \otimes V(\lam1)^{(q)} \otimes  V(\lam1)^{(q^2)}\otimes  V(\lam1)^{(q^3)}$, where $V(\lam1)$ is the natural module for $E(G)$.
Now, any projective prime order element has an eigenspace of dimension at most $n-1$ on the natural module. Therefore, by Propositions \ref{tensorcodim} and \ref{permsquare}, for projective prime order $x\in G$, $\dim{C_V(g)} \leq n^2((n-1)^2+1^2)$ and so if $G$ has no regular orbit on $V$, then 
\[
q^{n^4} \leq 2|\mathrm{PGL}_n(q^4)|q^{n^2((n-1)^2+1^2)} + 2\log(\log_2q^4+2)q^{2n^2+(n^4+n^2)/2}.
\]
This gives a contradiction for $q\geq 2$ and $n=3,4$, so $G$ has a regular orbit on $V$.

If $n=2$, then $b(G)\leq 2$ by Propositions \ref{fieldext} and \ref{sub_ext_prop}.
\end{proof}

\begin{proposition}
	\label{sub_ext_square}
	Suppose $V=V_{m^2}(q)$, where $m=\binom{n}{2}$ and $n\in [4,6]$.
Let $G$ be an almost quasisimple group with $E(G)/Z(E(G)) \cong \mathrm{PSL}_n(q^2)$ such that $E(G)$ acts absolutely irreducibly on $V$. If $n\in [5,6]$, then $G$ has a regular orbit on $V$, and if $n=4$, then $b(G)\leq 2$.
\end{proposition}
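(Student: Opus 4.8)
The module under consideration satisfies $\overline{V}:=V\otimes\overline{\F}_p\cong V(\lambda_2)\otimes V(\lambda_2)^{(q)}$, so $V$ is the $\F_q$-realisation of this absolutely irreducible $\mathrm{SL}_n(q^2)$-module, of dimension $m^2=\binom{n}{2}^2$. Since a graph automorphism of $\mathrm{SL}_n(q^2)$ carries $\lambda_2$ to $\lambda_{n-2}$, the module $V$ is preserved by no graph or graph-field automorphism when $n\geq 5$; so for $n\in\{5,6\}$ it suffices to bound $|C_V(x)|$ for elements $x$ of $\mathrm{PGL}_n(q^2)$ of prime order and for field automorphisms, and then apply Proposition \ref{tools}\ref{qsgood}) together with Proposition \ref{field}. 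The case $n=4$ is handled separately at the end.

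Let $s\in G$ be semisimple of projective prime order, with eigenvalue multiplicities $a_1\geq a_2\geq\cdots$ on the natural module $W$ for $E(G)$. Writing $s$ as $s_1\otimes s_1^{(q)}$ on $\overline{V}=V(\lambda_2)\otimes V(\lambda_2)^{(q)}$ and noting both tensor factors have dimension $m$, Proposition \ref{tensorcodim} gives $\dim C_V(s)\leq m\,\emax^{V(\lambda_2)}(s_1)$, while Proposition \ref{eigsp_from_permsquare} gives $\emax^{V(\lambda_2)}(s_1)\leq\tfrac12\sum a_i^2$. By \cite[Table B.3]{bg}, $|s^{\mathrm{PGL}_n(q^2)}|<2^n(q^2)^{n^2-\sum a_i^2}$, and by \cite[Lemma 3.7]{MR2888238} the number of conjugacy classes of $\mathrm{PGL}_n(q^2)$ is at most $q^{2(n-1)}+5q^{2(n-2)}$. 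For the generic semisimple classes, those with $a_1\leq n-3$, these three estimates already show that the total contribution of such classes to the right-hand side of Proposition \ref{tools}\ref{qsgood}) is far smaller than $q^{m^2}$. The classes with $a_1\in\{n-2,n-1\}$ are the delicate ones: here the bound $\tfrac12\sum a_i^2$ is too weak (for $n=6$ it makes the estimate fail on the nose), so instead I would use the sharper bound $\dim C_V(s)\leq m\,\emax^{V(\lambda_2)}(s_1)$, where $\emax^{V(\lambda_2)}(s_1)$ is the largest among the numbers $\binom{a_i}{2}$ and $a_ia_j$ — for $a_1=n-1$ this equals $\binom{n-1}{2}$, giving $\codim C_V(s)\geq\binom{n}{2}(n-1)$ — combined with the explicit counts of elements with centraliser of type $A_{n-2}$ or $A_1A_{n-3}$ obtained in the proof of Proposition \ref{sscounts}, read over $\F_{q^2}$.

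Unipotent elements $u$ of order $p$ are treated in the same way: writing the Jordan type of $u$ on $W$ as $(J_p^{a_p},\dots,J_1^{a_1})$, one computes the Jordan type of $u$ on $V(\lambda_2)=\Lambda^2 W$ from \cite[Lemma 1.3.3]{bigpaper}, whence $\dim C_V(u)\leq m\dim C_{V(\lambda_2)}(u)$ by Proposition \ref{tensorcodim}, while $|u^{\mathrm{PGL}_n(q^2)}|$ is bounded by \cite[Table B.3]{bg}; the transvection class, which has the smallest codimension on $V$, has size only about $(q^2)^{2n-1}$ and is comfortably dominated. Feeding all of this, together with the field-automorphism bound of Proposition \ref{field} (legitimate since $d=\binom{n}{2}^2\geq 2n^2=n^2/k$ for $n\geq 4$), into Proposition \ref{tools}\ref{qsgood}) (or \ref{crude})) yields a contradiction for $n\in\{5,6\}$ and all $q\geq 2$; if a finite residue of small $q$ survives the crude estimate it is cleared using the precise prime-order element counts of \cite{bg} or a direct computation in \cite{GAP4}, exactly as in Proposition \ref{subcube}.

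Finally, for $n=4$ we need only $b(G)\leq 2$. Here $V\otimes_{\F_q}\F_{q^2}$ is the $\F_{q^2}$-realisation of the tensor-product module $V(\lambda_2)\otimes V(\lambda_2)^{(q)}$ for $\mathrm{SL}_4(q^2)$, a module with $k=1$, and Proposition \ref{L2_tens_l=3} (the case $\iota_2=\lambda_2$, $l=3$) shows that $G$ has a regular orbit on $V\otimes\F_{q^2}$; Proposition \ref{fieldext} then yields $b(G)\leq 2$ on $V$, which together with the trivial lower bound gives $1\leq b(G)\leq 2$ as recorded in Table \ref{rem2}. The main obstacle throughout is precisely the semisimple and unipotent classes with a large eigenspace, respectively Jordan block, on the natural module: for these the generic "exterior square of $A_{n-1}$" bound lies exactly on the boundary, and one must exploit both the finer eigenvalue multiplicities of the exterior square and the scarcity of such classes to force strict inequality.
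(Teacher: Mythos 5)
Your proposal is correct, and its skeleton matches the paper's: the identification $V\otimes\overline{\F}_p\cong V(\lambda_2)\otimes V(\lambda_2)^{(q)}$, the reduction via Proposition \ref{tensorcodim} to eigenspace bounds on a single $V(\lambda_2)$ factor, Proposition \ref{field} for field automorphisms, and the treatment of $n=4$ by combining Propositions \ref{L2_tens_l=3} and \ref{fieldext} are exactly the paper's steps. Where you genuinely diverge is the engine for the main inequality when $n\in\{5,6\}$: the paper sorts the prime order elements into just three buckets using the $\Psi$-net tables for $V(\lambda_2)$ from Section \ref{tensorl2} (codimension at least $n-2$, $2n-6$ or $2n-4$ on the $\lambda_2$ factor according as $\Phi(s)$ meets every $A_1^2$, every $A_2$, or neither), with the bucket sizes supplied by Propositions \ref{sscounts} and \ref{unipcounts}; you instead run a class-by-class analysis through eigenvalue multiplicities and Jordan types, using Proposition \ref{eigsp_from_permsquare}, the class-size bounds of \cite[Table B.3]{bg} and the class-count bound of \cite[Lemma 3.7]{MR2888238} --- the method the paper itself deploys in Propositions \ref{subcube} and \ref{L2_base_size}. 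Both routes close for all $q\geq 2$; yours needs more bookkeeping over multiplicity patterns but is, as you say, better adapted to isolating the near-critical classes. One small remark: your worry that the bound $\tfrac12\sum a_i^2$ ``fails on the nose'' for $a_1=n-1$ when $n=6$ is unfounded --- such classes number at most about $q^{2(2n-1)}=q^{22}$ and contribute roughly $2q^{22}\cdot q^{15\cdot 13}=2q^{217}$ against $q^{225}$ --- so the sharper $\binom{n-1}{2}$ eigenspace bound is a safety margin rather than a necessity.
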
 
\begin{proof}
We have $V\otimes \overline{\mathbb{F}}_p \cong V(\lam2) \otimes V(\lam2)^{(q)}$. From the $\Psi$-net tables for $V(\lam2)$ given in Section \ref{tensorl2}, and also Proposition \ref{tensorcodim}, we deduce that if $G$ has no regular orbit on $V$ then
\begin{align*}
q^{\binom{n}{2}^2}\leq & 2(q^{2(2n-1)}+q^{2(2n-1)})q^{\binom{n}{2}(\binom{n}{2}-n+2)}+ \left[ 2(2q^{2(n^2/2+2)})+4 \left( \frac{q^{2(n^2/2+2)}-1}{q^4-1}\right)\right]q^{\binom{n}{2}(\binom{n}{2}-2n+6)}\\
& +2|\mathrm{PGL}_n(q^2)| q^{\binom{n}{2}(\binom{n}{2}-2n+4)}+ 2\log(\log_2q^2+2)q^{n^2+\frac{1}{2}( \binom{n}{2}^2+\binom{n}{2})}.
\end{align*}
This inequality is false for $n=5,6$ and $q\geq 2$.
Finally, if $n=4$, then $b(G) \leq 2$ by Propositions \ref{fieldext} and \ref{L2_tens_l=3}.
\end{proof}
We have now completed the proof of Theorem \ref{submain}.

\section{Absolutely irreducible representations over extension fields}
\label{ext_section}
In this section we consider absolutely irreducible $\mathrm{SL}_n(q)$-modules $V$ defined over a field $\mathbb{F}$ that can be realised over a proper subfield of $\mathbb{F}$. We have already analysed such modules $V$ over $\F_q$ in Sections \ref{prest} and \ref{tensor}, so we also assume that $k\neq 1$ (with $k$ as in Theorem \ref{main}) in this section.
The main result of this section is as follows.

\begin{theorem}
\label{extmain}
Let $V=V_d(q^k)$ be a $d$-dimensional vector space over $\mathbb{F}_{q^k}$ with $k \neq 1$, and let $G \leq \Gamma \mathrm{L}(V)$ be almost quasisimple with $E(G)/Z(E(G)) \cong \mathrm{PSL}_n(q)$ and $n\geq 2$. Further suppose that the restriction of $V=V(\lambda)$ to $E(G)$ is an absolutely irreducible $\F_{q^k}E(G)$-module of highest weight $\lambda$, and that  $V$ can be realised over a proper subfield of $\F_{q^{k}}$. Then either $G$ has a regular orbit on $V$, or $n$, $b(G)$ and $\lambda$ (up to quasiequivalence) appear in Table \ref{rem2}.
\end{theorem}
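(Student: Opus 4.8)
The plan is to reduce Theorem \ref{extmain} to the classifications obtained over the minimal field of definition, and then to peel off the field extension using Lemmas \ref{extfield} and \ref{fieldext}. Since $V=V(\lambda)$ can be realised over a proper subfield of $\F_{q^k}$, let $\F_{q^{k_0}}$ be the smallest such subfield, let $V_0$ be the resulting absolutely irreducible $\F_{q^{k_0}}E(G)$-module, and let $G_0\le\Gamma\mathrm{L}(V_0)$ be the corresponding almost quasisimple group; then $V=V_0\otimes_{\F_{q^{k_0}}}\F_{q^k}$, the degree $j=[\F_{q^k}:\F_{q^{k_0}}]$ is an integer with $k_0 j=k$, and $\dim_{\F_{q^k}}V=\dim_{\F_{q^{k_0}}}V_0=:d$. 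Here $k_0\in\{1,\tfrac12,\tfrac13,\tfrac14\}$ by the restrictions on $k$ discussed after Theorem \ref{main}, and $V_0$ cannot be realised over a proper subfield of $\F_{q^{k_0}}$; hence $V_0$ is one of the modules treated in Theorems \ref{prestmain} and \ref{nonprestmain} (if $k_0=1$), or in Theorem \ref{submain} with $q$ replaced by $q^{k_0}$ (if $k_0<1$). Consequently either $G_0$ has a regular orbit on $V_0$, or $V_0$ is exceptional, in which case $\lambda$ lies (up to quasiequivalence) in a finite list: $\lambda_1$, $2\lambda_1$, $\lambda_2$, $\lambda_3$ with $n=6$, $3\lambda_1$ with $n=2$, $\lambda_1+\lambda_{n-1}$ with $n=3$, together with the subfield highest weights already recorded in Table \ref{rem2}.

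First I would dispose of the case where $G_0$ has a regular orbit on $V_0$. Inspecting the proofs in Sections \ref{prest}--\ref{subfields}, this was obtained either by showing that one of the inequalities of Proposition \ref{tools}(ii)--(v) fails for $G_0$ on $V_0$ — in which case Lemma \ref{extfield} (applied with $q^{k_0}$ in place of $q_0$ and $j$ in place of $k$) shows $\F_{q^k}^\times\circ G_0$ has a regular orbit on $V$ — or by exhibiting an explicit regular vector; in the latter case one re-runs Proposition \ref{tools}(iv) directly for $G$ on $V$, noting that the codimension bounds $c(s)$ and $c(u_\Psi)$ computed for $V_0$ apply verbatim to $V$ while $|V|=(q^k)^d$ has grown, so that the inequality now fails. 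It then remains to pass from $\F_{q^k}^\times\circ G_0$ to $G$: modulo its Fitting subgroup $G$ induces on $E(G)$ the same type of subgroup of $\mathrm{Aut}(\mathrm{PSL}_n(q))$ as $G_0$ — generated by diagonal, graph and $\F_q$-field automorphisms, all realised inside $\Gamma\mathrm{L}(V_0)$ — and the only elements of $G$ not so accounted for induce field automorphisms of $\F_{q^k}$ over the field of definition of $\lambda$, for which the argument in the proof of Proposition \ref{field} gives $|C_V(x)|\le(q^k)^{d/r}\le q^{kd/2}$ when $x$ has projective prime order $r$. Since the inequality of Lemma \ref{extfield} fails with slack at least a factor $q^{k-k_0}$ — the estimate $d-\dim C_{\overline V}\ge1$ in its proof being responsible — this additional contribution is absorbed and Proposition \ref{tools}(iv) still fails for $G$ on $V$, so $G$ has a regular orbit on $V$ and $(\lambda,n,k)$ does not occur in Table \ref{rem2}.

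The remaining, and main, work concerns the finitely many exceptional $V_0$, treated case by case in $n$ and $k$. When $n$ or $k$ is large enough, the extra room from passing to $\F_{q^k}$, combined with the field-automorphism bound of Proposition \ref{field}, makes Proposition \ref{tools}(iv) fail for $G$ on $V$, giving a regular orbit and so omission from Table \ref{rem2}; otherwise $|V|\le|G|$ (for $\lambda=\lambda_1$ this yields $b(G)\ge\lceil n/k\rceil$) forces $b(G)\ge2$, and one must provide the matching upper bound. For $\lambda\in\{\lambda_1,2\lambda_1,\lambda_2\}$ these upper bounds are exactly Propositions \ref{L1_base_size} and \ref{k=1_leftovers}; for $\lambda=\lambda_3$ with $n=6$ and $\lambda=\lambda_1+\lambda_{n-1}$ with $n=3$ one combines the $\F_q$-results (Propositions \ref{l3} and \ref{adjoint}) with Lemma \ref{fieldext}, and sharpens to $b(G)=2$ or to a regular orbit wherever a base of size $1$ or $2$ over a further extension is available, via Propositions \ref{sub_ext_prop} and \ref{exttensor} together with Lemma \ref{extfield}. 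Assembling these, the surviving non-regular-orbit triples match the rows of Table \ref{rem2}, which completes the proof of the theorem and, with it, of Theorem \ref{main}.

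I expect the principal obstacle to be the group-theoretic bookkeeping in the second step: tracking precisely how the field of definition of $\lambda$, the minimal realisation field $\F_{q^{k_0}}$ and the group field $\F_q$ sit inside $\Gamma\mathrm{L}(V)$, so as to be certain that reducing $G$ to $\F_{q^k}^\times\circ G_0$ together with extension-field automorphisms overlooks nothing — and, for the low-dimensional exceptional modules, checking that the slack in Lemma \ref{extfield} really does survive re-adding those automorphism contributions at the smallest admissible values of $q$.
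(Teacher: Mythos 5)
Your overall strategy is the one the paper follows: pass to the realisation $V_0$ over the minimal field of definition, use Lemma \ref{extfield} to dispose of every module for which a regular orbit on $V_0$ was obtained by defeating one of the inequalities of Proposition \ref{tools}, and then treat the surviving modules one by one over $\F_{q^k}$. However, your enumeration of the survivors has a genuine gap, and the mechanism you propose for them would not close it. The modules that survive are exactly those in Table \ref{extrem}: besides $\lambda_1$, $2\lambda_1$, $\lambda_2$ they include the adjoint module $\lambda_1+\lambda_{n-1}$ for \emph{every} $n\geq 3$ (not only $n=3$), the modules $(p^a+1)\lambda_1$ and $\lambda_1+p^a\lambda_{n-1}$ for every $n$, and all modules whose regular orbit over the field of definition was produced by an explicit vector rather than by counting ($\lambda_3$ with $n=9$, $\lambda_4$ with $n=8$, $2\lambda_2$ with $n=4$, $3\lambda_1$ with $n=2,3$, $4\lambda_1$ with $n=2$, and the subfield modules of Section \ref{subfields}). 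For none of these does Lemma \ref{extfield} say anything, because there is either no regular orbit on $V_0$ or no failed inequality to propagate.

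For these survivors your proposed tool, Lemma \ref{fieldext}, points the wrong way: it converts a base of size $c$ on $V_0\otimes\F_{q^{ci}}$ into a base of size $ci$ on $V_0$, so it can never produce the regular orbits on $V_0\otimes\F_{q^k}$ that Table \ref{rem2} asserts by omission (for instance, the adjoint module for $n\geq 4$ and all $k\geq 2$, or $(p^a+1)\lambda_1$ for all $k>1$). What is actually required is a fresh application of Proposition \ref{tools} over $\F_{q^k}$ for each such module, with eigenspace bounds that scale correctly with $k$; this is the content of Propositions \ref{adj_ext}, \ref{2L1_base_size}, \ref{L2_base_size}, \ref{exttensor}, \ref{3L1_n=2}--\ref{ext_2L2} and \ref{sub_ext_prop}, and it is where most of the work of the section lies. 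The adjoint case in particular is not a formal consequence of Proposition \ref{adjoint}: one must control $\dim C_V(g)$ on the quotient $V_0/V_1$ when $p\mid n$, carry out a multiplicity-by-multiplicity analysis of semisimple classes, and handle graph automorphisms separately, which is how the exceptional entry $(n,k)=(3,2)$ of Table \ref{rem2} arises. Your first reduction step and your treatment of the extra $\F_{q^k}/\F_{q_0}$-semilinear elements via Proposition \ref{field} are sound, but without these extension-field counting arguments the theorem is not proved.
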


Maintaining the notation in Theorem \ref{extmain}, let $\F_{q_0}$ be field of definition of $V$ i.e., the smallest subfield of $\F_{q^k}$ that $V$ is realised over. By our previous work, $q_0=q^{1/c}$ for some integer $c\geq 1$, and so we set $q=q_0^c$. Let $V_0$ be the realisation of $V$ over $\F_{q_0}$. We can then write $V=V_0 \otimes \F_{q^k}$, where $q^k=q_0^t$ i.e., $t=ck$. 

 We have already investigated these modules $V_0$ in Sections \ref{prest}, \ref{tensor} and \ref{subfields}. By Proposition \ref{extfield}, if one of the inequalities given in Proposition \ref{tools}\ref{eigsp1}--\ref{crude} fails for $G$ acting on $V_0$, then $\mathbb{F}_{q_0^t}^{\times}\circ G$ has a regular orbit on $V$. Therefore, considering the results contained in Sections \ref{prest}, \ref{tensor} and \ref{subfields}, we see that it remains to consider field extensions of the modules $V_0=V(\lambda)$ listed in Table \ref{extrem}, where there is either no regular orbit of $G$ on $V_0$, or a different method of proof was used.


\begin{table}
	
	\begin{tabular}{@{}lccc@{}}
		\toprule
		$\lambda$ & Dimension & $c$ & $n$ \\ \midrule
		$\lambda_1$ & $n$ & 1 & $[2,\infty)$ \\
		$\lambda_2$ & $\binom{n}{2}$ & 1 & $[3,\infty)$ \\
		$2\lambda_1$ & $\binom{n+1}{2}$ & 1 & $[2,\infty)$ \\
		$3\lam1$ & 4 & 1 & $2$ \\
		& 6 & 1 & $3$ \\
		$\lambda_3$ & 20 & 1 & $6$ \\
		& 84 & 1 & $9$ \\
		$\lam4$ & 70 & 1 & $8$ \\
		$2\lam2$ & $20-\ep_3$ & 1 & 4 \\
		$4\lam1$ & 5 & 1 & 2 \\
		$\lam1+\lam {n-1}$ & $n^2-1-\ep_n$ & 1 & $[4,\infty)$ \\
		$\lambda_1+ p^a\lambda_{n-1}$ & $n^2$ & 1 & $[3,\infty)$ \\ 
		$(p^a +1 )\lambda_1$ & $n^2$ & $2$ & $[2,\infty)$ \\
		$2(p^{e/2}+1)\lam1$ & 9 & $2$ & $2$ \\
		$(p^{e/2}+1)\lam2$ & 36 & $2$ & $4$ \\
		$(p^{e/3}+p^{2e/3}+1)\lam1$ & $n^3$ & $3$ & $[2,3]$ \\
		$(p^{e/4}+p^{2e/4}+p^{3e/4}+1)\lam1$ & 16 & $4$ & 2 \\
		 \bottomrule
	\end{tabular}
\caption{Remaining absolutely irreducible $\mathrm{SL}_n(q_0^c)$ modules to consider, with field of definition $\F_{q_0}$.\label{extrem}}
\end{table}

We now consider each of the modules in Table \ref{extrem} individually. 
\begin{proposition}
\label{L1_base_size}
Let $G$ be almost quasisimple with $E(G) \cong \mathrm{SL}_n(q)$. Denote by $V_0$ the natural module for $G$ over $\mathbb{F}_q$ and let $V = V_0 \otimes \mathbb{F}_{q^k}$ with $k\geq 1$.
If $G\leq \Gamma\mathrm{L}(V)$ contains no field automorphisms, then $b(G)=\lceil n/k \rceil +c$, where $c=1$ if $i = (k,n)>1$ and $G$ contains scalars in $\mathbb{F}_{q^i}^{\times}\setminus \F_q^\times$, and $c=0$ otherwise. If $G$ contains field automorphisms, then $b(G)\leq \lceil n/k \rceil +1$, with equality if $G$ contains scalars in $\mathbb{F}_{q^i}^{\times}\setminus \F_q^\times$ or $\log_q|G|>kn \lceil n/k \rceil$.

%
\end{proposition}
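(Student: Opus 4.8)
The plan is to pin down $b(G)$ by pairing a lower bound coming from restriction of scalars to $\mathbb{F}_q$ with explicit ``general position'' constructions of bases. Throughout set $m=\lceil n/k\rceil$ and realise $V=V_0\otimes_{\mathbb{F}_q}\mathbb{F}_{q^k}=\mathbb{F}_{q^k}^n$, so that $E(G)=\mathrm{SL}_n(q)$ acts $\mathbb{F}_q$-linearly while commuting with multiplication by $\mathbb{F}_{q^k}$; fix an $\mathbb{F}_q$-basis $\eta_1,\dots,\eta_k$ of $\mathbb{F}_{q^k}$, so each $v\in V$ decomposes as $v=\sum_{l}\eta_l w_l$ with $w_l\in V_0$. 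First I would record the easy facts that $G$ contains no graph automorphisms (for $n\ge 3$ these swap $V(\lambda_1)$ and $V(\lambda_{n-1})$, and for $n=2$ they are inner), and that an element of $E(G)$ fixes $v$ if and only if it fixes every $w_l$, while the pointwise stabiliser in $\mathrm{SL}_n(q)$ of a subset of $V_0$ is trivial exactly when it spans $V_0$ (otherwise it lies in the axis of a transvection). Hence a base $\{v_1,\dots,v_s\}$ for $G$ yields the $sk$ vectors $w_{j,l}$ spanning $V_0$, so $sk\ge n$ and $b(G)\ge m$.

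For the upper bound when $G$ contains no field automorphisms, I would invoke the structural description (from \cite[Proposition 5.4.6]{KL} and the classification of automorphisms of $\mathrm{PSL}_n(q)$) that $G\le \mathrm{GL}_n(q)\cdot(\mathbb{F}_{q^i}^{\times}I)$ with $i=(k,n)$, so every $g\in G$ has the form $g=B\mu I$ with $B\in\mathrm{GL}_n(q)$ inducing an inner-diagonal automorphism and $\mu\in\mathbb{F}_{q^i}^{\times}$. Choosing the basis $\eta_l$ adapted to the tower $\mathbb{F}_q\subset\mathbb{F}_{q^i}\subset\mathbb{F}_{q^k}$ and expanding $\mu Bv_j=v_j$ against it shows that $B$ must preserve each of the (at most $i$-dimensional) subspaces $\langle w_{j,b,1},\dots,w_{j,b,i}\rangle$ and act there by the matrix of multiplication by $\mu^{-1}$ on $\mathbb{F}_{q^i}/\mathbb{F}_q$; in particular $\mu=1$ forces $B=1$. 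If either $i=1$ (so only $\mathbb{F}_q^{\times}$-scalars occur) or $G$ contains no scalars outside $\mathbb{F}_q^{\times}$, this makes the only candidate stabiliser the identity once the $w_{j,l}$ span $V_0$ and are in general position, and a count of the available choices gives a base of size $m$; thus $c=0$. If $i>1$ and $\mu I\in G$ for some $\mu\in\mathbb{F}_{q^i}^{\times}\setminus\mathbb{F}_q^{\times}$, I would argue the reverse: since any size-$m$ set has its $w_{j,l}$ spanning $V_0$, the displayed conditions can be solved for a non-identity $B$ of the form $W\bigl(\bigoplus_b C_0^{-1}\bigr)W^{-1}$ with $C_0$ the multiplication matrix of $\mu$ on $\mathbb{F}_{q^i}/\mathbb{F}_q$; checking that $B$ has $\mathbb{F}_q$-entries and that $B\mu I$ actually lies in $G$ (which uses $\det C_0=N_{\mathbb{F}_{q^i}/\mathbb{F}_q}(\mu)$, the relation $m=n/k$ when $k\mid n$, and the extra freedom in the remaining $w_{j,l}$ when $k\nmid n$) then shows no size-$m$ set is a base, so $b(G)=m+1$ and $c=1$.

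For $G$ containing field automorphisms, the upper bound $b(G)\le m+1$ would follow by taking $v_1,\dots,v_{m+1}$ in general position: the $\mathbb{F}_q$-components span $V_0$, the extra slack ($(m+1)k>n$) rules out the ``forced $B$'' obstruction of the previous paragraph for any allowed scalar, and a counting argument — field automorphisms are few and fix a subspace of bounded dimension, cf. Proposition~\ref{field} — eliminates the field-automorphism cosets. For the matching lower bound: if $b(G)\le m$ then $|G|\le|V|^{m}=q^{knm}$, so $b(G)=m+1$ whenever $\log_q|G|>kn\lceil n/k\rceil$; and if $G$ contains a scalar in $\mathbb{F}_{q^i}^{\times}\setminus\mathbb{F}_q^{\times}$ the obstruction established above already gives $b(G)\ge m+1$. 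When neither condition holds, the $m$-vector general-position construction goes through (the count now has enough room), so $b(G)=m$.

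The main obstacle is the middle step: determining precisely when the inner-diagonal-plus-scalar part of $G$ fails to have a base of size $m$. This needs the exact structural statement about which scalars $G$ may contain relative to $i=(k,n)$, a careful analysis via the $\mathbb{F}_q$-decomposition of the matrix $B$ forced by a candidate base together with a scalar $\mu$ (tracking its rational canonical form and determinant), and a verification that this $B$, not merely lying in $\mathrm{GL}_n(q)$, actually belongs to $G$ — this last point is where the diagonal-automorphism part of $G$ and the divisibility of $n$ by $k$ enter. By comparison, the general-position constructions and the field-automorphism count are routine.
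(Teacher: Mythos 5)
Your overall strategy --- a lower bound from the requirement that the $\mathbb{F}_q$-components of a base span $V_0$, an explicit/general-position base of size $m=\lceil n/k\rceil$, an analysis of stabilising elements of the shape $B\mu I$ with $B\in\mathrm{GL}_n(q)$ and $\mu$ a scalar, one extra vector to kill field automorphisms, and a scalar obstruction forcing the ``$+1$'' --- is essentially the paper's, and your spanning argument for $b(G)\ge\lceil n/k\rceil$ is if anything cleaner than the order count used there. But there are two genuine gaps. First, the structural premise $G\le \mathrm{GL}_n(q)\cdot(\mathbb{F}_{q^i}^{\times}I)$ with $i=(k,n)$ is false: the centraliser of the absolutely irreducible $\mathrm{SL}_n(q)$ in $\mathrm{GL}_n(q^k)$ is all of $\mathbb{F}_{q^k}^{\times}I$, so $G$ may contain arbitrary scalars of $\mathbb{F}_{q^k}^{\times}$ (indeed the Proposition's hypothesis ``$G$ contains scalars in $\mathbb{F}_{q^i}^{\times}\setminus\mathbb{F}_q^{\times}$'' presupposes this). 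The confinement of the relevant $\mu$ to $\mathbb{F}_{q^i}^{\times}$ must be a \emph{conclusion} of the eigenvalue analysis (an eigenvalue of $B\in\mathrm{GL}_n(q)$ whose eigenspace contains vectors with components spanning $V_0$ generates a subfield $\mathbb{F}_{q^j}$ with $j\mid n$, and also lies in $\mathbb{F}_{q^k}$, hence lies in $\mathbb{F}_{q^{(k,n)}}$); as written, the cosets with $\mu\in\mathbb{F}_{q^k}^{\times}\setminus\mathbb{F}_{q^i}^{\times}$ are simply never examined.

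Second, and more seriously, the step ``no size-$m$ set is a base'' in the $c=1$ direction. Writing $v_j=\sum_l\eta_l w_{j,l}$, the equations $Bv_j=\mu^{-1}v_j$ prescribe $Bw_{j,l}$ for all $mk$ components. When $k\mid n$ these components form a basis of $V_0$ and the forced $B$ exists (it is multiplication by $\mu^{-1}$ in the induced $\mathbb{F}_{q^i}$-structure); but when $k\nmid n$ there are $mk>n$ prescribed images, so the system is \emph{over}determined and for a generic candidate set the consistency conditions fail for every $\mu\notin\mathbb{F}_q$ --- your ``extra freedom in the remaining $w_{j,l}$'' points in the wrong direction, and the obstruction element does not exist. (The paper's own closing argument quietly uses $|B|=n$, i.e.\ $k\mid n$, at exactly this point, and its analysis of $\mathcal{B}$ derives $n\equiv 0\bmod k$ as a necessary condition for a semisimple stabiliser.) Moreover, even when the forced $B$ exists you must still exhibit a $\mu$ for which $B\mu I$ genuinely lies in $G$; this requires combining $\det B=N_{\mathbb{F}_{q^i}/\mathbb{F}_q}(\mu^{-1})^{n/i}$ with the precise set of scalars and diagonal elements that $G$ contains, and you explicitly defer it. Until these points are settled, the equality $b(G)=\lceil n/k\rceil+1$ in the scalar case is not established.
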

\begin{proof}
First note that $b(G) \geq \log |G|/\log |V|$, so $b(G) \geq \lceil n/k \rceil $ if $k\geq 2$, since $|G|\geq |\mathrm{SL}_n(q)| > q^{n^2-2}$. Moreover, if $|G|> kn \lceil n/k \rceil$, then $b(G) \geq \lceil n/k \rceil +1$. 
 Let $e_1, \dots e_n$ denote the standard basis of $V_0$ and let $\omega_k$ be a generator of $\mathbb{F}_{q^k}^\times$. If $k=1$, then $\{e_1, \dots e_n \}$ is a base for $\gl_n(q)$, and adding 
$\omega_1e_1$ to the standard basis gives a base for $\Gamma \mathrm{L}_n(q)$. So from now on, suppose $k\geq 2$. Define
\[
\mathcal{B} = \left\{\sum_{i=1}^{\min \{k,n-jk\}} \omega_k^{i-1}e_{i+jk} \, \Big\vert\,  0 \leq j \leq \lceil n/k \rceil-1\right\}.
\]
Note that $\{\omega_k^i  \mid  0 \leq i \leq k-1\}$ is linearly independent over $\mathbb{F}_q$. Suppose $g \in G_\mathcal{B}$ has projective prime order $r$. The elements of $\mathcal{B}$ lie in the $\F_{q^k}$-span of vectors in the fixed point space  $C_V(g)$ of $g$. If $g$ is unipotent, or is semisimple with $r\mid q-1$, then this holds if and only if $g$ fixes $e_1, \dots e_n$, i.e., $g$ is the identity, contradicting our assumption that $g$ is non-trivial. So either $g$ is semisimple and $r$ divides $q^{k}-1$ but not $q-1$, or $g$ is a field automorphism.

If $g$ is semisimple, then $g$ has the form $ah$, for some $a \in \F_{q^{k}}^\times \setminus \F_q$ and $h \in \mathrm{SL}_n(q)$.
Let $i$ be the least natural number such that $r \mid q^{i}-1$, and note that $i\mid k$. If $g$ fixes $\mathcal{B}$, then each vector contained in $\mathcal{B}$, excluding that where $j = \lceil n/k \rceil -1$, can be written uniquely as a $\F_{q^k}$ linear combination of $k/i$ linearly independent vectors in $V_0 \otimes \F_{q^i}$, while the vector with $j = \lceil n/k \rceil -1$ can be written as the $\F_{q^k}$ linear combination of $\min\{b,k/i\}$ such vectors, where $b  = n \mod k$. Therefore, the fixed point space $C_V(g)$ of $g$ has dimension at least $\frac{k}{i} \floor{\frac{n}{k}}+ \min\{b,k/i\}$.
Since $h$ is defined over $\F_q$, we must have $b=0$, so $i\mid n$, the fixed point space of $g$ has dimension $n/i$, and $G$ must contain scalars $aI$ of order $r$. Moreover, a representative of $gF(G) \in G/F(G)$ must have eigenvalues $\{\alpha^{q^j} \mid 0 \leq j \leq k_1-1\}$ for some $\alpha\in  \mathbb{F}_{q^{i}}\setminus \F_q$ of order $r$, each of multiplicity $n/i$. 
We see that $e_i\notin C_V(g)$ for all $1\leq i \leq n$, since otherwise $e_ih \notin V_0$. 
Now suppose that $g$ is a field automorphism, and write $q=p^e$. Then we can write $g = h\phi$, where $\phi$ is a standard field automorphism sending the coefficients of an element of $\mathcal{B}$ (with respect to the basis $e_1, e_2, \dots e_n$) to their $p^{a}$th powers, and $h\in \mathrm{GL}_n(q)$ is the unique matrix acting on each element of $\mathcal{B}$ by:
\[
\sum_{i=1}^{\min \{k,n-jk\}} \omega_k^{i-1}e_{i+jk}  \mapsto \sum_{i=1}^{\min \{k,n-jk\}} \omega_k^{p^{e-a}(i-1)}e_{i+jk}.
\]
Now, there exists $e_i$ not fixed by $h\phi$, since otherwise $h$ is the identity, and $g=\phi$ does not fix $\mathcal{B}$ pointwise. So $e_i$ is not fixed by any element in $G_\mathcal{B}$, and therefore, $B\cup \{e_i\}$ forms a base for $G$ on $V$.

Therefore, the base size of $G$ on $V$ is less than or equal to $\lceil n /k \rceil+c$, where $c=1$ if either $(k,n)>1$ and $G$ contains scalars outside of $\F_q$, or $G$ contains field automorphisms, and $c=0$ otherwise. 
%
%
%
It remains to show that if $(k,n)>1$ and $G$ contains scalars outside of $\F_q$, then $b(G) = \lceil n/k \rceil+1$, i.e., there is no base of size $\lceil n/k \rceil$. If there exists a base $\hat{\mathcal{B}}$ of size $\lceil n/k \rceil$, then we can write each element of the base as a linear combination of a set of vectors $B \subseteq V$ (over $\mathbb{F}_q$) with coefficients $\{\omega_k^i \mid 0 \leq i \leq k-1\}$, since this is a basis of $\mathbb{F}_{q^k}$ over $\mathbb{F}_q$. By the proof of Proposition \ref{fieldext}, $B$ forms a base for $G$ acting on $V$. But a base for $G$ acting on $V$ must be a spanning set, and since $|B| = n$, it follows that $B$ is a basis of $V$ and therefore $\hat{\mathcal{B}}$ is conjugate to $\mathcal{B}$ and cannot form a base of $G$.
\end{proof}
\begin{proposition}
\label{exttensor}
Let $G$ be almost quasisimple with $E(G)/Z(E(G)) \cong \mathrm{PSL}_n(q)$, and let $V=V((p^a+1)\lam1)$ or $V(p^a \lam1+\lam l)$ over $\mathbb{F}_{q^k}$ for $k>1$. Then $G$ has a regular orbit on $V$. 
\end{proposition}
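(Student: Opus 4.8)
The plan is to establish Proposition~\ref{exttensor} by the counting method of Proposition~\ref{tools}, taking advantage of the fact that $\dim V = n^2$ does not depend on $k$ while $|V| = q^{kn^2}$ with $k \geq 2$, so that the ratio $|V|/|G|$ grows with $k$ and there is ample room for a regular orbit. First I would record the structure already used in the proof of Proposition~\ref{tensornatural}: as an $\overf_p E(G)$-module, $V$ is $V(\lam1) \otimes V(\lam1)^{(p^a)}$ or $V(\lam l) \otimes V(\lam1)^{(p^a)}$, so the action of $E(G)$ on $V$ is equivalent to $A \mapsto g^{\top} A g^{(p^a)}$, respectively $A \mapsto g^{-1} A g^{(p^a)}$, on $n \times n$ matrices, and is in particular rank-preserving. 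For $n \geq 3$ the highest weight $\lambda$ is not fixed by the Dynkin diagram automorphism, so no graph or graph-field automorphism of $E(G)$ preserves $V$; hence $G$ consists only of inner-diagonal elements, field automorphisms, and scalars, and for $n = 2$ the graph automorphism is inner, so the same conclusion holds.

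For the eigenspace bounds, let $x \in G$ have projective prime order and act on the natural module $W$ with eigenvalue multiplicities $a_1 \geq \cdots \geq a_m$ in the semisimple case, or with Jordan-block data in the unipotent case. Since $x$, as an element of $\gl(V)$, is (a scalar times) $g \otimes g^{(p^a)}$ for $g$ the image of $x$ on $W$, Proposition~\ref{tensorcodim} gives $\emax^V(x) \leq n\,\emax^{W}(x)$; as $x$ is non-central, $\emax^W(x) \leq n-1$, and in the semisimple case $\emax^W(x) = a_1$. Combining this with $|x^{\pgl_n(q)}| < 2^m q^{n^2 - \sum a_i^2}$ from \cite[Table~B.3]{bg} (and the analogous unipotent estimate obtained via \cite[Lemma~1.3.3]{bigpaper}), one gets, for every projective-prime-order $x$,
\[
\log_q|x^{\pgl_n(q)}| + k\dim C_V(x) \;<\; n^2 - \sum a_i^2 + kn a_1 + m ,
\]
and the right-hand side is bounded above by $kn^2$ minus a strictly positive quantity as soon as $k \geq 2$; the least slack occurs for the multiplicity pattern $(n-1,1)$, which forces both eigenvalues into $\F_q$ and so accounts for fewer than $q$ conjugacy classes.

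It then remains to assemble the count: summing over the conjugacy classes of $\pgl_n(q)$ grouped by eigenvalue-multiplicity pattern (using the bound $q^{n-1}+5q^{n-2}$ of \cite[Lemma~3.7]{MR2888238} for their total number, and sharper per-pattern counts for the extremal patterns), adding the field-automorphism contribution from Proposition~\ref{field} and absorbing the diagonal-automorphism factor $\gcd(n,q-1)$, an application of Proposition~\ref{tools}\ref{eigsp2}) gives, under the assumption that $G$ has no regular orbit, an inequality $q^{kn^2} \leq (\cdots)$; a term-by-term comparison shows this inequality fails for $k \geq 2$, so $G$ does have a regular orbit. The step I expect to be the main obstacle is the field-automorphism contribution: when $V$ is realised over a proper subfield of $\F_{q^k}$ a field automorphism can act linearly on $V$, so $\zeta = 1$ in Proposition~\ref{field}, and for $n = 2$, $k = 2$ the resulting bound $\log(\log_2 q + 2)\,q^{n^2/2 + \frac{k}{2}(n^2 + n)}$ only equals $|V| = q^{kn^2}$ rather than lying strictly below it. For those finitely many small triples $(n,q,k)$ I would instead use Proposition~\ref{tools}\ref{eigsp2}) with the exact counts of semisimple and unipotent elements of each prime order from \cite[Table~B.3]{bg}, and, where even that is inconclusive, construct $V$ explicitly in GAP \cite{GAP4} to exhibit a vector with trivial stabiliser, as done elsewhere in the paper.
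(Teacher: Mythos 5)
Your overall strategy --- bounding fixed point spaces of projective prime order elements and summing class contributions via Proposition \ref{tools}, with \cite[Table B.3]{bg}, \cite[Lemma 3.7]{MR2888238} and Proposition \ref{field} supplying the counts --- is the paper's strategy, and for integer $k\geq 2$ your version would likely close with the per-pattern bookkeeping you describe. The genuine gap is that the proposition is not restricted to $k\geq 2$: when $a=e/2$ the module can be realised over $\F_{q^{1/2}}$, so the admissible extension fields include $\F_{q^{b/2}}$ for odd $b\geq 3$, and in particular $k=3/2$ must be treated. Your argument assumes $k\geq 2$ throughout, and it cannot be rescued at $k=3/2$ with the eigenspace bound you use: Proposition \ref{tensorcodim} only gives $\dim C_V(s)\leq n a_1$, so for the multiplicity pattern $(n-1,1)$ the upper bound for the contribution of those classes is already $q^{2n-1}\cdot q^{\frac{3}{2}n(n-1)}=q^{\frac{3}{2}n^2+\frac{n}{2}-1}\geq |V|$, and no contradiction can be extracted. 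The paper avoids this by proving the sharper bound $\mathrm{codim}\,C_{\overline{V}}(s)\geq n^2-\sum a_i^2$: for a fixed target eigenvalue $t$ and each $i$ there is at most one $j$ with $t_it_j^{p^a}=t$, and then Proposition \ref{permsquare} applies. On the critical pattern this wins by $na_1-\sum a_i^2=n-2$, which is exactly what makes the $k=3/2$ inequality close (with a further case analysis for $n\leq 4$ and $(n,q)=(5,4)$).

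Two smaller points. First, even at $k=2$ your displayed inequality has exactly zero margin on the pattern $(n-1,1)$ (with $\sum a_i^2=n^2-2n+2$ and $m=2$ the right-hand side is $kn^2$ on the nose); your remark that these classes number fewer than $q$ does patch this, but it shows the $na_1$ bound sits at the edge of what the method tolerates, which is why the paper works with $\sum a_i^2$ from the start. Second, the claim that no graph or graph-field automorphism preserves $V$ when $n\geq 3$ fails for $\lambda=\lambda_1+p^{e/2}\lambda_{n-1}$: composing the graph automorphism with the field twist by $p^{e/2}$ sends this weight to $p^{e/2}\lambda_{n-1}+p^{e}\lambda_1\equiv \lambda_1+p^{e/2}\lambda_{n-1}$, so such automorphisms do act on $V$ and their classes would need to enter the sum. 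Both defects have the same root cause: the special case $a=e/2$, which your proposal never isolates.
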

\begin{proof}
Let $V=V((p^a+1)\lam1)$ over $\mathbb{F}_{q^k}$.
We will complete the proof by considering the action of $G$ on $\overline{V} = V\otimes \overline{\mathbb{F}}_q$. Note that here $k$ is an integer, except when $a=e/2$. In this case, since $V$ can be realised over $\mathbb{F}_{q^{1/2}}$, we instead must also consider $k=b/2$ for $b\geq 3$. Let $\overline{W}$ be the natural module for $G$ over $\overline{\mathbb{F}}_q$, and suppose $s\in G$ is semisimple of prime order $r$. Denote the eigenvalues of $s$ on $\overline{W}$ by $t_1, t_2, \dots , t_m$, ordered so that the multiplicities $a_i$ of the $t_i$ are weakly decreasing. Suppose that $\{v_i \mid 1\leq i \leq n\}$ is a basis of eigenvectors of $s$ on $\overline{W}$ so that $\{v_i \otimes v_j \mid 1\leq i,j \leq n\}$ forms a basis of $\overline{V}$ consisting of eigenvectors for $s$. Fix $t \in \overline{\mathbb{F}}_q$. The action of $s$ on $\overline{V}$ has eigenvalues of the form $t_it_j^{p^a}$, so for fixed $i$, there is at most one choice of $j$ so that $t=t_it_j^{p^a}$. Therefore, by Proposition \ref{permsquare}, the codimension of $C_{\overline{V}}(s)$ is at least $n^2 - \sum a_j^2$. Now, by \cite[Table B.3]{bg}, $|sF(G)^{\mathrm{PGL}_n(q)}| < 2^mq^{n^2-\sum a_j^2}$, and this is at most $q^{n^2+m/2-\sum a_j^2}$, since $q \geq 4$. Therefore, 
{\small
\begin{align}
k \, \mathrm{codim}C_V(s)-\log_q(|sF(G)&^{\mathrm{PGL}_n(q)}|)\nonumber \geq\\ 
 & k(n^2-\sum a_j^2)-(n^2+m/2-\sum a_j^2)= (k-1)(n^2-\sum a_j^2) -m/2 \label{pa+1sseqn}
\end{align}
}
If $k\geq 2$, then the right hand side of \eqref{pa+1sseqn} is at least $ (k-1)(2n-2)-n/2$, while if $k=3/2$, and $s$ does not have eigenvalues of multiplicity $(n-1,1)$ then the right hand side of \eqref{pa+1sseqn} is at least $2n-5$.
Now suppose that $u \in G$ is unipotent of prime order $p$, and with associated canonical Jordan form $(J_p^{a_p}, \dots J_2^{a_2},J_1^{a_1})$ on $\overline{W}$. By \cite[Lemma 1.3.3]{bigpaper}, the codimension of $C_{V}(u)$ is $n^2-\sum_{i<j} i a_i a_j-\sum ia_i^2$, while by \cite[Table B.3]{bg}, $|uF(G)^{\mathrm{PGL}_n(q)}| < 2^m q^{n^2-c}$, where $c= 2\sum_{i<j} ia_ia_j - \sum ia_i^2$ and $m$ is the number of distinct Jordan block sizes. So 
\begin{align}
 k\, \mathrm{codim}C_V(u)-\log_q(|uF(G)^{\mathrm{PGL}_n(q)}|) &\geq (k-2)(n^2-\sum ia_i^2-\sum_{i<j} ia_ia_j)+n^2-\frac{m}{2}-\sum ia_i^2 \label{pa+1ueqn}
 \end{align}
 If $k\geq 2$, then the right hand side of  \eqref{pa+1ueqn} is at least  $n^2-n/2-(n-2)^2-2 = \frac{7}{2}n-6$. If $k=3/2$, then $m \leq \sum_{i<j}ia_ja_j +1$ and if $u$ is not a root element, then the right hand side of \eqref{pa+1ueqn} is at least $2n-4$.
 By \cite[Lemma 3.7]{MR2888238}, the number of conjugacy classes in $\mathrm{PGL}_n(q)$ is at most $q^{n-1}+5q^{n-2}$. Therefore, if $k\geq 2$ and $G$ has no regular orbit on $V$,
 \[
 q^{k n^2} \leq 2(q^{n-1}+5q^{n-2})(q^{k n^2-(7n/2-6)}+q^{k n^2-((k-1)(2n-2)-n/2)})+ 2\log(\log_2q+2)q^{kn^2/2+n^2/2}
 \]
 This gives a contradiction for $n\geq 3$ and $q\geq 4$, except when $(n,k,q) = (4,2,4)$, where $E(G)/Z(E(G)) \cong Alt_8$, or $(3,2,q_1)$, where $q_1 \leq 9$. For the remaining cases where $n=3$, we compute that no element of projective prime order has an eigenspace of dimension greater than 5, and substituting this in gives a contradiction for the remaining $q$. When $n=2$, then each element of prime order has eigenspaces of dimension at most 2 on $V$, so if $G$ has no regular orbit on $V$ and $k\geq 2$,
 \[
 q^{4k} \leq 2|\pgl_2(q)|q^{k(4-2)} + 2\log(\log_2 q+2)q^{2k+2}
 \]
 This gives a contradiction for all $q\geq 4$, completing the proof. 
 If now $k=3/2$ and $G$ has no regular orbit on $V$, then
  \begin{align*}
 q^{k n^2} \leq & 2(q^{n-1}+5q^{n-2})(q^{k n^2-(2n-4)}+q^{k n^2-(2n-5)})+2q^{2n-1}q^{\tfrac{3}{2}(n^2-2n-2)}+ q^{2n-1}q^{\tfrac{3}{2}(n^2-3n+4)}\\
 &+ 2\log(\log_2q+2)q^{kn^2/2+n^2/2}
 \end{align*}
This gives a contradiction for all $n,q$ except $(n,q)=(5,4)$ and $n\leq 4$ for all $q$. For these cases, we perform a more detailed analysis of the classes of elements of prime order and again apply Proposition \ref{tools}\ref{crude} to give the result.
 The proof for $V = V(p^a \lam1+\lam l)$ with $k\geq 2$ is similar.
\end{proof}
	\begin{proposition}
		\label{adj_ext}
	Let $G \leq \Gamma\mathrm{L}(V)$ be almost quasisimple with $E(G)/Z(E(G)) \cong \mathrm{PSL}_n(q)$, and let $V=V(\lam1+\lam l)$ be the adjoint module for $E(G)$ over $\mathbb{F}_{q^k}$ for $k>1$. Then either $G$ has a regular orbit on $V$, or $(n,k)=(3,2)$, $G$ contains graph automorphisms and $b(G)\leq 2$.
\end{proposition}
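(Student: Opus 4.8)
The plan is to apply the counting inequality of Proposition~\ref{tools}\ref{crude}) (or \ref{tools}\ref{qsgood}) when $G$ is quasisimple) over the enlarged field $\F_{q^k}$. Write $d=\dim V=n^2-1-\ep_n$. Recall from Proposition~\ref{adjoint} that over $\F_q$ this module already has $b(G)=2$; the decisive point here is that $V$ is defined over $\F_q$ (indeed over the prime field), so $V=V_0\otimes\F_{q^k}$ with $V_0$ the $\F_q$-realisation, and hence $|C_V(x)|=q^{k\dim C_{\overline V}(x)}$ for every $x\in G$ of projective prime order acting linearly. Thus every codimension bound gets multiplied by $k$ while the conjugacy-class counts in $\mathrm{Aut}(\mathrm{PSL}_n(q))$ are unchanged, so the eigenspace bounds of Proposition~\ref{adjoint} --- too weak to force a regular orbit when $k=1$ --- become strong enough once $k\ge2$. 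Since the Dynkin diagram automorphism fixes $\lam1+\lam l$, the module is always preserved by graph automorphisms, so these must be treated alongside the inner-diagonal, field, and graph-field elements.

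For the inner-diagonal classes I would run a two-variable comparison of the type used in Propositions~\ref{l3} and~\ref{exttensor}. Since $V$ is a subquotient of $W\otimes W^{*}$ for $W$ the natural module, a semisimple $s$ of projective prime order with natural-module eigenvalue multiplicities $a_1\ge\cdots\ge a_m$ has $\dim C_{\overline V}(s)=\sum a_i^2-1$ (by Propositions~\ref{permsquare} and~\ref{tensorcodim}), while $|s^{\pgl_n(q)}|<2^m q^{n^2-\sum a_i^2}$ by \cite[Table~B.3]{bg}; the analogous pair of bounds for a unipotent $u$ comes from \cite[Lemma~1.3.3]{bigpaper} and \cite[Table~B.3]{bg}. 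As $n^2-\sum a_i^2\ge 2(n-1)$ for every non-central $s$, this comparison shows that, apart from a bounded number of classes --- those with $a_1\in\{n-1,n-2\}$, together with the subregular unipotent types --- one has $k\,\codim C_V(x)-\log_q|x^{\pgl_n(q)}|$ exceeding $kd$ minus a small constant; for the residual classes I would fall back on the explicit counts of Propositions~\ref{sscounts} and~\ref{unipcounts} and the $\Psi=A_1$ weight-string bounds $c(s)\ge 2l$, $c(u_\Psi)\ge 2l-\ep_2$ of Proposition~\ref{adjoint}, using also that $\pgl_n(q)$ has at most $q^{n-1}+5q^{n-2}$ classes of prime order by \cite[Lemma~3.7]{MR2888238}. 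Field automorphisms are controlled by Proposition~\ref{field} with $\zeta=0$ (as $\lam1+\lam l$ is $p$-restricted, $V$ is not a twisted tensor product), contributing at most $\log(\log_2q+2)\,q^{n^2/2+kd/2}$, negligible for $k\ge2$.

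Graph and graph-field automorphisms are the subtle part. By Proposition~\ref{graphfix}, a graph involution $\tau$ satisfies $\dim C_V(\tau)\le\tfrac d2+\tfrac12\sum_{\mu\in S}\dim V_\mu$, where $S$ consists of the zero weight together with the roots fixed by $\alpha_i\mapsto\alpha_{n-i}$; counting the latter gives $\sum_{\mu\in S}\dim V_\mu$ equal to $(n-1-\ep_n)+n$ when $n$ is even and $(n-1-\ep_n)+(n-1)$ when $n$ is odd, so in all cases $\dim C_V(\tau)\le\tfrac12(n^2+2n-2-2\ep_n)$. There are at most $2q^{(n^2-n)/2-1}$ such $\tau$ (Proposition~\ref{graph}) and fewer than $q^{(n^2-1)/2}$ graph-field involutions, and the latter act semilinearly on $V$, so $|C_V(\tau x)|\le q^{kd/2}$ by Proposition~\ref{graphfix}. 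Assembling all of this into Proposition~\ref{tools}\ref{crude}) I expect a contradiction with $q^{kd}\le(\text{right-hand side})$ for every $q$ when $n\ge4$, and when $n=3$ with $k\ge3$; the finitely many remaining small $q$ would be settled with \cite{GAP4}, as elsewhere in the paper.

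The main obstacle is the case $(n,k)=(3,2)$ with $G$ containing graph automorphisms: here $d=8-\ep_3$ and $|V|=q^{16-2\ep_3}$, the graph-involution fixed spaces have codimension only $2$ in $V$, and their contribution $q^{(n^2-n)/2-1}\cdot q^{k\dim C_V(\tau)}$ is too close to $|V|$ for the inequality to close uniformly in $q$; moreover Proposition~\ref{adjoint} does not yield a failure of Proposition~\ref{tools}\ref{eigsp1})--\ref{crude}) for this module that Lemma~\ref{extfield} could exploit. In this case the plan is to run the whole argument instead over $\F_{q^4}$, where the multiplier becomes $k=4$ and the estimates above do give a genuine regular orbit of $G$ on $V\otimes\F_{q^4}$, and then to invoke Lemma~\ref{fieldext} with $i=2$, viewing $V\otimes\F_{q^4}$ as the degree-$2$ extension of the $\F_{q^2}$-module $V$, to conclude $b(G)\le2$ --- mirroring the reductions used repeatedly in Section~\ref{subfields}.
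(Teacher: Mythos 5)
Your architecture coincides with the paper's: a two-variable comparison of $k\,\codim C_V(x)$ against $\log_q|x^{\pgl_n(q)}|$ for inner-diagonal classes (reusing the estimates underlying Proposition \ref{exttensor}), Proposition \ref{field} for field automorphisms, Propositions \ref{graphfix} and \ref{graph} for graph and graph-field elements, the class-number bound $q^{n-1}+5q^{n-2}$ from \cite[Lemma 3.7]{MR2888238}, and the identification of $(n,k)=(3,2)$ with graph automorphisms as the one surviving exception, resolved by producing a regular orbit over $\F_{q^4}$ and applying Lemma \ref{fieldext} with $i=2$. Two quantitative caveats: the paper's graph-automorphism bound is sharper than the raw output of Proposition \ref{graphfix} (it gets $\frac{1}{2}(n^2-1-\ep_n)+\frac{1}{2}(2\lfloor n/2\rfloor-1-\ep_n)$, roughly $n/2$ better than your $\frac{1}{2}(n^2+2n-2-2\ep_n)$), and the closing inequality in fact also fails to give an immediate contradiction for $k=2$ with $n=4,5$ and for some $k=3$, $n=3$ cases, which the paper patches with tighter class counts; your hedge about residual classes and GAP covers this, so neither point is fatal.

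The one genuine gap is the transfer of eigenspace bounds from $\hat V=W\otimes W^{*}$ to the adjoint module. When $\ep_n=0$ the adjoint module is a direct summand of $\hat V$ and the transfer is immediate, but when $p\mid n$ the module $V$ is a proper quotient $V_0/V_1$ of a submodule $V_0$ of $\hat V$, and fixed-point spaces can strictly grow under passage to a quotient (a single Jordan block $J_2$ on $V_0$ with $V_1$ its fixed line already shows this). So your assertion that $\dim C_{\overline V}(s)=\sum a_i^2-1$, and especially the unipotent analogue drawn from \cite[Lemma 1.3.3]{bigpaper}, is not automatic in the case $\ep_n=1$. The paper spends the first half of its proof on exactly this point: for unipotent $g$ it shows that a coset $v+V_1$ fixed by $g$ with $v\notin C_{V_0}(g)$ forces $v(g-I)$ to span $V_1$, so that all such cosets lie in $\langle C_{V_0}(g),v\rangle$ and hence $\dim C_V(g)\le\dim C_{V_0}(g)$; for semisimple $g$ it uses $C_{V_0}(g)=C_{V_0}(g^p)$. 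This is why the paper's displayed codimension bounds carry an explicit $-k(1+\ep_n)$ correction. You need to supply this argument (or an equivalent one) before your inner-diagonal estimates are valid for $p\mid n$; once it is in place, the rest of your plan goes through as in the paper.
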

\begin{proof}
	Recall that $\dim(V)=n^2-1-\ep_n$, where $\ep_n=1$ if $(n,q)=1$ and $\ep_n=0$ otherwise.
	We can consider $V$ as a composition factor of 
	$\hat{V}=V(\lambda_1) \otimes V(\lam l)$. Now $\hat{V}$ has an $(n^2-1)$-dimensional $\F_{q^k}\mathrm{GL}_n(q)$-submodule $V_0$. When $\ep_n=0$, then $V_0$ is irreducible, and we set $V=V_0$ and have  $\dim C_V(g) \leq  \dim C_{\hat{V}}(g)$ for all $g \in G$. We will aim to show the same when $\ep_n=1$. 
	So assume $\ep_n=1$, then $V_0$ has a one-dimensional $\F_{q^k}\mathrm{GL}_n(q)$-submodule $V_1= \langle w \rangle$, which is fixed by every element of $\gl_n(q)$, and $V=V_0/V_1$. Every element $g \in \gl(V_0)$ preserving $V_1$ can be considered as an element of $\mathrm{GL}(V)$ by setting $(v+V_1)g = vg+V_1$. Conversely, every element of $\mathrm{GL}(V)$ arises this way. Suppose $g \in \gl(V_0)$ is unipotent of prime order $p$ and let $v+V_1 \in C_V(g)$. Clearly if $v \in C_{V_0}(g)$ then $v+V_1 \in C_V(g)$, so $\dim C_{V_0}(g)-1 \leq C_V(g)$, since $g$ fixes $V_1$ pointwise. Now suppose $v+V_1 \in C_V(g)$, but $v \not\in C_{V_0}(g)$. Then $vg= v+tw$ for some $t \in \F_{q^k}^\times$. So $v(g-I) = tw$, and $t^{-1}v(g-I)=w$. Since $g-I$ is linear, it follows that every coset representative of an element of $C_V(g)$ must lie in $\langle C_{V_0}(g), v\rangle$. Therefore, recalling that $g$ fixes $V_1$ pointwise, we have  $\dim C_V(g) \leq \dim C_{V_0}(g)\leq \dim C_{\hat{V}}(g)$ as required.
	We now consider $g \in G$ semisimple of prime order. Recall that  we can think of the action of $G \cap \mathrm{GL}_n(q)$ on $V$ as conjugation on the  vector space  of $n \times n$ trace 0 matrices over $\F_{q^k}$, modulo the subspace $Z$ generated by the identity matrix. Suppose $g \in G$ is semisimple of  prime order. If $h+Z \in C_V(g)$, then as before $g^{-1}hg = h+jI$, for some $j \in \F_{q^k}$. Therefore, $g^{-p}hg^p = h$, so $h \in C_{V_0}(g^p)$ and $C_{V_0}(g^p)=C_{V_0}(g)$, since $g$ has projective prime order coprime to $p$. So $\dim C_{V}(g)\leq \dim C_{V_0}(g) \leq  C_{\hat{V}}(g)$ as required.  
	Let $\overline{W}$ denote the natural module for $G$ over $\overf_q$.
	By the proof of Proposition \ref{exttensor}, if $s \in G$ is semisimple, then as long as $s$ does not have eigenvalue multiplicities $(n-1,1)$ or $(n-2,2)$ on $\overline{W}$, then 
	\[
	k \, \mathrm{codim}C_V(s)-\log_q(|sF(G)^{\mathrm{PGL}_n(q)}|)\geq (k-1)(4n-6)-n/2-k(1+\epsilon_n).\\
	\]
	If $s\in G$ has eigenvalue multiplicities $(n-1,1)$ or $(n-2,2)$ on $\overline{W}$, then the lower bound is replaced with $ (k-1)(2n-2)-1-k(1+\epsilon_n)$ or $ (k-1)(4n-8)-1-k(1+\epsilon_n)$ respectively. The number of conjugacy classes of elements of prime order in $\pgl_n(q)$ that have eigenvalue multiplicities $(n-1,1)$ or $(n-2,2)$ on $\overline{W}$ is at most $q-1$ in each case, as long as $n\geq 5$.
	Moreover, if $u\in G$ is unipotent, then by the proof of Proposition \ref{exttensor}, 
	\[
	k\, \mathrm{codim}C_V(u)-\log_q(|uF(G)^{\mathrm{PGL}_n(q)}|) \geq (k-2)(3n-4)+\frac{7}{2}n-6 -k(1+\epsilon_n). 
	\]
	By \cite[Lemma 3.7]{MR2888238}, the number of conjugacy classes in $\mathrm{PGL}_n(q)$ is at most $q^{n-1}+5q^{n-2}$. We must also consider graph and graph-field automorphisms here. We find that a graph automorphism $\tau$ has $\dim C_V(\tau)\leq \frac{1}{2}(n^2-1-\ep_n)+ \frac{1}{2}(2\floor{n/2}-1-\ep_n)$
	
	Therefore, if $G$ has no regular orbit on $V$, letting $d=k(n^2-1-\ep_n)$, we have 
	\begin{align*}
	q^{d} &\leq  2(q^{n-1}+5q^{n-2})(q^{d-(k-2)(3n-4)-(7n/2-6)+k(1+\ep_n)}+q^{d-(k-1)(4n-6)+n/2+k(1+\epsilon_n)})\\
	&2(q-1)(q^{d-(k-1)(2n-2)+1+k(1+\epsilon_n)}+q^{d-(k-1)(4n-8)+1+k(1+\epsilon_n)})\\
	&+2\log(\log_2q+2)q^{n^2/2+d/2}+ 4q^{(n^2+n)/2-1}q^{d/2+\frac{k}{2}(2\floor{n/2}-1-\ep_n)}+2q^{(n^2-1)/2}q^{d/2}
	\end{align*}
	This inequality is false except for some cases with $k=3$ and $n=3$, as well as $k=2$ and $n=3,4,5$. For the remaining cases with $n=4,5$, we either replace $q^{n-1}+5q^{n-2}$  with a tighter upper bound for the number of $\pgl_n(q)$-classes of elements of prime order to obtain a contradiction, or we use the precise counts of elements with eigenvalue multiplicities $(n-1,1)$ or $(n-2,2)$ on $\overline{W}$. So suppose $n=3$. By Proposition \ref{permsquare} and the proof of Proposition \ref{adjoint}, if $G$ has no regular orbit on $V$, then letting $d=k(8-\ep_3)$, we have 
	\begin{align*}
	q^d \leq&  (q-1)\frac{|\gl_3(q)|}{|\gl_2(q)||\gl_1(q)|}(q^{4k}+q^{d-4k})+
	\frac{|\gl_3(q)|}{q^3|\gl_2(q)||\gl_1(q)|}q^{4k}+\frac{|\gl_3(q)|}{q^2|\gl_1(q)|}q^{3k}\\
	&+
	\frac{1}{2}\left(\binom{q-1}{2}+\frac{q+1}{2}+\frac{q^2+q+1}{3}\right)\frac{|\gl_3(q)|}{|\gl_1(q)|^3}(2q^{3k}+q^{(2-\ep_3)k}+\field\\
	&+4q^{\frac{n^2+n}{2}-1}q^{d/2+\frac{k}{2}(2\floor{n/2}-1-\ep_n)}+ 4q^{\frac{n^2-1}{2}}q^{d/2}
	\end{align*}
	This gives a contradiction for the remaining cases with $k=3$. When $k=2$, removing the terms that account for graph automorphisms gives a contradiction for all $q$ except $q=2$ when $\ep_3=0$, and $q\leq 13$ when $\ep_3=1$. For these remaining cases where $G$ does not contain graph automorphisms, we confirm the existence of a regular orbit of $G$ on $V$ using GAP. When $G$ contains graph automorphisms, we do not achieve a contradiction for $k=2$, so $b(G)\leq 2$ here by Lemma \ref{fieldext}.
\end{proof}

\begin{proposition}
	\label{2L1_base_size}
Let $G$ be almost quasisimple with $E(G)/Z(E(G)) \cong \mathrm{PSL}_n(q)$, and let $V=V(2\lam1)$ be the symmetric square for $E(G)$ over $\mathbb{F}_{q^k}$. If $k\geq 3$, then $G$ has a regular orbit on $V$. If instead $k=2$, then $b(G)\leq 2$. 
\end{proposition}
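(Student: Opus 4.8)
For Proposition \ref{2L1_base_size} I would treat $k\geq 3$ first and then deduce $k=2$ from it. Throughout $p\geq 3$ (otherwise $V(2\lambda_1)=V(\lambda_1)^{(2)}$ is not a genuine $p$-restricted module) and $d=\dim V=\binom{n+1}{2}$, so that $V\cong\mathrm{Sym}^2W$ for $W$ the natural module. Since the Dynkin diagram automorphism does not fix $2\lambda_1$ when $n\geq 3$, and $\mathrm{PSL}_2(q)$ has no graph automorphisms, $V$ is never preserved by a graph automorphism, so $G$ contains no graph or graph-field automorphisms. Moreover $2\lambda_1$ is $p$-restricted, so by Theorem \ref{steintens} $V$ is not a tensor product of proper Frobenius twists and hence no field automorphism of $E(G)$ acts linearly on $V$; as $d\geq n^2/k$ for $k\geq 2$, Proposition \ref{field} bounds the contribution of the field-automorphism classes by $\log(\log_2q+2)\,q^{n^2/2+kd/2}$, which is less than $q^{kd}$ for every $n\geq 2$. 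Finally, in the inequality of Proposition \ref{tools}\ref{qsgood}) applied to $G\leq\Gamma\mathrm{L}(V)$ with $V=V_0\otimes\F_{q^k}$, every summand is of the shape (constant)$\cdot q^{a+kb}$ with $a$ bounded independently of $k$ and $b\leq d-1$; writing $q^{a+kb}=q^{a+3b}q^{(k-3)b}$ shows that the whole right-hand side for $k\geq 3$ is at most $q^{(k-3)(d-1)}$ times its value at $k=3$, so it suffices to contradict the $k=3$ inequality.

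For $k=3$ the eigenspace bounds are as follows. If $s\in G$ has projective prime order with eigenvalue multiplicities $a_1\geq a_2\geq\cdots$ on $W$, then $\dim C_V(\kappa s)\leq a_1+\tfrac12\sum a_i^2$ for all $\kappa$ by Proposition \ref{eigsp_from_permsquare}(i), while $|s^{\pgl_n(q)}|<2^mq^{n^2-\sum a_i^2}$ with $m$ the number of distinct eigenvalues by \cite[Table B.3]{bg}; writing $\Delta=n^2-\sum a_i^2$ one gets $3\,\mathrm{codim}\,C_V(s)-\log_q|s^{\pgl_n(q)}|\geq\tfrac12\Delta+\tfrac{3n}{2}-3a_1-m$, which is comfortably positive once $a_1$ is bounded suitably far below $n$; for those classes one sums over the at most $q^{n-1}+5q^{n-2}$ conjugacy classes of $\pgl_n(q)$ (\cite[Lemma 3.7]{MR2888238}). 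The remaining ``near-scalar'' classes — those with a large eigenspace on $W$, in particular the involutions (eigenvalue pattern $(a_1,a_2)$ on $W$, codimension $a_1a_2$ on $\mathrm{Sym}^2W$) and the semisimple elements with an eigenspace of dimension $n-1,n-2$ or $n-3$ on $W$ — are split by eigenvalue pattern and bounded using the precise class-size estimates of \cite[Table B.3]{bg} and the counts in Propositions \ref{invols} and \ref{sscounts}; for each such family the actual (rather than estimated) largest eigenspace dimension is used, so the family contributes a negligible fraction of $q^{3d}$. For unipotent $u$ of order $p$ the weight-string table for $V(2\lambda_1)$ (as displayed in the proof of Proposition \ref{sub_symm_square}) gives $c(u_\Psi)=n$ for a root element, whence $\dim C_V(u)\leq d-n$ for every non-identity unipotent $u$ by upper semicontinuity of $\dim C_V(\cdot)$ on the closure of $u^{\overline{G}}$. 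Feeding all of this into Proposition \ref{tools}\ref{qsgood}) should yield $q^{3d}>(\text{right-hand side})$ for all $n$ above a small bound and all $q\geq 3$; the finitely many remaining pairs $(n,q)$ with $n\in\{2,3,4\}$ (where the generic estimate is tightest) are then handled by refining the class counts via \cite[Table B.3]{bg} or by constructing $V$ explicitly in GAP \cite{GAP4}. This establishes the regular orbit for $k=3$, and hence for all $k\geq 3$ by the reduction above.

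For $k=2$ one reduces to the case just proved: $V\otimes\F_{(q^2)^2}=V_0\otimes\F_{q^4}$, on which $G$ acts by extension of scalars with $E(G)/Z(E(G))\cong\mathrm{PSL}_n(q)$ still absolutely irreducible, so the case $k=4$ shows $G$ has a regular orbit on $V_0\otimes\F_{q^4}$; Lemma \ref{fieldext} with $i=2$ then gives $b(G)\leq 2$ on $V=V_0\otimes\F_{q^2}$.

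The main obstacle is the bookkeeping inside the $k=3$ step: one must isolate the semisimple classes of small codimension on $\mathrm{Sym}^2W$ (those with a very large eigenspace on $W$, whose number grows only polynomially in $q$ while the generic bound $a_1+\tfrac12\sum a_i^2$ is no longer strong enough), bound each such family tightly in both cardinality and eigenspace dimension so that the running total stays below $q^{3d}$, and then dispatch the low-rank cases $n\in\{2,3,4\}$, where an explicit computation or a sharpened class count is likely to be unavoidable.
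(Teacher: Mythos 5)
Your overall architecture tracks the paper's quite closely: the semisimple bound $\dim V_t(s)\leq a_1+\tfrac12\sum a_i^2$ from Proposition \ref{eigsp_from_permsquare}, the comparison of $k\,\mathrm{codim}$ against $\log_q$ of the class size with the near-central classes split off and counted separately, the monotonicity reduction of $k\geq 3$ to a single value of $k$, and the deduction of the $k=2$ statement from a larger field via Lemma \ref{fieldext} are all essentially what the paper does. The problem is the unipotent part, where there is a genuine gap.

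You bound $\dim C_V(u)\leq d-n$ for \emph{every} non-identity unipotent $u$, by semicontinuity from the root-element class. That bound is close to sharp only for root elements and is far too weak for the large classes. The regular unipotent class of $\mathrm{PGL}_n(q)$ has size roughly $q^{n^2-n}$, so with your bound its contribution to the right-hand side of Proposition \ref{tools}\ref{qsgood}) is about $q^{n^2-n}\cdot q^{3(\binom{n+1}{2}-n)}$, which beats $|V|=q^{3\binom{n+1}{2}}$ unless $n^2-n<3n$, i.e.\ $n\leq 3$ (with $n=4$ exactly borderline and failing once constants are tracked). Since you have reduced everything to $k=3$, your argument therefore collapses for all $n\geq 4$ — precisely the range where the proposition has content. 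The repair is a class-by-class codimension estimate that grows with the class size: decompose the symmetric square of the Jordan form $(J_p^{a_p},\dots,J_2^{a_2},J_1^{a_1})$ via \cite[Lemmas 1.3.3, 1.3.4]{bigpaper} to get $\dim C_V(u)\leq\sum a_i\lceil i/2\rceil+\sum_{i<j}ia_ia_j+\sum\binom{a_i}{2}i$, from which $k\,\mathrm{codim}\,C_V(u)-\log_q|u^{\pgl_n(q)}|\geq(\tfrac{k}{2}-1)(4n-6)$ for all classes except $(J_2,J_1^{n-2})$ and $(J_2^2,J_1^{n-4})$; only those two small classes should be handled with flat bounds of your $d-n$ type together with the counts from Proposition \ref{unipcounts}. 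With that replacement (and the explicit low-rank computations you already anticipate), the rest of your outline goes through.
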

\begin{proof}
We will aim to apply Proposition \ref{tools}\ref{crude}. Let $d=k \binom{n+1}{2}$.
We will again proceed by considering the action of $G$ on $\overline{V} = V\otimes \overline{\mathbb{F}}_q$.
Suppose $s\in G$ is semisimple of prime order $r$ in $G$, and write the eigenvalues of $s$ on the natural module $W$ for $G$ as $t_1, t_2, \dots , t_m$, ordered so that the multiplicities $a_i$ of the $t_i$ are weakly decreasing. Suppose that $\{v_i \mid 1\leq i \leq n\}$ is a basis of eigenvectors of $s$ on the natural module $\overline{W}$ for $G$ over $ \overline{\mathbb{F}}_q$ so that $\{v_i v_j \mid 1\leq i\leq j \leq n\}$ forms a basis of $\overline{V}$ consisting of eigenvectors for $s$. Fix $t \in \overline{\mathbb{F}}_q$. Now, $V$ is isomorphic to the symmetric square of $W$, so 
by Proposition \ref{eigsp_from_permsquare}, $\dim V_t(s) \leq \tfrac{1}{2} \sum a_i^2+a_1$.
Therefore, 
\begin{align*}
k\, \mathrm{codim}V_t(s) - \log_q(|s^{\pgl_n(q)}|) \geq & k\left(\binom{n+1}{2}-\frac{1}{2}\sum a_i^2- a_1\right)-\left(n^2-\sum a_i^2+m/2 + \delta_{q,3} \frac{m}{6}\right)\\
=& \left( \frac{k}{2}-1\right) \left( n^2-\sum a_i^2\right)+\frac{kn}{2} - a_1-\frac{m}{2}-\delta_{q,3} \frac{m}{6}
\end{align*}
This is at least $(k/2-1)(3n-2)-\delta_{q,3} \frac{n}{6}$, since $m \leq n-a_1+1$ and $n\geq a_1+1$.

Now let $u \in G$ be unipotent of prime order $p$, and with associated canonical Jordan form $(J_p^{a_p}, \dots J_2^{a_2},J_1^{a_1})$ on the natural module of $G$. Suppose that $u$ does not have Jordan blocks $(J_2,J_1^{n-2})$ or $(J_2^2, J_1^{n-4})$; we will deal with these elements separately. Now,
\[
S^2((J_p^{a_p}, \dots J_2^{a_2},J_1^{a_1}))  = \sum_i a_i S^2(J_i) +  \sum_{i<j} a_ia_j J_i \otimes J_j + \sum_i \binom{a_i}{2} J_i\otimes J_i.
\]
By \cite[Lemmas 1.3.3, 1.3.4]{bigpaper}, $\dim C_V(u)  \leq \sum a_i \lceil i/2 \rceil + \sum_{i<j} ia_ia_j + \sum \binom{a_i}{2}i$. 
Therefore,
\begin{align*}
k\, \mathrm{codim}C_V(u) -& \log_q(|u^{\pgl_n(q)}|) \geq \\
 & k\left(\binom{n+1}{2}-\sum_{i \textrm{ odd}} \frac{a_i}{2}-\sum_{i<j}ia_ia_j-\frac{1}{2}\sum ia_i^2 \right)-(n^2-2\sum_{i<j}ia_ia_j-\sum ia_i^2+m)\\
 & \geq (\frac{k}{2}-1)(n^2-2\sum_{i<j}ia_ia_j-\sum ia_i^2)+ k(\frac{n}{2}-\sum_{i \textrm{ odd}} \frac{a_i}{2})-m,
\end{align*}
which is at least $(k/2-1)(4n-6)$ since $m < 3\sqrt{n}/2$. Also note that if $u$ has Jordan form $(J_2,J_1^{n-2})$ or $(J_2^2, J_1^{n-4})$, then $\dim C_V(u) \leq d-n$  and $d-(2n-2)$ respectively. The number of such elements is given in the proof of Proposition \ref{unipcounts}.
%
 If $G$ has no regular orbit on $V$, then
\begin{align*}
q^d \leq & 2(q^{n-1}+5q^{n-2})(q^{d-(k/2-1)(4n-6)}+q^{d-(k/2-1)(3n-2)-\delta_{q,3} \frac{n}{6}})+2\log(\log_2q+2)q^{n^2/2+d/2}\\
& +q^{2n-1}q^{d-kn}+4q^{4n-8}q^{d-k(2n-2)}.
\end{align*}
This is a contradiction for all $(k,n,q)$ with $k \geq 3$, $n\geq 3$ and $q\geq 3$, except for $(n,q) = (3,3), (3,5),(4,3)$ and $(5,3)$. We confirm using GAP \cite{GAP4} that $G$ has a regular orbit on $V$ in each of the remaining cases. 
When $n=2$, we remove the $4q^{4n-8}q^{d-k(2n-2)}$ term, and find that there is a regular orbit of $G$ on $V$ for $k\geq 3$ and $q\geq 7$. Finally, if $k=2$, then $b(G)\leq 2$ by Proposition \ref{fieldext}.
\end{proof}
	\begin{proposition}
	\label{L2_base_size}
	Let $G$ be almost quasisimple with $E(G)/Z(E(G)) \cong \mathrm{PSL}_n(q)$ with $n\geq 4$, and let $V=V(\lam2)$ be the exterior square for $E(G)$ over $\mathbb{F}_{q^k}$. Then $G$ has a regular orbit on $V$ unless one of the following holds.
	\begin{enumerate}
		\item $(n,k)=(4,4)$ and $b(G)\leq 2$, 
		\item $k=3$ with $n\in [4,6]$ and $b(G)\leq 2$
		\item $k=2$ with $n\geq 5$ and $b(G)\leq 2$, or 
		\item $(n,k)=(4,2)$ and $b(G)\leq 3$.
	\end{enumerate}
\end{proposition}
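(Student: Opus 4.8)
The proof will follow the pattern of Propositions \ref{2L1_base_size}, \ref{exttensor} and \ref{adj_ext}, working with the action of $G$ on $\overline{V}=V\otimes\overline{\mathbb{F}}_q$, which is the exterior square of the natural module $\overline{W}$ for $\overline{G}$. Write $d=k\binom{n}{2}=\dim_{\mathbb{F}_{q^k}}V$. Since the Dynkin diagram automorphism sends $\lambda_2\mapsto\lambda_{n-2}$, it preserves $\lambda_2$ only when $n=4$, so graph and graph-field automorphisms need only be taken into account in that case, bounding $\dim C_V(\tau)$ via Proposition \ref{graphfix}; field automorphisms of $\mathrm{PSL}_n(q)$ are handled by Proposition \ref{field}, which applies whenever $k\geq 3$ since then $\binom n2\ge n^2/k$. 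The plan is to produce, for each element $x$ of projective prime order in $G$, a lower bound on $k\,\mathrm{codim}_{\mathbb{F}_{q^k}}C_V(x)-\log_q|x^{\mathrm{PGL}_n(q)}|$, then substitute these into Proposition \ref{tools}\ref{crude}), using the bound $q^{n-1}+5q^{n-2}$ on the number of conjugacy classes of $\mathrm{PGL}_n(q)$ from \cite[Lemma 3.7]{MR2888238}.

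For semisimple $s$ of prime order with eigenvalue multiplicities $a_1\geq\dots\geq a_m$ on $\overline{W}$, Proposition \ref{eigsp_from_permsquare}(ii) gives $\dim V_t(s)\leq\tfrac12\sum a_i^2$ for every $t\in\overline{\mathbb{F}}_q$, while \cite[Table B.3]{bg} gives $|s^{\mathrm{PGL}_n(q)}|<2^mq^{n^2-\sum a_i^2}$; as $q\geq 4$ these combine to
\[
k\,\mathrm{codim}\,V_t(s)-\log_q|s^{\mathrm{PGL}_n(q)}|\geq\Big(1-\tfrac k2\Big)\sum a_i^2+k\binom n2-n^2-\tfrac m2 .
\]
This estimate degenerates when $\sum a_i^2$ is large, so I would split off the classes with $a_1\in\{n-1,n-2\}$, for which the exterior-square eigenspace dimensions and the class sizes can be computed directly (the relevant counts already occur in the proof of Proposition \ref{sscounts}). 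Dually, for unipotent $u$ of order $p$ I would use \cite[Lemmas 1.3.3, 1.3.4]{bigpaper} to bound $\dim C_V(u)$ through the decomposition of the exterior square of the Jordan form of $u$ on $\overline{W}$, handling the small partitions $(J_2,J_1^{n-2})$ and $(J_2^2,J_1^{n-4})$ separately exactly as in Proposition \ref{unipcounts}. Feeding everything into Proposition \ref{tools}\ref{crude}) should give a contradiction, hence a regular orbit of $G$ on $V$, for all $(n,k)$ with $n\geq 4$ outside the finitely many small tuples listed; the few remaining tuples with small $n$ and small $q$ can be checked with GAP, and for the surviving $k\in\{2,3\}$ cases one deduces the stated bound $b(G)\leq 2$ (or $b(G)\leq 3$ when $(n,k)=(4,2)$) from Lemma \ref{fieldext} applied to the regular orbit of $G$ on $V\otimes\mathbb{F}_{q^{2}}$ (resp. $V\otimes\mathbb{F}_{q^{3}}$), which is an exterior-square module of the same kind but with larger extension degree, already covered above.

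The main obstacle I expect is the bookkeeping for the borderline classes. The crude inequality fails precisely when $s$ or $u$ has a large fixed subspace on $\overline{W}$, and for $n\in\{4,5,6\}$ with small $q$ the accumulated contributions from these classes are delicate; one must either sharpen the class counts (using \cite[Table B.3]{bg} more carefully, in the style of Propositions \ref{sscounts} and \ref{unipcounts}) or fall back on explicit computation. Determining exactly which $(n,k,q)$ genuinely have no regular orbit, as opposed to merely failing the crude estimate, is where most of the effort goes.
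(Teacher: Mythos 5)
Your proposal follows essentially the same route as the paper's proof: pass to $\overline{V}=V\otimes\overline{\mathbb{F}}_q$, bound semisimple eigenspaces by Proposition \ref{eigsp_from_permsquare}(ii) against the class-size bounds of \cite[Table B.3]{bg}, split off the semisimple classes with $a_1\in\{n-1,n-2\}$, bound unipotent fixed spaces through the Jordan decomposition of $\wedge^2\overline{W}$ via \cite[Lemmas 1.3.3, 1.3.4]{bigpaper}, feed everything into Proposition \ref{tools}\ref{crude}) with the class-number bound $q^{n-1}+5q^{n-2}$, and recover the $k\in\{2,3\}$ (and $(n,k)=(4,4)$) conclusions from Lemma \ref{fieldext} applied to the same module over a larger extension. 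Your displayed semisimple estimate is algebraically correct and equivalent to the one used in the paper.

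The one point where your plan as written would stall is the unipotent exceptional set. For the exterior square, $J_i$ fixes a $\lfloor i/2\rfloor$-dimensional subspace of $\wedge^2 J_i$ rather than the $\lceil i/2\rceil$ occurring for the symmetric square, which costs roughly $kn/2$ in the generic codimension bound; consequently the generic estimate only survives for partitions with $n^2-2\sum_{i<j}ia_ia_j-\sum_i ia_i^2\geq 6n$, and one must treat separately not just $(2,1^{n-2})$ and $(2^2,1^{n-4})$ (which suffice for $S^2$ in Proposition \ref{2L1_base_size}) but all partitions violating this inequality — the paper tabulates eighteen families, including $(3,1^{n-3})$, $(4,1^{n-4})$, $(2^3,1^{n-6})$, etc. — and sum their individual contributions in the final inequality. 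Since you explicitly flag the borderline bookkeeping as where the effort lies, this is a quantitative correction to your plan rather than a conceptual gap, but it does materially enlarge the case analysis and is what forces the extra surviving tuples for $n\in[4,6]$.
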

\begin{proof}
	We again consider the action of $G$ on $\overline{V} = V\otimes \overline{\mathbb{F}}_q$. Assume that $k\geq 3$.
	Suppose $s\in G$ is semisimple of prime order $r$ in $G$, with eigenvalues $t_1, t_2, \dots , t_m$ on the natural module $\overline{W}$ of $G$ over $\overf_q$, ordered so that their multiplicities $a_i$ are weakly decreasing. Also suppose that $(t_1,t_2) \neq (n-1,1)$ or $(n-2,2)$; we will deal with these cases separately. Now, $V$ is isomorphic to the exterior square of $W$, so by Proposition \ref{eigsp_from_permsquare}, for $t\in \overline{\mathbb{F}}_q$, we have $\dim V_t(s) \leq \frac{1}{2}\sum a_i^2$. Therefore, 
	\begin{align*}
	k\, \mathrm{codim}V_t(s) - \log_q(|s^{\pgl_n(q)}|) \geq &  k\left(\binom{n}{2}-\frac{1}{2}\sum a_i^2\right)-\left(n^2-\sum a_i^2+m\right) \\
	&> \left(\frac{k}{2}-1\right)(n^2-n-\sum a_i^2) \geq \left(\frac{k}{2}-1\right)(3n-6),
	\end{align*}
	since by assumption, $n^2-\sum a_i^2 \geq n^2-(n-2)^2-2 = 4n-6$. If $(t_1,t_2) = (n-1,1)$ or $(n-2,2)$, then we note that by the $\Psi$-net tables for $V(\lambda_2)$ in Table \ref{L2_tabs}, we have $V_t(s) \leq n-2$ and $2n-4$ respectively.
	Now suppose $u \in G$ is unipotent of prime order $p$, and with associated canonical Jordan form $(J_p^{a_p}, \dots J_2^{a_2},J_1^{a_1})$ on the natural module of $G$. 
	Now,
	\[
	\wedge^2((J_p^{a_p}, \dots J_2^{a_2},J_1^{a_1}))  = \sum_i a_i \wedge^2(J_i) +  \sum_{i<j} a_ia_j J_i \otimes J_j + \sum_i \binom{a_i}{2} J_i\otimes J_i.
	\]
	By \cite[Lemmas 1.3.3, 1.3.4]{bigpaper}, $\dim C_V(u)  \leq \sum a_i \lfloor i/2 \rfloor + \sum_{i<j} ia_ia_j + \sum \binom{a_i}{2}i$. 
	Therefore,
	\begin{align*}
	k\, \mathrm{codim}C_V(u) -& \log_q(|u^{\pgl_n(q)}|) \geq \\
	& k\left(\binom{n}{2}+\sum_{i \textrm{ odd}} \frac{a_i}{2}-\sum_{i<j}ia_ia_j-\frac{1}{2}\sum ia_i^2 \right)-(n^2-2\sum_{i<j}ia_ia_j-\sum ia_i^2+m)\\
	& \geq (\frac{k}{2}-1)(n^2-2\sum_{i<j}ia_ia_j-\sum ia_i^2) + k\sum_{i \textrm{ odd}} \frac{a_i}{2}-\frac{kn}{2}-m.
	\end{align*}
	Now, since $k \geq 3$,  we have $m \leq \sqrt{n}+ k\sum_{i \textrm{ odd}} \frac{a_i}{2}$. Moreover, $n^2-2\sum_{i<j}ia_ia_j-\sum ia_i^2 \geq 6n$, unless $u$ has an associated partition given in Table \ref{badparts}, which we treat separately. Let $\mathcal{S}$ denote a set of unipotent conjugacy class representatives of prime order with corresponding partition in Table \ref{badparts}. Excluding classes with representatives in $\mathcal{S}$, we have 
	\[
	k\, \mathrm{codim}C_V(u) - \log_q(|u^{\pgl_n(q)}|) \geq (\frac{5k}{2}-6)n-\sqrt{n}.
	\]

	\begin{table}[!htbp]
		\centering \begin{tabular}{ccc}
			\toprule
			Partition & $n^2-2\sum_{i<j} i a_i a_j-\sum i a_i^2$ & $\mathrm{codim}C_V(u) \geq$ \\ \midrule
			$(6,1^{n-6})$ & $10n-30$ & $5n-18$ \\
			$(5,2,1^{n-7})$ & $10n-32$ & $5n-19$ \\
			$(5,1^{n-5})$ & $8n-20$ & $4n-12$ \\
			$(4,3,1^{n-7})$ & $10n-34$ & $5n-20$ \\
			$(4,2^2,1^{n-8})$ & $10n-36$ & $5n-22$ \\
			$(4,2,1^{n-6})$ & $8n-22$ & $4n-14$ \\
			$(4,1^{n-4})$ & $6n-12$ & $3n-8$ \\
			$(3^2, 2,1^{n-8})$ & $10n-38$ & $5n-22$ \\
			$(3^2,1^{n-6})$ & $8n-24$ & $4n-14$ \\
			$(3,2^3,1^{n-9})$ & $10n-42$ & $5n-25$ \\
			$(3,2^2,1^{n-7})$ & $8n-26$ & $4n-16$ \\
			$(3,2,1^{n-5})$ & $6n-14$ & $3n-9$ \\
			$(3,1^{n-3})$ & $4n-6$ & $2n-4$ \\
			$(2^5,1^{n-10})$ & $10n-50$ & $5n-30$ \\
			$(2^4,1^{n-8})$ & $8n-32$ & $4n-20$ \\
			$(2^3,1^{n-6})$ & $6n-18$ & $3n-12$ \\
			$(2^2,1^{n-4})$ & $4n-8$ & $2n-6$ \\
			$(2,1^{n-2})$ & $2n-2$ & $n-2$ \\ \bottomrule
		\end{tabular}
		\caption{The centraliser dimensions and fixed point space codimensions of some unipotent elements $u$ acting on $V=V(\lam2)$.\label{badparts}}
	\end{table}
	Therefore, if $G$ has no regular orbit on $V$,
	\begin{align*}
	q^d &\leq 2(q^{n-1}+5q^{n-2})(q^{d-(k/2-1)(3n-6)}+q^{d-( (\frac{5k}{2}-6)n-\sqrt{n})})+2\log(\log_2q+2)q^{n^2/2+d/2}\\
	&+2q^{2n-1}q^{d-k(n-2)}  +2q^{4n-7}q^{d-2k(n-2)}+ \sum_{u \in \mathcal{S}} 2^{t_u}q^{n^2-2\sum_{i<j} i a_i a_j-\sum i a_i^2}q^{d-k\,  \mathrm{codim}C_V(u)}\\
	\end{align*}
	where $d = k \binom{n}{2}$ and $t_u$ is the number of parts of distinct sizes in the partition corresponding to $u$.
	
	This gives a contradiction for all $k,n,q$ except for $4\leq n\leq 6$, as well as $k=4$ with $n=7$ and $q\geq 2$, and also $k=3$ with $n=7,8$ with $q\geq 2$, and $(n,2)$ for $n\leq 12$. For the remaining cases, with $k=4$ and $n\geq 5$, or $k=3$ and $n\geq 7$, we remove terms in the inequality based on elements of $\mathcal{S}$ that do not exist for a given $n$. This leaves us with a smaller list of $(n,q)$ that satisfy the inequality. All of these have $q\leq 5$, and by either computing the number of elements of each prime order in $G/F(G)$ precisely or constructing the module explicitly using GAP \cite{GAP4}, we determine that there is a regular orbit. Finally, if $k=2$, then $b(G)=2$ by Lemma \ref{fieldext} for $n\geq 5$, and $b(G)\leq 3$ if $n=4$.
\end{proof}
\begin{proposition}
	\label{3L1_n=2}
Let $G$ be almost quasisimple with $E(G)/Z(E(G)) \cong \mathrm{PSL}_2(q)$, and let $V=V(3\lam1)$ or $V(4\lam1)$ over $\mathbb{F}_{q^k}$ for $k>1$. Then $G$ has a regular orbit on $V$. 
\end{proposition}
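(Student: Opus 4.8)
The plan is to apply Proposition~\ref{tools}\ref{qsgood}, exploiting that $V$ has tiny dimension ($d=4$ for $V(3\lam1)$, $d=5$ for $V(4\lam1)$) yet is realised over the large field $\mathbb F_{q^k}$ with $k\geq2$: thus $|V|=q^{kd}$ grows with $k$, while $E(G)/Z(E(G))\cong\mathrm{PSL}_2(q)$, and with it the number of prime order elements we must sum over (field automorphisms aside), stays bounded by about $q^3$. Note that $3\lam1$, resp.\ $4\lam1$, is $p$-restricted of dimension $4$, resp.\ $5$, only when $p\geq5$, so $q\geq5$ throughout, and that the field extension leaves the group unchanged, only enlarging the module.

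First I would record the eigenspace bounds. Writing $V(m\lam1)=S^m(W)$ with $W$ the natural module, a semisimple $s\in G$ of projective prime order $r\neq p$ has, after rescaling, eigenvalues $\xi,1$ on $W$ with $\xi$ a primitive $r$-th root of unity, hence eigenvalues $\xi^m,\xi^{m-1},\dots,1$ on $\overline V$, each of multiplicity at most $\lceil(m+1)/r\rceil$; since rescaling does not change this, $\dim C_{\overline V}(\kappa s)\leq2$ for all $\kappa\in\overf_q^\times$ when $m=3$, and $\leq3$ (equality forcing $r=2$) when $m=4$. As $p>m$, every nontrivial unipotent $u\in G$ acts on $V(m\lam1)$ as a single Jordan block $J_{m+1}$, so $\dim C_V(u)=1$. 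Finally, a projective prime order element inducing a field automorphism of $\mathrm{PSL}_2(q)$ acts semilinearly, but not linearly, on $V$ — by the proof of Proposition~\ref{field} the linear case forces $\overline V$ to be a nontrivial twisted tensor module, which the $p$-restricted module $V(m\lam1)$ is not — so Proposition~\ref{field} applies with $\zeta=0$ and bounds the total field-automorphism contribution by $\log(\log_2 q+2)\,q^{2+kd/2}$.

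Next I would substitute these into Proposition~\ref{tools}\ref{qsgood}. The number of prime order elements of $\mathrm{PGL}_2(q)$ is less than $q^3$, of which fewer than $q^2$ are unipotent, and $\tfrac{1}{o(\bar x)-1}\leq\tfrac14$ since $p\geq5$. So if $G$ has no regular orbit on $V$, then
\[
q^{kd}\ \leq\ 2\,q^{3}\,q^{k(d-2)}\ +\ \tfrac14\,q^{2}\,q^{k}\ +\ 2\log(\log_2 q+2)\,q^{2+kd/2}.
\]
For $d\in\{4,5\}$, $k\geq2$ and $q\geq5$ the right-hand side is dominated by $2q^{3+k(d-2)}$, and $q^{kd-3-k(d-2)}=q^{2k-3}\geq q$ overwhelms the bounded remainder, so the inequality fails and $G$ has a regular orbit on $V$. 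If the $(m,k)=(4,2)$ margin proves uncomfortable, one splits the semisimple sum into the $r=2$ part — counted by Proposition~\ref{invols}, with $\dim C_V\leq3$ — and the $r\geq3$ part, with $\dim C_V\leq2$.

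The step I expect to demand the most care is the field-automorphism accounting: if some field automorphism acted linearly on $V$, Proposition~\ref{field} would give only the exponent $2+\tfrac{k}{2}(d+d^{1/2})$, which at $(d,k)=(4,2)$ equals $8=kd$ and destroys the estimate. Thus the crux is establishing $\zeta=0$, i.e.\ that the relevant field automorphisms are genuinely semilinear because $V(m\lam1)$ is an ordinary $p$-restricted module rather than a twisted tensor product; with that in hand, the gap between $|\mathrm{PSL}_2(q)|$ and $q^{kd}$ makes the remaining estimates routine.
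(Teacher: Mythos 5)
Your proposal is correct and follows essentially the same route as the paper: both plug eigenspace-dimension bounds into the counting inequality of Proposition \ref{tools}, handle field automorphisms via Proposition \ref{field} with the semilinear (non-linear) case, and conclude by comparing $q^{kd}$ with a right-hand side of order $q^{3+k(d-2)}$. The only cosmetic difference is that you derive the bounds $\dim C_{\overline V}(\kappa s)\leq 2$ (resp.\ $3$) and $\dim C_V(u)=1$ directly from the eigenvalues of $S^m(W)$, whereas the paper reads them off from the $\alpha$-bounds of Proposition \ref{alphas} via Proposition \ref{tools}(i) together with the involution count of Proposition \ref{invols}; the resulting numerical estimates coincide.
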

\begin{proof}
First let $V= V(3\lam1)$ over $\mathbb{F}_{q^k}$ for $k>1$. Then by Proposition \ref{alphas}, 
if $G$ has no regular orbit on $V$,
\[
q^{4k} \leq 2|\mathrm{PGL}_2(q)|q^{2k}+2\log(\log_2q+2)q^{2+2k}
\]
This gives a contradiction for $k\geq 2$ and $q\geq 7$.
Now let $V= V(4\lam1)$ over $\mathbb{F}_{q^k}$. If $G$ has no regular orbit on $V$, then 
\[
q^{5k} \leq 2|\mathrm{PGL}_2(q)|q^{2k}+4(q^2+q)q^{3k}+2\log(\log_2q+2)q^{2+5k/2}
\]
This gives a contradiction for $q\geq 7$ and the result follows.
\end{proof}

\begin{proposition}
	\label{L3_n=6}
Let $G$ be almost quasisimple with $E(G)/Z(E(G)) \cong \mathrm{PSL}_n(q)$ for $n\in \{6,9\}$, and let $V=V(\lam3)$ over $\mathbb{F}_{q^k}$ for $k>1$. Then $G$ has a regular orbit on $V$, unless possibly when either $(n,k)=(6,3)$ and $G$ contains a graph automorphism, or $(n,k)=(6,2)$, in which cases $b(G)\leq 2$.
\end{proposition}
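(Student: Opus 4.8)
The plan is to run the fixed-point counting method of Proposition~\ref{tools} directly for $V=V(\lam3)$ realised over $\F_{q^k}$, exploiting the fact that, since $V$ is now an $\F_{q^k}$-module while the conjugacy classes of $G/F(G)$ are still counted inside $\mathrm{Aut}(\mathrm{PSL}_n(q))$, every fixed-space dimension occurring in the inequalities of Proposition~\ref{tools} is multiplied by $k\ge 2$. This buys a large amount of room over the $k=1$ situation treated in Proposition~\ref{l3}. Write $W$ for the natural module for $E(G)$ over $\overline{\F}_q$; then $V\cong\wedge^3 W$, $d=\dim V=\binom n3\in\{20,84\}$, and $|V|=q^{kd}$.

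For the codimension bounds I would argue exactly as in Proposition~\ref{l3}. Given a semisimple $s\in G$ of projective prime order with eigenvalue multiplicities $a_1\ge\dots\ge a_m$ on $W$, Proposition~\ref{eigsp_from_permsquare}(iii) gives $\dim V_t(s)\le\tfrac n6\sum a_i^2$, and \cite[Table~B.3]{bg} gives $|s^{\pgl_n(q)}|<2^m q^{n^2-\sum a_i^2}$; combining these as in \eqref{l3eqn} bounds $k\,\codim V_t(s)-\log_q|s^{\pgl_n(q)}|$ below by a quantity that grows with $k$ once $a_1\le n-4$, while the finitely many patterns with $a_1\in\{n-1,n-2,n-3\}$ I would treat separately by writing down the eigenvalues of $\wedge^3 s$ directly (for instance multiplicities $(n-1,1)$ give a largest eigenspace of dimension $\binom{n-1}{3}$) together with the sharper class-size bounds of Proposition~\ref{sscounts}. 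For a unipotent $u\in G$ of prime order I would use the codimension lower bounds from the $\Psi$-net tables for $V(\lam3)$ in Table~\ref{l3table} specialised to $l=n-1$ (computing the $n$-specific tables in GAP \cite{GAP4} as in Table~\ref{L3_n=10_tabs} where needed), with $\Psi$ ranging over types $A_1,A_1^2,A_1^3,A_2A_1^2,A_3A_2$, or equivalently \cite[Lemma~1.3.3]{bigpaper} together with Proposition~\ref{tensorcodim}, again setting aside the near-regular partitions of small $\wedge^3$-codimension. Field automorphisms are handled by Proposition~\ref{field}. Since $\lam3$ is fixed by the diagram automorphism exactly when $n=6$ (for $n=9$ its dual is $V(\lam6)\ne V(\lam3)$), for $n=6$ I would also bound $\dim C_V(\tau x)$ for involutory graph and graph-field automorphisms via Proposition~\ref{graphfix} and count them using Proposition~\ref{graph}.

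Inserting these bounds into Proposition~\ref{tools}\ref{qsgood}) (and \ref{crude}) for the unipotent and near-regular contributions), the hypothesis that $G$ has no regular orbit on $V$ would give an inequality
\[
q^{kd}\le 2\bigl(q^{n-1}+5q^{n-2}\bigr)q^{k(d-c_0)}+\sum_i N_i\,q^{k(d-c_i)}+(\text{field, and for }n=6\text{ graph, terms}),
\]
where the $c_i$ are the codimension lower bounds above (the same quantities as in Proposition~\ref{l3}) and the $N_i$ are the class-size bounds of \cite[Table~B.3]{bg}, Proposition~\ref{sscounts}, Proposition~\ref{field} and \cite[Lemma~3.7]{MR2888238}. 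Because $k\ge 2$ multiplies each $c_i$, every exponent on the right is strictly below $kd$ for all but finitely many small $q$, which I would dispose of by counting elements of each prime order in $G/F(G)$ precisely or by realising the $20$- or $84$-dimensional module in GAP \cite{GAP4} and exhibiting a regular orbit representative. A case check then shows the inequality closes — so that $G$ has a regular orbit on $V$ — whenever $k\ge 4$, whenever $k=3$ and $n=9$, and whenever $k=3$, $n=6$ and $G$ contains no graph automorphism, leaving only $(n,k)=(6,2)$ and $(n,k)=(6,3)$ with a graph automorphism. In those two cases I would deduce $b(G)\le 2$ from Lemma~\ref{fieldext}: for $(n,k)=(6,2)$ the module $V\otimes\F_{q^4}$ is the $k=4$ module just shown to have a regular orbit, so a base of size $1$ there gives a base of size at most $2$ for $V$; likewise $(n,k)=(6,3)$ reduces to the $k=6$ case.

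The main obstacle will be the semisimple classes with a large eigenvalue multiplicity on $W$: there Proposition~\ref{eigsp_from_permsquare}(iii) overshoots $d$, so one must compute $\wedge^3$-eigenspaces by hand and pair each such class with an accurate $\pgl_n(q)$-class-size estimate. A further subtlety, specific to $n=6$, is bounding the graph- and graph-field-automorphism contributions tightly enough that they do not spuriously obstruct the $k=3$ argument while remaining visibly unavoidable when $k=2$; together with the routine bookkeeping of the exceptional unipotent classes and the small fields, this is where the work concentrates.
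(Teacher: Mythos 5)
Your proposal is correct and follows essentially the same route as the paper: rerun the Proposition~\ref{tools} counting with the codimension bounds inherited from the proof of Proposition~\ref{l3} now multiplied by $k$, handle field automorphisms via Proposition~\ref{field} and (for $n=6$ only, where $\lam3$ is self-dual under the diagram automorphism) graph and graph-field automorphisms via Propositions~\ref{graphfix} and~\ref{graph}, close the inequality for $k\ge 4$ in general, for $k\ge 2$ when $n=9$, and for $k\ge 3$ when $n=6$ without graph automorphisms, and dispose of the residual cases $(6,2)$ and $(6,3)$-with-graph by Lemma~\ref{fieldext} applied to the quadratic extension (i.e.\ the $k=4$ and $k=6$ modules, which have base size $1$). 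The paper's proof is exactly this, with the same small-$q$ cleanup by precise element counts.
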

\begin{proof}
Suppose $n=9$. If $G$ has no regular orbit on $V$, then by the proof of Proposition \ref{l3},
\begin{align*}
q^{84k} \leq & 2|\mathrm{PGL}_9(q)|q^{84k-42k}+2\times 2q^{81/2+3/2}q^{84k-32k}+(2q^{2n-1}+4\left( \frac{q^{81/2+2}-1}{q^2-1}\right))q^{84-21k}\\
&+2\log(\log_2q+2)q^{81/2+42k}
\end{align*}
This gives a contradiction except for $q=2$, where replacing $4\left( \frac{q^{81/2+2}-1}{q^2-1}\right)$ with the precise number of unipotent elements of order 2 in $\mathrm{PGL}_9(2)$ gives a contradiction. Now suppose that $n=6$. Here there are graph automorphisms $\tau$ to consider. By Proposition \ref{graphfix}, $\dim C_V(\tau) \leq 14$. Therefore, if $G$ contains a graph automorphism and has no regular orbit on $V$, then
\begin{align*}
q^{20k} \leq & 2|\mathrm{PGL}_6(q)|q^{k(20-12)}+ 2(4q^{20}+4\left( \frac{q^{20}-1}{q^2-1}\right))q^{k(20-8)}+3q^{11}q^{k(20-6)}+2\log(\log_2q+2)q^{18+10k}\\
&+(2q^{20}+2q^{35/2})q^{14k}.
\end{align*}
This is a contradiction for $k\geq 4$ and $q\geq 2$. So $G$ has a regular orbit on $V$, and moreover $b(G)\leq 2$ for $k=2,3$ by Proposition \ref{fieldext}, since $b(G) =1$ for $k=4,6$ respectively. A similar argument shows that if $G$ does not contain a graph automorphism, then $G$ has a regular orbit on $V$ when $k\geq 3$ and $q\geq 2$, and $b(G)=2$ when $k=2$.
\end{proof}
\begin{proposition}
Let $G$ be almost quasisimple with $E(G)/Z(E(G)) \cong \mathrm{PSL}_3(q)$, and let $V=V(3\lam1)$ over $\mathbb{F}_{q^k}$ for $k>1$. Then $G$ has a regular orbit on $V$. 
\end{proposition}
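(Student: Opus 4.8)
The plan is to argue by contradiction via Proposition \ref{tools}\ref{qsgood}), in the spirit of Propositions \ref{3L1_n=2} and \ref{L3_n=6}; the gain over the case $k=1$ (Proposition \ref{3l1}, where the explicit construction of \cite{bigpaper} was needed) is simply that for $k\ge 2$ the module $V$ is large enough for the crude $A_1$-weight-string bounds to suffice. First I would make the usual reductions. Since $p\ge 5$, the module $V=V(3\lam1)$ is the symmetric cube of the natural module, so $d:=\dim_{\F_{q^k}}V=\binom{n+2}{3}=10$; set $\overline{V}=V\otimes\overf_q$. As the Dynkin diagram automorphism of type $A_2$ sends $3\lam1$ to $3\lam2\ne 3\lam1$, the module $V$ is preserved by no graph or graph-field automorphism of $\mathrm{SL}_3(q)$, so $G\le\Gamma\mathrm{L}(V)$ induces only inner-diagonal and field automorphisms of $E(G)/Z(E(G))$. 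Moreover $3\lam1$ is $p$-restricted, so $V$ is not a proper tensor product of field twists, whence any field automorphism in $G$ acts semilinearly and not linearly on $V$; thus $\zeta=0$ in Proposition \ref{field}.

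Next I would record the eigenspace bounds from the $A_1$-net table for $V(3\lam1)$ with $l=2$ in Table \ref{3L1_tab_1}. A proper closed subsystem of $A_2$ contains at most one simple root, so for a non-central semisimple $x\in G$ of projective prime order $r\ne p$ there is a standard $\Psi=\langle\alpha\rangle$ of type $A_1$ with $\Psi\cap\Phi(x)=\emptyset$, and the table gives that the largest eigenspace of $x$ on $\overline{V}$ has dimension at most $10-c(x)$, where $c(x)=4,5,6$ according as $r=2$, $r=3$, or $r\ge 5$ (the $\alpha_1$- and $\alpha_2$-string structures of $V(3\lam1)$ coincide, so the choice of $\alpha$ is immaterial). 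For a non-identity unipotent $x$ (of order $p\ge 5$) the closure of $x^{\overline{G}}$ contains a root element, regular in an $A_1$-subsystem subgroup, so $\dim_{\overf_q}C_{\overline{V}}(x)\le 10-6=4$. As $V=V_0\otimes\F_{q^k}$, in every case $\max_{\kappa\in\F_{q^k}^{\times}}|C_V(\kappa x)|\le q^{k(10-c(x))}$.

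Finally, supposing $G$ has no regular orbit on $V$, I would substitute these bounds into Proposition \ref{tools}\ref{qsgood}) with $s=p$, using Proposition \ref{invols} (at most $2(q^5+q^4)$ elements of order $2$ and $2(q^7+q^6)$ of order $3$ in $\pgl_3(q)$, and at most $|\pgl_3(q)|<q^8$ prime order elements in total) and Proposition \ref{field} for the field automorphisms, to get
\begin{align*}
q^{10k}\ \le\ 4(q^5+q^4)q^{6k}+4(q^7+q^6)q^{5k}+\Bigl(2+\tfrac{1}{p-1}\Bigr)q^{8}q^{4k}+2\log(\log_2 q+2)\,q^{9/2+5k}.
\end{align*}
When $k=2$ the right-hand side is at most $8q^{17}+11q^{16}+2\log(\log_2 q+2)q^{29/2}$, which is less than $q^{20}$ for every $q\ge 5$, and for $k\ge 3$ the gap only widens; this contradiction gives the result. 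I expect the only point needing genuine care to be the observation that the $A_1$ bound is valid for every non-central semisimple element (the ``at most one simple root in $\Phi(x)$'' remark, together with the symmetry of the two weight-string structures); everything else is bookkeeping, and no small values of $q$ survive precisely because $k\ge 2$ leaves so much slack.
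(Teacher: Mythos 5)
Your proposal is correct and follows essentially the same route as the paper: the paper's proof also invokes the codimension bounds from the proof of Proposition \ref{3l1} and feeds them into Proposition \ref{tools} together with Propositions \ref{invols} and \ref{field}, obtaining a contradiction for $k\geq 2$ and $q\geq 5$. If anything, your instantiation is the more careful one — you use the correct $n=3$ data ($d=10$, codimensions $4,5,6,6$ from Table \ref{3L1_tab_1} with $l=2$), whereas the paper's displayed inequality appears to carry over the $n=4$ numerics ($q^{20k}$, $|\mathrm{PGL}_4(q)|$); your observations that graph automorphisms do not preserve $V(3\lam1)$ and that $\zeta=0$ in Proposition \ref{field} are also the right justifications for the terms that appear.
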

\begin{proof}
By the proof of Proposition \ref{3l1}, if $G$ has no regular orbit on $V$, then
\[
q^{20k} \leq 2|\mathrm{PGL}_4(q)|q^{20k-8k}+4(q^{5}+q^{4})q^{20k-6k}+2(q^{5}+q^{5})q^{20k-4k}+2\log(\log_2q+2)q^{9/2+10k}
\]
This is a contradiction for $k\geq 2$ and $q\geq 5$.
\end{proof}
\begin{proposition}
	\label{ext_L4}
Let $G$ be almost quasisimple with $E(G)/Z(E(G)) \cong \mathrm{PSL}_8(q)$, and let $V=V(\lam4)$ over $\mathbb{F}_{q^k}$ for $k>1$. Then $G$ has a regular orbit on $V$. 
\end{proposition}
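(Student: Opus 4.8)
The plan is to mimic the proof of Proposition \ref{l4} for the algebraic-group case $l=7$, but now carried out over the extension field $\mathbb{F}_{q^k}$ with $k>1$, which gives us an extra factor of $k$ in every codimension exponent and makes the inequalities much easier to satisfy. Recall that $\dim V = \binom{l+1}{4}=\binom{9}{4}=70$ when $n=l+1=8$, that $V=V(\lam4)$ is self-dual so graph automorphisms of $\mathrm{SL}_8(q)$ preserve $V$, and that the $\Psi$-net analysis of \cite[Proposition 2.6.4]{bigpaper} for $\Psi$ of type $A_1$, $A_1^2$, $A_2$ together with the $A_3$-net table computed in Table \ref{l4a3} supply the lower bounds on $c(s)$ and $c(u_\Psi)$: with $l=7$ these are $\binom{6}{3}=20$ for $A_1$, $2(\binom{5}{3}+4)=28$ for $A_1^2$, $2\binom{6}{3}=40$ for $A_2$, and from Table \ref{l4a3} we get $c(s)=c(u_\Psi)=85$ for $\Psi$ of type $A_3$ (the table is stated for $l=8$; I will recompute it for $l=7$, or simply fall back on the smaller $A_2$ and $A_1^2$ bounds, which already suffice once multiplied by $k$). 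I will also need a bound for graph automorphisms $\tau$ via Proposition \ref{graphfix}: since $V(\lam4)$ for $\mathrm{SL}_8$ has all weights in a single Weyl orbit and the set $S$ of weights fixed by the diagram automorphism is comparatively small, $\dim C_V(\tau) \le \tfrac12(70 + |S|)$, and over $\mathbb{F}_{q^k}$ this becomes $|C_V(\tau)| \le q^{k(35+|S|/2)}$, which I will bound explicitly (using the combinatorics of $4$-subsets of $\{1,\dots,8\}$ stable under $i \mapsto 9-i$).

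First I would invoke Propositions \ref{sscounts} and \ref{unipcounts} to bound the number of semisimple elements $s$ with $\Phi(s)\cap\Psi=\emptyset$ and unipotent elements $u$ outside the closure of $u_\Psi^{\overline G}$ for $\Psi$ of type $A_2$ (and possibly $A_3$): with $n=8$ these counts are of the form $4q^{n^2/2+2}$-type and $4\bigl(\tfrac{q^{n^2/2+2}-1}{q^2-1}\bigr)$-type respectively, i.e.\ at most $q^{O(n^2/2)}$. I would then apply Proposition \ref{tools}\ref{crude}) (or \ref{qsgood})) over the field $\mathbb{F}_{q^k}$, so that every term $q^{d-m}$ in the finite-field inequality of Proposition \ref{l4} becomes $q^{k(d-m)}=q^{kd-km}$ while the class-size bounds and the field-automorphism term $\mathrm{field}= 2\log(\log_2 q^k+2)q^{n^2/2+kd/2}$ grow only like $q^{O(n^2)}$ and $q^{kd/2+O(n^2)}$. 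Concretely, if $G$ has no regular orbit on $V$ then
\begin{align*}
q^{70k} \leq\ & 2|\pgl_8(q)|\, q^{70k-40k} + 2(\atwoeven)\,q^{70k-28k} + 2(q^{2n-1}+q^{2n-1})\,q^{70k-20k} + \itwo\, q^{70k-28k}\\
& + 2\log(\log_2 q^k+2)\,q^{32 + 35k} + (2q^{(n^2+n)/2-1}+2q^{(n^2-1)/2})\,q^{k(35+|S|/2)},
\end{align*}
where the last term accounts for graph automorphisms (the graph-field automorphism contribution is absorbed into the field-automorphism term via Proposition \ref{graphfix}). Since every exponent on the right is bounded by $\max\{30k, 42k, 50k, \ldots\} + O(n^2) < 70k$ once $k\ge 2$ and $q\ge 2$ — the dominant competing exponent being the graph-automorphism one $k(35+|S|/2)+O(n^2)$, and $|S|$ is small enough (a quick count gives $|S|$ on the order of $\binom{4}{2}=6$, so $35+|S|/2 \le 40 < 70$) — the inequality fails, yielding the desired contradiction.

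The main obstacle I anticipate is pinning down the graph-automorphism bound precisely enough: the exponent $k(35+|S|/2)$ must be comfortably below $70k$, so I need the genuine value of $\sum_{\mu\in S}\dim V_\mu$ for $V(\lam4)$ of $\mathrm{SL}_8$, which requires either a direct weight-multiplicity computation (all multiplicities here are $1$, since $\lam4$ is minuscule for type $A_7$, so $\sum_{\mu\in S}\dim V_\mu = |S|$ is just the number of diagram-automorphism-fixed weights) or a GAP check as used elsewhere in the paper. Given minusculeness this is routine: the weights are the $4$-subsets of $\{1,\dots,8\}$, the involution is $i\mapsto 9-i$, and a $4$-subset is fixed iff it is a union of pairs $\{i,9-i\}$, of which there are $\binom{4}{2}=6$; hence $|S|=6$, $\dim C_V(\tau)\le 35+3=38$, and $38k < 70k$ with room to spare. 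The remaining work is purely the verification that the displayed inequality fails for all $k\ge 2$, $q\ge 2$ (equivalently $q^k\ge 4$), which is a finite check of the kind carried out throughout Sections \ref{prest}--\ref{ext_section}, done with Mathematica as elsewhere; no small exceptional cases should survive, since already the leading $\pgl_8(q)$ term contributes exponent $\le 30k + 31 < 70k$.
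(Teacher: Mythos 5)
Your proposal follows essentially the same route as the paper: rescale the exponents in the Proposition \ref{l4} inequality by $k$ and check that it now fails for all $k\geq 2$, $q\geq 2$; the codimension bounds $20$, $28$, $40$ for $\Psi$ of type $A_1$, $A_1^2$, $A_2$ and the element counts you quote are the correct ones, and the resulting inequality does fail. In fact you are more careful than the paper's own (very terse) proof, which omits the graph-automorphism term even though $\lambda_4$ is fixed by the diagram automorphism of $A_7$, so that term genuinely needs to be there (the crude $\alpha$-bound $\dim C_V(\tau)\leq \tfrac{7}{8}\cdot 70k$ would not suffice when $k=2$, so Proposition \ref{graphfix} is really needed here).

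One detail is wrong, though harmlessly: your count $|S|=\binom{4}{2}=6$. The diagram automorphism acts on weights as $\mu\mapsto -w_0\mu$, which on the $4$-subsets indexing the weights of $\wedge^4 W$ is $I\mapsto \{9-i : i\in I\}^{c}$ (this is the involution fixing the highest weight $\{1,2,3,4\}$, whereas $I\mapsto\{9-i:i\in I\}$ sends it to the lowest weight). Its fixed points are the subsets containing exactly one element of each pair $\{j,9-j\}$, so $|S|=2^4=16$ and $\dim C_V(\tau)\leq 35+8=43$. Since $43k+35<70k$ still holds for all $k\geq 2$, the conclusion is unaffected. (Also note $\binom{9}{4}=126$; the dimension $70$ is $\binom{8}{4}$.)
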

\begin{proof}
By the proof of Proposition \ref{l4}, if $G$ has no regular orbit on $V$ over $\mathbb{F}_{q^k}$ for $k>1$, then
\begin{align*}
q^{d} \leq & 2|\pgl _n(q)| q^{d-2k \binom{l-1}{3}} + 2(\atwoeven)q^{d-28k} +2(q^{2n-1}+q^{2n-1})q^{d-k\binom{n-2}{3}}\\
&+  \itwo q^{d-28k} + \field,
\end{align*}
where $d= k \binom{n}{4}$.  This gives a contradiction for $k\geq 2$ and $q\geq 2$ so the result follows.
\end{proof}
\begin{proposition}
	\label{ext_2L2}
Let $G$ be almost quasisimple with $E(G)/Z(E(G)) \cong \mathrm{PSL}_4(q)$, and let $V=V(2\lam2)$ over $\mathbb{F}_{q^k}$ for $k>1$. Then $G$ has a regular orbit on $V$. 
\end{proposition}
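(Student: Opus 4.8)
The plan is as follows. The module $V=V(2\lambda_2)$ for $\mathrm{SL}_4$ has $\overf_q$-dimension $d=20-\ep_3$ (this requires $p\geq 3$; otherwise $V(2\lambda_2)$ is a Frobenius twist of the $6$-dimensional $V(\lambda_2)$), and since the nontrivial graph automorphism of type $A_3$ fixes the central node of the Dynkin diagram it fixes $2\lambda_2$, so $V$ is preserved by graph automorphisms of $\mathrm{SL}_4(q)$. Realised over $\mathbb{F}_{q^k}$ the module has $\mathbb{F}_q$-dimension $kd$, and for every $x\in G$ of projective prime order $\codim_{\mathbb{F}_q}C_V(x)=k\,\codim_{\overf_q}C_{\overline V}(x)$, where $\overline V=V\otimes\overf_q$; thus all codimension estimates available for $V(2\lambda_2)$ over $\mathbb{F}_q$ get multiplied by the factor $k\geq 2$, while the class-number bounds are unchanged. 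From the weight-string table of $V(2\lambda_2)$ (Table \ref{2l2A1}) specialised to $l=3$ one reads $c(s)\geq 8-\ep_3$ for semisimple $s$ of projective order $2$, $c(s)\geq 10$ for projective prime order $r\geq 3$, and $c(u_\Psi)\geq 10$ for unipotent elements of order $p\geq 3$; with $\Psi$ of type $A_1$ these apply to every noncentral semisimple and every nontrivial unipotent element, and sharper values for involutions follow, if needed, from the $A_1^2$-net table computed exactly as in Table \ref{2L2_A1^2} (the $l=4$ case), cf.\ also \cite[Proposition 3.1.3]{bigpaper}. For a graph automorphism $\tau$, Proposition \ref{graphfix} gives $\dim C_V(\tau)\leq\tfrac d2+\tfrac12\sum_{\mu\in S}\dim V_\mu$ over the diagram-fixed weight set $S$, an explicit number below $\lfloor\tfrac34 d\rfloor$; graph-field automorphisms are handled by Proposition \ref{graphfix} together with the counts in Proposition \ref{graph}, and field automorphisms by Proposition \ref{field}.

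I will then assume $G$ has no regular orbit on $V$ and apply Proposition \ref{tools}\ref{qsgood}) to $V=V_{kd}(q)$: bounding the inner-diagonal elements of odd projective prime order and the unipotent elements of order $p$ together by $|\pgl_4(q)|<q^{15}$ with codimension at least $10$, the inner-diagonal involutions by $i_2(\pgl_4(q))\leq i_2(\mathrm{Aut}(\mathrm{PSL}_4(q)))<2(q^9+q^8)$ with codimension at least $8-\ep_3$, the graph involutions by $2q^{5}$ and the graph-field involutions by $q^{15/2}$ (Proposition \ref{graph}), and the field automorphisms by Proposition \ref{field}, one obtains an inequality of the form
\begin{align*}
q^{kd} \leq{}& 2|\pgl_4(q)|\,q^{k(d-10)} + 2(q^9+q^8)\,q^{k(d-8+\ep_3)} + 2q^{5}\,q^{k\dim C_V(\tau)}\\
&{}+ q^{15/2}\,q^{k(d+d^{1/2})/2} + 2\log(\log_2 q+2)\,q^{8+k(d+d^{1/2})/2}.
\end{align*}
Since each exponent on the right is strictly smaller than $kd$ once $k\geq 2$, this is a contradiction for all $k\geq 2$ and $q\geq 3$; in the very tightest case, $q=3$ (so $d=19$) with $k=2$, one instead substitutes the exact counts of prime-order elements of $\mathrm{Aut}(\mathrm{PSL}_4(3))$ computed in GAP, or the $A_1^2$-net codimensions.

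The main difficulty is conceptual rather than computational. Over $\mathbb{F}_q$ the codimension estimates are genuinely too weak to run Proposition \ref{tools} -- this is exactly why the $k=1$ case in Proposition \ref{2l2} was settled via the explicit regular-orbit representative of \cite[Proposition 3.1.3]{bigpaper} rather than by counting -- and the same obstruction resurfaces here in the term $2|\pgl_4(q)|q^{k(d-10)}$, of size roughly $q^{15+10k}$. Everything therefore hinges on the factor $k\geq 2$, so the two points requiring genuine care are verifying that $k(d-10)$ beats $15$ for the $\pgl_4(q)$-term already at $k=2$, and that the graph- and graph-field-automorphism terms, which receive no $\Psi$-net improvement, nonetheless fall below $q^{kd}$ using only Proposition \ref{graphfix} and the counts of Proposition \ref{graph}. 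Finally, one cannot shortcut via Lemma \ref{extfield}, as that would only give a regular orbit for $\mathbb{F}_{q^k}^{\times}\circ G$ and not for $G$ itself; the direct computation above is needed.
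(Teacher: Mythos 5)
Your proposal is correct and follows essentially the same route as the paper: it imports the codimension bounds $c(s)\geq 10$ (resp. $8-\ep_3$ for involutions) and $c(u_\Psi)\geq 10$ from the weight-string analysis of $V(2\lam2)$ in the proof of Proposition \ref{2l2}, multiplies them by $k\geq 2$, bounds graph, graph-field and field automorphisms via Propositions \ref{graphfix}, \ref{graph} and \ref{field}, and derives the contradiction from Proposition \ref{tools}. The only differences are cosmetic constants in the automorphism terms; if anything your version is slightly more careful at the boundary, since you verify the inequality down to $q=3$ whereas the paper records the contradiction only for $q\geq 5$.
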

\begin{proof}
 This module is preserved by graph automorphisms, and there are 9 weights fixed by a graph automorphism.
By the proof of Proposition \ref{2l2}, if $G$ has no regular orbit on $V=V(2\lam2)$ over $\mathbb{F}_{q^k}$, then letting $d=k(20-\ep_3)$, we have
\[
q^d \leq 2 |\pgl_4(q) |q^{d-10k}+4(q^9+q^8)q^{d-(8-\ep_3)k}+2\log(\log_2q+2)q^{8+d/2}+(2q^{9}+2q^{15/2})q^{9k}
\]
This gives a contradiction for $k\geq 2$ and $q\geq 5$.
\end{proof}

We next take care of some absolutely irreducible modules from Section \ref{subfields} over extension fields. Here we replace $q_0$ with $q$ in our notation to be  consistent with Section \ref{subfields}.
\begin{proposition}
	\label{sub_ext_prop}
	Let $V=V_d(q)$ be a finite dimensional vector space over $\mathbb{F}_{q}$. Let $G\leq \Gamma \mathrm{L}(V)$ be almost quasisimple with $E(G)/Z(E(G)) \cong \mathrm{PSL}_n(q^c)$ for $n,c$ in Table \ref{ext_sub_table}. Also suppose that the restriction of $V$ to $E(G)$ is an absolutely irreducible module of highest weight $\lambda$ corresponding to $n,c$ in Table \ref{ext_sub_table}. Then $G$ has a regular orbit on $V \otimes \F_{q^t}$ for all integers $t >1$.
\end{proposition}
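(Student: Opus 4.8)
The plan is to run the counting argument of Proposition~\ref{tools} for $G$ acting on $\hat V:=V\otimes\F_{q^t}$, exploiting the fact that $\dim_{\F_{q^t}}\hat V=d$ does not depend on $t$ while $|\hat V|=q^{td}$ grows, so that increasing $t$ enlarges the target without altering the relevant group theory. Every summand on the right-hand side of the inequalities in Proposition~\ref{tools}\ref{qsgood}) has the shape $c_x\,|\bar x^H|\,|C_{\hat V}(x)|$ with $|C_{\hat V}(x)|=(q^t)^{\dim C_{\bar V}(x)}$, and $\dim C_{\bar V}(x)$, $|\bar x^H|$ and the number of conjugacy classes of prime order elements all depend only on $q^c$ (and on $\log q$, for field automorphisms), not on $t$. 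Hence it suffices to establish the required inequality for $t=2$; for $t\ge 3$ it then follows a fortiori, exactly as in the remark following Proposition~\ref{tools}\ref{crude}).

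So I would fix $t=2$ and one of the finitely many triples $(n,c,\lambda)$ of Table~\ref{ext_sub_table}. By the construction at the start of Section~\ref{subfields} together with Theorem~\ref{steintens}, $\bar V:=V\otimes\overline{\F}_q\cong W\otimes W^{(q)}\otimes\cdots\otimes W^{(q^{c-1})}$, where $W=V(\lambda_0)$ is the $\F_{q^c}$-realisation of a small $p$-restricted $\mathrm{SL}_n$-module (the natural module in all but one case, a symmetric square in the remaining $c=2$ case). For $x\in G$ of projective prime order, $x$ either acts linearly on $\bar V$, induced by $\bar x\in\mathrm{GL}_n(\overline{\F}_q)$ permuting the tensor factors trivially, or induces a field automorphism, permuting the factors cyclically. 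In the linear case I would bound $\emax^{\bar V}(x)$ by applying Proposition~\ref{tensorcodim} iteratively to the factorisation $\bar V=W\otimes(W^{(q)}\otimes\cdots)$ and, crucially, also to $\bar V=(W\otimes W^{(q)})\otimes(W^{(q^2)}\otimes\cdots)$; for semisimple $\bar x$ the two- and three-factor values are controlled by Propositions~\ref{permsquare} and~\ref{eigsp_from_permsquare}, and, when $\bar x$ has order dividing $q^{j}-1$, by the collapse $W^{(q^j)}\cong W$ as $\langle\bar x\rangle$-modules, while for unipotent $\bar x$ one uses \cite[Lemma~1.3.3]{bigpaper}. The outcome, just as in the proofs of Propositions~\ref{subcube} and~\ref{sub_n^4}, is that $\dim C_{\bar V}(x)$ lies well below $d$; in particular regular and near-regular semisimple elements have much smaller fixed spaces than Proposition~\ref{tools}\ref{alphabound}) would give. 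Field automorphisms (and, when $\lambda$ is stable under the graph automorphism, graph and graph-field automorphisms) are handled by Propositions~\ref{field},~\ref{graphfix} and~\ref{graph}, contributing a term of order $q^{td/2+O(t)}$.

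With these ingredients I would assume for contradiction that $G$ has no regular orbit on $\hat V$ and apply Proposition~\ref{tools}\ref{qsgood}) (or~\ref{eigsp2}) where a better bound on $\max_\kappa|C_{\hat V}(\kappa x)|$ is available), using \cite[Table~B.3]{bg} for the prime order element counts, \cite[Lemma~3.7]{MR2888238} for the number of conjugacy classes, and Proposition~\ref{invols} for involutions. Each term on the right is then at most a class count times $q^{t(d-\delta)}$ with $\delta\ge 1$, or the field/graph term of size $q^{td/2+O(t)}$; since the class counts and the $\delta$'s are independent of $t$, the total is strictly less than $q^{td}$ for $t=2$ and all $q$ outside a short explicit list (one list per triple). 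For the residual pairs $(q,t)$, all with $q$ small, I would either substitute exact counts of prime order elements of $G/F(G)$ into the displayed inequality, or construct $G\le\Gamma\mathrm{L}(\hat V)$ explicitly in GAP~\cite{GAP4} and exhibit a vector with trivial stabiliser.

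The hard part will be the codimension estimates in the cases with $n=2$ (and to a lesser extent $n=3$): the bound of Proposition~\ref{tools}\ref{alphabound}) only gives codimension $\approx d/2$ (respectively $d/3$), which is hopelessly weak against $|G|\sim q^{c(n^2-1)}$, so one genuinely needs the near-optimal tensor estimate $\emax^{\bar V}(x)\le n\,\emax^{W\otimes W^{(q)}}(x)$ — and, for elements of order dividing $q-1$, the further collapse coming from $W^{(q)}\cong W$ on $\langle x\rangle$ — to push $\dim C_{\hat V}(x)$ down far enough; only then does the surplus factor $q^{(t-1)d}$ in $|\hat V|$ overcome the order of $G$, and even so a careful sweep of the finitely many small $q$ at $t=2$ is needed.
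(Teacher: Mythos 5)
Your proposal is correct and follows essentially the same route as the paper: refined eigenspace bounds for the tensor-factorised module $\overline{V}\cong W\otimes W^{(q)}\otimes\cdots\otimes W^{(q^{c-1})}$ via Propositions \ref{tensorcodim} and \ref{permsquare}, fed into the counting inequalities of Proposition \ref{tools} with the element counts from \cite{bg} and Proposition \ref{invols}, and GAP for the residual small $q$ (which in the paper occur only for $(n,c)=(2,3)$ with $t=2$ and $q\leq 7$). Your preliminary reduction to $t=2$ is a valid streamlining in the spirit of Lemma \ref{extfield} that the paper does not bother with (it simply carries $t$ through each inequality); the only other quibble is that there are two non-natural tensor factors to handle, a symmetric square for $(n,c)=(2,2)$ and an exterior square for $(n,c)=(4,2)$, and for the former the crude bound of Proposition \ref{tools}\ref{alphabound}) already suffices.
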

\begin{table}[h]
	\begin{tabular}{cccc}
		\toprule
		$\lambda$ & $c$ & $n$ & Section \ref{subfields} reference\\
		\midrule
		$(p^{e/3}+p^{2e/3}+1)\lam1$ & $3$ & $[2,3]$ & Propositions \ref{all_subnoro}, \ref{subcube}\\
		$(p^{e/4}+p^{2e/4}+p^{3e/4}+1)\lam1$& $4$ & 2 & Proposition \ref{sub_n^4}\\
		$2(p^{e/2}+1)\lam1$ & $2$ & $2$ & Proposition \ref{sub_symm_square}\\
		$(p^{e/2}+1)\lam2$ & $2$ & $4$ & Proposition \ref{sub_ext_square}\\
		\bottomrule
	\end{tabular}
	\caption{Some absolutely irreducible $\mathrm{SL}_n(q)$-modules realised over proper subfields of $\F_{p^e}$. \label{ext_sub_table}}
\end{table}

\begin{proof}
First suppose that $\lambda=(p^{e/3}+p^{2e/3}+1)\lam1$, so $c=3$ and let $n=3$. We have $d=27$. By Proposition \ref{permsquare}, a semisimple element $s \in G$ has $\dim C_V(s) \leq 15$ or $9$ if $s$ has eigenspaces of dimensions $2,1$ and $1,1,1$ respectively on the natural module of $G$. The number of prime order semisimple elements in $\pgl_3(q)$ whose preimages in $\mathrm{GL}_3(q^3)$ have eigenspace dimensions $2,1$  on the natural module for $G$ is at most $(q^3-1)q^6(q^6+q^3+1)$. The number of prime order unipotent elements in $\pgl_3(q^3)$ is at most $q^{18}$, and by \cite[Corollary 1]{MR2764924}, we have $\dim C_V(u) \leq 14$ for all unipotent projective prime order $u \in G$. Therefore, if $G$ has no regular orbit on $V \otimes \F_{q^t}$,
\[
q^{27t} \leq 2 |\pgl_3(q^3)|q^{9t}+2(q^3-1)q^6(q^6+q^3+1)q^{15t}+q^{18}q^{14t}+2\log(\log_2q+2)q^{27/2+\frac{1}{2}(27+\sqrt{27})t}.
\]
This inequality is false for $t\geq 2$ and $q\geq 2$, so $G$ has a regular orbit on $V \otimes \F_{q^t}$.
Now suppose that $n=2$. We compute that all semisimple and unipotent elements $x$ of projective prime order in $G$ have $\dim C_V(x) \leq 4$, with $\dim C_V(x) \leq 3$ if the projective prime order of $x$ is at least 5. We also compute that the number of odd prime order elements in $\pgl_2(q^3)$ is at most $q^3(q^3+1)(q^3+q^2+2q-1)$, that the number of involutions of $\pgl_2(q^3)$ is $q^6$, and the number of unipotent elements is $q^6-1$. Therefore, if $G$ has no regular orbit on $V\otimes \F_{q^t}$, then by Propositions \ref{invols} and the proof of Proposition \ref{field}, $q^{8t}$ is less than or equal to
\[
 2q^3(q^3+1)(q^3+q^2+2q-1)q^{3t}+4(q^7+q^4)q^{4t}+2q^6q^{4t}+q^6q^{4t}+2\log(\log_2q^3+2)q^{4+5t}+2\log(\log_2q^3+2)q^{6+4t},
\]
which is false, except for $t=2$ and $q\leq 7$. When $t=2$ and $3\leq q \leq 7$, we delete terms from the inequality as appropriate and substitute in the precise number of elements of order 3 and elements of order at least 5 in $\pgl_2(q^3)$ into the inequality. This gives us a contradiction in each case, so $G$ has a regular orbit on $V\otimes \F_{q^2}$. When $t=q=2$, we determine that there is a regular orbit of $G$ on $V\otimes \F_{q^2}$ by explicit construction in GAP \cite{GAP4}.

Now let $\lambda=(p^{e/4}+p^{2e/4}+p^{3e/4}+1)\lam1$, $c=4$ and $n=2$. We have $d=16$ and by the proof of Proposition \ref{sub_n^4}, if $G$ has no regular orbit on $V \otimes \F_{q^{t}}$, then 
\[
q^{16t} \leq 2 |\pgl_2(q^4)|q^{8t}+2\log(\log_2q^4+2)q^{8+10t},
\]
which gives a contradiction for $t\geq 2$ and  $q\geq 2$, and so the result follows.

Now suppose that $\lambda = 2(p^{e/2}+1)\lam1$, with $ c=2$ and $n=2$. Here $d=9$ and $q$ is odd. By Propositions \ref{invols}, \ref{alphas} and \ref{tools}\ref{crude}), if $G$ has no regular orbit on $V\otimes \F_{q^t}$, then 
\[
q^{9t}\leq 2 |\pgl_2(q^2)|q^{\floor{\frac{9}{2}}t}+ 4(q^4+q^2)q^{\floor{\frac{3\times 9}{4}}t},
\]
which gives a contradiction for $t \geq 2$ and $q\geq 3$.
%
Lastly, suppose that $n=4$, $c=2$ and $\lambda = (p^{e/2}+1)\lam2$, so $d=36$. We must consider graph automorphisms $\tau$ here, and determine that $\dim C_V(\tau) \leq 24 $.
 Therefore if $G$ has no regular orbit on $V \otimes \F_{q^t}$, then by the proof of Proposition \ref{sub_ext_square},
\[
q^{36t}\leq 2\left[2q^{14}+ 2q^{20}+4 \left( \frac{q^{20}-1}{q^4-1}\right)\right]q^{24t} +2|\mathrm{PGL}_4(q^2)| q^{12t}+ 2\log(\log_2q^2+2)q^{16+21t}+4(q^{18}+q^{15})q^{24t}.
\]
This gives a contradiction for $t\geq 2$ and $q\geq 2$, so $G$ has a regular orbit in all cases.

\end{proof}

We have now completed the proof of Theorem \ref{extmain}.
Finally, many of the base size results in this section for $V=V_0 \otimes F_{q_0^k}$ give upper bounds on the base size of $V_0$ by Proposition \ref{fieldext}. The next proposition finishes the proof of Theorem \ref{main} by completing Table \ref{rem1}.

\begin{proposition}
	\label{k=1_leftovers}
Let $V=V_d(q)$ be a $d$-dimensional vector space over $\mathbb{F}_q$, and let $G \leq \Gamma \mathrm{L}(V)$ be almost quasisimple with $E(G)/Z(E(G)) \cong \mathrm{PSL}_n(q)$. Further suppose that the restriction of $V=V(\lambda)$ to $E(G)$ is an absolutely irreducible module of highest weight $\lambda$, with $\lambda$ (up to quasiequivalence) in Table \ref{ext_sub_tab}. Let $\delta = 1$ if $G$ contains a graph automorphism, and zero otherwise. Then $b(G)$ is given in Table \ref{ext_sub_tab}.
\end{proposition}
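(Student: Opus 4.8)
The plan is to obtain Proposition~\ref{k=1_leftovers} by assembling facts already proved, rather than by a fresh argument. Every module $V=V(\lambda)$ occurring in Table~\ref{ext_sub_tab} has been met before: either in Sections~\ref{prest}--\ref{tensor}, where (apart from $\lambda=\lambda_1$) it was shown that $G$ has no regular orbit on $V$, or --- in the form of its realisation over an extension field $\mathbb{F}_{q^k}$ --- in Section~\ref{ext_section}. I would split the proof into a lower-bound part and an upper-bound part, the latter being the substantial one.

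For the lower bounds, nothing is needed when $\lambda=\lambda_1$, since Proposition~\ref{L1_base_size} with $k=1$ already gives $b(G)$ exactly. For the rows of $\lambda_2$, $2\lambda_1$ ($n\geq 3$) and $\lambda_3$ ($n=6$) the tabulated lower bound follows from $b(G)\geq\lceil\log|G|/\log|V|\rceil$: one has $\log|G|/\log|V|>2$ for $\lambda=\lambda_2$ (forcing $b(G)\geq 3$) and $\log|G|/\log|V|>1$ in the other two cases (forcing $b(G)\geq 2$), after bounding $|G|\geq|E(G)|$ by the usual estimate. For $\lambda=2\lambda_1$ with $n=2$ the lower bound $b(G)\geq 2$ is the non-regularity statement coming from $\mathrm{PSL}_2(q)\cong\Omega_3(q)$ and \cite[Lemma 2.10.5(iv)]{KL} recorded near the start of Section~\ref{prest}, and for $\lambda=(p^a+1)\lambda_1$ and $\lambda=\lambda_1+p^a\lambda_{n-1}$ it is the non-regularity proved in Proposition~\ref{tensornatural}. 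The remaining rows (e.g. $\lambda_3$ with $n=9$, $\lambda_4$ with $n=8$, $2\lambda_2$ with $n=4$, $3\lambda_1$ with $n=2,3$) carry lower bound $1$, so nothing is required there.

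For the upper bounds, the engine is Lemma~\ref{fieldext}: if $G$ has a base of size $c$ on $V(\lambda)$ realised over $\mathbb{F}_{q^i}$, then $b(G)\leq ci$ on $V(\lambda)$ over $\mathbb{F}_q$; and since a subgroup of a group with a regular orbit on a given vector also has a regular orbit there, it suffices to quote the extension-field results of Section~\ref{ext_section} with $c=1$. Row by row I would take: $i=2$ for $\lambda=\lambda_3$ with $n=9$ (Proposition~\ref{L3_n=6}), $\lambda=\lambda_4$ with $n=8$ (Proposition~\ref{ext_L4}), $\lambda=2\lambda_2$ with $n=4$ (Proposition~\ref{ext_2L2}), $\lambda=3\lambda_1$ with $n=2,3$ (Proposition~\ref{3L1_n=2} for $n=2$ and the corresponding $\mathrm{SL}_3(q)$ proposition in Section~\ref{ext_section} for $n=3$), and $\lambda=(p^a+1)\lambda_1$, $\lambda_1+p^a\lambda_{n-1}$ (Proposition~\ref{exttensor}); $i=3$ for $\lambda=2\lambda_1$ (Proposition~\ref{2L1_base_size}) and for $\lambda=\lambda_2$ with $n\geq 7$ (Proposition~\ref{L2_base_size}); $i=4$ for $\lambda=\lambda_2$ with $n\in[5,6]$ and $i=5$ for $\lambda=\lambda_2$ with $n=4$ (both Proposition~\ref{L2_base_size}); and, for $\lambda=\lambda_3$ with $n=6$, $i=3$ if $G$ contains no graph automorphism and $i=4$ otherwise (Proposition~\ref{L3_n=6}), which is exactly the $3+\delta$ bound. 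In each instance $ci=i$ is the number recorded in Table~\ref{ext_sub_tab}. Finally, where a cited Section~\ref{ext_section} result carries a mild restriction on $q$ (such as $q\geq 5$ for $2\lambda_2$ or $q\geq 7$ for $3\lambda_1$ with $n=2$), the finitely many omitted $q$ are all small powers of the defining prime --- with the $\mathrm{PSL}_2(4)\cong\mathrm{PSL}_2(5)$ case already treated in \cite{crosscharpaper} --- and are dispatched by an explicit construction of $V$ in GAP \cite{GAP4} or by inserting exact conjugacy-class counts into the relevant inequality.

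Since almost everything is inherited, the only point that genuinely needs care is the bookkeeping: one must check, row by row, that the extension degree $i$ for which Section~\ref{ext_section} guarantees a regular orbit coincides with the upper bound asserted in the table, and that a smaller $i$ is genuinely unavailable where the table records a larger value --- for example $b(G)\leq 5$, not $\leq 4$, for $\lambda=\lambda_2$ with $n=4$, because Proposition~\ref{L2_base_size} fails to produce a regular orbit over $\mathbb{F}_{q^2}$, $\mathbb{F}_{q^3}$ or $\mathbb{F}_{q^4}$ in that case. Keeping the graph-automorphism dichotomy straight in the $\lambda_3$, $n=6$ row, and verifying the residual small-rank, small-$q$ computations, completes the proof.
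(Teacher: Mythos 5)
Your proposal is correct and is essentially the paper's own argument: the paper's proof is the one-line statement that the result follows from Lemma \ref{fieldext} together with the references listed in Table \ref{ext_sub_tab}, and your row-by-row application of that lemma (taking $i$ to be the smallest extension degree for which Section \ref{ext_section} guarantees a regular orbit, with the $3+\delta$ split for $\lambda_3$, $n=6$, and $i=5$ for $\lambda_2$, $n=4$) is exactly the intended bookkeeping. The only slip is cosmetic: you list $3\lambda_1$, $n=2$ among the rows with lower bound $1$, whereas the table asserts $b(G)=2$ there, the lower bound being the non-regularity already established in Proposition \ref{3l1} — as with your other lower bounds, this is inherited from earlier results rather than reproved here.
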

\begin{table}[!htbp]
\begin{tabular}{cccc}
	\toprule
	$\lambda$ & $n$ & $b(G)$ & Reference \\ \midrule
	$2\lam1$ & $[2,\infty)$ & $2\leq b(G)\leq 3$ & Proposition \ref{2L1_base_size} \\
	$\lam2$ & $[7,\infty)$ & 3 & Proposition \ref{L2_base_size} \\
	& $[5,6]$ & $3 \leq b(G) \leq 4$& Proposition \ref{L2_base_size} \\
		& $4$ & $3 \leq b(G) \leq 5$& Proposition \ref{L2_base_size} \\
	$3\lam1$ , $p\geq 5$ & $2$ & 2 & Proposition \ref{3L1_n=2} \\
 &3 &  $1\leq b(G) \leq 2$ &Proposition \ref{3L1_n=2}  \\
	$\lam3$ & $6$ & $2\leq b(G) \leq 3+\delta$ & Proposition \ref{L3_n=6} \\
	&9& $1\leq b(G) \leq 2$ & Proposition \ref{L3_n=6}\\	
		$\lam4$ &8& $1\leq b(G)\leq 2$ & Proposition \ref{ext_L4} \\
				$2\lam2$, $p\geq 3$ &4& $1\leq b(G)\leq 2$&Proposition \ref{ext_2L2} \\
	$(p^a+1)\lam1$ & $[2,\infty)$ & 2 & Proposition \ref{exttensor} \\
	$\lam1+p^a\lam{n-1}$ & $[3,\infty)$ & 2 & Proposition \ref{exttensor} \\

	 \bottomrule
\end{tabular}
\caption{Some base sizes of $\mathbb{F}_qG$-modules $V(\lambda)$. \label{ext_sub_tab}}
\end{table}
\begin{proof}
This follows from Proposition \ref{fieldext} and each of the references given in Table \ref{ext_sub_tab}.
\end{proof}
 This concludes the proof of Theorem \ref{main}.
\section*{Acknowledgements}
This paper comprises a part of the PhD work of the author under the supervision of Professor Martin Liebeck. The author would like to thank Professor Liebeck for his guidance and careful reading of this paper. The author acknowledges the support of an EPSRC International Doctoral Scholarship at Imperial College London.
\bibliographystyle{plain}

\end{document}